\documentclass[a4paper,12pt]{amsart}

\usepackage[T1]{fontenc}
\usepackage[utf8]{inputenc}
\usepackage{palatino}
\usepackage{amsmath, amssymb, mathtools}
\usepackage[usenames,dvipsnames]{xcolor}
\usepackage{graphicx}
\usepackage{subfigure}
\usepackage{minitoc}
\usepackage{tikz}
\usepackage{enumitem}
\usepackage{dsfont}

\usepackage[margin=0.96 in]{geometry}
\usepackage{bbm}
\usepackage[numbers,sort&compress]{natbib}
\usepackage[colorlinks=true]{hyperref}
\hypersetup{urlcolor=blue, citecolor=red}

\DeclarePairedDelimiter{\abs}{\lvert}{\rvert}
\DeclarePairedDelimiter{\norm}{\lVert}{\rVert}
\DeclarePairedDelimiter{\set}{\{}{\}}
\DeclareMathAlphabet{\mathup}{OT1}{\familydefault}{m}{n}
\newcommand{\dx}[1]{\mathop{}\!\mathup{d} #1}
\DeclarePairedDelimiter{\prt}{(}{)}
\DeclarePairedDelimiter{\brk}{[}{]}
\newcommand{\N}{{\mathbb N}}
\newcommand{\R}{{\mathbb R}}
\newcommand{\Rd}{{\mathbb R^d}}

\def\remspace{\kern-0.5em}
\def\alnspace{0.5em}

\theoremstyle{plain}
\newtheorem{theorem}{Theorem}[section]
\newtheorem{lemma}[theorem]{Lemma}
\newtheorem{proposition}[theorem]{Proposition}
\newtheorem{corollary}[theorem]{Corollary}

\theoremstyle{remark}
\newtheorem{assumption}{Assumption}[section]
\newtheorem{remark}[theorem]{\bf Remark}
\newtheorem{definition}[theorem]{\bf Definition}

\renewcommand{\i}{^{(i)}}
\newcommand{\1}{^{(1)}}
\newcommand{\2}{^{(2)}}

\newcommand{\sign}{\mathrm{sign}}
\newcommand{\ds}{\displaystyle}
\newcommand{\ddt}{\frac{\dx{}}{\dx{t}}}
\newcommand{\partialt}[1]{\frac{\partial #1}{\partial t}}

\newcommand{\fpartial}[1]{\frac{\partial}{\partial #1}}

\providecommand{\mathfs}[1]{\mathsf{#1}}
\newcommand{\intO}[1]{\int_{\R^d} #1 \dx{x}}
\newcommand{\iintO}[1]{\int_{\R^{2d}} #1 \dx{x} \dx{y}}
\newcommand{\eps}{\varepsilon}

\newcommand{\calK}{\mathcal{K}}

\title[Incompressible limit]{Incompressible limit for a two-species Brinkman model with drift}

\author{Tomasz D\k{e}biec$^{1}$}
\author{Valentin Vincent Neumann$^{2}$}
\author{Markus Schmidtchen$^{2}$}

\address{$^{1}$ Institute of Applied Mathematics and Mechanics, University of Warsaw, Banacha 2, 02-097 Warsaw, Poland. Email: t.debiec@mimuw.edu.pl.}
\address{$^{2}$ Institute of Scientific Computing, Faculty of Mathematics, TU Dresden, Zellescher Weg 12-14, 01069 Dresden, Germany. Emails: 
	valentin\_vincent.neumann@tu-dresden.de, markus.schmidtchen@tu-dresden.de.}

\begin{document}

\begin{abstract}
    We study a two-species model for tissue growth in which both populations are transported by an external drift and by a velocity potential determined through Brinkman's law. The pressure is generated by a stiff constitutive relation depending on the total density. Our main result establishes the incompressible limit as the stiffness exponent tends to infinity, in arbitrary space dimension and for merely integrable initial data. The limit system consists of the two balance laws coupled to Brinkman's equation, the hard-congestion constraint $0\leq n_\infty\leq 1$, the graph relation $p_\infty(1-n_\infty)=0$, and the corresponding complementarity relation. A key point of the analysis is a new $L^2$-based compactness theory that avoids both uniform $L^\infty$-bounds on the pressure and the kinetic reformulation used in earlier approaches. We first construct weak solutions for bounded data and then remove the boundedness assumption by means of a weighted compactness argument inspired by Bresch--Jabin. We also prove an Aubin--Lions--Simon type lemma based on oscillation control, yielding time continuity of the constructed solutions. Finally, a refined dissipation estimate for the Bresch--Jabin compactness functional gives strong compactness of the pressure in the stiff limit and implies a regularising effect: the limiting pressure is bounded even when the approximating pressures are only integrable.
\end{abstract}

\maketitle

\section{Introduction}
In this paper, we derive novel regularity results for systems of balance laws coupled through Brinkman's law, an elliptic equation, under rather minimal assumptions on growth and drift terms. 
The novel methodology allows us to revisit singular limits in which the pressure degenerates into a stiff graph relation 
$$
    0 \leq n_\infty \leq 1,\quad p_\infty \geq 0, \quad p_\infty(1 - n_\infty) =0,
$$
also referred to as stiff limit, incompressible limit or hard congestion limit, while removing technically challenging proofs of past approaches. 
In its seminal form, this problem was addressed in the paper \cite{PV2015},
in which the authors propose the viscoelastic tissue growth model
\begin{subequations}
\label{Brinkman-sys-seminal}
\begin{align}
    \label{eq:balance-law}
    \partialt n = \nabla \cdot (n \nabla W) + n G(p),
\end{align}
where $n=n(x,t)$ is the spatial concentration of cells at locations $x \in \Rd$ at time $t>0$. The velocity potential, $W$, satisfies the so-called Brinkman law
\begin{align}
    - \nu \Delta W + W = p,
\end{align}
\end{subequations}
and the pressure $p$ is coupled to the density through the constitutive law $p = p(n) = n^k$. In their work, they address the incompressible limit, $k \to \infty$, which leads to a free-boundary type problem, as we shall discuss below in more detail. \\

\textbf{Historical background.}
It is worth noting that the interest in the stiff limit $k \to \infty$ arose much earlier and dates back to the study of the porous medium equation $\partial_t n = \Delta n^k$, see \cite{Sab61, OKJ58, Vaz07}. The approaches proposed over the years include operator semigroup theory \cite{BC81}, formal asymptotics \cite{EHKO86}, obstacle problems \cite{CF87}, and more recently, variational (Wasserstein and Kantorovich-Fisher-Rao) gradient-flow approaches \cite{alexander2014quasi, CD2020}, and viscosity solutions \cite{kim2018porous, KPW2019}. In the case of the porous medium equation, letting the exponent $k\to \infty$ leads to infinitely fast diffusion in zones of densities larger than $1$, referred to as collapse, and zones of densities less than $1$, referred to as mushy regions. The contact set is characterised by a stationary obstacle problem (on $\Rd$ or bounded domains with zero Dirichlet data) and by a Hele-Shaw-type problem (bounded domains with non-zero Dirichlet data) \cite{GQ2001, GQ2003}. The interest in this singular limit experienced a renaissance in 2014, when  \cite{PQV14} proposed a model incorporating zeroth-order growth terms in the context of tissue growth, $\partial_t n = \frac{k}{k+1}\Delta n^{k+1} + G$ posed on the whole space $\Rd$. The introduction of growth dynamics acting as a source term opened up new challenges as they can `inject' new mass into the system, leading to a different type of Hele-Shaw system in the limit.
The problem then sparked the emergence of new approaches based on (refined) versions of the celebrated Aronson-B\'enilan estimate \cite{AB1979, bevilacqua2022aronson, BPPS2020} on the one hand and methods based on energy-dissipation principles that establish strong compactness of (approximations of the identity of) the pressure gradient, \cite{LX2021, Dav2023, Jac2021}. Moreover, in the parabolic case  convergence rates for the stiff limit have been obtained in various settings, cf. \cite{alexander2014quasi, david2022convergence, david2024improved}.\\

\textbf{Back to the Brinkman system.} 
The picture is much less complete for the Brinkman system \eqref{Brinkman-sys-seminal}. As $\nu \to 0$, the velocity potential becomes $W = p$ and the velocity field in Eq. \eqref{eq:balance-law} becomes the well-known Darcy law $v = - \nabla p$, cf. \cite{DDMS2024, ES2025, dkebiec2025finite, DJK2025}. From a modelling perspective, the inclusion of the viscous term models viscous dissipation of the velocity due to friction between cells and was proposed in \cite{PV2015, DT14}. From an analytical perspective, the picture changes dramatically: while the regularity of the velocity field, $\nabla W$, enjoys better regularity compared to the Darcy setting, the density and the pressure are even less regular. As a consequence, the uniform-in-$k$ estimates from the parabolic setting fail, calling for new techniques. Moreover, now the main challenge arises from establishing strong compactness of the pressure rather than the pressure gradient to pass to the limit in the nonlinear growth terms. 

In \cite{PV2015}, the seminal paper on the stiff limit for Eq. \eqref{Brinkman-sys-seminal}, the authors show that the pressure can only oscillate between two well-defined values. This information is then used in a delicate argument based on representing nonlinear weak limits \`a la \cite{LPT1994}. We shall point out that this strategy requires certain assumptions avoiding mushy regions, i.e., zones where $0 < n < 1$,  in order to identify the kinetic defect measure. Under a similar set of assumptions, using the theory of generalised flows and of viscosity solutions, the authors of \cite{KT18} establish the local uniform convergence of the pressure. Several multispecies extensions have been obtained recently, which are either at a formal level \cite{degond2022multi} or following the technique of \cite{PV2015} in conjunction with the compactness method of \cite{BreschJabin, BreschJabin2, BelgacemJabin}, cf. \cite{DS2020, DPSV2021}.

In this paper, we revisit the stiff limit for a viscoelastic tissue-growth model, System \eqref{Brinkman-sys-seminal}, for two species, $n^{(i)} = n^{(i)}(x,t)$ satisfying
\begin{subequations}
\label{eq:main-system}
\begin{align}
    \label{eq:main-equation}
    \partialt n^{(i)} = \nabla \cdot (n^{(i)} \nabla W) + \nabla \cdot (n^{(i)} \nabla V) + n^{(i)}G^{(i)}(p),
\end{align}
on $\Rd \times (0,T)$, for $i=1,2$. Here, $p$ denotes the pressure and is related to the total population, $n=n^{(1)} + n^{(2)}$, by $p =n^k$. While $V$ is a given external potential, $W$ is the velocity potential coming from Brinkman's law
\begin{align}
    \label{eq:brinkman}
    - \nu \Delta W + W = n^k,
\end{align}
where $k \geq 1$ is the stiffness of the pressure law. The system is complemented by nonnegative initial data $n^{(i)} \geq 0 $. 
\end{subequations}
Throughout, we will work at the level of weak solutions which will be introduced below, and we should highlight that only recently, due to the contribution of \cite{KT2025}, the discussion of strong solutions has been initiated.

\subsection{Our contributions}
\mbox{}\\
The goals of this paper are threefold: First, we present a complete existence result for integrable and bounded solutions. Subsequently,  we establish an existence result for solutions that are merely integrable, and, third, we establish the stiff limit $k\to \infty$ in precisely the setting in which solutions are only integrable.

\medskip
Moreover, we prove a general Aubin-Lions-Simon-type embedding into continuous-in-time functions. The central observation is that, despite lacking the commonly used $H^s$-regularity, solutions inherit enough uniform spatial regularity to control oscillations. Since this spatial information is uniform in time, it can be combined with weak time regularity to infer time continuity. As a result, all the solutions we construct (bounded, integrable, and in the stiff limit) are in $C([0,T];L^p)$, which to the best of our knowledge is a new result. \\ Furthermore, we prove a smoothing property of the stiff limit: even if the initial pressure is only integrable, the limit pressure in the incompressible limit is  bounded in $L^\infty$.

\medskip
Below we dedicate a short paragraph to each of the results as well as a discussion of the assumptions that allow for it.

\subsubsection{Existence result}
\mbox{}\\
The first step consists of an existence result, presented in Section \ref{sec:existence}. To establish the existence of solutions, we require the following set of assumptions.
\begin{assumption}
    \label{assu:existence}
    The velocity potential $V$ is such that
    \begin{itemize}[itemsep=1em]
        \item $\nabla V \in L^\infty(0,T;L^\infty(\Rd))$, and $\Delta V \in L^\infty(0,T;L^\infty(\Rd))$,
    \end{itemize}
    whose norms are bounded by some $C_V>0$. Concerning the growth rate of the two species, $G^{(i)}$, $i=1,2$, we assume
    \begin{itemize}[itemsep=1em]
        \item $G^{(i)}\in C^1([0,\infty))$  and $\partial_p G^{(i)}<-\alpha<0$, for some $\alpha >0$.
    \end{itemize}
    Since both growth rates are decreasing functions there holds
    \begin{align}
        \label{eq:def-pL}
    	\exists p_L> 0:\  \forall p>p_L: \;\; \norm{\Delta V}_{L^\infty(0,T;L^\infty(\Rd))} + G^{(i)}(p) < 0,
    \end{align}
    for both $i=1,2$. It is convenient to define $n_L:= p_L^{1/k}$, and we introduce 
    \begin{align}
        \label{eq:bound-G}
        C_G = \max(G^{(1)}(0), G^{(2)}(0), -G^{(1)}(p_L), -G^{(2)}(p_L)).
    \end{align}
\end{assumption}

Under these assumptions, we can prove the following theorem.
\begin{theorem}\label{thm:Main1}
    Let $n^{(1),in}, n^{(2),in} \in L^1(\Rd)\cap L^\infty(\Rd)$ be nonnegative. 
    Then, there exist functions
    \begin{align*}
        n\1, n\2 \in L^\infty(0,T;L^1(\Rd)\cap L^\infty(\Rd)) \cap C([0,T];L^q(\Rd)) \cap H^1(0,T;H^{-1}(\Rd)),
    \end{align*} 
    for any $1\leq q < \infty$, such that $(n\1, n\2)$ is a weak solution of the system
    \begin{align*}
        \partialt {n\i}  &= \nabla \cdot (n\i\nabla W) + \nabla \cdot(n\i\nabla V) + n\i G\i(p),\\
        -\nu \Delta W + W &= p\\
        n\i(\cdot, 0) &= n^{(i),in},
    \end{align*}
    in the sense of Definition~\ref{def:weak-sol-Brinkman}.
\end{theorem}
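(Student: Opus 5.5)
The plan is to construct solutions by regularisation and compactness, with the compactness of the total density handled by a Bresch--Jabin type argument; no uniform-in-regularisation $H^s$ bound on $n$ is available.

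\textbf{Approximate problem and uniform bounds.} First, I regularise. Add a vanishing viscosity $\delta\Delta n\i$ to each equation, mollify the initial data, and truncate the nonlinearity, replacing $p=n^k$ by $\min(n,M)^k$ for a large $M$. For fixed $\delta$ the system is a parabolic system with a nonlocal but smooth velocity $\nabla W=\nabla(I-\nu\Delta)^{-1}p$, so a Banach fixed point in $L^\infty(0,T_0;L^1\cap L^\infty)$, iterated in time, yields smooth nonnegative solutions. Next I derive $\delta$-uniform bounds.

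\begin{itemize}
\item Integrating the equations and using $G\i(p)\le C_G$ gives the $L^1$ bound, with growth at most $e^{C_GT}$.
\item For $L^\infty$, sum the equations to get $\partial_t n=\nabla n\cdot\nabla(W+V)+n\Delta(W+V)+\sum_i n\i G\i(p)$. Since $\nu\Delta W=W-p$, at a maximum point of $n$ we have $\Delta W=(W-p)/\nu\le (\|W\|_\infty - p)/\nu$. With $\|W\|_\infty\le\|p\|_\infty$, the term $n\Delta W$ is nonpositive at the maximum. Combined with \eqref{eq:def-pL}, this shows that $\max(\|n^{in}\|_\infty, n_L)$ cannot be exceeded.
\item Consequently the truncation is inactive once $M$ exceeds this bound, and $p\in L^\infty$ uniformly.
\item Elliptic regularity then gives $W\in L^\infty(0,T;W^{2,q})$ for all $q<\infty$, so $\nabla W$ is bounded in $W^{1,q}$ uniformly.
\item From the equation, $\partial_t n\i$ is bounded in $L^2(0,T;H^{-1})$.
\end{itemize}

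\textbf{Compactness (main obstacle).} The nonlinear terms are $n\i\nabla W$, which is weak times strong once $\nabla W$ is compact, and $n\i G\i(p)$, which requires strong convergence of $n$. Compactness of $\nabla W$ follows from Aubin--Lions, since $W_t$ is controlled through the equation for $p$ or simply through the equation for $n$ tested against the Brinkman resolvent. For $n$ itself I would follow Bresch--Jabin. I consider the functional
\[
\mathcal{Q}_h(t)=\iint K_h(x-y)\,|n(x,t)-n(y,t)|\,\dx{x}\dx{y},
\]
or its $L^2$ analogue, with the weight $K_h$ normalised so that $\mathcal{Q}_h/\|K_h\|_{L^1}\to0$ uniformly characterises compactness. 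The time derivative of this functional produces a transport commutator term, controlled by $\|\nabla^2(W+V)\|_{L^q}$ via maximal function estimates. It also produces the term $n\Delta W=n(W-p)/\nu$. Its contribution $-(n(x)p(x)-n(y)p(y))\sign(n(x)-n(y))$ has a good sign because $n\mapsto np(n)$ is monotone, while the $W$ part is Lipschitz-controlled. The growth term is controlled similarly, using monotonicity of $G\i$. Gronwall then yields $\mathcal{Q}_h(t)\le C(\mathcal{Q}_h(0)+|\log h|^{-1/2})\|K_h\|_{L^1}$.

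This gives strong compactness of $n$ in $L^1_{loc}$, which upgrades to $L^q$ by uniform $L^\infty$ bounds and tightness. Tightness comes from a moment bound, or from $L^1$ control of tails, using the bounded velocity $\nabla W,\nabla V$. I expect this to be the delicate step: the signs in the Brinkman term must be checked, and the commutator must be controlled uniformly in $\delta$, with the viscous term only helping. Individual species need not converge strongly; only $n$ (hence $p$, $G\i(p)$) does, while $n\i$ converges weakly-$*$. This suffices to pass to the limit in the weak formulation.

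\textbf{Time continuity.} The oscillation control $\mathcal{Q}_h$ holds uniformly in $t$ and is inherited by the limit $n$. The weak time regularity $H^1(0,T;H^{-1})$ gives $n\i\in C([0,T];L^2_{w})$. The paper's Aubin--Lions--Simon type lemma then combines the uniform-in-time spatial oscillation control with the weak time continuity to yield $C([0,T];L^q)$. For the individual species, I plan to show that each $n\i$ is also renormalised, or apply the same functional to $n\i$ using $n\i=n\,c\i$ with transported fractions, to conclude $C([0,T];L^q)$ for $n\i$ as well.
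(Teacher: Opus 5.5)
The gap is in your compactness step: the oscillation functional of the total density alone does not satisfy a closed Gronwall inequality when $G^{(1)}\not\equiv G^{(2)}$. Your remark that strong convergence of $n$ plus weak-$*$ convergence of $n^{(i)}$ would suffice to pass to the limit in the weak formulation is correct; the problem is obtaining the strong convergence of $n$. The reaction term of the summed equation is $n^{(1)}G^{(1)}(p)+n^{(2)}G^{(2)}(p)$, which is not a function of $n$. Set $\sigma=\mathrm{sign}(n^x-n^y)=\mathrm{sign}(p^x-p^y)$. Then its contribution to $\partial_t\mathcal{Q}_h$ is the $\mathcal{K}_h$-integral of
\[
G^{(2)}(p^x)\,|n^x-n^y| + \big(G^{(1)}(p^x)-G^{(2)}(p^x)\big)\big(n^{(1),x}-n^{(1),y}\big)\sigma + \sum_{i=1}^2\big(G^{(i)}(p^x)-G^{(i)}(p^y)\big)\,n^{(i),y}\sigma .
\]
Monotonicity of $G^{(i)}$ makes the last sum nonpositive, and the first term is bounded by $C_G|n^x-n^y|$. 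The middle term, however, can only be bounded by $\iint\mathcal{K}_h(x-y)\,|n^{(1),x}-n^{(1),y}|$, i.e.\ by oscillations of a single species, which $\mathcal{Q}_h$ does not control. So ``the growth term is controlled similarly'' fails, and the bound $\mathcal{Q}_h(t)\le C(\mathcal{Q}_h(0)+|\log h|^{-1/2})\|\mathcal{K}_h\|_{L^1}$ does not follow.

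The paper avoids this by propagating $Q_{n^{(1)}}$, $Q_{n^{(2)}}$ and $Q_n$ simultaneously.
\begin{itemize}
\item Each species functional picks up, from $n^{(i)}\Delta W=n^{(i)}(W-p)/\nu$, an unsigned term of size $2\nu^{-1}|p^x-p^y|\,n^{(i),x}$, because the sign of $n^{(i),x}-n^{(i),y}$ is not aligned with that of $p^x-p^y$.
\item This term is cancelled exactly by the good term $-2\nu^{-1}|p^x-p^y|\,n^{x}$ of the total-density functional, since $n^{(1)}+n^{(2)}=n$. At fixed $k$ with bounded densities you could instead use $|p^x-p^y|\le k\|n\|_{L^\infty}^{k-1}|n^x-n^y|$.
\item Conversely, the mixed growth term above is absorbed by $Q_{n^{(1)}}+Q_{n^{(2)}}$.
\end{itemize}
The coupled estimate closes and yields strong compactness of each $n^{(i)}$, which you need anyway. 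The theorem asserts $n^{(i)}\in C([0,T];L^q(\mathbb{R}^d))$ for each species, and weak-$*$ convergence of $n^{(i)}$ cannot give this. Your fallback of applying the functional to $n^{(i)}$ alone runs into precisely the unsigned pressure term.

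A minor point: $\partial_t W$ is not obtained from the $n$-equation tested against the Brinkman resolvent, since $W=K_\nu\star n^k$. Either use the $p$-equation, whose extra viscous term $\delta k(k-1)n^{k-2}|\nabla n|^2$ is bounded in $L^1$, or infer compactness of $W$ a posteriori from that of $n$.
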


\subsubsection{Removal of $L^\infty$-assumption on initial data}
\mbox{}\\
We relax the class of initial data to integrable but not necessarily bounded functions, cf. Section \ref{sec:UnboundedData}.
\begin{assumption}
    To this end, in addition to Assumption \ref{assu:existence}, we impose
    \label{assu:integrable-data}
    \begin{itemize}[itemsep=1em]
        \item $\nabla V \in L^\infty(0,T;H^1(\Rd))$, which is required to make sense of the weight equation, Proposition \ref{prop:w}, in the DiPerna-Lions setting,
        \item $\exists \Gamma >0,\  s\geq 1: \ |G^{(i)}(p)| \leq \Gamma(1 + p^s)$, for $i=1,2$, as the growth term is no longer trivially in $L^\infty$ in the case of unbounded solutions.
    \end{itemize}
\end{assumption}
Under these assumptions we can prove the following theorem.
\begin{theorem}
    Let $0 \leq n^{(i), \mathrm{in}} \in L^1(\Rd) \cap L^2(\Rd)$ be given initial data for $i=1,2$ such that $(n^{(1), \mathrm{in}}+n^{(2), \mathrm{in}})^k \in L^1(\Rd) \cap L^{2s}(\Rd)$.
    Then, there exist functions 
    \begin{align}
        n^{(1)}, n^{(2)} \in L^\infty(0,T;L^1(\Rd)\cap L^2(\Rd)) \cap C([0,T];L^q(\Rd)),
    \end{align}
    for any $1\leq q < 2$, and
    \begin{align}
        \partialt {n^{(1)}}, \partialt {n^{(2)}} \in L^2(0,T; H^{-\frak s}(\R^d)),
    \end{align}
    for any $\frak{s}>d/2 + 1$ which are a weak solution of the system
    \begin{align*}
    \partialt {n\i} &= \nabla \cdot(n\i\nabla W) + \nabla \cdot(n\i\nabla V) + n\i G\i(p),\\
    -\nu \Delta W + W &= p\\
    n\i(0,x) &= n^{(i),in}
    \end{align*}
    in the sense of Definition~\ref{def:weak-sol-integrable}.
\end{theorem}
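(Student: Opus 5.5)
The plan is to approximate the integrable data by bounded data, use Theorem~\ref{thm:Main1} for the approximations, and pass to the limit with the weighted compactness argument à la Bresch--Jabin announced in the introduction.

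\textbf{Approximation.} Truncate and mollify the data, $n^{(i),in}_m := \min(n^{(i),in}, m)$, so that $n^{(i),in}_m \in L^1\cap L^\infty$ and $n^{(i),in}_m \to n^{(i),in}$ in $L^1\cap L^2$. Also $(n^{in}_m)^k \to (n^{in})^k$ in $L^1\cap L^{2s}$. Theorem~\ref{thm:Main1} gives solutions $(n^{(1)}_m, n^{(2)}_m, W_m)$, which we then treat in three steps.

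\textbf{Uniform estimates.} $L^1$ bounds follow by integrating the equations, since $G\i \le C_G$. For the $L^2$ bound on $n_m\i$ and the $L^1\cap L^{2s}$ bound on $p_m$, test the total density equation with $n_m^{r-1}$, or equivalently write the pressure equation
\[
\partial_t p = \nabla p\cdot\nabla(W+V) + k p\,(\Delta W + \Delta V) + k p\,\textstyle\sum_i \frac{n\i}{n} G\i(p),
\]
with $\Delta W = (W-p)/\nu$. The term $-kp^2/\nu$ has a good sign and gives $\int_0^T\!\int p^{q+1}$ dissipation. The terms $\Delta V$ and $G$ are controlled by $C_V$ and $C_G$ and Gronwall, yielding $p_m \in L^\infty(0,T;L^1\cap L^{2s})\cap L^{2s+1}$ uniformly. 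Elliptic regularity then gives $W_m \in L^\infty(0,T;W^{2,2s})$ and $\nabla W_m$ bounded in $L^\infty(0,T;H^1)$. The growth bound $|G\i(p)|\le\Gamma(1+p^s)$ together with $n\i\in L^2$ and $p^s\in L^2$ yields $n\i G\i(p)$ bounded in $L^2(0,T;L^1)$. The fluxes $n\i\nabla W$ and $n\i\nabla V$ are bounded in $L^2(0,T;L^1)$. Since $L^1 \hookrightarrow H^{-\frak s+1}$ for $\frak s > d/2+1$, this gives the claimed bound on $\partial_t n_m\i$ in $L^2(0,T;H^{-\frak s})$.

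\textbf{Strong compactness (main obstacle).} The weak limits are insufficient to identify $G\i(p)$ and $n\i\nabla W$, so we need spatial compactness of $n_m$, hence of $p_m = n_m^k$, and then of the fractions $n_m\i/n_m$. Without $H^s$ regularity, we follow Proposition~\ref{prop:w}. We introduce the weight $w_m$ solving the transport equation
\[
\partial_t w + \nabla w\cdot\nabla(W_m+V) = -\lambda\, \mathcal{M}|\nabla^2 (W_m+V)|\, w,
\]
which is well posed in the DiPerna--Lions setting since $\nabla V, \nabla W_m \in L^\infty(0,T;H^1)$. We then estimate the functional
\[
\iint K_h(x-y)\,|n_m(x)-n_m(y)|\,w(x)w(y)\,dx\,dy,
\]
and its analogues for $n\i_m$, via the doubled-variable computation. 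The commutator of the transport terms is bounded by the maximal function inequality $|\nabla\phi(x)-\nabla\phi(y)|\le |x-y|(\mathcal M|\nabla^2\phi|(x)+\mathcal M|\nabla^2\phi|(y))$ and absorbed by the weight. The compression term $n\Delta W$ is handled using $\Delta W = (W-p)/\nu$: the monotonicity of $n\mapsto n^k$ gives a signed contribution $-(n(x)-n(y))(p(x)-p(y))\,\mathrm{sign}\le 0$. The growth terms are handled by monotonicity of $G\i$ and a Lipschitz bound on bounded sets, with truncation controlled by the $L^{2s}$ pressure bound. The result is $o(1)$ as $h\to0$ uniformly in $m$, on sets where $w$ is not small. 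Since $\{w\le\eta\}$ has small measure uniformly, by the integrability of $\mathcal M|\nabla^2 W_m|$ in $L^2$, this yields compactness in $L^1_{loc}$ in space. Combined with the time bound, via an Aubin--Lions--Simon lemma based on oscillation control, we get strong convergence in $L^1_{loc}$ and hence, by uniform integrability, in $L^q$ for $q<2$. Continuity in time in $C([0,T];L^q)$ follows from the same lemma, using spatial oscillation control that is uniform in $t$.

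\textbf{Passing to the limit.} With strong convergence of $n_m\i$ and of $p_m$ (the latter from equi-integrability in $L^{2s}$), $W_m\to W$ strongly in $L^2(0,T;H^1)$. All terms in Definition~\ref{def:weak-sol-integrable} then converge, and the initial data are attained by the time continuity established above.
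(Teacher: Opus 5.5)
\paragraph{Main gap: the two-species coupling.} Your architecture matches the paper's: bounded approximations from Theorem~\ref{thm:Main1}, uniform $L^1\cap L^2$ and pressure bounds, a DiPerna--Lions weight, a Bresch--Jabin doubled-variable functional, and the oscillation-based Aubin--Lions--Simon lemma for time continuity. The compactness step, however, does not close as you describe it, because you never deal with the two-species coupling.

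You exhibit a sign only for the compression term of the \emph{total} density. For a single species, inserting $\Delta W=(W-p)/\nu$ into the doubled-variable computation gives a term $-\frac{2}{\nu}(p^x-p^y)\,n^{(i),x}\sigma_i$ (times the weight), with $\sigma_i=\mathrm{sign}(n^{(i),x}-n^{(i),y})$. Since $\sigma_i$ is unrelated to $\mathrm{sign}(p^x-p^y)$ (one species can increase where the total decreases), you only get the unsigned bound $\frac{2}{\nu}|p^x-p^y|\,n^{(i),x}$.

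The reaction has the same problem. The term $(G^{(i)}(p^x)-G^{(i)}(p^y))\,n^{(i),y}\sigma_i$ is unsigned. Your ``Lipschitz bound on bounded sets'' only turns it into $\iint \calK_h|p^x-p^y|\,n^{(i),y}$, which presupposes the compactness of the pressure you are trying to prove.

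Nor can you first get compactness of $n$ and then of the fractions. In the functional for the total density, the reaction term contains $G^{(i)}(p^x)(n^{(i),x}-n^{(i),y})\sigma$ with $\sigma=\mathrm{sign}(n^x-n^y)$. This involves individual-species oscillations as soon as $G^{(1)}\neq G^{(2)}$. So neither the species-wise functionals nor the total-density functional closes a Gronwall inequality on its own.

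\paragraph{The missing idea.} Propagate $Q_{n^{(1)}}+Q_{n^{(2)}}+Q_n$ simultaneously, as the paper does. In $Q_n$ one has $\sigma=\mathrm{sign}(p^x-p^y)$, which yields the dissipative term $-\frac2\nu|p^x-p^y|\,n^x\upsilon^x$. By monotonicity of $G^{(i)}$ it also yields $-\sum_i |G^{(i)}(p^x)-G^{(i)}(p^y)|\,n^{(i),y}$. Because $n^{(1)}+n^{(2)}=n$, these cancel exactly the unsigned terms from the two species. What remains is $|W^x-W^y|$, $|\Delta V^x-\Delta V^y|$ and $2C_G(Q_{n^{(1)}}+Q_{n^{(2)}})$, and Gronwall closes.

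\paragraph{Two smaller repairs.}
\begin{enumerate}
\item The weight must be transported by the densities' velocity $-\nabla(W+V)$, i.e.\ $\partial_t w-\nabla w\cdot\nabla(W+V)=-\lambda Bw$, not with your sign.
\item Removing the weight cannot rest on $\{w\le\zeta\}$ having uniformly small measure. That does not follow from $\mathsf{M}|D^2(W+V)|\in L^2$, since the flow has divergence $(p-W)/\nu-\Delta V$, which is unbounded above. The paper instead proves $\int n^{(i)}|\log\upsilon|\,dx\le C\lambda$, i.e.\ small \emph{mass} on $\{\upsilon\le\zeta\}$. This suffices because the paper uses the additive weight $\upsilon^x+\upsilon^y$, so the discarded region is $\{x,y\in\{\upsilon\le\zeta\}\}$. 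With your product weight $w(x)w(y)$, the discarded region is $\{w(x)\le\zeta\}\cup\{w(y)\le\zeta\}$, which the mass bound does not control. The additive weight in turn requires the Bresch--Jabin estimate for the commutator cross term, at a cost of $C|\log h|^{1/2}$.
\end{enumerate}
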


\subsubsection{Passage to the stiff limit}
\mbox{}\\
Ultimately, we address the stiff limit, $k\to \infty$. 

\begin{assumption}
\label{assu:stiff}
    In addition to Assumptions \ref{assu:existence}, 
    \ref{assu:integrable-data}, we require
    \begin{itemize}[itemsep=1em]
        \item $\partial_t {\Delta V} \in L^1(0,T;L^{\frak p'}(\Rd))$, where $\frak p'$ is the H\"older conjugate of $\frak p$, which is required to control $\partial_t p$ in a suitable space.
        \item $\nabla \Delta V \in L^1(0,T;L^\infty(\Rd))$ is required in Lemma \ref{lemma:pQinL1}. 
    \end{itemize}
\end{assumption} 

Before we can state the theorem on the stiff limit, let us introduce the following integrability exponents
\begin{itemize}
    \item $\frak{p} = \max(2,(d/2)^+)$, for the densities,
    \item  $\frak{q} > 2s\frak{p}'$, for the pressures,
\end{itemize}
where $\frak{p}'$ is the H\"older conjugate of $\frak p$ (i.e., $\frak p' = \min(2,(\frac{d}{d-2})^{-})$ for $d\geq 2$ and $\frak p' = 2$ in one dimension), and $s\geq 1$ is give in Assumption~\ref{assu:integrable-data}. Here and henceforth, we use the following notation for integrability exponents:\ $f\in L^{\frak r\pm }$ means that $f\in L^{\frak{r} \pm \delta}$ for some $\delta >0$.  
\\

Under these assumptions, we prove the following result.

\begin{theorem}
    For $i=1,2$, let $(n_k^{(i), \mathrm{in}})_k \subset L^1(\Rd)\cap L^{\frak{p}}(\Rd)$ be sequences of initial densities, uniformly bounded in $k$, satisfying $\abs{\cup_{k\in \N} \, \mathrm{supp} (n^{\mathrm{in}}_k)} < \infty$, 
    as well as 
    $$
        n_k^{(i), \mathrm{in}} \to n_\infty^{(i), \mathrm{in}}, \qquad \text{in } L^1(\Rd).
    $$
    Further, let us assume that the sequence of initial pressures, $(p_k^{\mathrm{in}})_k \subset L^1(\Rd)\cap L^\frak{q}(\Rd)$, is uniformly bounded in $k$, as well. Denote by $n_k\i$, $i=1,2$, the solution constructed in Theorem~\ref{thm:existence-integrable-data}. Then, there exist functions 
   \begin{align*}
        n\i_\infty &\in L^\infty(0,T;L^\frak{p}(\Rd)) \cap C([0,T];L^q(\Rd)), \quad q\in[1,\frak p),\\
        p_\infty &\in L^\infty(0,T;L^1(\Rd)\cap L^\frak{q}(\Rd)),
   \end{align*}
    such that, up to a subsequence,
    \begin{alignat*}{2}
        n\i_k &\to n\i_\infty,\quad &&\text{in } L^1(0,T;L^1(\Rd)),\\
        p_k &\to p_\infty,\quad &&\text{in } L^1(0,T;L^1(\Rd)).
    \end{alignat*}
    Moreover, 
    \begin{equation*}
        W_k\to W_\infty:=K_\nu\star p_\infty,\quad \text{in } L^2(0,T;L^2(\Rd)),
    \end{equation*}
    and $(n\1_\infty, n\2_\infty, W_\infty, p_\infty)$ satisfies (in the weak sense) the system:
    \begin{align*}
        \partialt {n\i_\infty} &= \nabla \cdot(n\i_\infty \nabla W_\infty) + \nabla \cdot(n\i_\infty\nabla V) + n\i_\infty G\i(p_\infty),\\
        -\nu\Delta W_\infty + W_\infty &= p_\infty,\\
        n\i_\infty(x, 0) &= n_\infty^{(i),\mathrm{in}},
    \end{align*}
    with the pointwise relation $p_\infty(1-n_\infty)=0$.
    Finally, the following complementarity relation holds a.e.\ in $\R^d\times(0,T)$:   \begin{equation*}
        p_\infty(W_\infty - p_\infty + \nu \Delta V + \nu n\1_\infty  G\1(p_\infty) + \nu n_\infty\2  G\2(p_\infty)) = 0.
    \end{equation*}
\end{theorem}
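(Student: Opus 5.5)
Our plan is to derive $k$-uniform a priori estimates, obtain compactness of the densities via a Bresch--Jabin-type functional, obtain strong compactness of the pressures from a refined dissipation estimate, and then pass to the limit in the weak formulation of Definition~\ref{def:weak-sol-integrable}. Throughout, $(n_k\1,n_k\2,W_k,p_k)$ denotes the solution given by Theorem~\ref{thm:existence-integrable-data}.

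\textbf{Uniform bounds.} We first derive the bounds that do not depend on $k$.
\begin{enumerate}
\item We test the equation for $n_k=n_k\1+n_k\2$ against $n_k^{\frak p-1}$. The Brinkman term contributes
$$\int n_k^{\frak p}\Delta W_k=\nu^{-1}\int n_k^{\frak p}(W_k-p_k),$$
and the part $-\int n_k^{\frak p}p_k$ has a good sign. The $\Delta V$ and growth terms are handled with $C_V$ and $C_G$. Together with the uniform support of the data, this gives an $L^\infty(0,T;L^1\cap L^{\frak p})$ bound on the densities.
\item We then test with powers of $p_k$, using the pressure equation
$$\partial_t p_k=\nabla p_k\cdot\nabla(W_k+V)+k\,p_k\big(\Delta W_k+\Delta V+\tfrac{n\1_k G\1+n\2_k G\2}{n_k}\big).$$
The factor $k$ multiplies $\nu^{-1}(W_k-p_k)$, which is dissipative at high pressure, and the growth is controlled by $|G\i|\le\Gamma(1+p^s)$. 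This yields $p_k\in L^\infty(0,T;L^1\cap L^{\frak q})$ and the bound $\int_0^T\!\!\int k\,p_k^{\frak q}(p_k-W_k)_+\le C$.
\item Since $p_k=n_k^k$, the $L^1$ bound on $p_k$ forces $(n_k-1)_+\to0$ in measure, so every limit satisfies $n_\infty\le1$.
\item By elliptic regularity, $W_k=K_\nu\star p_k$ is bounded in $L^\infty(0,T;W^{2,\frak q})$.
\item Assumption~\ref{assu:stiff} gives a bound on $\partial_t p_k$ in $L^1(0,T;W^{-1,1}+\cdots)$.
\end{enumerate}

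\textbf{Compactness of densities.} Following the weighted argument of Section~\ref{sec:UnboundedData}, we use the Bresch--Jabin functional
$$\int_0^T\!\!\iint \calK_h(x-y)\,|n_k(x)-n_k(y)|\,w(x)w(y),$$
with weights $w$ solving the transport equation of Proposition~\ref{prop:w}. The commutator $\nabla W_k(x)-\nabla W_k(y)$ is controlled by the uniform $W^{2,\frak q}$ bound on $W_k$. The obstruction is the growth difference $G\i(p_k(x))-G\i(p_k(y))$, and here monotonicity is used. Since $G\i$ is decreasing and $p_k$ is an increasing function of $n_k$, the product
$$\sign(n_k(x)-n_k(y))\,\big(G(p_k(x))-G(p_k(y))\big)$$
is nonpositive. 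This is a uniform-in-$k$ dissipation term, and the bound on the functional is $o(|\log h|)$. For the two species separately, we write $n\i_k=c\i_k n_k$ and use the same functional for the fractions, which are transported without $k$-dependent terms. The Aubin--Lions--Simon lemma based on oscillation control then gives $n\i_k\to n\i_\infty$ in $L^1$ and in $C([0,T];L^q)$ for $q<\frak p$.

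\textbf{Compactness of pressures: main obstacle.} Strong compactness of $p_k$ is the hard step, because $p_k=n_k^k$ degenerates. We plan a refined dissipation estimate for the same functional applied directly to $p_k$, i.e.\ with $|p_k(x)-p_k(y)|$ in place of $|n_k(x)-n_k(y)|$. The $k$-multiplied term
$$k\,\big(p_k(x)-p_k(y)\big)\big(\Delta W_k(x)-\Delta W_k(y)\big)$$
has to be split using $\nu\Delta W_k=W_k-p_k$. The part coming from $-p_k$ is dissipative, namely $-k|p_k(x)-p_k(y)|^2$-like. The part coming from $W_k$ is compact, since $W_k$ is compact by the elliptic bound. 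This dissipation controls the oscillations of $p_k$ uniformly; it is also the source of the $L^\infty$ regularisation in the limit, via the comparison $p_k\lesssim W_k+\nu(\|\Delta V\|_\infty+C_G)$ on the dissipative set. We expect this estimate to be the crux, in particular making it compatible with the weights and the mere $L^{\frak q}$ integrability of $p_k$. Once it holds, $p_k\to p_\infty$ in $L^1$, and hence $W_k\to W_\infty$ in $L^2$.

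\textbf{Limit relations.} With strong convergence in hand, we pass to the limit in the weak formulation; the product $n\i_k\nabla W_k$ is strong times strong, and $G\i(p_k)\to G\i(p_\infty)$. The relation $p_\infty(1-n_\infty)=0$ follows from
$$p_k(1-n_k)=n_k^k(1-n_k)\to0,$$
which holds in $L^1$ because $\sup_{n\le1}n^k(1-n)\le 1/k$ and $n_k\le 1+o(1)$ in measure. For the complementarity relation, we multiply the pressure equation by $p_k$ and divide by $k$, obtaining
$$p_k^2\big(\nu^{-1}(W_k-p_k)+\Delta V+\textstyle\sum_i c\i_k G\i(p_k)\big)\to0$$
in $\mathcal D'$, since the remaining terms are $O(1/k)$. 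Combining this with strong convergence and $n_\infty=1$ on $\{p_\infty>0\}$ gives the stated relation, with $c\i_\infty=n\i_\infty$ on that set.
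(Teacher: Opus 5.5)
Your plan fails at the step you yourself flag as the crux. Running the Bresch--Jabin functional directly on $|p_k^x-p_k^y|$ cannot give $k$-uniform bounds. Using $\nu\Delta W_k=W_k-p_k$, the contribution of $k\,p_k\Delta W_k$ to $\partial_t|p_k^x-p_k^y|$ is
\[
\frac{k}{\nu}\,|p_k^x-p_k^y|\Big(\frac{W_k^x+W_k^y}{2}-p_k^x-p_k^y\Big)+\frac{k}{2\nu}\,(p_k^x+p_k^y)(W_k^x-W_k^y)\,\sign(p_k^x-p_k^y).
\]
The $W$-part is not harmless. It carries the factor $k$, and the bracket is positive wherever $p_k$ is small compared with $W_k$ (typically where $n_k<1$ next to a congested region). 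Compactness of $W_k$ only makes the last term $k\,o_h(1)$. The growth part $k\,p_k\bar G_k$, with $\bar G_k=r_kG\1(p_k)+(1-r_k)G\2(p_k)$, likewise produces $k\,C_G|p_k^x-p_k^y|$ and $k\,p_k^y(r_k^x-r_k^y)(G\1-G\2)(p_k^y)$, which have no sign.

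The missing idea is that the usable dissipation is $k$-free and already sits in the \emph{density} functional. In the total-population term $a_1^{(tot)}$, since $\sign(n^x-n^y)=\sign(p^x-p^y)$, Brinkman's law produces $-2\nu^{-1}\calK_h^{x,y}n^x|p^x-p^y|\upsilon^x$. Once species compactness is known, all other terms in the evolution of $Q_n$ are $o(|\log h|)$ uniformly in $k$. This gives Proposition~\ref{prop:GradP}, i.e.\ oscillation control of $n_kp_k=p_k^{1+1/k}$, not of $p_k$.

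Turning this into compactness requires a $k$-uniform bound on $\partial_t(n_kp_k)$, hence $k\|n_kp_kQ_k\|_{L^1}\le C$ (Lemma~\ref{lem:npQinL1}). That in turn rests on $k\|p_kQ_k\|_{L^1}\le C$ for $Q=W-p+\nu\Delta V+\nu rG\1(p)+\nu(1-r)G\2(p)$ (Lemma~\ref{lemma:pQinL1}). The latter comes from a transport equation for $Q$ using $\beta_2\ge 1+\nu\alpha$ and the localisation $\varpi$, which is where the finite-measure support hypothesis is really needed. Your item 5 and your one-sided bound on $k\int p_k^{\frak q}(p_k-W_k)_+$ cannot substitute for this, since $\partial_tp_k=\nabla p_k\cdot\nabla(W_k+V)+\frac k\nu p_kQ_k$ involves the full $|Q_k|$. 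Afterwards, $p|p^{1/k}-1|\le(1+p^2)/k$ transfers compactness from $n_kp_k$ to $p_k$, and $k\|p_kQ_k\|_{L^1}\le C$ gives the complementarity relation directly. Your route via $p_k^2(\cdots)/k$ needs $p_k^3$ and $p_k^{2+s}$ to be uniformly integrable, which $\frak q>2s\frak p'$ does not guarantee for large $d$.

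The density step does not close as described either. For $n_k$ the reaction difference is $\sum_i\big[G\i(p^x)(n^{(i),x}-n^{(i),y})+n^{(i),y}(G\i(p^x)-G\i(p^y))\big]\sign(n^x-n^y)$. Only the second bracket has a sign. The first involves the individual species with the unbounded factor $G\i(p^x)$, so $Q_n$ alone does not close. Passing to the fractions is circular: $r$ solves $\partial_tr=\nabla r\cdot\nabla(W+V)+r(1-r)(G\1-G\2)(p)$, and $G\1-G\2$ is not monotone. So the term $r^y(1-r^y)\big[(G\1-G\2)(p^x)-(G\1-G\2)(p^y)\big]$ requires oscillation control of $p_k$, which you only aim to obtain afterwards.

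The paper instead propagates $Q_{n\1}+Q_{n\2}+Q_n$ jointly. The species' bad terms $2\nu^{-1}|p^x-p^y|n^{(i),x}\upsilon^x$ and $|G\i(p^x)-G\i(p^y)|n^{(i),y}$ cancel exactly against the good-sign terms of the total population, because $n\1+n\2=n$. On $\{p^x>p_L\}$, the species' term $G\i(p^x)|n^{(i),x}-n^{(i),y}|\le 0$ absorbs the remaining part of the total's reaction term. The resulting weighted estimate is uniform in $k$ and is transferred to $n_k\i$ by letting $\eta\to0$. Only after species compactness is established is the total functional used alone, with its dissipation kept, to obtain Proposition~\ref{prop:GradP}.
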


Finally, we prove that the density and the pressure become essentially bounded in the stiff limit.
\begin{theorem}
    The limits $n_\infty$ and $p_\infty$ satisfy
    $$
        n_\infty, p_\infty \in L^\infty(0,T;L^\infty(\Rd)),
    $$ 
    with $0\leq p_\infty \leq p_L$ and $0 \leq n_\infty \leq 1$.
\end{theorem}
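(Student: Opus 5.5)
The bound $0\le n_\infty\le 1$ comes from the pressure, and the bound $p_\infty\le p_L$ from a comparison argument for the pressure equation that survives the limit $k\to\infty$. The density bound is essentially immediate from the previous theorem. Since $p_k=n_k^k\to p_\infty$ in $L^1(0,T;L^1(\Rd))$, the sequence $(p_k)$ is uniformly bounded in $L^1$. Hence for any $\delta>0$,
\begin{align*}
\abs{\{n_k>1+\delta\}} \le (1+\delta)^{-k}\norm{p_k}_{L^1(0,T;L^1(\Rd))} \to 0 .
\end{align*}
Together with the strong convergence $n_k\to n_\infty$ in $L^1$, this gives $n_\infty\le 1$ a.e. Nonnegativity is inherited from $n_k\ge 0$.

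For the pressure I would follow the refined dissipation idea mentioned in the abstract. Write the pressure equation satisfied by $p_k$:
\begin{align*}
\partial_t p_k = \nabla p_k\cdot\nabla(W_k+V) + k\,p_k\bigl(\Delta W_k+\Delta V + \textstyle\sum_i \tfrac{n_k\i}{n_k}G\i(p_k)\bigr),
\end{align*}
using $\Delta W_k = (W_k-p_k)/\nu$. Test it with $(p_k-p_L)_+$, or more robustly with the truncated quantity $\int (p_k-p_L)_+\,\dx x$, regularised in the DiPerna--Lions sense as was already needed for the weight equation. After integrating the transport term by parts, the right-hand side contains the factor
\begin{align*}
k\,p_k\Bigl(\tfrac{W_k-p_k}{\nu}+\Delta V+G(p_k)\Bigr).
\end{align*}
On $\{p_k>p_L\}$ the terms $\Delta V+G(p_k)$ are negative by \eqref{eq:def-pL}. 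The remaining term $(W_k-p_k)/\nu$ is problematic only where $W_k>p_k$.

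The key step, and the main obstacle, is handling $W_k$. I would split off a fraction of $-kp_k^2/\nu$, which gives a large negative dissipation $-\frac{k}{\nu}\int p_k(p_k-p_L)_+$. This dissipation absorbs the contribution of $W_k=K_\nu\star p_k$, using $\norm{K_\nu\star f}_{L^2}\le\norm f_{L^2}$, so that $\int W_kp_k\mathbf 1_{p_k>p_L}\le\int p_k^2\mathbf 1_{p_k>p_L}+$ lower-order terms involving $p_L\norm{p_k}_{L^1}$. What remains is an estimate of the form
\begin{align*}
\ddt\int(p_k-p_L)_+\,\dx x \le C\int(p_k-p_L)_+\,\dx x + \frac{C}{k}.
\end{align*}
Alternatively, integrating in time gives
\begin{align*}
\int_0^T\!\!\int k\,(p_k-p_L)_+\,p_k \le C\bigl(1+\norm{p^{in}_k}_{L^1\cap L^{\frak q}}\bigr),
\end{align*}
uniformly in $k$. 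Either version yields $\norm{(p_k-p_L)_+}_{L^1(0,T;L^1)}\to 0$. The first form needs $(p^{in}_k-p_L)_+\to0$, which we do not have, so I would rely on the second, integrated dissipation form, where the factor $k$ does the work. If the $W_k$ term resists this absorption, I would instead exploit the complementarity relation of the previous theorem in the limit. Indeed, $p_\infty=W_\infty+\nu(\Delta V+\sum n_\infty\i G\i(p_\infty))$ on $\{p_\infty>0\}$. A maximum principle for $-\nu\Delta W_\infty+W_\infty=p_\infty$ at points where $W_\infty$ exceeds $p_L$ then gives $p_\infty\le W_\infty\le p_L$ wherever $\Delta V+G<0$.

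Finally, pass to the limit. By the strong $L^1$ convergence $p_k\to p_\infty$ and the lower semicontinuity of $f\mapsto\int(f-p_L)_+$, we get $\int_0^T\!\int(p_\infty-p_L)_+=0$, that is, $0\le p_\infty\le p_L$ a.e. Combined with $n_\infty\le1$, this gives $n_\infty,p_\infty\in L^\infty(0,T;L^\infty(\Rd))$, which is the claim of the theorem.
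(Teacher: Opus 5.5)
Your proof of $0\le n_\infty\le 1$ is fine; it is the paper's estimate $\int(n_k-1)_+\le \frac1k\int p_k$ written in Chebyshev form. The gap is in the $k$-level estimate for $(p_k-p_L)_+$, exactly at the step you call key.

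Controlling $\int_{\{p_k>p_L\}}p_kW_k$ through $\norm{K_\nu\star f}_{L^2}\le\norm{f}_{L^2}$ uses up the whole term $-\frac{k}{\nu}\int_{\{p_k>p_L\}}p_k^2$. You cannot also keep a fraction of it as dissipation. Moreover, the ``lower-order'' remainder has size $\frac12\int_{\{p_k\le p_L\}}p_k^2\le\frac{p_L}{2}\norm{p_k}_{L^1}$ and comes with the prefactor $k/\nu$. So you get $\ddt\int(p_k-p_L)_+\le Ck$, not $C/k$. The integrated version only yields $\int_0^T\!\int p_k(p_k-p_L)_+\le C$, with no decay in $k$.

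This is not a bookkeeping issue. If $p_k$ lies slightly above $p_L$ on a large set, even the extra dissipation $\alpha k\int p_k(p_k-p_L)_+$ available from $\partial_pG^{(i)}\le-\alpha$ is tiny, while the Cauchy--Schwarz remainder is not. Your final lower-semicontinuity step therefore has nothing to pass to the limit.

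The missing ingredient is the positivity and unit mass of $K_\nu$ (a maximum principle), not its $L^2$-contractivity. Set $A=\{p_k>p_L\}$, $m=\min(p_k,p_L)$ and $q=(p_k-p_L)_+$. Then:
\begin{itemize}
\item $K_\nu\star m\le p_L=m$ on $A$;
\item $\int q(K_\nu\star q-q)\le 0$;
\item $\int_A(K_\nu\star q-q)\le 0$.
\end{itemize}
Writing $(k-1)p_k+p_L=(k-1)q+kp_L$ on $A$, these give $\int_A[(k-1)p_k+p_L](W_k-p_k)\le 0$. Combining this with $G^{(i)}(p)\le G^{(i)}(p_L)-\alpha(p-p_L)$ and the definition of $p_L$ gives $\ddt\int(p_k-p_L)_+\le-\alpha k\int p_k(p_k-p_L)_+$. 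Hence $\norm{(p_k-p_L)_+}_{L^1(0,T;L^1(\Rd))}\le\norm{p_k^{\mathrm{in}}}_{L^1(\Rd)}/(\alpha k p_L)$. You should derive this on the viscous bounded approximations, where the extra terms have a sign, and pass to the limit by Fatou. Done this way, your approach would be a genuinely different, quantitative route that needs neither the complementarity relation nor a bootstrap.

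Your fallback is, in substance, the paper's route via the complementarity relation. The paper first bootstraps $p_\infty\in L^\infty$ using Young's inequality, and then argues by contradiction on $\{p_\infty>p_L+\epsilon\}$. Your ``maximum principle'' can be made rigorous without that bootstrap. Test $-\nu\Delta W_\infty+W_\infty=p_\infty$ with $(W_\infty-p_L)_+$. Complementarity gives $W_\infty>p_\infty$ a.e.\ on $\{W_\infty>p_L\}$, which forces $W_\infty\le p_L$ and then $p_\infty\le p_L$. As written, however, this is only a one-line sketch, and the argument you actually commit to rests on the failing estimate.
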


\section{Removal of $L^\infty$-assumption on initial data}
\label{sec:UnboundedData}

In this section we shall extend the existence result to the case of initial data that enjoy certain integrability, but which are not necessarily bounded.
Since we cannot expect the pressure to be bounded in such a situation, we need to make an additional structural assumption on the growth rates $G\i$, namely we assume that they satisfy Assumption~\ref{assu:integrable-data}, i.e.,
\begin{equation*}
    |G\i(p)| \leq \Gamma(1+p^s),\quad\text{for some $s\geq 1$}.
\end{equation*}

Before constructing the approximation and deriving the necessary estimates, let us state the definition of weak solutions we will be working with and formulate the main result of this section.
\begin{definition}[Integrable weak solutions]
    \label{def:weak-sol-integrable}
    Suppose Assumptions \ref{assu:existence} and \ref{assu:integrable-data} are met and we are given some nonnegative initial data
    $$
        (n^{(1), \mathrm{in}}, n^{(2), \mathrm{in}}) \in (L^1(\Rd)\cap L^2(\Rd))^2,
    $$ 
    such that
    $$
        p^{\mathrm{in}}:=(n^{(1), \mathrm{in}}+n^{(2), \mathrm{in}})^k \in L^1(\Rd) \cap L^{2s}(\Rd).
    $$
    Then, a pair of nonnegative functions $(n^{(1)}, n^{(2)})$ with 
    \begin{align}
        n^{(1)}, n^{(2)} \in L^\infty(0,T;L^1(\Rd)\cap L^2(\Rd)) \cap C([0,T];L^q(\Rd)),
    \end{align}
    for any $1\leq q < 2$, and
    \begin{align}
        \partialt {n^{(1)}}, \partialt {n^{(2)}} \in L^2(0,T; H^{-\frak s}(\R^d)),
    \end{align}
    for any $\frak{s}>d/2 + 1$, is called
    a weak solution to System \eqref{eq:main-system} if there holds
    \begin{align}
        \int_0^T\int_\Rd \varphi \frac{\partial n^{(i)}}{\partial t}  + n^{(i)} \nabla  \varphi \cdot (\nabla V + \nabla W)  \dx x \dx t = \int_0^T \int_\Rd  \varphi n^{(i)} G^{(i)}(p) \dx x \dx t,
    \end{align}
    for any test function $\varphi \in L^2(0,T ; H^{\frak s}(\Rd))$, and $n^{(i)}(0) = n^{(i),\mathrm{in}}$, where $p = (n^{(1)} + n^{(2)})^k \in L^\infty(0,T;L^{2s}(\Rd))$.
\end{definition}
We can now present the main result of this section, the existence of integrable weak solutions.

\begin{theorem}[Existence of solutions]
    \label{thm:existence-integrable-data}
    For any nonnegative initial data $n^{(i), \mathrm{in}} \in L^1(\Rd)\cap L^2(\Rd)$ and $p^{\mathrm{in}} \in L^1(\Rd)\cap L^{2s}(\Rd)$ there exists a solution in the sense of Definition~\ref{def:weak-sol-integrable}. 
\end{theorem}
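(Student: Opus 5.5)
We approximate the unbounded data by bounded data and pass to the limit with a weighted compactness argument in the spirit of Bresch--Jabin. For $m\in\N$ let $n^{(i),\mathrm{in}}_m := \min(n^{(i),\mathrm{in}}, m)$. These truncations lie in $L^1(\Rd)\cap L^\infty(\Rd)$ and converge to $n^{(i),\mathrm{in}}$ in $L^1\cap L^2$. The corresponding pressures converge to $p^{\mathrm{in}}$ in $L^1\cap L^{2s}$ by monotone convergence. Theorem~\ref{thm:Main1} then gives bounded weak solutions $(n^{(1)}_m,n^{(2)}_m)$ with pressures $p_m$ and potentials $W_m=K_\nu\star p_m$.

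\textbf{Uniform estimates.} We derive bounds in $m$ that use only integrability of the data.

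First, integrating the equations gives $L^1$ bounds, since $G^{(i)}\le G^{(i)}(0)$.

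Second, we test the equation for $n_m=n^{(1)}_m+n^{(2)}_m$ with $n_m$ and with $kp_m^{r-1}n_m^{k-1}$, which yields $\frac{\mathrm d}{\mathrm dt}\int p_m^r$ for $r\in\{1,2s\}$. The transport terms produce $\int p_m^r(\Delta W_m+\Delta V)$ up to constants. The identity $\nu\Delta W_m = W_m-p_m$ gives $-\Delta W_m\,p_m^r = \nu^{-1}(p_m-W_m)p_m^r$. By Young's convolution inequality, $\int W_m p_m^r\le \int p_m^{r+1}$, because $\int K_\nu=1$ and $K_\nu\ge0$, so this contribution has a good sign. The growth terms are controlled because $G^{(i)}(p)<-\norm{\Delta V}_\infty$ for $p>p_L$, and they are bounded for $p\le p_L$. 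Gronwall's lemma then gives
$n_m\in L^\infty(0,T;L^1\cap L^2)$ and $p_m\in L^\infty(0,T;L^1\cap L^{2s})$, uniformly in $m$. Consequently $n^{(i)}_mG^{(i)}(p_m)$ is bounded in $L^\infty(0,T;L^1)$, since $n\,p^s\lesssim 1+p^{2s}$ when $n$ and $p$ are powers of the same quantity ($k\ge1$). In addition, $W_m$ is bounded in $L^\infty(0,T;W^{2,2}\cap W^{1,1})$ by elliptic regularity.

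Third, the equation gives $\partial_t n^{(i)}_m$ bounded in $L^2(0,T;H^{-\frak s})$ for $\frak s>d/2+1$. The reason is that $H^{\frak s}\hookrightarrow W^{1,\infty}$, so it suffices that the fluxes $n\nabla W$, $n\nabla V$ and the source are bounded in $L^1$ in space.

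\textbf{Strong compactness.} This is the main obstacle. The time bound gives no spatial compactness of $n^{(i)}_m$, and the pressure is nonlinear in $n_m$. We follow the Bresch--Jabin approach. We introduce weights $w_m$ solving a transport equation along $-\nabla(W_m+V)$ with damping, as in Proposition~\ref{prop:w}. The assumption $\nabla V\in L^\infty(0,T;H^1)$, together with the $W^{2,2}$ bound on $W_m$, gives the DiPerna--Lions framework for this equation.

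We then estimate the functional
\[
\int_0^T\!\!\iint_{\R^{2d}} \calK_h(x-y)\,|n_m(x)-n_m(y)|\,w_m(x)w_m(y)\dx x\dx y\dx t,
\qquad \calK_h\approx (|x|+h)^{-d},
\]
normalised by $\norm{\calK_h}_{L^1}\sim|\log h|$.

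The dangerous term is $\iint \calK_h\,|\nabla W_m(x)-\nabla W_m(y)|\cdots$. We handle it with the maximal-function estimate $|\nabla W(x)-\nabla W(y)|\lesssim|x-y|\,(M\nabla^2W(x)+M\nabla^2W(y))$. The weights are chosen to absorb the large values of $M\nabla^2W_m$, which is where the $L^2$ bound on $\nabla^2 W_m$ from $p_m\in L^2$ is used. The monotone map $n\mapsto n^k$ gives a favourable sign for the divergence term $\mathrm{div}\nabla W_m=\nu^{-1}(W_m-p_m)$. The growth terms are Lipschitz in $p$ on the regions where the weights are non-negligible, and the dependence on the level sets of $w_m$ is removed with the $L^1$ bound of $\log w_m$.

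The outcome is a rate $o(|\log h|)$, which yields strong $L^1_{loc}$ compactness of $n_m$. Each species $n^{(i)}_m$ is then handled by a similar two-species difference argument (or via $n^{(i)}_m=c^{(i)}_m n_m$, with the ratio transported). Tightness follows from the $L^1$ bound plus a moment estimate.

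\textbf{Passage to the limit and continuity.} Compactness of $n_m$ together with the $L^{2s}$ bound on $p_m$ and Vitali's theorem gives $p_m\to p$ in $L^1\cap L^{r}$ for all $r<2s$. Then $\nabla W_m\to\nabla W$ strongly and $G^{(i)}(p_m)n^{(i)}_m$ converges in $L^1$, so the weak formulation passes to the limit against $\varphi\in L^2(0,T;H^{\frak s})$. Finally, the uniform spatial oscillation control from the Bresch--Jabin estimate, combined with the $H^{-\frak s}$ bound on $\partial_t n$ and the Aubin--Lions--Simon-type lemma announced in the introduction, yields $n^{(i)}\in C([0,T];L^q)$ for $1\le q<2$. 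Interpolation with the $L^2$ bound is what rules out $q=2$. The initial datum is attained by this time continuity.
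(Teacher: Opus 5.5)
Your a priori estimates (sufficient for fixed $k$), the weighted Bresch--Jabin framework and the final limit passage are broadly in line with the paper. There is, however, a genuine gap in the compactness step, exactly where the two-species structure enters.

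The monotonicity of $n\mapsto n^k$ yields a good sign only for the \emph{total} density. In the functional for $n$, the Brinkman contribution $2(\Delta\mathcal V^x-\Delta\mathcal V^y)\sigma n^x\upsilon^x$ contains $-2\nu^{-1}\abs{p^x-p^y}n^x\upsilon^x$, because $\sigma=\sign(n^x-n^y)=\sign(p^x-p^y)$. For a single species, $\sigma_i=\sign(n^{(i),x}-n^{(i),y})$ is unrelated to $\sign(p^x-p^y)$. The same term then only gives $+2\nu^{-1}\abs{p^x-p^y}n^{(i),x}\upsilon^x$, which you cannot control without already knowing compactness of the pressure.

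Conversely, the functional for $n$ does not close on its own when $G^{(1)}\neq G^{(2)}$. Its reaction term contains $(G^{(1)}(p^x)-G^{(2)}(p^x))(n^{(1),x}-n^{(1),y})\sigma$, an unsigned species difference that is $O(1)$, not $o(1)$, after normalising by $\abs{\log h}$. So the sequential plan (first $n_m$, then each species ``by a similar argument'') fails. The ratio route is circular as stated: the equation for $c^{(1)}$ has source $c^{(1)}(1-c^{(1)})(G^{(1)}(p)-G^{(2)}(p))$, whose differences require compactness of $p$. Your remark that the growth terms are Lipschitz in $p$ where the weights are non-negligible does not help either. A Lipschitz bound on $G^{(i)}$, even on a bounded range of $p$, produces precisely the uncontrolled quantity $\abs{p^x-p^y}n^{(i),y}$.

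The missing idea is to propagate the \emph{sum} $Q_{n^{(1)}}+Q_{n^{(2)}}+Q_{n}$ of the three weighted functionals. Because $n^{(1)}+n^{(2)}=n$, the species' bad terms $2\nu^{-1}\abs{p^x-p^y}n^{(i),x}\upsilon^x$, $i=1,2$, add up to exactly the good term $-2\nu^{-1}\abs{p^x-p^y}n^x\upsilon^x$ of the total. Likewise, since $G^{(i)}$ is decreasing and $\sign(n^x-n^y)=\sign(p^x-p^y)$, the reaction term of the total produces $-\abs{G^{(i)}(p^x)-G^{(i)}(p^y)}n^{(i),y}$. This cancels the $+\abs{G^{(i)}(p^x)-G^{(i)}(p^y)}n^{(i),y}$ coming from species $i$. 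What remains is bounded by $2C_G\sum_i\abs{n^{(i),x}-n^{(i),y}}$ through the case distinction $p^x\le p_L$ versus $p^x>p_L$. Only with this cancellation does the Gronwall argument close, giving compactness of both species simultaneously, and then of $n$ and $p$.

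Two smaller points:
\begin{itemize}
\item The level sets of the weight must be controlled with respect to $n^{(i)}\dx x$, i.e.\ via $\int n^{(i)}\abs{\log\upsilon}\dx x\le C\lambda$, which follows from the density equation. An unweighted $L^1$ bound on $\log w_m$ is neither what is needed nor readily available.
\item A moment estimate for tightness requires moments of the data, which are not assumed. Propagate tightness of the data with a cut-off $1-\chi_R$ instead.
\end{itemize}
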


\subsection{Approximation of the initial data}
\label{subsec:DataApproximation}
In this section, we approximate the general integrable initial data $n^{(i), \mathrm{in}}$ by bounded data $n_\eta^{(i), \mathrm{in}}\in L^1\cap L^\infty$ and derive estimates that are uniform in $\eta>0$. Crucially, these bounds are independent of the essential supremum of the approximating data.

Given some initial data $ 0 \leq n^{(i), \mathrm{in}} \in L^1\cap L^2(\Rd)$, we assume the approximations $n_{\eta}^{(i),\mathrm{in}}$, to be chosen such that
\begin{equation*}
\begin{aligned}
    0 & \leq n_{\eta}^{(i),\mathrm{in}} \in L^1\cap L^\infty(\Rd),\\[\alnspace]
    n_{\eta}^{(i),\mathrm{in}}&\to n^{(i),\mathrm{in}}, \quad \text{in $L^1(\Rd)$, as $\eta\to 0$},\\[\alnspace]
    \norm{n_{\eta}^{(i),\mathrm{in}}}_{L^{\frak{p}}(\Rd)} &\leq C \norm{n^{(i),\mathrm{in}}}_{L^{\frak{p}}(\Rd)},\quad \forall\frak{p}\in [1,2].
\end{aligned}
\end{equation*}
Furthermore, we assume that the sequence of initial pressures $p_{\eta}^{\mathrm{in}} := (n_{\eta}^{(1),\mathrm{in}} + n_{\eta}^{(2),\mathrm{in}})^k$, is bounded in $L^1\cap L^{\frak q}(\Rd)$ uniformly in $\eta$ and $k$ for some $\frak q > 2s\geq 2$.

According to the existence result for bounded solutions in Theorem~\ref{thm:Main1}, for each $\eta>0$ there exists a global weak solution $(n\1_\eta, n\2_\eta)$ of System~\eqref{eq:main-system} with data $(n^{(1),\mathrm{in}}_\eta, n^{(2),\mathrm{in}}_\eta)$. 

We next use the compactness criterion from Appendix~\ref{app:compcrit} to pass to the limit $\eta\to0$ and construct global weak solutions with the original initial data $n^{(i),\mathrm{in}}$. The argument relies only on the uniform-in-$k$ a priori estimates obtained in the previous subsection; these estimates will later provide the compactness of the densities with respect to $k$ needed for the incompressible limit.
There is, however, one additional difficulty: Lemma~\ref{lem:CompactnessCriterion} cannot be applied directly, and we therefore introduce an auxiliary weight.

\subsection{A priori estimates}
We first establish the a priori estimates needed to remove the essential boundedness assumption on the initial data. These estimates are uniform in the approximation parameter $\eta$ and, importantly, also in the stiffness exponent $k$; the latter uniformity will be essential for the incompressible limit. To lighten the notation, we keep the subscript $\eta$ only in the statements of the subsequent estimates.

\begin{lemma}[Uniform $L^1$-control of $n_\eta$]
    There holds
    \begin{align*}
       \norm{n_\eta}_{L^\infty(0,T; L^1(\Rd))} \leq C, 
    \end{align*}
    for some constant $C>0$ independent of $\eta$ and $k$.
\end{lemma}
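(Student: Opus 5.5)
We integrate the equation for each species over $\Rd$ and sum over $i=1,2$. Since $n_\eta\i\ge 0$ and the solutions from Theorem~\ref{thm:Main1} are bounded and integrable with $\nabla W, \nabla V\in L^\infty$, the transport terms $\nabla\cdot(n\i\nabla W)$ and $\nabla\cdot(n\i\nabla V)$ have zero integral. To justify this, we test the weak formulation with a smooth cut-off $\varphi_R(x)=\varphi(x/R)$ and let $R\to\infty$. The boundary contributions are bounded by $R^{-1}\norm{n\i}_{L^1}(\norm{\nabla W}_{L^\infty}+C_V)$, which vanishes in the limit. (We may use that $W=K_\nu\star p$ with $p\in L^\infty(0,T;L^1\cap L^\infty)$ for fixed $\eta$, so $\nabla W\in L^\infty$ for that fixed $\eta$; only the vanishing of these terms matters, not their size.) We are left with
\begin{align*}
    \ddt \intO{n_\eta} = \sum_{i=1}^2 \intO{n_\eta\i G\i(p_\eta)}.
\end{align*}

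Next we bound the right-hand side uniformly in $\eta$ and $k$. Since $G\i$ is decreasing, $G\i(p)\le G\i(0)\le C_G$ for all $p\ge 0$. Combined with nonnegativity of the densities, this gives $n_\eta\i G\i(p_\eta)\le C_G\, n_\eta\i$ pointwise. We never need a lower bound on $G\i$, so the possibly unbounded negative part $G\i(p)\sim -p^s$ causes no trouble: it only decreases the mass. Hence $\ddt\norm{n_\eta(t)}_{L^1}\le C_G\norm{n_\eta(t)}_{L^1}$, and Gronwall's lemma yields
\begin{align*}
    \norm{n_\eta(t)}_{L^1(\Rd)}\le e^{C_G T}\norm{n_\eta^{\mathrm{in}}}_{L^1(\Rd)} \le C e^{C_G T}\norm{n^{\mathrm{in}}}_{L^1(\Rd)}.
\end{align*}
The last inequality uses the assumed bound on the approximation with $\frak p=1$. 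The constant $C_G$ depends only on $G\i(0)$ and $G\i(p_L)$, not on $k$ or $\eta$, which gives the claimed uniformity.

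The only delicate step is the rigorous version of the mass identity. The weak solutions of Theorem~\ref{thm:Main1} lie only in $H^1(0,T;H^{-1})$, so we cannot differentiate $\intO{n_\eta}$ directly. We therefore work with the time-integrated weak formulation using test functions $\varphi_R(x)\chi(t)$, with $\chi$ approximating the indicator of $[0,t]$. The continuity $n\i\in C([0,T];L^q)$ makes the time endpoints meaningful. We then send $R\to\infty$, using dominated convergence in the growth term, where $n\i G\i(p)\in L^1$ for bounded solutions. This produces the integral form of the Gronwall inequality, which is all that the argument needs.
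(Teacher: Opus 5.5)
Your proposal is correct and follows the paper's argument. You integrate in space, discard the divergence terms, use $G^{(i)}(p)\le G^{(i)}(0)\le C_G$, apply Gronwall, and invoke the $L^1$-stability of the approximate data. You additionally make rigorous, via cut-offs in space and time, the vanishing of the flux terms that the paper simply asserts.
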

\begin{proof}
    Consider
    \begin{align*}
        \ddt \int_{\Rd} n^{(i)} \dx x 
        &=  \int_{\Rd}  \nabla \cdot(n^{(i)} \nabla (V+W)) \dx x + \int_{\Rd} n^{(i)} G^{(i)}(p) \dx x 
        \leq C_G \int_{\Rd} n^{(i)} \dx x.
    \end{align*}
    Applying Gronwall's lemma yields
    \begin{align*}
        \norm{n\i}_{L^\infty(0,T;L^1(\Rd))} \leq e^{C_GT}\norm{n^{(i),\mathrm{in}}_\eta}_{L^1(\Rd)} \leq C\norm{n^{(i),\mathrm{in}}}_{L^1(\Rd)},
    \end{align*}
    and, summing up the two inequalities ($i=1,2$), completes the proof.
\end{proof}

\begin{lemma}[Uniform $L^\infty(L^q)$-control of $n_\eta$]
    \label{lem:propagation-of-LP-for-n}
    There is a constant $C = C(\alpha, C_V, C_G)>0$ such that
    \begin{align*}
        \norm{n_\eta}_{L^\infty(0,T; L^q(\Rd))}^q  \leq \max\left\{\norm{n_\eta^\mathrm{in}}_{L^q(\Rd)}^q, 
        \norm{n_\eta^\mathrm{in}}_{L^1(\Rd)} C^\frac{q-1}{k} \right\},
    \end{align*}
    for any $q>1$.
\end{lemma}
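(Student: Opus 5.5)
We compute the evolution of $\int n^q$ for the total density $n=n\1+n\2$. Summing the two equations gives
\[
\partial_t n = \nabla\cdot(n\nabla(W+V)) + n\1G\1(p)+n\2G\2(p).
\]
We multiply by $q n^{q-1}$ and integrate by parts. This is justified at the level of the bounded solutions of Theorem~\ref{thm:Main1}, possibly via a regularised version of the test function. The transport part gives
\[
\int q n^{q-1}\nabla\cdot(n\nabla(W+V)) = (q-1)\int n^q\,\Delta(W+V).
\]
From Brinkman's law, $\Delta W = (W-p)/\nu$. Hence
\[
\ddt\int n^q = \frac{q-1}{\nu}\int n^q (W-p) + (q-1)\int n^q\Delta V + q\int n^{q-1}\bigl(n\1G\1(p)+n\2G\2(p)\bigr).
\]

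The key step is to control the term $\int n^q W$ by $\int n^q p$, so that the Brinkman contribution is nonpositive. Since $W=K_\nu\star p$ with $K_\nu\ge0$ of unit mass, Young's inequality gives
\[
\int n^q W \le \|n^q\|_{L^{r}}\|W\|_{L^{r'}} \le \|n^q\|_{L^{r}}\|p\|_{L^{r'}}.
\]
We choose exponents so that both factors are powers of $\int n^{q+k}$: take $r=(q+k)/q$ and $r'=(q+k)/k$. Then both norms combine to $\int n^{q+k}=\int n^q p$, and so
\[
\int n^q(W-p)\le 0.
\]
This is the main point of the argument. It uses only $p=n^k$ and is uniform in $\eta$ and $k$.

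We are then left with
\[
\ddt \int n^q \le \int n^{q}\Bigl( (q-1)\Delta V + q\max_i G\i(p)\Bigr).
\]
We split the domain according to whether $p>p_L$ or $p\le p_L$, that is, $n>n_L$ or $n\le n_L$. On $\{n>n_L\}$, Assumption~\ref{assu:existence} gives $\|\Delta V\|_{L^\infty}+G\i(p)<0$. The remaining factor mismatch between $q-1$ and $q$ has to be absorbed there. The clean way to do this is to work with $\int (n^q - n_L^{q-1} n)$, or to use the decreasing structure of $G$ with the margin $\alpha$. This bookkeeping is where I expect the fiddly part to lie.

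On $\{n\le n_L\}$ we use $n^q\le n_L^{q-1}n$, and $n_L^{q-1}=p_L^{(q-1)/k}$. With the $L^1$ bound from the previous lemma, this contribution is at most
\[
C\,p_L^{(q-1)/k}\|n\|_{L^1}.
\]
This suggests the structure of the estimate: $\int n^q$ decreases whenever it exceeds $\|n\|_{L^1}C^{(q-1)/k}$. I would make this rigorous by an invariant-region argument. Suppose $\int n^q$ exceeded $M:=\max\{\int (n^{in})^q,\ \|n^{in}\|_{L^1}C^{(q-1)/k}\}$ at some time. The mass of $n$ carried in the region $\{n>n_L\}$ must then dominate, and there the time derivative is negative, which gives a contradiction.

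The main obstacle is to make this last step sharp enough to yield exactly the stated maximum, with no Gronwall exponential. The cleanest route may be to estimate
\[
\ddt\int (n-n_L)_+^{\,q}\text{-type quantities}
\]
instead, or to apply the convex-truncation functional $\int n\,(n^{q-1}-n_L^{q-1})_+$. On its support every term has a sign, and one then adds back $\int_{n\le n_L} n^q \le n_L^{q-1}\|n\|_{L^1}$. The constant $C(\alpha,C_V,C_G)$ enters through $p_L$, since $p_L$ can be bounded by $(C_V+G\i(0))/\alpha$.
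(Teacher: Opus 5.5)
There is a genuine gap, and it sits in the decisive closing step. Your first half matches the paper: you test the total equation with $qn^{q-1}$ and obtain $\int n^q(W-p)\le 0$ from Hölder with exponents $(q+k)/q$ and $(q+k)/k$.

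The invariant-region argument does not close as stated. On $\{n>n_L\}$ the integrand is at most $q(\norm{\Delta V}_\infty+\max_i G^{(i)}(p))$. The $q-1$ versus $q$ mismatch you worry about is harmless, since $(q-1)\norm{\Delta V}_\infty\le q\norm{\Delta V}_\infty$. The real problem is that this coefficient degenerates to $0$ as $p\downarrow p_L$. Meanwhile $\{n\le n_L\}$ contributes a fixed source of size up to $q\gamma\,p_L^{(q-1)/k}\norm{n}_{L^1}$. So knowing that ``most of the $L^q$-mass lies in $\{n>n_L\}$'' does not make the derivative negative, and the contradiction does not follow.

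Without a quantitative margin you only get $\norm{n(t)}_{L^q}^q\le\norm{n^{\mathrm{in}}}_{L^q}^q+q\gamma t\,p_L^{(q-1)/k}C_0$. For $q=\ell k$ this grows linearly in $k$, which destroys Corollary~\ref{cor:LebesguePressure}.

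The paper obtains the coercivity by using $\alpha$ globally. The bound $G^{(i)}(p)\le C_G-\alpha p$ yields the dissipation $-q\alpha\int n^{q+k}$. Interpolation, $\norm{n}_{L^q}\le\norm{n}_{L^1}^{1-\theta}\norm{n}_{L^{q+k}}^{\theta}$, converts this into $-q\alpha C_0^{-k/(q-1)}\xi^{1+k/(q-1)}$ for $\xi=\norm{n}_{L^q}^q$. The resulting logistic-type inequality gives exactly $\xi\le\max\{\xi(0),C_0(\gamma/\alpha)^{(q-1)/k}\}$.

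Your contradiction argument can also be rescued with a second threshold $p>\lambda p_L$, $\lambda>1$. There $-(\norm{\Delta V}_\infty+G^{(i)}(p))\ge\alpha(\lambda-1)p_L$. The price is that the resulting constant is no longer of the form $C^{(q-1)/k}$.

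Your truncated functional $\Psi(n)=(n^q-n_L^{q-1}n)_+$ does close the argument, even without $\alpha$ beyond controlling $p_L$. However, your claim that every term has a sign on its support is not supported by your Hölder step, because that step is global. Restricted to $A=\{p>p_L\}$ it only gives $\int_A n^q(W-p)\le\int_{A^c}n^q p\le p_L^{1+(q-1)/k}\norm{n}_{L^1}$. This is a positive error, multiplied by $(q-1)/\nu$.

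What you need is a comonotonicity argument. Since $\int K_\nu=1$, you have $\int_A n^q(W-p)\,dx=\int_A\int_{\Rd}n^q(x)K_\nu(x-y)\bigl(p(y)-p(x)\bigr)\,dy\,dx$. The part with $y\notin A$ is nonpositive because $p(y)\le p_L<p(x)$. The part with $y\in A$ symmetrises to $-\frac12\iint_{A\times A}K_\nu(x-y)\bigl(n^q(x)-n^q(y)\bigr)\bigl(p(x)-p(y)\bigr)\,dx\,dy\le 0$.

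Combine this with $n\Psi'(n)-\Psi(n)=(q-1)n^q\mathbbm{1}_A$ and with $\max_iG^{(i)}(p)<-\norm{\Delta V}_\infty$ on $A$. This gives $\frac{d}{dt}\int\Psi(n)\le-\norm{\Delta V}_\infty\int\Psi(n)\le 0$. Hence $\norm{n(t)}_{L^q}^q\le\norm{n^{\mathrm{in}}}_{L^q}^q+p_L^{(q-1)/k}\norm{n(t)}_{L^1}$. This is the lemma up to a factor $2$ (a sum instead of a maximum), with $p_L$ controlled by $\alpha$, $C_V$ and $G^{(i)}(0)$ as you indicate.

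Compared with the paper's interpolation and ODE comparison, this route is more elementary and uses only the sign information encoded in $p_L$. The paper's route yields the sharper max-form.
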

\begin{proof}
    Let us begin by multiplying the equation for the total population,
    $$
        \partialt n = \nabla \cdot (n \nabla (W + V)) + n^{(1)} G^{(1)}(p) + n^{(2)} G^{(2)}(p),
    $$
    by $q n^{q-1}$ and integrating to get
    \begin{align*}
        \ddt &\int_{\Rd} n^q \dx x \\
        &=  - (q-1) \int \nabla n^q \cdot \nabla (V+ W) \dx  x + q \int_{\Rd} n^{q-1} [n^{(1)} G^{(1)}(p) + n^{(2)}G^{(2)}(p)] \dx x \\
        &\leq  (q-1) \int_{\Rd} n^q \Delta W \dx x + q \left[\norm{\Delta V}_{L^\infty(0,T; L^\infty(\Rd))} + C_G\right]\int_{\Rd} n^q \dx x - q \alpha \int_{\Rd}  n^{q+k} \dx x,
    \end{align*}
    where we used the fact that 
    $$
        G^{(i)}(p) \leq C_G - \alpha p,
    $$
    by Assumption~\ref{assu:existence}. Concerning the first integral, we observe that
    \begin{align*}
        \int_{\Rd} n^q \Delta W \dx x 
        &= \frac1\nu\int_{\Rd} n^q (W - n^k) \dx x \\
        &=  - \frac1\nu \norm{n}_{L^{k+q}(\Rd)}^{k+q} + \frac1\nu \int_{\Rd} W n^q \dx x\\
        &\leq  - \frac1\nu\norm{n}_{L^{k+q}(\Rd)}^{k+q} + \frac1\nu \norm{n^q}_{L^{1 + k/q}(\Rd)} \norm{W}_{L^{1 + q/k}(\Rd)},
    \end{align*}
    by H\"{o}lder's inequality. Moreover, by Young's convolution inequality, we have
    \begin{align*}
        \norm{W}_{L^{1 + q/k}(\Rd)} = \norm{K \star p}_{L^{1 + q/k}(\Rd)}\leq \norm{p}_{L^{1 + q/k}(\Rd)} = \norm{n^k}_{L^{1 + q/k}(\Rd)} = \norm{n}_{L^{k+q}(\Rd)}^k,
    \end{align*}
    and, concurrently, we note that
    \begin{align*}
        \norm{n^q}_{L^{1 + k/q}(\Rd)}  = \norm{n}_{L^{k+q}(\Rd)}^q.
    \end{align*}
    Upon combining both, we find
    \begin{align*}
        \int_\Rd n^q \Delta W \dx x \leq 0.
    \end{align*}
    Hence, the estimate reduces to
    \begin{equation}\label{eq:LqEstimate}
    \begin{aligned}
        \ddt \int_\Rd n^q \dx x
        \leq  q \left(\gamma \norm{n}_{L^q(\Rd)}^q - \alpha \norm{n}_{L^{q+k}(\Rd)}^{q+k}\right),
    \end{aligned}
    \end{equation}
    where $\gamma  := \norm{\Delta V}_{L^\infty(0,T;L^\infty(\Rd))} + C_G$.
    Next, we use $L^p$-interpolation to estimate
    \begin{align}
        \label{eq:interpolation-1}
        \norm{n}_{L^q(\Rd)} \leq \norm{n}_{L^1(\Rd)}^{1-\theta} \norm{n}_{L^{q+k}(\Rd)}^\theta,
    \end{align}
    where $\theta$ satisfies
    \begin{align*}
        \theta = \frac{q \big(1 - (q+k)\big) + k}{q \big(1 - (q+k)\big)} \in (0,1),
    \end{align*}
    as $q,k>1$. Rearranging Eq. \eqref{eq:interpolation-1}, we find
    \begin{align*}
        \norm{n}_{L^{q+k}(\Rd)}^{q+k} \geq \norm{n}_{L^{q}(\Rd)}^{\frac{q+k}{\theta}} \norm{n}_{L^{1}(\Rd)}^{\frac{\theta - 1}{\theta} (q+k)}.
    \end{align*}
    Setting $\xi(t) := \norm{n}_{L^q(\Rd)}^q$ and revisiting Inequality~\eqref{eq:LqEstimate}, we obtain
    \begin{align*}
        \dot \xi(t) 
        &\leq q \left(\gamma \xi - \alpha \norm{n}_{L^{q}(\Rd)}^{\frac{q+k}{\theta}} \norm{n}_{L^{1}(\Rd)}^{\frac{\theta - 1}{\theta} (q+k)} \right)\\[6pt]
        &\leq q \left(\gamma \xi - \alpha \xi^{\frac{q+k}{q\theta}} C_0^{\frac{\theta - 1}{\theta} (q+k)} \right)\\[6pt]
        &= q \alpha C_0^{\frac{\theta - 1}{\theta} (q+k)} \xi \left(\frac{\gamma C_0^{\frac{1 - \theta}{\theta} (q+k)}}{\alpha} -  \xi^{\frac{q + k - q\theta}{q\theta}} \right),
    \end{align*}
    where $C_0 := \norm{n}_{L^\infty(0,T;L^1(\Rd))}$.
    To simplify the exponents in the inequality we note
    \begin{align*}
        \frac{1-\theta}{\theta} (q+k) = \frac{k}{q-1}\quad\text{and}\quad \frac{q + k - q\theta}{q\theta} = \frac{k}{q-1},
    \end{align*}
    such that the differential inequality becomes
        \begin{align*}
       \dot \xi(t) 
        &\leq q \alpha C_0^{\frac{k}{1-q}} \xi \left(\frac\gamma\alpha C_0^{\frac{k}{q-1}}  - \xi^{\frac{k}{q-1}}\right).  
    \end{align*}
    From classical ODE theory we infer that
    \begin{align*}
        0 \leq \xi(t) \leq \max\left\{\xi(0),  C_0 \left(\frac{\gamma}{\alpha}\right)^{\frac{q-1}{k}} \right\},
    \end{align*}
    i.e.,
    $$
        \norm{n}_{L^\infty(0,T;L^q(\Rd))}^q \leq \max\{\norm{n^\mathrm{in}}_{L^q(\Rd)}^q, \norm{n^\mathrm{in}}_{L^1(\Rd)} C(\alpha, C_V, C_G)^\frac{q-1}{k}\},
    $$
    where we have used $\norm{n}_{L^\infty(0,T;L^1(\Rd))}\leq C\norm{n^{\mathrm in}}_{L^1(\Rd)}$ and 
    $C(\alpha, C_V, C_G)>0$ is independent of $\eta, q, k$.
\end{proof}

\begin{corollary}[$L^\ell$-control of $p_\eta$]\label{cor:LebesguePressure}
    For any $\ell \geq 1$ there holds
    \begin{align*}
        \norm{p_\eta}_{L^\infty(0,T; L^\ell(\Rd))}^\ell \leq \max\{\norm{p_\eta^\mathrm{in}}_{L^\ell(\Rd)}^\ell, \norm{n_\eta^\mathrm{in}}_{L^1(\Rd)} C(\alpha, C_V, C_G)^{\ell}\},
    \end{align*}
    where $C$ is independent of $\eta$ and $k$.
\end{corollary}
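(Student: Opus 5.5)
The plan is to read Corollary~\ref{cor:LebesguePressure} off Lemma~\ref{lem:propagation-of-LP-for-n} by a change of exponent. Since $p_\eta = n_\eta^k$, we have for every $\ell\geq 1$ and a.e.\ $t$
\begin{equation*}
    \norm{p_\eta(t)}_{L^\ell(\Rd)}^\ell = \int_{\Rd} n_\eta^{k\ell}(t) \dx x = \norm{n_\eta(t)}_{L^{q}(\Rd)}^{q}, \qquad q := k\ell \geq k \geq 1 .
\end{equation*}
The degenerate case $q=1$ (that is, $k=\ell=1$) is covered directly by the uniform $L^1$-control of $n_\eta$. For $q>1$ we apply Lemma~\ref{lem:propagation-of-LP-for-n} with this choice of $q$. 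This is legitimate because that lemma holds for every $q>1$, and its constant $C(\alpha,C_V,C_G)$ was shown to be independent of $\eta$, $q$ and $k$.

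The lemma then gives
\begin{equation*}
    \norm{p_\eta}_{L^\infty(0,T;L^\ell(\Rd))}^\ell \leq \max\Big\{\norm{n_\eta^{\mathrm{in}}}_{L^{k\ell}(\Rd)}^{k\ell},\ \norm{n_\eta^{\mathrm{in}}}_{L^1(\Rd)}\, C^{\frac{k\ell-1}{k}}\Big\}.
\end{equation*}
The first entry equals $\norm{p_\eta^{\mathrm{in}}}_{L^\ell(\Rd)}^\ell$, again because $(n_\eta^{\mathrm{in}})^k=p_\eta^{\mathrm{in}}$. For the second entry, the exponent is $\frac{k\ell-1}{k} = \ell - \frac1k \in [\ell-1,\ell)$. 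If $C\geq 1$ we bound $C^{\ell-1/k}\leq C^\ell$. If $C<1$ we bound $C^{\ell-1/k}\leq 1 \leq \max\{1,C\}^\ell$. In either case, after replacing $C$ by $\max\{1,C\}$, which is still independent of $\eta$ and $k$, we obtain exactly the stated bound.

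There is no real obstacle here; the step needing care is the uniformity of the constant. One has to check in the proof of Lemma~\ref{lem:propagation-of-LP-for-n} that the ODE comparison bound $C_0(\gamma/\alpha)^{(q-1)/k}$ has $C_0 = \norm{n_\eta}_{L^\infty(0,T;L^1(\Rd))}$. This quantity is controlled by $e^{C_GT}\norm{n^{\mathrm{in}}_\eta}_{L^1(\Rd)}$ independently of $k$ and $\eta$, which is what the earlier $L^1$ lemma provides. The factor $e^{C_GT}$ can be absorbed into $C$ once $C\geq 1$, since then $e^{C_GT} \leq (e^{C_GT})^\ell$ for $\ell \geq 1$. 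With this bookkeeping the bound holds uniformly in $\eta$ and $k$ for all $\ell\geq1$.
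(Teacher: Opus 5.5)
Your proposal is correct and follows the paper's proof: substitute $q=k\ell$ in Lemma~\ref{lem:propagation-of-LP-for-n}, identify $\norm{n_\eta^{\mathrm{in}}}_{L^{k\ell}(\Rd)}^{k\ell}=\norm{p_\eta^{\mathrm{in}}}_{L^\ell(\Rd)}^\ell$, and bound $C^{\ell-1/k}\leq C^\ell$. Your extra bookkeeping fills in points the paper's one-line argument silently assumes: the case $k=\ell=1$, replacing $C$ by $\max\{1,C\}$, and the observation that the factor $e^{C_GT}$ coming from $C_0$ enters multiplicatively rather than raised to the power $(q-1)/k$, so it must be absorbed using $\ell\geq 1$.
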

\begin{proof}
    For any $\ell \geq 1$ we set $q = \ell k$ and observe
    \begin{align*}
        \norm{p}_{L^\infty(0,T; L^\ell(\Rd))}^\ell 
        &= \norm{n}_{L^\infty(0,T; L^q(\Rd))}^q \\
        &\leq \max\{\norm{n^\mathrm{in}}_{L^q(\Rd)}^q, \norm{n^\mathrm{in}}_{L^1(\Rd)} C(\alpha, C_V, C_G)^{\frac{q-1}{k}}\}\\
        &\leq \max\{\norm{p^\mathrm{in}}_{L^\ell(\Rd)}^\ell, \norm{n^\mathrm{in}}_{L^1(\Rd)} C(\alpha, C_V, C_G)^{\ell}\}.\qedhere
    \end{align*}
\end{proof}

\begin{proposition}[Time derivatives]\label{prop:TimeDerivatives1}
We have the following time regularity of the densities and the pressure:
    \begin{itemize}
        \item $\partial_tn_\eta\i$ is bounded in $L^\infty(0,T;W^{-1,\frak{r}'}(\Rd))$ for some $\frak{r} > d$, uniformly in $\eta$ and $k$;
        \item  $\partial_t p \in L^\infty(0,T;H^{-\frak{s}}(\Rd))$ for some $\frak{s}>d/2+1$, uniformly in $\eta$.
    \end{itemize}
\end{proposition}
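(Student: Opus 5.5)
We bound both time derivatives by duality, testing each equation against a smooth function and using only the uniform Lebesgue bounds from Lemma~\ref{lem:propagation-of-LP-for-n} and Corollary~\ref{cor:LebesguePressure}.

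\textbf{Densities.} For $\varphi\in W^{1,\frak r}(\Rd)$ with $\frak r>d$, the weak form gives
\begin{align*}
\Big|\langle \partial_t n\i,\varphi\rangle\Big| \leq \int_{\Rd} n\i |\nabla\varphi|\,\big(|\nabla V|+|\nabla W|\big)\dx x + \int_{\Rd} n\i |G\i(p)|\,|\varphi| \dx x .
\end{align*}
We estimate the three contributions separately.

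The $V$-term is bounded by $C_V\norm{n\i}_{L^{\frak r'}}\norm{\nabla\varphi}_{L^{\frak r}}$. Since $1<\frak r'<d/(d-1)\le 2$, $\norm{n\i}_{L^{\frak r'}}$ is controlled by the uniform $L^1\cap L^2$ bounds of the preceding lemmas.

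For the $W$-term, Young's inequality and the Bessel kernel give $\norm{\nabla W}_{L^{a}}\le C\norm{p}_{L^a}$ with $\nabla K_\nu\in L^1$. The pressure is uniformly bounded in every $L^\ell$ by Corollary~\ref{cor:LebesguePressure}, given the assumed $L^1\cap L^{\frak q}$ bound on the initial pressures. We then apply H\"older with $n\in L^2$, $\nabla W\in L^a$ and $\nabla\varphi\in L^{\frak r}$, choosing $a$ so that $\tfrac12+\tfrac1a+\tfrac1{\frak r}\le 1$, and interpolate $n$ between $L^1$ and $L^2$ if the exponents need adjusting.

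For the growth term, $|G\i(p)|\le\Gamma(1+p^s)$ and $p^s$ is uniformly bounded in $L^2$, so $n\i|G\i(p)|\in L^1$ uniformly. Since $\frak r>d$, Morrey's embedding gives $W^{1,\frak r}\hookrightarrow L^\infty$, so this term is bounded by $C\norm{\varphi}_{W^{1,\frak r}}$.

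All constants are independent of $\eta$ and $k$, which gives the first claim.

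\textbf{Pressure.} Here we use the pressure equation satisfied by the bounded solutions of Theorem~\ref{thm:Main1}, where the calculation is rigorous:
\begin{align*}
\partial_t p = \nabla p\cdot\nabla(W+V) + k p\,\Delta(W+V) + kp\,\frac{n\1G\1(p)+n\2G\2(p)}{n}.
\end{align*}
Substituting $\Delta W=(W-p)/\nu$ and integrating the transport term by parts gives
\begin{align*}
\langle\partial_t p,\varphi\rangle = -\int_{\Rd} p\nabla\varphi\cdot\nabla(W+V)\dx x + \int_{\Rd}(k-1)p\,\Delta(W+V)\varphi\dx x + k\int_{\Rd} p\,\bar G\varphi\dx x,
\end{align*}
where $\bar G$ denotes the weighted average of $G\1,G\2$. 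For $\varphi\in H^{\frak s}$ with $\frak s>d/2+1$, both $\varphi$ and $\nabla\varphi$ are bounded. The remaining integrands are products of $p$, $p^2$, $pW$ and $p^{1+s}$, all uniformly in $L^1$ by Corollary~\ref{cor:LebesguePressure} and Young's inequality.

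The resulting bound carries a factor of $k$, which is why the second claim is stated uniformly in $\eta$ only. This is acceptable, since $k$ is fixed during the limit $\eta\to0$.

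\textbf{Main obstacle.} The delicate step is the pressure equation. We must justify it for the bounded solutions, which I would do via a renormalisation argument, and the integrability of $p^{1+s}$ requires exactly the assumption $\frak q>2s$ on the initial pressure.
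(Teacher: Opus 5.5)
Your proposal is correct and follows the paper's proof essentially step for step. For the densities you use duality against $W^{1,\frak r}$ and control $n^{(i)}\nabla W$ through $\nabla K_\nu\in L^1$ and H\"older. For the pressure you use the same integration by parts, the substitution $\Delta W=(W-p)/\nu$, and the $k$-dependent bound $k\Gamma\bigl(\norm{p}_{L^1}+\norm{p}_{L^{s+1}}^{s+1}\bigr)\norm{\varphi}_{L^\infty}$ on the growth term.

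One small correction: Corollary~\ref{cor:LebesguePressure} bounds $p_\eta$ uniformly in $\eta$ only in $L^\ell$ for $\ell\le\frak q$, because the initial pressures are uniformly bounded only in $L^1\cap L^{\frak q}$, not in every $L^\ell$. So you must keep $a\in(2,\frak q]$, which forces you to take $\frak r$ large rather than an arbitrary $\frak r>d$, as the paper does with $\frak r\gg 1$.
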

\begin{proof}
Let us begin by observing that all the integrability of the initial data is propagated uniformly in time thanks to the previous a priori estimates. 
For every $\varphi\in W^{1,\frak{r}}(\Rd)$,
    \begin{align*}
        &\abs*{\int_{\Rd}\nabla\varphi\cdot n\i\nabla(W+V)\dx x} \\
        &\leq \norm{\nabla \varphi}_{L^{\frak r}(\Rd)}\norm{\nabla V}_{L^\infty(0,T;L^\infty(\Rd))}\norm{n\i}_{L^\infty(0,T;L^{\frak r'}(\Rd))} + \norm{\nabla\varphi}_{L^{\frak r}(\Rd)}\norm{n\i \nabla W}_{L^\infty(0,T;L^{\frak{r}'}(\Rd))}.
    \end{align*}
Now, notice that for any $1<\alpha< 2$
\begin{align*}
    \norm{n\i\nabla W}_{L^\alpha(\Rd)} \leq \norm{n\i}_{L^2(\Rd)}\norm{\nabla K_\nu}_{L^1(\Rd)}\norm{p}_{L^{\frac{2\alpha}{2-\alpha}}(\Rd)}.
\end{align*}
Since $\frak{q}>2$, it follows that $n\i\nabla W$ is uniformly bounded in $L^\infty(0,T;L^{1+}(\Rd))$. Therefore, we can choose $\frak r \gg 1$ such that both $n\i\nabla W$ and $n\i$ are simultaneously uniformly bounded in $L^{\frak r'}(\Rd)$.
Similarly, for the growth term, we observe
    \begin{align*}
        &\abs*{\int_\Rd n\i G\i(p) \varphi\dx x}\\
        &\quad\leq \Gamma(\norm{n\i}_{L^\infty(0,T; L^1(\Rd))} + \norm{n\i}_{L^\infty(0,T; L^{\frak{p}}(\Rd))}\norm{p}_{L^\infty(0,T; L^{s\frak{p}'}(\Rd))})\norm{\varphi}_{L^\infty(\Rd)}.
    \end{align*}
    It follows that $\partial_t n \in L^\infty(0,T;W^{-1,\frak{r}'}(\Rd))$  uniformly in both $\eta$ and $k$. We argue similarly for $\partial_t p$. Adding the two equations for the densities and using $p=n^k$, we obtain, in the sense of distributions,
    \begin{equation*}
        \partialt p = \nabla p\cdot\nabla(W+V) + kp(\Delta W + \Delta V) + k n^{k-1} n^{(1)}G\1(p) + k n^{k-1} n^{(2)}G\2(p).
    \end{equation*}
    Setting $\mathcal{V}:=W+V$, we have for any $\varphi\in H^{\frak s}(\Rd)$ with $\frak s>d/2+1$,
    \begin{align*}
        \abs*{\int_{\Rd} (\nabla p\cdot \nabla \mathcal{V} + kp\Delta \mathcal{V})\varphi\dx x}
        \leq \abs*{\int_\Rd p\nabla \mathcal{V}\cdot \nabla\varphi\dx x} + (k-1)\abs*{\int_\Rd p\Delta \mathcal{V}\,\varphi\dx x}.
    \end{align*}
    Since $H^{\frak s}(\Rd)\hookrightarrow W^{1,\infty}(\Rd)$, the first term is bounded by
    \begin{align*}
        \abs*{\int_\Rd p\nabla \mathcal{V}\cdot \nabla\varphi\dx x}
        &\leq \norm{\nabla\varphi}_{L^\infty(\Rd)}\left(\norm{p}_{L^1(\Rd)}\norm{\nabla V}_{L^\infty(\Rd)} + \norm{p}_{L^2(\Rd)}\norm{\nabla W}_{L^2(\Rd)}\right)\\
        &\leq C\norm{\varphi}_{H^{\frak s}(\Rd)}.
    \end{align*}
    For the second term, we use $\Delta W=(W-p)/\nu$ from Brinkman's law and obtain
    \begin{align*}
        \abs*{\int_\Rd p\Delta \mathcal{V}\,\varphi\dx x}
        &\leq \norm{\varphi}_{L^\infty(\Rd)}\left(\norm{p}_{L^1(\Rd)}\norm{\Delta V}_{L^\infty(\Rd)} + \frac{1}{\nu}\norm{p}_{L^2(\Rd)}\norm{W}_{L^2(\Rd)} + \frac{1}{\nu}\norm{p}_{L^2(\Rd)}^2\right)\\
        &\leq C\norm{\varphi}_{H^{\frak s}(\Rd)}.
    \end{align*}
    Here the constants are independent of $\eta$, because the previous estimates give uniform control of $p$ in $L^\infty(0,T;L^1\cap L^2)$, while Brinkman's law gives the corresponding control of $W$ and $\nabla W$, and $\nabla V, \Delta V$ are bounded by Assumption \ref{assu:existence}.

    It remains to estimate the growth terms. On the set $\{n>0\}$, we have $n^{k-1}n\i=p\frac{n\i}{n}$ and $0\leq n\i/n\leq1$, whereas the integrand vanishes on $\{n=0\}$. Therefore, using $\abs*{G\i(p)}\leq \Gamma(1+p^s)$ and, once more, $H^{\frak s}(\Rd)\hookrightarrow L^\infty(\Rd)$,
    \begin{align*}
        \abs*{k\int_\Rd n^{k-1}n\i G\i(p)\varphi \dx x}
        &= \abs*{k\int_{\{n>0\}} p\frac{n\i}{n}G\i(p)\varphi \dx x }\\
        &\leq k \Gamma \int_\Rd p(1+p^s)\abs{\varphi}\dx x\\
        &\leq k \Gamma \left(\norm{p}_{L^1(\Rd)} +  \norm{p}_{L^{s+1}(\Rd)}^{s+1}\right)\norm{\varphi}_{L^\infty(\Rd)}\\
        &\leq C_k\norm{\varphi}_{H^{\frak s}(\Rd)}.
    \end{align*}
   Thus, we have $\partial_t p$ in $L^\infty(0,T;H^{-\frak s}(\Rd))$, uniformly in $\eta$ but not in $k$.
\end{proof}

\subsection{The weights}
We define the weights $\upsilon_\eta$ as solutions of the transport equation 
\begin{align}
\label{eq:weight}
\left\{
\begin{array}{rl}
    \ds \partialt {\upsilon_\eta} - \nabla \upsilon_\eta \cdot \nabla \mathcal{V}_\eta \remspace &= -\lambda B_\eta \upsilon_\eta,\\[\alnspace]
    \upsilon_\eta(\cdot, 0) \remspace &= 1,
\end{array}
\right.
\end{align}
where $B_\eta = \mathfs{M}|D^2 \mathcal{V}_\eta| + |\Delta\mathcal{V}_\eta|$, and where $\lambda\geq0$ is a sufficiently large constant to be fixed below.
Here, $\mathfs{M}$ denotes the maximal operator
$$
    \mathfs{M} f(x) = \sup_{0 < \rho \leq 2}  \frac1{|\{|z| \leq \rho\}|}\int_{|z|\leq \rho} f(x+z)\,\dx{z}.
$$
We recall that $\mathfs{M}$ is bounded on $L^q$ for every $q>1$. 

The following proposition establishes existence of the weights and two key properties that will be used below. Specifically, the second bound will be instrumental in removing the weights from the compactness quantity for the densities.

\begin{proposition}\label{prop:w}
There exists a unique solution to Eq.~\eqref{eq:weight}. Moreover, we have
\begin{itemize}
    \setlength\itemsep{0.7em}
    \item[(i)] $0\leq \upsilon_\eta(t,x) \leq 1$, for almost every $(x, t)\in \Rd \times (0,T)$.
    \item[(ii)] There exists $C>0$ such that for almost every $t$ there holds
    \begin{equation*}
    \label{boundlogw}
        \intO{ n_\eta^{(i)} |\log \upsilon_\eta| } \leq C \lambda.
    \end{equation*}
\end{itemize}
\end{proposition}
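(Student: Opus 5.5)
Existence and uniqueness come from the DiPerna--Lions theory for linear transport equations. Along the approximating solution the velocity field $\nabla\mathcal V_\eta=\nabla W_\eta+\nabla V$ lies in $L^1(0,T;W^{1,1}_{loc})$ with bounded divergence. For $W_\eta$ this holds because $\nabla W_\eta = \nabla K_\nu\star p_\eta$ is bounded in $L^\infty$ ($p_\eta$ is bounded for fixed $\eta$), $D^2W_\eta$ is in $L^2$ by Calderón--Zygmund applied to Brinkman's law, and $\Delta W_\eta=(W_\eta-p_\eta)/\nu\in L^\infty$. For $V$ we use Assumption~\ref{assu:integrable-data}: $\nabla V\in L^\infty(0,T;H^1)$ and $\Delta V\in L^\infty$. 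The source $-\lambda B_\eta$ is in $L^1_{loc}$ because $\mathfs M$ is bounded on $L^2$. I therefore write $\upsilon_\eta=e^{-\lambda\psi_\eta}$, where $\psi_\eta$ solves
\[
\partial_t\psi_\eta-\nabla\psi_\eta\cdot\nabla\mathcal V_\eta=B_\eta,\qquad \psi_\eta(0)=0 .
\]
This is a linear transport equation with nonnegative source, so it has a unique renormalized solution. Applying the renormalization property with the function $e^{-\lambda\cdot}$ gives the equation for $\upsilon_\eta$, and uniqueness of $\upsilon_\eta$ follows from the uniqueness of renormalized solutions.

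For (i), the comparison principle applies because $B_\eta\ge0$, so $\psi_\eta\ge0$. Equivalently, integrate $B_\eta\ge0$ along the regular Lagrangian flow. Hence $0\le \upsilon_\eta = e^{-\lambda\psi_\eta}\le1$.

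For (ii), note that $|\log\upsilon_\eta|=\lambda\psi_\eta$, so it suffices to bound $\int n\i_\eta\psi_\eta\dx x$ uniformly in $\eta$, $k$ and $t$. I would compute its time derivative by duality between the continuity equation for $n\i_\eta$ and the transport equation for $\psi_\eta$. The transport terms cancel after integration by parts, since $n\i$ is advected by $-\nabla\mathcal V$ and $\psi$ is transported backward along the same field. What remains is
\[
\ddt\int n\i\psi\dx x=\int n\i B_\eta\dx x+\int n\i G\i(p)\psi\dx x .
\]
The growth term is bounded by $C_G\int n\i\psi\dx x$, using $G\i\le C_G$ and $\psi\ge0$. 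The first term is bounded by
\[
\norm{n\i}_{L^2}\bigl(\norm{\mathfs M|D^2W|}_{L^2}+\norm{\Delta W}_{L^2}\bigr)+\norm{n\i}_{L^1}\bigl(\norm{\mathfs M|D^2V|}_{L^\infty}+\norm{\Delta V}_{L^\infty}\bigr).
\]
The $W$-contributions are controlled by $\norm{p}_{L^2}$ through $L^2$-boundedness of $\mathfs M$ and Calderón--Zygmund, and $\norm{p}_{L^2}$ is uniform by Corollary~\ref{cor:LebesguePressure}. The $V$-part needs more care. The assumptions give $D^2V\in L^\infty(0,T;L^2)$ rather than $L^\infty$, so I would instead pair it as $\norm{n\i}_{L^2}\norm{\mathfs M|D^2V|}_{L^2}$, which is uniform by Lemma~\ref{lem:propagation-of-LP-for-n}. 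Gronwall's lemma with $\psi(0)=0$ then gives $\int n\i\psi\le C$, hence $\int n\i|\log\upsilon_\eta|\le C\lambda$.

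The main obstacle is justifying this duality computation rigorously at low regularity, since $n\i_\eta$ and $\psi_\eta$ are only weak or renormalized solutions. I would handle it by regularizing $\psi_\eta$ in space (or truncating it as $\min(\psi,M)$), using DiPerna--Lions commutator estimates for the transport field $\nabla\mathcal V_\eta$, and then passing to the limit by monotone convergence. For fixed $\eta$ the densities are bounded, which makes the commutator argument available.
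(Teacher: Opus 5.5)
Your proposal is correct and follows the paper's argument. You use DiPerna--Lions for existence and uniqueness, and $B_\eta\geq 0$ to get $0\leq \upsilon_\eta\leq 1$; your $\lambda\psi_\eta$ is exactly the paper's renormalized $|\log\upsilon_\eta|$, which you couple with the density equation (the paper simply cites Bresch--Jabin for this step), followed by Gronwall and Cauchy--Schwarz with the uniform $L^2$ bounds on $n_\eta\i$ and $B_\eta$, where setting $\upsilon_\eta=e^{-\lambda\psi_\eta}$ is a harmless repackaging that has the minor advantage of making $|\log\upsilon_\eta|$ automatically well defined.
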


\begin{proof}
$(i)$ Since $D^2 W_\eta\in L^2(\Rd\times (0,T))$, we have $B_\eta\in L^2(\Rd \times (0,T))$. Since also $\nabla (W_\eta+V)\in L^\infty(0,T;H^1(\R^d))$, by Assumption \ref{assu:integrable-data}, the DiPerna-Lions theory of renormalised solutions~\cite{DL} provides the existence and uniqueness of a nonnegative solution to Eq.~\eqref{eq:weight}. Moreover, since $B_\eta$ is nonnegative, the property $\upsilon_\eta\leq 1$ is propagated from the initial condition.

$(ii)$ Since $\upsilon_\eta\leq 1$, we have $|\log \upsilon_\eta|=-\log \upsilon_\eta$. By renormalisation, Eq. \eqref{eq:weight} implies
$$
    \fpartial t |\log \upsilon_\eta| - \nabla (W_\eta+V) \cdot \nabla |\log \upsilon_\eta| = \lambda B_\eta.
$$
Using also the density equation associated to the regularised initial data, we get
$$
    \fpartial t \prt*{n_\eta\i |\log \upsilon_\eta|} - \nabla\cdot \prt*{n\i_\eta |\log \upsilon_\eta|  \nabla (W_\eta+V)} = n\i_\eta |\log \upsilon_\eta| G^{(i)}(p_\eta) + \lambda n\i_\eta B_\eta,
$$
cf. \cite[Lemma 6.8]{BreschJabin2}. Then, integrating in space, we deduce
$$
    \ddt \intO{ n\i_\eta |\log \upsilon_\eta|} \leq C_G \intO{ n\i_\eta |\log \upsilon_\eta|} + \lambda \intO{ n\i_\eta B_\eta }.
$$
Using Gronwall's lemma, we obtain
$$
    \intO{n\i_\eta |\log \upsilon_\eta|(t,x)} \leq \lambda e^{C_G T}\int_0^T\!\!\int_{\Rd} n\i_\eta B_\eta\dx{x} \dx{t}.
$$
We conclude using the Cauchy-Schwarz inequality, as $B_\eta$ and $n\i_\eta$ are uniformly bounded in $L^2(\R^d\times (0,T))$.
\end{proof}

\subsection{Compactness of the densities}
We will now use the weights constructed above to propagate compactness of the individual density sequences. To this end, we will require global compactness of the velocity potential $W_\eta$, which we establish first.
\begin{proposition}\label{prop:CompactW_eta}
    The family $\{W_\eta\}_{\eta>0}$ is compact in $L^2(0,T;L^2(\Rd))$. We call its limit $W$.
\end{proposition}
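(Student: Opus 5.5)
We want compactness of $W_\eta = K_\nu\star p_\eta$ in $L^2(0,T;L^2(\Rd))$, where $K_\nu$ is the Bessel kernel of $-\nu\Delta+1$. The plan is a Riesz--Fréchet--Kolmogorov / Aubin--Lions type argument. Spatial regularity comes from Brinkman's law, tightness from the $L^1$ and $L^2$ bounds, and time regularity from the equation for $p_\eta$ (uniform in $\eta$, though not in $k$, which is fine here because $k$ is fixed).

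\emph{Spatial regularity.} By Corollary~\ref{cor:LebesguePressure}, $p_\eta$ is bounded in $L^\infty(0,T;L^1\cap L^2(\Rd))$ uniformly in $\eta$. Elliptic regularity for Brinkman's law then gives $W_\eta$ bounded in $L^\infty(0,T;H^2(\Rd))$; in particular $\nabla W_\eta$ is bounded in $L^\infty(0,T;L^2)$. Consequently
\[
\sup_\eta\int_0^T\norm{W_\eta(\cdot+h,t)-W_\eta(\cdot,t)}_{L^2}^2\dx t\le CT|h|^2 .
\]

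\emph{Time regularity.} By Proposition~\ref{prop:TimeDerivatives1}, $\partial_t p_\eta$ is bounded in $L^\infty(0,T;H^{-\frak s}(\Rd))$ uniformly in $\eta$. Since $(1-\nu\Delta)^{-1}$ maps $H^{-\frak s}$ to $H^{2-\frak s}$, we get $\partial_t W_\eta = K_\nu\star\partial_t p_\eta$ bounded in $L^\infty(0,T;H^{2-\frak s})$. Combined with the $L^\infty(0,T;H^2)$ bound, Aubin--Lions--Simon applied on each ball $B_R$ (with $H^2(B_R)\Subset L^2(B_R)\hookrightarrow H^{2-\frak s}(B_R)$) yields local compactness in $L^2(0,T;L^2(B_R))$. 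Alternatively, interpolating between the $H^{-\frak s+2}$ time increments and the $H^2$ bound gives directly
\[
\norm{W_\eta(t+\tau)-W_\eta(t)}_{L^2}\le C\tau^{\theta}.
\]

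\emph{Tightness.} This is the main obstacle, since $\Rd$ is unbounded and we have no moment bounds. For $|x|>2R$,
\[
W_\eta(x)\le \int_{|y|<R}K_\nu(x-y)p_\eta(y)\dx y+\int_{|y|\ge R}K_\nu(x-y)p_\eta(y)\dx y .
\]
The first term is controlled in $L^2$ by $\norm{K_\nu}_{L^2(|z|>R)}\norm{p_\eta}_{L^1}$, which tends to $0$ by the exponential decay of $K_\nu$. The second term needs uniform smallness of $\norm{p_\eta}_{L^2(|y|\ge R)}$. To get it, I would first try a weighted estimate: test the equation for $p_\eta$ (or $n_\eta$) against $\chi_R$, a cutoff vanishing near the origin, and use that the drift $\nabla(W_\eta+V)$ is bounded in $L^\infty(0,T;L^2)$ with $\nabla V\in L^\infty$. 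This should give
\[
\ddt\int \chi_R n_\eta\le C\int\chi_R n_\eta + \frac{C}{R},
\]
so tightness of the initial data (which converges in $L^1$) propagates to $n_\eta$. Then interpolate with the uniform $L^{\frak q}$ pressure bound, using $p=n^k$ and Hölder on $\{n\text{ small}\}\cup\{n\text{ large}\}$, to transfer tightness to $p_\eta$ in $L^2$. If this transfer proves delicate, a fallback is to note that only $L^2$-smallness of $W_\eta$ at infinity is needed: the $L^1$-tightness of $p_\eta$, obtained by testing the $p$-equation with $\chi_R$ at fixed $k$, combined with the uniform $L^\infty(0,T;H^2)$ bound, gives $L^2$-tightness of $W_\eta$ via interpolation between $L^1$ and $L^\infty$.

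Putting the three ingredients together, Kolmogorov--Riesz (or local Aubin--Lions plus tightness) gives relative compactness of $\{W_\eta\}_{\eta>0}$ in $L^2(0,T;L^2(\Rd))$, and we extract a subsequence with limit $W$.
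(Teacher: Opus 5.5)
Your proposal is correct and follows the paper's proof. Local compactness comes from Aubin--Lions using the uniform-in-$\eta$ bounds on $p_\eta$ and $\partial_t p_\eta$. Tightness is obtained by testing the density equations with a cutoff, propagating the tightness of the $L^1$-convergent initial data, and transferring it to $p_\eta$ by interpolating with the higher integrability of the pressure.

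The only deviation is the final transfer to $W_\eta$. You split the convolution and use the decay of $K_\nu$ to get $L^2$-tightness directly, whereas the paper tests Brinkman's law against $1-\chi_R$ to get $L^1$-tightness. Your fallback is unnecessary, and its use of $H^2\hookrightarrow L^\infty$ would fail for $d\ge 4$.
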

\begin{proof}
    From Proposition~\ref{prop:TimeDerivatives1} and Corollary~\ref{cor:LebesguePressure} we have that $\partial_tp_\eta \in L^2(0,T;H^{-\frak{s}}(\Rd))$ and that $p_\eta \in L^2(0,T;L^2(\Rd))$, both bounds being uniform in $\eta$. Since $W_\eta = K_\nu\star p_\eta$ (and since $\nabla K_\nu$ is integrable), we deduce the corresponding bounds for $\partial_t W_\eta$ and $\nabla W_\eta$. By the Aubin-Lions Lemma, the sequence $W_\eta$ is locally compact in $L^2(0,T;L^2(\Rd))$.

    To obtain global compactness, we argue that the sequence is tight. First, we show that tightness is propagated for the densities, since the initial sequence converges globally.  Let $\chi_R \in C_c^\infty([0,\infty))$ be the usual cut-off function with $|\nabla \chi_R| \lesssim R^{-1}$ and $|\Delta \chi_R| \lesssim R^{-2}$
    Testing the density equation by $1-\chi_R$ we obtain
    \begin{align*},
        &\ddt \int_\Rd n_\eta\i(1-\chi_R)\dx x = \int_\Rd \nabla\chi_R\cdot n_\eta\i(\nabla W_\eta+\nabla V)\dx x + \int_\Rd n\i_\eta G\i(p_\eta)(1-\chi_R)\dx x\\
        &\leq \norm{n\i_\eta}_{L^2(\Rd)}(\norm{\nabla W_\eta}_{L^2(\Rd)}+\norm{\nabla V}_{L^2(\Rd)})\frac{C}{R} + C_G\int_\Rd n\i_\eta(1-\chi_R)\dx x.
    \end{align*}
    It follows that
    \begin{align*}
        \int_\Rd n\i_\eta(1-\chi_R) \dx x \leq e^{C_G T}\int_\Rd n^{(i),\mathrm{in}}_\eta(1-\chi_R)\dx x + \frac{C}{R},
    \end{align*}
    where the constant is independent of $\eta$, and consequently
    \begin{equation}\label{eq:tightness1}
        \lim_{R\to\infty}\;\sup_{\eta>0}\;  \sup_{0 \leq t \leq T} \;\int_\Rd n\i_\eta(1-\chi_R) \dx x =0.
    \end{equation}
    Now, we test the Brinkman equation with $(1-\chi_R)$, obtaining
    \begin{equation}\label{eq:tightness2}
    \begin{aligned}
        \int_\Rd W_\eta(1-\chi_R)\dx x &= \int_\Rd p_\eta(1-\chi_R)\dx x - \nu \int_\Rd W_\eta\Delta\chi_R\dx x\\
        & \leq \int_\Rd p_\eta(1-\chi_R)\dx x + \frac{C\nu}{R^2},
    \end{aligned}
    \end{equation}
    and we observe that
    \begin{align*}
        \int_\Rd p_\eta(1-\chi_R)\dx x &= 
        \int_\Rd n^k_\eta(1-\chi_R)\dx x\\
        &\leq \prt*{\int_\Rd n^{2k-1}_\eta(1-\chi_R)\dx x}^{1/2}\prt*{\int_\Rd n_\eta(1-\chi_R)\dx x}^{1/2}\\
        &= \prt*{\int_\Rd p^{2-\frac{1}{k}}_\eta(1-\chi_R)\dx x}^{1/2}\prt*{\int_\Rd n_\eta(1-\chi_R)\dx x}^{1/2}\\
        &\leq C\prt*{\int_\Rd n_\eta(1-\chi_R)\dx x}^{1/2},
    \end{align*}
    where the constant is independent of $\eta$, since $2-1/k \in (1,2)$ and $p_\eta$ is uniformly bounded in $L^\infty(0,T; L^1\cap L^2(\Rd))$, cf. Corollary \ref{cor:LebesguePressure}.
    It now follows from~\eqref{eq:tightness1} and~\eqref{eq:tightness2} that
    \begin{equation*}
        \lim_{R\to\infty}\;\sup_{\eta>0}\; \sup_{0 \leq t \leq T} \;\int_\Rd W_\eta(1-\chi_R) \dx x =0,
    \end{equation*}
    and we readily conclude the proof.
\end{proof}
\subsubsection{Propagation of compactness -- with weights}
Throughout, when invoking the compactness method by Belgacem, Bresch, and Jabin, cf. Appendix \ref{app:compcrit}, for brevity we shall use the superscript notation 
$$
	\mathcal K_h^{x,y} := \mathcal K_h(x-y), \quad n_\eta^{(i),x} :=  n_\eta^{(i)}(x), \quad n_\eta^{x} :=  n_\eta(x), \qquad p_\eta^x := p_\eta(x), 
$$
and, analogously, for all other functions appearing in the nonlocal compactness argument.

\begin{proposition}\label{prop:CompactnessEtaWeight}
    Let $\calK_h$ be the kernel defined in Appendix~\ref{app:compcrit}. The following estimate holds for $h<1$ small enough, for $i=1,2$ and all $t\in[0,T]$ 
    \begin{equation}
    \label{eq:WeightEstimate}
    \begin{aligned}
    &\int_{\R^{2d}} \calK_h^{x,y} | n_\eta^{(i),x}- n_\eta^{(i),y}|(\upsilon_\eta^{x}+\upsilon_\eta^{y}) \dx x\dx y\\
    &\leq C\int_{\R^{2d}} \calK_h^{x,y} |n_\eta^{(1),\mathrm{in},x} - n_\eta^{(1),\mathrm{in},y}| \dx x\dx y\\
    &\quad+C\int_{\R^{2d}} \calK_h^{x,y} |n_\eta^{(2),\mathrm{in},x}-n_\eta^{(2),\mathrm{in},y}| \dx x\dx y\\
    &\quad + C|\log h|^{1/2}\prt*{\int_0^T\iintO{\calK_h^{x,y}\abs*{W_\eta^x - W_\eta^y}^2}\dx t}^{1/2}\\
    &\quad + C|\log h|^{1/2}\prt*{\int_0^T\iintO{\calK_h^{x,y}\abs*{\Delta V^x-\Delta V^y}^2}\dx t}^{1/2}\\
    &\quad +C|\log h|^{1/2},
    \end{aligned}
    \end{equation}
    where the constants depend only on  $L^\infty(0,T;L^1\cap L^2(\Rd))$-norms of the densities and the pressure and the data. In particular, they are independent of $\eta$.
\end{proposition}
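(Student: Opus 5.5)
We will follow the Bresch--Jabin weighted propagation of compactness, in the form used in \cite{BreschJabin2, DPSV2021}. Set $\mathcal V_\eta = W_\eta + V$. Subtracting the equations for $n_\eta^{(i)}$ at $x$ and $y$ and renormalising with the absolute value (justified by doubling variables and a standard regularisation of $|\cdot|$) gives
\begin{equation*}
\partial_t |n^{(i),x}-n^{(i),y}| - \nabla_x\cdot\big(|n^{(i),x}-n^{(i),y}|\nabla\mathcal V^x\big) - \nabla_y\cdot\big(|n^{(i),x}-n^{(i),y}|\nabla\mathcal V^y\big) \le \mathcal R,
\end{equation*}
where $\mathcal R$ collects the divergence commutator terms $\tfrac12(n^{(i),x}+n^{(i),y})\,\mathrm{sign}(\cdot)(\Delta\mathcal V^x-\Delta\mathcal V^y)$ and the growth terms. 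We multiply by $\calK_h^{x,y}(\upsilon^x+\upsilon^y)$ and use the weight equation \eqref{eq:weight}. The transport parts of $\upsilon$ cancel against the advective fluxes. What remains is:
\begin{itemize}
\item the good damping term $-\lambda\iintO{\calK_h |n^x-n^y|(B^x\upsilon^x + B^y\upsilon^y)}$;
\item the kernel commutator $\iintO{\nabla\calK_h(x-y)\cdot(\nabla\mathcal V^x-\nabla\mathcal V^y)|n^x-n^y|(\upsilon^x+\upsilon^y)}$;
\item the $\Delta\mathcal V^x-\Delta\mathcal V^y$ term;
\item the growth term.
\end{itemize}

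\textbf{Kernel commutator.} This is the main obstacle. We use the classical pointwise estimate
\begin{equation*}
|\nabla\mathcal V^x-\nabla\mathcal V^y|\le C|x-y|\big(\mathfs M|D^2\mathcal V|^x+\mathfs M|D^2\mathcal V|^y\big).
\end{equation*}
Since $|x-y||\nabla\calK_h|\lesssim \calK_h$ for the kernel of Appendix~\ref{app:compcrit}, the commutator is bounded by $C\iintO{\calK_h|n^x-n^y|(\mathfs M|D^2\mathcal V|^x+\mathfs M|D^2\mathcal V|^y)(\upsilon^x+\upsilon^y)}$. The diagonal pieces, such as $\mathfs M|D^2\mathcal V|^x\upsilon^x$, are absorbed by the damping once $\lambda$ is large. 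The cross pieces, such as $\mathfs M|D^2\mathcal V|^x\upsilon^y$, need the usual Bresch--Jabin trick. We split according to whether $\upsilon^y\le\upsilon^x$, in which case the damping absorbs the term. Otherwise we use $|\nabla\mathcal V^x-\nabla\mathcal V^y|\le C|x-y|\,\mathfs M|D^2\mathcal V|$ evaluated at the point carrying the larger weight, up to a translation average. The leftover is controlled in the stated way: we write $\mathfs M|D^2\mathcal V|\lesssim$ the translation-averaged quantity and use the $L^2$ bound on $D^2\mathcal V$ (from Brinkman and $p\in L^2$) together with $\|\calK_h\|_{L^1}\sim|\log h|$. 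This is where the $|\log h|^{1/2}$ enters. The $D^2 W$ part of the commutator can alternatively be handled through the $L^2$ bounds.

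\textbf{The $\Delta\mathcal V$ term and growth.} For the $\Delta\mathcal V$ term we use Brinkman's law, $\nu\Delta W = W-p$. The $\Delta V$ and $W$ differences are bounded by Cauchy--Schwarz against $\calK_h$, using $\int\calK_h(n^x+n^y)^2\lesssim|\log h|\,\|n\|_{L^2}^2$, which produces the two $|\log h|^{1/2}(\cdots)^{1/2}$ terms. The pressure part $-(n^x+n^y)\mathrm{sign}(n^{(i),x}-n^{(i),y})(p^x-p^y)$ cannot be signed for one species alone. We therefore sum over $i=1,2$ and use monotonicity: since $p=n^k$ is increasing in the total $n$, the terms with $\mathrm{sign}(n^x-n^y)(p^x-p^y)\ge0$ have a good sign. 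The remaining mixed contributions, where the species differences have opposite sign, are bounded by $|n^{(j),x}-n^{(j),y}|$ of the other species times pressures. This is why both species' initial oscillations appear on the right-hand side. The growth terms are treated the same way: $G^{(i)}$ is Lipschitz-decreasing in $p$, and $|G|\le\Gamma(1+p^s)$ is combined with $p\in L^{2s}$ uniformly, which yields Gronwall-type terms plus $C|\log h|^{1/2}$.

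\textbf{Closing the estimate.} We integrate in time, discard the nonpositive weight time-derivative contributions, and sum the two species. Applying Gronwall with $\upsilon(0)=1$ then gives \eqref{eq:WeightEstimate}. Every constant depends only on the $L^\infty(0,T;L^1\cap L^2)$ bounds of Lemma~\ref{lem:propagation-of-LP-for-n} and Corollary~\ref{cor:LebesguePressure}, hence is independent of $\eta$.
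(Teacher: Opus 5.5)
\textbf{Gap: the pressure and growth cross terms have no sign when you sum only the two species.} Once $\Delta\mathcal V^x\upsilon^x|n^{(i),x}-n^{(i),y}|$ is traded against the damping $-\lambda B_\eta\upsilon_\eta$ (this is why $B_\eta$ contains $|\Delta\mathcal V_\eta|$), symmetrisation turns the divergence terms into $2(\Delta\mathcal V^x-\Delta\mathcal V^y)\sigma_i n^{(i),x}\upsilon^x$, with $\sigma_i=\sign(n^{(i),x}-n^{(i),y})$. The pressure part is then $-2\nu^{-1}(p^x-p^y)\sigma_i n^{(i),x}\upsilon^x$.

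Consider the mixed case $\sigma_1=\sign(p^x-p^y)=-\sigma_2$. The species sum is $-2\nu^{-1}|p^x-p^y|(n^{(1),x}-n^{(2),x})\upsilon^x$, which can be as large as $2\nu^{-1}|p^x-p^y|n^{(2),x}\upsilon^x$. You propose to bound this by ``the other species' difference times pressures''. That route goes through $|p^x-p^y|\le k\max(n^x,n^y)^{k-1}|n^x-n^y|$ and leaves a coefficient of size $k\max(p^x,p^y)$ in front of a species difference. Since $p_\eta$ is not bounded in $L^\infty$ uniformly in $\eta$ (the approximating data are bounded, but not uniformly), this cannot be closed by Gronwall for the $L^1$-type quantities. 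The constant would also depend on $\eta$ and $k$, which is exactly what the proposition excludes.

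The growth piece $(G^{(i)}(p^x)-G^{(i)}(p^y))n^{(i),y}\sigma_i$ has the same defect. When $\sigma_i\neq\sign(p^x-p^y)$ it equals $+|G^{(i)}(p^x)-G^{(i)}(p^y)|n^{(i),y}$. The bounds $|G|\le\Gamma(1+p^s)$ and $p\in L^{2s}$ only control it by $O(|\log h|)$, which is useless after normalising by $\|\calK_h\|_{L^1}=|\log h|$.

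\textbf{The missing idea: propagate the total density as well.} Alongside $Q_{n^{(1)}_\eta}$ and $Q_{n^{(2)}_\eta}$, you need the weighted oscillation $Q_{n_\eta}$ of $n=n^{(1)}+n^{(2)}$.
\begin{itemize}
\item For the total, $\sign(n^x-n^y)=\sign(p^x-p^y)$, so its pressure term is exactly $-2\nu^{-1}|p^x-p^y|n^x\upsilon^x$.
\item Because $G^{(i)}$ is decreasing, its growth term contains $-|G^{(i)}(p^x)-G^{(i)}(p^y)|n^{(i),y}$.
\item Since $n^{(1)}+n^{(2)}-n=0$, these cancel exactly the worst-case species contributions $+2\nu^{-1}|p^x-p^y|n^{(i),x}\upsilon^x$ and $+|G^{(i)}(p^x)-G^{(i)}(p^y)|n^{(i),y}$.
\item What remains of the growth, $G^{(i)}(p^x)\big(|n^{(i),x}-n^{(i),y}|+(n^{(i),x}-n^{(i),y})\sigma\big)$, is at most $2C_G|n^{(i),x}-n^{(i),y}|$. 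This follows by distinguishing $p^x\le p_L$ from $p^x>p_L$, so no integrability of $p$ is used.
\end{itemize}
This is also the real reason both initial oscillations appear on the right-hand side: $Q_{n_\eta}(0)\le Q_{n^{(1)}_\eta}(0)+Q_{n^{(2)}_\eta}(0)$.

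\textbf{Secondary issue: where $|\log h|^{1/2}$ comes from in the commutator.} $L^2$ bounds on $D^2\mathcal V$ together with $\|\calK_h\|_{L^1}\sim|\log h|$ give only $O(|\log h|)$. The correct route is as follows.
\begin{itemize}
\item Symmetrise so that only $\upsilon^x$ appears.
\item Write $\mathsf D_{|z|}\nabla\mathcal V^y=(\mathsf D_{|z|}\nabla\mathcal V^y-\mathsf D_{|z|}\nabla\mathcal V^x)+\mathsf D_{|z|}\nabla\mathcal V^x$.
\item Absorb the second piece by the damping.
\item Control the first with the Bresch--Jabin square-function lemma $\int\calK_h(z)\|\mathsf D_{|z|}u-\mathsf D_{|z|}u(\cdot+z)\|_{L^2}\,\mathrm{d}z\le C|\log h|^{1/2}\|u\|_{H^1}$, applied to $u=\nabla\mathcal V_\eta$.
\end{itemize}
Also, $|z||\nabla\calK_h|\lesssim\calK_h$ fails on $1\le|z|\le 2$. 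This costs an extra, harmless, $C|\log|\log h||$.
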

\begin{proof}
     
    For $i =1,2$, we define the quantities
 \[
    Q_{n^{(i)}_\eta}(t) := \int_{\R^{2d}}\calK_h^{x,y} \abs{ n_\eta^{(i),x}-  n_\eta^{(i),y}} (\upsilon_\eta^{x}+ \upsilon_\eta^{y}) \dx x \dx y,
 \]
which we will propagate in time. Then, to close the ensuing Gronwall estimate, we will also need to define
 \[
    Q_{n_\eta}(t) := \int_{\R^{2d}} \calK_h^{x,y} \abs{ n_\eta^{x} -  n_\eta^{y}}(\upsilon_\eta^x+\upsilon_\eta^y) \dx x \dx y.
 \]
Throughout, we suppress the time dependence for brevity and we use the notation from before,
 \begin{equation*}
     \mathcal{V}_\eta^x := W_\eta^x + V^x.
 \end{equation*}
Then, for  two solutions $n_\eta^{(i),x}, n_\eta^{(i),y}$, for $x,y \in \R^d$, associated to the regularised initial data, we find
 \begin{align*}
        \fpartial t &\prt*{n_\eta^{(i),x} -  n_\eta^{(i),y}}
     + \nabla_x \cdot \prt*{-\nabla \mathcal{V}_\eta^x\prt{n_\eta^{(i),x} - n_\eta^{(i),y}}}\\[3pt]
     &+ \nabla_y \cdot \prt*{-\nabla \mathcal{V}_\eta^y\prt{n_\eta^{(i),x} - n_\eta^{(i),y}}} \\[3pt]
     =\, &-\frac 1 2 \prt*{\Delta \mathcal{V}_\eta^x + \Delta \mathcal{V}_\eta^y} \prt{n_\eta^{(i),x} - n_\eta^{(i),y}}\\[3pt]
    &+ \frac 1 2 \prt*{\Delta \mathcal{V}_\eta^x - \Delta \mathcal{V}_\eta^y} \prt*{n_\eta^{(i),x} + n_\eta^{(i),y}}
     + n_\eta^{(i),x} G^{(i)} (p_\eta^x) - n_\eta^{(i),y} G^{(i)} (p_\eta^y).
 \end{align*}
Upon multiplying with $\sigma_i := \mathrm{sign} \prt{n_\eta^{(i),x} - n_\eta^{(i),y}}$, we get
 \begin{align*}
     \begin{split}
         \fpartial t \abs*{n^{(i),x}_\eta - n_\eta^{(i),y}}
         = 
         &- \frac 1 2 \prt*{\Delta \mathcal{V}_\eta^x + \Delta \mathcal{V}_\eta^y} \abs*{n^{(i),x}_\eta - n_\eta^{(i),y}} \\[3pt]
         &+ \frac 1 2 \prt*{\Delta \mathcal{V}_\eta^x - \Delta \mathcal{V}_\eta^y} \prt{ n^{(i),x}_\eta + n_\eta^{(i),y}} \sigma_i \\[3pt]
         & + \nabla_x \cdot \prt*{\nabla \mathcal{V}_\eta^x  \abs*{n^{(i),x}_\eta - n_\eta^{(i),y}}} \\[3pt]
         & + \nabla_y \cdot \prt*{\nabla \mathcal{V}_\eta^y  \abs*{n^{(i),x}_\eta - n_\eta^{(i),y}}} \\[3pt]
         & + \prt*{n^{(i),x}_\eta G^{(i)}(p_\eta^x) - n^{(i),y}_\eta G^{(i)}(p_\eta^y)} \sigma_i.    \end{split}
 \end{align*}

To compute $\ddt Q_{n^{(i)}_\eta}$, we integrate the above equality in time and use the symmetry of the kernel. This yields
 \begin{align*}
    &Q_{n^{(i)}_\eta}(t) - \;Q_{n^{(i)}_\eta}(0)\\[3pt]
    &= \int_0^t\int_{\R^{2d}}\calK_h^{x,y} \prt*{\Delta \mathcal{V}_\eta^x - \Delta \mathcal{V}_\eta^y}  \prt*{n_\eta^{(i),x} + n_\eta^{(i),y} + (n_\eta^{(i),x} - n_\eta^{(i),y})}\sigma_i \upsilon_\eta^{x} \dx x\dx y\dx\tau \\[6pt]
    &\qquad - \int_0^t \int_{\R^{2d}}\nabla \calK_h^{x,y}\cdot \prt*{\nabla \mathcal{V}_\eta^x - \nabla \mathcal{V}_\eta^y } \abs*{n^{(i),x}_\eta - n_\eta^{(i),y}}(\upsilon_\eta^{x}+\upsilon_\eta^{y}) \dx x\dx y \dx \tau \\[6pt]
    &\qquad + \int_0^t\int_{\R^{2d}}\calK_h^{x,y} \prt*{n_\eta^{(i),x} G^{(i)}(p_\eta^x) - n_\eta^{(i),y} G^{(i)}(p_\eta^y) } \sigma_i (\upsilon_\eta^{x}+\upsilon_\eta^{y}) \dx x\dx y \dx\tau\\[6pt]
    &\qquad + 2\int_0^t\int_{\R^{2d}} \calK_h^{x,y} | n_\eta^{(i),x}- n_\eta^{(i),y}| \prt*{\fpartial t \upsilon_\eta^{x}-\nabla \mathcal{V}_\eta^x\cdot\nabla \upsilon_\eta^{x}-\Delta \mathcal{V}_\eta^x\upsilon_\eta^{x}}\dx x \dx y \dx \tau
    \\[3pt]
    &\equiv \mathcal A_1^{(i)} + \mathcal A_2^{(i)} + \mathcal A_3^{(i)} + \mathcal A_4^{(i)}.
 \end{align*}
We obtain a similar equality for the quantity $Q_{n_\eta}$ involving the total population density -- the corresponding terms on the right-hand side will be denoted by $\mathcal{A}^{(tot)}_\ell$, for $\ell=1,\dots,4$. In particular, note that
\begin{align*}
    \mathcal A^{(tot)}_3 = \sum_{i=1}^2\int_0^t \int_{\R^{2d}}\calK_h^{x,y} \Big(n_\eta^{(i),x} G^{(i)}(p_\eta^x) &- n_\eta^{(i),y} G^{(i)}(p_\eta^y)\Big) (\upsilon_\eta^{x}+\upsilon_\eta^{y}) \sigma \dx x\dx y \dx\tau,
\end{align*}
where $\sigma = \sign( n_\eta^{x}- n_\eta^{y})$. The other corresponding terms are the same, up to removing the $(i)$ superscript. We will now treat each term individually, making use of some cancellations between the individual species and the total population.\\

\underline{The highest order term}:
Notice that
\begin{align*}
    a_1\i &\coloneqq \prt{\Delta \mathcal{V}_\eta^x - \Delta \mathcal{V}_\eta^y}  \prt{n_\eta^{(i),x} + n_\eta^{(i),y}+(n_\eta^{(i),x} - n_\eta^{(i),y})} \sigma_i \upsilon_\eta^{x}\\[\alnspace]
    &= 2\prt*{\Delta  \mathcal{V}_\eta^x - \Delta  \mathcal{V}_\eta^y} \sigma_i  n_\eta^{(i),x}\upsilon_\eta^{x} \\[\alnspace]
    &= 2\brk*{\nu^{-1}\prt*{W_\eta^x - p_\eta^x - W_\eta^y + p_\eta^y} + \prt*{\Delta V^{x} - \Delta V^{y}}} \sigma_i  n_\eta^{(i),x}\upsilon_\eta^{x} \\[\alnspace]
    &\leq 2 \nu^{-1} \abs*{W_\eta^x-W_\eta^y} n_\eta^{(i),x} + 2 \nu^{-1} \abs*{p_\eta^x - p_\eta^y}   n_\eta^{(i),x}\upsilon_\eta^{x}\\[\alnspace]
    &\quad+ 2\abs*{\Delta V^{x} - \Delta V^{y}} n_\eta^{(i),x},
\end{align*}
using the fact that $|\sigma_i|\leq 1$ and $0\leq \upsilon_\eta\leq 1$. The term coming from the joint population reads
\begin{align*}
    a_1^{(tot)} &\coloneqq 2\prt*{\Delta  \mathcal{V}_\eta^x - \Delta  \mathcal{V}_\eta^y} \sigma  n_\eta^{x} \upsilon_\eta^{x}
    \\[\alnspace]
    &= 2\brk*{\nu^{-1} \prt*{W_\eta^x - p_\eta^x - W_\eta^y + p_\eta^y} + \prt*{\Delta V^{x} - \Delta V^{y}}} \sigma  n_\eta^{x} \upsilon_\eta^{x}\\[\alnspace]
    &\leq 2 \nu^{-1} \abs*{W_\eta^x-W_\eta^y} n_\eta^{x} - 2\nu^{-1} \abs*{p_\eta^x - p_\eta^y}   n_\eta^{x}\upsilon_\eta^{x}\\[\alnspace]
    &\quad + 2\abs*{\Delta V^{x} - \Delta V^{y}} n_\eta^{x},
\end{align*}
where we used the fact that  
$$
    \sigma = \sign( n_\eta^{x} -  n_\eta^{y}) = \sign(p_\eta^x-p_\eta^y).
$$
Adding all three terms up yields
\begin{align*}
    a_1\1 + a_1\2 + a_1^{(tot)} \leq 4 \nu^{-1} \abs*{W_\eta^x-W_\eta^y} n_\eta^{x} + 4\abs*{\Delta V^{x} - \Delta V^{y}} n_\eta^{x},
\end{align*}
due to the cancellation $n_\eta\1 + n_\eta\2 -n_\eta = 0$. Thus, using H\"older's inequality, we obtain the following bound
\begin{equation}
\begin{aligned}
\label{eq:A1-terms}
    &\mathcal{A}_1\1 + \mathcal{A}_1\2 + \mathcal{A}_1^{(tot)}\\[3pt]
    &\quad\leq 4 \nu^{-1} \norm{n_\eta}_{L^2(0,T;L^2(\R^d))} \norm{\calK_h}_{L^1(\R^d)}^{1/2}\prt*{\int_0^t\iintO{\calK_h^{x,y}\abs*{W_\eta^x-W_\eta^y}^2}\dx \tau}^{1/2}\\[3pt]
    &\qquad+ 4\norm{n_\eta}_{L^2(0,T;L^2(\R^d))} \norm{\calK_h}_{L^1(\R^d)}^{1/2}\prt*{\int_0^t\iintO{\calK_h^{x,y}\abs*{\Delta V^{x}-\Delta V^{y}}^2}\dx \tau}^{1/2}.
\end{aligned}
\end{equation}

\underline{The reaction terms}: 
For the individual species we can write
\begin{align*}
    a_3\i &:= \prt*{ n_\eta^{(i),x} G\i(p_\eta^x) -  n_\eta^{(i),y} G\i (p_\eta^y)} \sigma_i \\[\alnspace]
    &= G\i (p_\eta^x) \abs*{ n_\eta^{(i),x}- n_\eta^{(i),y}}  + \prt*{G\i (p_\eta^x)-G\i (p_\eta^y)} n_\eta^{(i),y} \sigma_i\\[\alnspace]
    &\leq  G\i (p_\eta^x) \abs*{ n_\eta^{(i),x}- n_\eta^{(i),y}}  + \abs*{G\i (p_\eta^x)-G\i (p_\eta^y)} n_\eta^{(i),y},
\end{align*}
having used the fact that $\abs{\sigma_i} \leq 1$. 
Similarly, for the sum we obtain 
\begin{align*}
    a_3^{(tot)}&:=\prt*{n_\eta^{(1),x} G\1(p_\eta^x) - n_\eta^{(1),y} G\1 (p_\eta^y)}\sigma\\[\alnspace]
    &\qquad+ \prt*{n_\eta^{(2),x} G\2(p_\eta^x) - n_\eta^{(2),y} G\2 (p_\eta^y)} \sigma \\[\alnspace]
    &= G\1 (p_\eta^x) \prt*{n_\eta^{(1),x}-n_\eta^{(1),y}} \sigma   + \prt*{G\1 (p_\eta^x)-G\1 (p_\eta^y)} n_\eta^{(1),y}\sigma\\[\alnspace]
    &\qquad + G\2 (p_\eta^x) \prt*{n_\eta^{(2),x}-n_\eta^{(2),y}} \sigma  + \prt*{G\2 (p_\eta^x)-G\2 (p_\eta^y)}n_\eta^{(2),y} \sigma\\[\alnspace]
    &=G\1 (p_\eta^x) \prt*{n_\eta^{(1),x}-n_\eta^{(1),y}} \sigma  - \abs*{G\1 (p_\eta^x)-G\1 (p_\eta^y)} n_\eta^{(1),y}\\[\alnspace]
    &\qquad + G\2 (p_\eta^x) \prt*{n_\eta^{(2),x}-n_\eta^{(2),y}} \sigma   -\abs*{G\2 (p_\eta^x)-G\2 (p_\eta^y)}n_\eta^{(2),y},
\end{align*}
where, in the last equality, we use again the facts that  
$\sign( n_\eta^{x} -  n_\eta^{y}) = \sign(p_\eta^x-p_\eta^y)$,
and that the functions $G\i$ are decreasing.
Adding the three contributions yields
\begin{align*}
   a_3\1+a_3\2+a_3^{(tot)} \leq \sum_{i=1}^2\brk*{G\i (p_\eta^x) \abs*{ n_\eta^{(i),x}- n_\eta^{(i),y}} +  G\i (p_\eta^x) \prt*{ n_\eta^{(i),x}- n_\eta^{(i),y}}  \sigma}. 
\end{align*}

Recall that the properties of $G\i$ imply that for each $i=1,2$ there is a constant $p_L\i$ such that $G\i(p)<0$ for $p>p_L\i$.
We can therefore estimate the terms on the right-hand side of the last inequality by considering the following cases:
\begin{itemize}
    \item if $p_\eta^x \leq p_L\i$, then
    \begin{equation*}
        G\i (p_\eta^x) \abs*{ n_\eta^{(i),x}- n_\eta^{(i),y}} +  G\i (p_\eta^x) \prt*{ n_\eta^{(i),x}- n_\eta^{(i),y}}  \sigma \leq 2C_G\abs*{ n_\eta^{(i),x}- n_\eta^{(i),y}};
    \end{equation*}
    \item if $p_\eta^x > p\i_L$, then
    \begin{align*}
	   &G\i (p_\eta^x) \abs*{ n_\eta^{(i),x}- n_\eta^{(i),y}} +  G\i (p_\eta^x)     \prt*{ n_\eta^{(i),x}- n_\eta^{(i),y}}  \sigma\\
	   &\leq
	   G\i (p_\eta^x) \abs*{ n_\eta^{(i),x}- n_\eta^{(i),y}} +  |G\i (p_\eta^x)|        \abs*{ n_\eta^{(i),x}- n_\eta^{(i),y}}\\
	   &=0
    \end{align*}
\end{itemize}

Finally, the entire sum of $\mathcal{A}_3$-terms can be estimated as
\begin{equation}\label{eq:A3-terms}
    \mathcal{A}_3\1 + \mathcal{A}_3\2 + \mathcal{A}_3^{(tot)} \leq 2C_G\int_0^t Q_{n\1_\eta}(\tau)\dx \tau + 2C_G\int_0^t Q_{n\2_\eta}(\tau)\dx \tau.
\end{equation}

\underline{The commutator}:
First, we make use of the following inequality
$$
    |\nabla \mathcal{V}_\eta^x-\nabla \mathcal{V}_\eta^y|\leq C|x-y|\prt*{\mathsf{D}_{|x-y|}\nabla \mathcal{V}_\eta^x+\mathsf{D}_{|x-y|}\nabla \mathcal{V}_\eta^y},
$$
where 
$$
    \mathrm{D}_\rho\nabla \mathcal{V}_\eta^x=\frac{1}{\rho} \int_{|z|\leq \rho} \frac{\abs*{D^2 \mathcal{V}_\eta (x+z)}}{|z|^{d-1}}\dx{z},
$$
the proof of which can be found in~\cite[Lemma 3.1]{Jabin2010}.

Thus, using also the symmetry of $\calK_h$, we get
\begin{align*}
    \mathcal{A}_2 & :=\mathcal{A}_2\1 + \mathcal{A}_2\2 + \mathcal{A}_2^{(tot)}\\
    &\leq C
    \int_0^t\int_{\R^{2d}}|x-y|\abs*{\nabla\calK_h^{x,y}}\prt*{\mathsf{D}_{|x-y|}\nabla \mathcal{V}_\eta^x+
    \mathsf{D}_{|x-y|} \nabla \mathcal{V}_\eta^y}\times  \\
    &\hspace{3cm}\times\prt*{\sum_{i=1}^2\abs*{ n_\eta^{(i),x}- n_\eta^{(i),y}} + \abs*{ n_\eta^{x}- n_\eta^{y}}}\upsilon_\eta^{x}\dx{x}\dx{y}\dx\tau.
\end{align*}
Now, we split the $\R^{2d}$ domain into two regions: the annulus $\{1 < |x-y| < 2\}$ and its complement. On the latter we have the inequality $|z||\nabla\calK_h(z)| \leq C \calK_h(z)$, while on the annulus we have $|z||\nabla \calK_h(z)| \leq C (\calK_h(z) + |\log |\log h||)$ --- both estimates follow from the precise definition of the kernel and are proved in Lemma~\ref{lem:prop-gradK}. Therefore, the term $\mathcal{A}_2$ can be estimated as
\begin{align*}
    \mathcal{A}_2 &\leq C|\log |\log h|| + C\sum_{i=1}^2\int_0^t\int_{\R^{2d}} \calK_h^{x,y}\prt*{\mathsf{D}_{|x-y|}\nabla \mathcal{V}_\eta^x+
    \mathsf{D}_{|x-y|} \nabla \mathcal{V}_\eta^y}\\
    &\hspace{8cm}\abs*{ n_\eta^{(i),x}- n_\eta^{(i),y}}\upsilon_\eta^{x}\dx{x}\dx{y}\dx\tau,
\end{align*}
where the first constant depends on the $L^\infty(0,T;L^1(\Rd)\cap L^2(\Rd))$ norm of $n\i_\eta$ and the $L^1(0,T;L^2(\Rd))$ norm of $D^2\mathcal{V}_\eta$.
We will now focus on the last term of the previous inequality --- let us denote it by $\tilde{\mathcal{A}}_2$.
Using that
$$
    \mathsf{D}_{|x-y|}\nabla \mathcal{V}_\eta^y+\mathsf{D}_{|x-y|}\nabla \mathcal{V}_\eta^x = \mathsf{D}_{|x-y|}\nabla \mathcal{V}_\eta^y- \mathsf{D}_{|x-y|}\nabla \mathcal{V}_\eta^x + 2\mathsf{D}_{|x-y|}\nabla \mathcal{V}_\eta^x,
$$ 
and changing the variables $z=x-y$, we may apply the Cauchy-Schwarz inequality
and use the uniform $L^\infty(0,T; L^2(\Rd))$-bounds on $n\i_\eta$ and $n_\eta$ to deduce
\begin{align*}
    \tilde{\mathcal{A}}_2
    \leq&\ C(\norm{n\1_\eta}_{L^\infty(0,T;L^2(\Rd))} + \norm{n\2_\eta}_{L^\infty(0,T;L^2(\Rd))})\times\\[\alnspace]
    &\quad\times\int_0^t\int_{\R^{d}} \calK_h(z) \norm*{\mathsf{D}_{|z|}\nabla \mathcal{V}_\eta (\cdot) - \mathsf{D}_{|z|}\nabla \mathcal{V}_\eta (\cdot+z)}_{L^2(\Rd)}\dx{z}\dx\tau \\[\alnspace]
    & + C\sum_{i=1}^2 \int_0^t\iintO{\calK_h^{x,y} \mathsf{D}_{|x-y|} \nabla \mathcal{V}_\eta^x\abs*{ n_\eta^{(i),x}- n_\eta^{(i),y}} \upsilon_\eta^{x}}\dx\tau.
\end{align*}
We may bound $\mathsf{D}_{|x-y|}\nabla \mathcal{V}_\eta$ by the maximal operator $\mathsf{M}\abs*{D^2 \mathcal{V}_\eta}$, whence
\begin{align}
\begin{split}\label{estimA1}
    \tilde{\mathcal{A}}_2 \leq & 
    C\int_0^t\int_{\R^{d}} \calK_h^z \norm*{\mathsf{D}_{|z|}\nabla \mathcal{V}_\eta (\cdot) - \mathsf{D}_{|z|}\nabla \mathcal{V}_\eta (\cdot+z)}_{L^2(\Rd)}\dx{z}\dx\tau \\[\alnspace]
    & + C\sum_{i=1}^2 \int_0^t\iintO{\calK_{h}^{x,y} \mathsf{M}\abs*{D^2 \mathcal{V}_\eta^x}\abs*{ n_\eta^{(i),x}- n_\eta^{(i),y}} \upsilon_\eta^{x}}\dx\tau.
\end{split}
\end{align}
The last two terms on the right-hand side of inequality~\eqref{estimA1} will be controlled by the term $\mathcal{A}_4$ and the equation for the weight $\upsilon_\eta$.

\smallskip
From Eq.~\eqref{eq:weight}, we have
\begin{align*}
    \mathcal{A}_4 &:= \mathcal{A}_4\1 + \mathcal{A}_4\2 + \mathcal{A}_4^{(tot)}\\
    &= \int_0^t\iintO{\calK_h^{x,y} \sum_{i=1}^2\abs*{ n_\eta^{(i),x} - n_\eta^{(i),y}} (-2\lambda B_\eta(x)-2\Delta \mathcal{V}_\eta^x) \upsilon_\eta^{x}}\dx\tau \\[\alnspace]
    &\quad + \int_0^t\iintO{\calK_h^{x,y}\abs*{ n_\eta^{x}- n_\eta^{y}}\prt*{-2\lambda B_\eta(x)-2\Delta \mathcal{V}_\eta^x} \upsilon_\eta^{x}}\dx\tau.
\end{align*}    
    
Therefore, combining the latter equality with Eq.~\eqref{estimA1}, we deduce
\begin{align*}
    \tilde{\mathcal{A}}_2+\mathcal{A}_4 \leq & \ 
    C\int_0^t\int_{\R^{d}} \calK_h(z) \norm*{\mathsf{D}_{|z|}\nabla \mathcal{V}_\eta (\cdot) - \mathsf{D}_{|z|}\nabla \mathcal{V}_\eta (\cdot+z)}_{L^2(\Rd)}\dx{z}\dx\tau \\[\alnspace]
    &+ \int_0^t\iintO{ \calK_h^{x,y}\sum_{i=1}^2 \abs*{ n_\eta^{(i),x}- n_\eta^{(i),y}} \upsilon_\eta^{x}\times\\[\alnspace]
    &\qquad\quad
    \times\prt*{C\mathsf{M}|D^2 \mathcal{V}_\eta^x| + 2|\Delta \mathcal{V}_\eta^x| - 2\lambda B_\eta(x)}}\dx\tau.
\end{align*}
From the choice of $B_\eta$ in the weight equation Eq.~\eqref{eq:weight}, we can find $\lambda$
large enough, independently of $\eta$, such that 
\begin{align*}
    \mathcal{A}_2+\mathcal{A}_4 &\leq  
    C\int_0^t\int_{\R^{d}} \calK_h(z) \norm*{\mathsf{D}_{|z|}\nabla \mathcal{V}_\eta (\cdot) - \mathsf{D}_{|z|}\nabla \mathcal{V}_\eta (\cdot+z)}_{L^2}\dx{z}\dx\tau + C|\log |\log h||.
\end{align*}

We shall now employ \cite[Lemma 6.3]{BreschJabin} which provides a constant $C>0$ such that for any $u\in H^1(\R^d)$,
\begin{equation*}
    \int_{\R^{d}} \calK_h(z) \norm*{\mathsf{D}_{|z|}u(\cdot) - \mathsf{D}_{|z|}u(\cdot+z)}_{L^2(\R^d)}\dx{z}
    \leq C |\log h|^{1/2} \norm*{u}_{H^1(\R^d)}.
\end{equation*}
In our context, observing that $u=\nabla \mathcal{V}_\eta$ belongs to $H^1(\Rd)$, uniformly in $\eta$, by Assumption \ref{assu:integrable-data}, we have the following bound
\begin{equation*}
    \int_{\R^{d}} \calK_h(z) \norm*{\mathsf{D}_{|z|}\nabla \mathcal{V}_\eta(\cdot) - \mathsf{D}_{|z|}\nabla \mathcal{V}_\eta(\cdot+z)}_{L^2(\R^d)}\dx{z}
    \leq C |\log h|^{1/2} \norm*{\nabla \mathcal{V}_\eta}_{H^1(\R^d)}.
\end{equation*}
Thus, we finally have the estimate
\begin{align}\label{eq:A2A4-terms}
    \mathcal{A}_2+\mathcal{A}_4  \leq  C |\log h|^{1/2} \int_0^t\norm*{\mathcal{V}_\eta(\tau)}_{H^{2}(\R^d)}\dx \tau + C|\log |\log h||\leq C|\log h|^{1/2},
\end{align}
for $h$ small enough, where the constant depends only on the $L^\infty(0,T; L^1(\Rd)\cap L^2(\Rd))$-norm of the densities $n\i_\eta$ and the $L^1(0,T; H^{2}(\Rd))$-norm of the velocity and drift potentials $W_\eta$ and $V$. Thus, this constant is independent of $\eta$.\\

\underline{Conclusion}:
Collecting all the estimates~\eqref{eq:A1-terms}, \eqref{eq:A3-terms}, and \eqref{eq:A2A4-terms}, we deduce
\begin{align*}
    Q_{n\1_\eta}(t) &+ Q_{n\2_\eta}(t) + Q_{n_\eta}(t) \\[3pt]
    &\leq C\int_0^t Q_{n\1_\eta}(\tau) + Q_{n\2_\eta}(\tau)\dx\tau + (Q_{n\1_\eta}(0) + Q_{n\2_\eta}(0) + Q_{n_\eta}(0)) + C|\log h|^{1/2}\\[3pt]
    &\quad + C|\log h|^{1/2}\prt*{\int_0^T\iintO{\calK_h^{x,y}\abs*{W_\eta^x-W_\eta^y}^2}\dx\tau}^{1/2}\\[3pt]
    &\quad + C|\log h|^{1/2}\prt*{\int_0^T\iintO{\calK_h^{x,y}\abs*{\Delta V^{x}-\Delta V^{y}}^2}\dx\tau}^{1/2},
\end{align*}
where the constants are independent of $\eta$.
Let us stress at this point that all the constants above are independent of $k$ whenever the initial densities and pressure are uniformly bounded in $L^1(\Rd)\cap L^2(\Rd)$.
Applying Gronwall's lemma, we obtain~\eqref{eq:WeightEstimate}.
\end{proof}

\subsubsection{Propagation of compactness -- removing the weights}
\begin{proposition}
	\label{prop:CompactnessEta}
	\begin{enumerate}
    	\item The following estimate holds for any $t\in[0,T]$ and any $\zeta\in(0,1)$:
	    \begin{equation}\label{eq:CompactnessEstimateEta}
    	\begin{aligned}
        		&\int_{\R^{2d}} \overline{\calK}_h^{x,y}|n_\eta^{(i),x}  - n_\eta^{(i),y}| \dx x \dx y\\[3pt]
	        &\leq \frac{C}{\zeta}\sum_{j=1}^2\int_{\R^{2d}} \overline{\calK}_h^{x,y} |n_\eta^{(j),\mathrm{in},x} - n_\eta^{(j),\mathrm{in},y}| \dx x\dx y\\[3pt]
	        &\quad + \frac{C}{\zeta}\prt*{\int_0^T\iintO{\overline{\calK}_h^{x,y}\abs*{W_\eta^x-W_\eta^y}^2}\dx t}^{1/2}\\[3pt]
	        &\quad + \frac{C}{\zeta}\prt*{\int_0^T\iintO{\overline{\calK}_h^{x,y}\abs*{\Delta V^x-\Delta V^y}^2}\dx t}^{1/2}\\[3pt]
	        &\quad +\frac{C}{\zeta}|\log h|^{-1/2} + \frac{C}{|\log\zeta|},
    	\end{aligned}
    \end{equation}
    where all constants are independent of $\eta$. 
    \item The densities $n_\eta\i$ satisfy
    \begin{equation}\label{eq:CompactnessEta}
        \lim_{h\to 0}\;\limsup_{\eta\to0}\;\sup_{t\in[0,T]}\;\int_{\R^{2d}} \overline{\calK}_h^{x,y} \abs*{n_\eta^{(i),x} -  n_\eta^{(i),y}} \dx x \dx y = 0.
    \end{equation}
\end{enumerate}
    
\end{proposition}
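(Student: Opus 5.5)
We follow the Bresch--Jabin procedure for removing weights. Recall from Appendix~\ref{app:compcrit} that $\overline{\calK}_h = \calK_h/\norm{\calK_h}_{L^1(\Rd)}$ with $\norm{\calK_h}_{L^1(\Rd)}\sim|\log h|$. We start from the weighted estimate \eqref{eq:WeightEstimate} of Proposition~\ref{prop:CompactnessEtaWeight} and divide it by $\norm{\calK_h}_{L^1(\Rd)}$.

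\emph{Normalising the weighted estimate.} The initial-data terms become the $\overline{\calK}_h$-initial terms. The term $C|\log h|^{1/2}$ becomes $C|\log h|^{-1/2}$. For the $W_\eta$- and $\Delta V$-terms we use
\[
|\log h|^{1/2}\Bigl(\int\calK_h|\cdot|^2\Bigr)^{1/2}\big/|\log h| = C\Bigl(\int\overline{\calK}_h|\cdot|^2\Bigr)^{1/2}.
\]
This yields a bound for the normalised weighted quantity
\[
\int \overline{\calK}_h^{x,y}|n_\eta^{(i),x}-n_\eta^{(i),y}|(\upsilon_\eta^x+\upsilon_\eta^y)
\]
by the right-hand side of \eqref{eq:CompactnessEstimateEta} without the factors $1/\zeta$ and without the term $C/|\log\zeta|$.

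\emph{Splitting by the weight.} Fix $\zeta\in(0,1)$ and split $\R^{2d}$ according to whether $\upsilon_\eta^x\geq\zeta$ or $\upsilon_\eta^y\geq\zeta$, or both are $<\zeta$.
\begin{itemize}
\item On the first region, $1\leq(\upsilon_\eta^x+\upsilon_\eta^y)/\zeta$, which produces the factor $1/\zeta$ in front of the bound above.
\item On the complementary region, bound $|n^{(i),x}_\eta-n^{(i),y}_\eta|\leq n^{(i),x}_\eta+n^{(i),y}_\eta$ and use symmetry. It then suffices to bound
\[
\int_{\{\upsilon_\eta<\zeta\}} n^{(i)}_\eta(x)\int\overline{\calK}_h(x-y)\dx y\dx x=\int_{\{\upsilon_\eta<\zeta\}} n^{(i)}_\eta\dx x .
\]
On $\{\upsilon_\eta<\zeta\}$ we have $|\log\upsilon_\eta|>|\log\zeta|$, so by Proposition~\ref{prop:w}(ii) this is at most $|\log\zeta|^{-1}\int n^{(i)}_\eta|\log\upsilon_\eta|\leq C\lambda/|\log\zeta|$.
\end{itemize}
Since $\lambda$ was fixed independently of $\eta$, this gives \eqref{eq:CompactnessEstimateEta}. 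One point needs care here: Proposition~\ref{prop:w}(ii) is stated for a.e.\ $t$. To get all $t$ we would use the time continuity of $n_\eta$ for fixed $\eta$ (Theorem~\ref{thm:Main1}) together with lower semicontinuity.

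\emph{Part (2): taking the limits.} We take $\sup_t$, then $\limsup_{\eta\to0}$, then $h\to0$, and only afterwards $\zeta\to0$. Each term is handled as follows.
\begin{itemize}
\item \emph{Initial data.} Since $n^{(j),\mathrm{in}}_\eta\to n^{(j),\mathrm{in}}$ in $L^1$, and $f\mapsto\int\overline{\calK}_h|f^x-f^y|$ is $2\norm{f}_{L^1}$-Lipschitz, the $\limsup_\eta$ equals the same quantity for the fixed $L^1$ function $n^{(j),\mathrm{in}}$. This tends to $0$ as $h\to0$ by the standard characterisation of $L^1$-compactness, in the direction already contained in Lemma~\ref{lem:CompactnessCriterion}.
\item \emph{Potential $W_\eta$.} By Proposition~\ref{prop:CompactW_eta}, $W_\eta\to W$ in $L^2(0,T;L^2)$. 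Since $\overline{\calK}_h$ has unit mass, the $W$-term converges to the corresponding term for $W$, which vanishes as $h\to0$ by $L^2$-continuity of translations.
\item \emph{Drift $\Delta V$.} Here $\Delta V\in L^\infty(0,T;L^\infty)$ only, so we need a substitute for $L^2$-integrability. We would use $\nabla V\in L^\infty(0,T;H^1)$, which gives $\Delta V\in L^2(0,T;L^2)$. Translation continuity in $L^2$ then applies.
\item \emph{Remainder.} The term $C\zeta^{-1}|\log h|^{-1/2}$ vanishes as $h\to0$.
\end{itemize}
What remains is $C/|\log\zeta|$, which we send to zero with $\zeta\to0$.

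The main obstacle is the uniform control of the low-weight set. This relies entirely on Proposition~\ref{prop:w}(ii) and on $\lambda$ being independent of $\eta$, and on making it hold pointwise in $t$. The remaining steps are bookkeeping with the normalisation.
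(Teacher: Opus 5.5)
Your proposal is correct and follows the paper's proof essentially step by step. You split according to whether the weight exceeds $\zeta$, which gives the factor $1/\zeta$ on the high-weight region. On the low-weight region you bound the contribution by $C\lambda/|\log\zeta|$ via Proposition~\ref{prop:w}(ii), and you normalise by $\norm{\calK_h}_{L^1(\Rd)}=|\log h|$. Sending $h\to0$ before $\zeta\to0$ is equivalent to the paper's choice $\zeta(h)=\beta(h)^{1/2}$, and your term-by-term justifications fill in details the paper leaves implicit: $\Delta V\in L^2(0,T;L^2(\Rd))$ from $\nabla V\in L^\infty(0,T;H^1(\Rd))$, $L^1$/$L^2$ continuity of translations, and the passage from a.e.\ $t$ to all $t$ via time continuity of $n_\eta^{(i)}$.
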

\begin{proof}
    We will make use of the properties of the weights proved in Proposition~\ref{prop:w}. To this end, we fix a number $\zeta\in(0,1)$ and introduce the set $\Omega_\zeta:=\{x\;:\; \upsilon_\eta\leq \zeta\}$. Now, we decompose the domain of integration:
    \begin{align*}
        \int_{\R^{2d}} \calK_h^{x,y}| n_\eta^{(i),x} -  n_\eta^{(i),y}| \dx x \dx y &= \int_{x\in\Omega_\zeta^c\lor y\in\Omega_\zeta^c} \calK_h^{x,y}| n_\eta^{(i),x} -  n_\eta^{(i),y}| \dx x \dx y\\[3pt]
        &\;+ \int_{x\in\Omega_\zeta\land y\in\Omega_\zeta} \calK_h^{x,y}| n_\eta^{(i),x} -  n_\eta^{(i),y}| \dx x \dx y. 
    \end{align*}
    For the first term, we observe
    \begin{align*}
        \int_{x\in\Omega_\zeta^c\lor y\in\Omega_\zeta^c} &\calK_h^{x,y}| n_\eta^{(i),x} -  n_\eta^{(i),y}| \dx x \dx y\\[3pt]
        &\leq \frac{1}{\zeta}\int_{\R^{2d}} \calK_h^{x,y}| n_\eta^{(i),x} -  n_\eta^{(i),y}|(\upsilon_\eta^{x}+\upsilon_\eta^{y}) \dx x \dx y\\[3pt]
        &= \frac{1}{\zeta}Q_{n\i_\eta}(t).
    \end{align*}
    For the second term, we use the symmetry of $\calK_h$ to write
    \begin{align*}
        \int_{x\in\Omega_\zeta\land y\in\Omega_\zeta} \calK_h^{x,y}| n_\eta^{(i),x} -  n_\eta^{(i),y}| \dx x \dx y &\leq 2\int_{x\in\Omega_\zeta\land y\in\Omega_\zeta} \calK_h^{x,y} n_\eta^{(i),x}\dx y\dx x \\[3pt]
        &\leq C|\log h| \int_{x\in\Omega_\zeta} n_\eta^{(i),x}\dx x\\[3pt]
        &\leq C|\log h|\frac{1}{|\log\zeta|} \int_{\Rd} n_\eta^{(i),x}|\log \upsilon_\eta^{x}|\dx x\\[3pt]
        &\leq C|\log h|\frac{1}{|\log\zeta|}.
    \end{align*}
    It follows that, for any $\zeta \in (0,1)$,
    \begin{align*}
        \int_{\R^{2d}} \calK_h^{x,y}| n_\eta^{(i),x} -  n_\eta^{(i),y}| \dx x \dx y \leq \frac{1}{\zeta}Q_{n\i_\eta}(t) + C\frac{|\log h|}{|\log\zeta|}.
    \end{align*}
    Dividing by the norm of $\calK_h$ and using estimate~\eqref{eq:WeightEstimate}, we obtain the desired estimate~\eqref{eq:CompactnessEstimateEta}. 
    To deduce~\eqref{eq:CompactnessEta}, we can write
    \begin{equation*}
        \limsup_{\eta\to0}\;\sup_{t\in[0,T]}\;\int_{\R^{2d}} \overline{\calK}_h^{x,y}| n_\eta^{(i),x} -  n_\eta^{(i),y}| \dx x \dx y \leq \frac{\beta(h)}{\zeta(h)} + \frac{C}{|\log\zeta(h)|},
    \end{equation*}
    where $\beta(h)\to0$ as $h\to0$. The choice $\zeta(h) = \beta(h)^{1/2}$ guarantees that the right-hand side vanishes as $h\to0$.
\end{proof}
\begin{corollary}\label{cor:ConvergenceNEta}
    The densities $\{n\i_\eta\}_{\eta>0}$ are compact in $L^q(0,T;L^p(\Rd))$, for any $q\in[1,\infty)$ and $p\in[1,2)$. We call their limits $n\i$.
\end{corollary}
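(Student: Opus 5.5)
We will apply the compactness criterion of Lemma~\ref{lem:CompactnessCriterion} from Appendix~\ref{app:compcrit} to the family $\{n\i_\eta\}_{\eta>0}$. That lemma requires three things: a uniform bound in some $L^p$, control of the spatial oscillations through the normalised kernel $\overline{\calK}_h$, and control of time oscillations through a uniform bound on $\partial_t n\i_\eta$ in a negative Sobolev space.

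\textbf{Uniform bounds.} These are already available. Lemma~\ref{lem:propagation-of-LP-for-n} (with $q=2$), together with the uniform $L^1$ bound, gives $n\i_\eta$ bounded in $L^\infty(0,T;L^1\cap L^2(\Rd))$. Proposition~\ref{prop:TimeDerivatives1} gives $\partial_t n\i_\eta$ bounded in $L^\infty(0,T;W^{-1,\frak r'}(\Rd))$. The spatial part is exactly \eqref{eq:CompactnessEta} from Proposition~\ref{prop:CompactnessEta}.

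One point needs checking before quoting that limit: the right-hand side of \eqref{eq:CompactnessEstimateEta} must vanish as $h\to0$, uniformly in $\eta$.
\begin{itemize}
\item The initial-data terms vanish because $n^{(j),\mathrm{in}}_\eta\to n^{(j),\mathrm{in}}$ in $L^1$. A sequence converging in $L^1$ is compact there, so its $\overline{\calK}_h$-oscillations vanish uniformly in $\eta$.
\item The $W_\eta$ term vanishes because Proposition~\ref{prop:CompactW_eta} gives compactness of $W_\eta$ in $L^2(0,T;L^2(\Rd))$. By the converse direction of the criterion, $\int_0^T\iintO{\overline{\calK}_h^{x,y}|W_\eta^x-W_\eta^y|^2}\dx t\to0$ uniformly in $\eta$.
\item The $\Delta V$ term is a single fixed function in $L^2_{t,x}$, by Assumption~\ref{assu:integrable-data}, so it vanishes as well.
\end{itemize}

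\textbf{Local compactness.} Combining the spatial oscillation control with the time-derivative bound in the Aubin--Lions--Simon type lemma of the appendix gives compactness in $L^1_{\mathrm{loc}}$, in fact in $C([0,T];L^1_{\mathrm{loc}})$ for the weak topology that is then upgraded.

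\textbf{Globalisation.} We add the tightness \eqref{eq:tightness1}, which holds uniformly in $t$ and $\eta$. This yields compactness in $L^q(0,T;L^1(\Rd))$ for every $q<\infty$; time is bounded and the $L^1$ norms are uniformly bounded in $t$, so dominated convergence handles every $q$.

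\textbf{Main obstacle: upgrading from $L^1$ to $L^p$ with $p<2$.} We interpolate:
\begin{equation*}
\norm{f}_{L^p}\le \norm{f}_{L^1}^{1-\theta}\norm{f}_{L^2}^{\theta}, \qquad \tfrac1p=1-\theta+\tfrac\theta2 .
\end{equation*}
We apply this to differences $n\i_{\eta}-n\i_{\eta'}$. The $L^2$ factor is uniformly bounded in time, so a Cauchy sequence in $L^q(0,T;L^1)$ is Cauchy in $L^q(0,T;L^p)$ for all $p\in[1,2)$ and $q<\infty$. The endpoint $p=2$ is excluded precisely because there $\theta=1$ and there is no $L^1$ factor left to make small.

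We then extract a subsequence and call the limits $n\i$. Nonnegativity and the $L^\infty(0,T;L^1\cap L^2)$ bounds pass to the limit by lower semicontinuity.
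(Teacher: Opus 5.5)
Your proposal is correct and follows the paper's proof. As in the paper, Lemma~\ref{lem:CompactnessCriterion}, combined with Proposition~\ref{prop:TimeDerivatives1} and \eqref{eq:CompactnessEta}, gives compactness in $L^1(0,T;L^1_{\mathrm{loc}}(\Rd))$; the tightness \eqref{eq:tightness1} globalises it; and interpolation with the uniform $L^\infty(0,T;L^2(\Rd))$ bound gives all $p\in[1,2)$ and $q<\infty$. Your re-check of the right-hand side of \eqref{eq:CompactnessEstimateEta} is already contained in the proof of Proposition~\ref{prop:CompactnessEta}.

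The aside about $C([0,T];L^1_{\mathrm{loc}})$ ``for the weak topology'' is inaccurate, though unnecessary for the argument: Lemma~\ref{lem:Arzela-ascoli} actually gives strong compactness in $L^\infty(0,T;L^1_{\mathrm{loc}}(\Rd))$.
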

\begin{proof}
    The compactness criterion~\ref{lem:CompactnessCriterion} together with Proposition~\ref{prop:TimeDerivatives1} provide compactness of $n\i_\eta$ in $L^1(0,T;L^1_{\mathrm{loc}}(\Rd))$. Since the sequence of initial data is tight, so is the entire sequence at any later time -- as shown in the proof of Proposition~\ref{prop:CompactW_eta}. Thus, we deduce global compactness in $L^1(0,T;L^1(\Rd))$. Interpolating with the uniform bounds in $L^\infty(0,T;L^2(\Rd))$, we conclude the proof.
\end{proof}

\subsection{The limit $\eta\to0$}
We are now ready to state and prove the main result of this section.
\begin{proposition}\label{prop:ExistenceEtaTo0}
    For any $\eta>0$, let the initial conditions be given as in Subsection~\ref{subsec:DataApproximation} and let $(n\1_\eta, n\2_\eta, W_\eta)$ be an associated weak solution of System~\eqref{eq:main-system}. Then, up to extraction of a subsequence, the following convergences hold:
    \begin{alignat*}{2}
        W_\eta&\to W,\quad &&\text{in $L^2(0,T;L^2(\Rd))$},\\
        n\i_\eta &\to n\i,\quad &&\text{in $L^1(0,T;L^1(\Rd))$},\\
        p_\eta&\to p,\quad &&\text{a.e.\ in $\Rd \times (0,T)$},
    \end{alignat*}
    where the functions $(n\i, p, W)$ are a weak solution of the system
    \begin{align*}
    \partialt {n\i} &= \nabla \cdot(n\i\nabla W) + \nabla \cdot(n\i\nabla V) + n\i G\i(p),\\
    -\nu \Delta W + W &= p\\
    n\i(0,x) &= n^{(i),in}
    \end{align*}
    in the sense of Definition~\ref{def:weak-sol-integrable}.
\end{proposition}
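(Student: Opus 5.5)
The plan is to combine the compactness statements already established and pass to the limit term by term in the weak formulation of Definition~\ref{def:weak-sol-integrable}.

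\textbf{Extracting limits.} Proposition~\ref{prop:CompactW_eta} gives a subsequence with $W_\eta\to W$ in $L^2(0,T;L^2(\Rd))$. Corollary~\ref{cor:ConvergenceNEta} gives a further subsequence with $n\i_\eta\to n\i$ in $L^1(0,T;L^1(\Rd))$, in fact in $L^q(0,T;L^p)$ for all $p<2$, and a.e. Hence $n_\eta\to n:=n\1+n\2$ a.e. Since $z\mapsto z^k$ is continuous, $p_\eta=n_\eta^k\to n^k=:p$ a.e. The uniform bounds of Corollary~\ref{cor:LebesguePressure} in $L^\infty(0,T;L^1\cap L^{\frak q})$ with $\frak q>2s$ give equi-integrability, so Vitali's theorem yields $p_\eta\to p$ strongly in $L^r(0,T;L^r)$ for every $r<\frak q$. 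Passing to the limit in $W_\eta=K_\nu\star p_\eta$ then gives $W=K_\nu\star p$, which is Brinkman's law. The uniform $L^2$ bound on $D^2W_\eta$ gives $\nabla W_\eta\to\nabla W$ strongly in $L^2(0,T;L^2)$, by interpolation or directly from the convolution with $\nabla K_\nu\in L^1$. Nonnegativity and the bounds $n\i\in L^\infty(0,T;L^1\cap L^2)$, $p\in L^\infty(0,T;L^{2s})$ pass to the limit by Fatou and weak-$*$ lower semicontinuity.

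\textbf{Limit in the equation.} Fix $\varphi\in L^2(0,T;H^{\frak s})$. Since $\frak s>d/2+1$, we have $\varphi,\nabla\varphi\in L^2(0,T;L^\infty)$.
\begin{itemize}
\item The drift term $n\i_\eta\nabla V\cdot\nabla\varphi$ converges because $n\i_\eta\to n\i$ in $L^2(0,T;L^1)$ and $\nabla V\in L^\infty$.
\item The term $n\i_\eta\nabla W_\eta\cdot\nabla\varphi$ is the product of $n\i_\eta$, converging strongly in $L^2(0,T;L^{p})$ for $p$ close to $2$, and $\nabla W_\eta$, converging strongly in $L^2(0,T;L^2)$ with uniform $L^\infty(0,T;L^{r})$ bounds for large $r$ (from $p_\eta\in L^{\frak q}$ and Young's inequality). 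Their product therefore converges in $L^1(0,T;L^1)$.
\item For the reaction term, $n\i_\eta G\i(p_\eta)\to n\i G\i(p)$ a.e. The bound $|n\i_\eta G\i(p_\eta)|\le \Gamma n\i_\eta(1+p_\eta^s)$ is uniformly integrable in $L^{1+}$, because $n\i_\eta\in L^2$ and $p_\eta^s\in L^{\frak q/s}$ with $\frak q/s>2$; Hölder then gives a uniform $L^{1+\delta}$ bound. Vitali's theorem gives convergence against $\varphi\in L^2(0,T;L^\infty)$, after also checking time integrability, which is uniform in $L^\infty(0,T)$.
\item The time-derivative term $\int\varphi\,\partial_t n\i_\eta$ is handled by a uniform bound on $\partial_t n\i_\eta$ in $L^2(0,T;H^{-\frak s})$, since $H^{\frak s}\hookrightarrow W^{1,\frak r}$ by Proposition~\ref{prop:TimeDerivatives1}. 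A weakly convergent subsequence exists, and its limit is identified with $\partial_t n\i$ distributionally.
\end{itemize}

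\textbf{Time continuity and initial datum.} The main obstacle is establishing $n\i\in C([0,T];L^q)$ for $q<2$, since only weak time regularity is available. I would use the Aubin--Lions--Simon-type lemma based on oscillation control that the paper announces. Estimate~\eqref{eq:CompactnessEta} gives uniform-in-time, uniform-in-$\eta$ control of spatial oscillations, with $\sup_t$ inside. Together with the tightness~\eqref{eq:tightness1} and $\partial_t n\i_\eta\in L^\infty(0,T;W^{-1,\frak r'})$, this yields equicontinuity of $n\i_\eta$ in $C([0,T];L^1)$ via an Arzelà--Ascoli argument. One mollifies at scale $h$, uses the time-derivative bound for the mollified functions, and controls the mollification error by the oscillation quantity uniformly in $t$. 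Interpolating with the $L^\infty(0,T;L^2)$ bound then upgrades this to $C([0,T];L^q)$ for $q<2$.

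As a consequence, $n\i_\eta(0)=n^{(i),\mathrm{in}}_\eta\to n^{(i),\mathrm{in}}$ in $L^1$ identifies $n\i(0)=n^{(i),\mathrm{in}}$. This concludes that $(n\1,n\2)$ is a weak solution in the sense of Definition~\ref{def:weak-sol-integrable}.
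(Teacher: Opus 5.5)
Your proposal is correct and follows the same route as the paper. Both use the strong convergences from Proposition~\ref{prop:CompactW_eta} and Corollary~\ref{cor:ConvergenceNEta}, a.e.\ convergence of $p_\eta=n_\eta^k$, the reaction term via a uniform $L^{1+}$ bound plus a.e.\ identification, and time continuity from Lemma~\ref{lem:Arzela-ascoli} followed by interpolation with the $L^\infty(0,T;L^2(\Rd))$ bound.

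One point needs fixing. On the unbounded domain $\Rd\times(0,T)$, Vitali's theorem requires tightness in addition to equi-integrability; translating bumps show the uniform Lebesgue bounds alone do not give it. So your global strong convergence of $p_\eta$ needs the tightness transfer from the proof of Proposition~\ref{prop:CompactW_eta}. Alternatively, drop that claim: weak convergence of $p_\eta$ already suffices to identify $W=K_\nu\star p$.
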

\begin{proof}
    We have already established the strong convergences of $W_\eta$ and $n\i_\eta$. Passing to a subsequence we can assume these convergences hold also almost everywhere. Then, the sequence $p_\eta = n_\eta^k$ converges a.e.\ to $p:=n^k$.

    To pass to the limit $\eta\to 0$ in the reaction term, we observe that $n\i_\eta G\i(p_\eta) \to n\i G\i(p)$ a.e.\ and
    \begin{equation*}
        |n\i_\eta G\i(p_\eta)| \leq \Gamma(n\i_\eta + n\i_\eta p_\eta^s). 
    \end{equation*}
    Using the uniform bounds $n_\eta\i\in L^\infty(0,T;L^2(\Rd))$ and $p_\eta \in L^\infty(0,T;L^\frak q(\Rd))$, $\frak q>2s$, we deduce that the product $\{n\i_\eta G\i(p_\eta)\}_{\eta>0}$ is uniformly bounded, and hence weakly precompact, in some Lebesgue space $L^\frak{r}(\Rd \times (0,T))$ with $\frak{r}>1$. Using the a.e.\ convergence, we identify its weak limit as $n\i G\i(p)$. Passing to the limit in all the other terms is even more straightforward. Finally, the continuity in time of $n^{(i)}$ follows from the same argument as in the proof of Proposition \ref{lem:RemoveEpsilon}, using Lemma~\ref{lem:Arzela-ascoli}. However, this time we can only interpolate up to $L^\infty(0,T;L^2(\Rd))$, such that $n^{(i)} \in C([0,T];L^q(\Rd))$, $1\leq q < 2$. Concerning the continuity in time of the pressure, we note
    \begin{align*}
        \norm{p(t) - p(s)}_{L^1(\Rd)}   \leq C(k) \norm{p}_{L^\infty(0,T; L^{2+}(\Rd))} \norm{n(t) - n(s)}_{L^{2-}(\Rd)},
    \end{align*}
    so that $p\in C([0,T];L^q(\Rd))$, for any $1\leq q < \frak q$.
\end{proof}

\section{Stiff limit}
\label{sec:StiffLimit}
This section is dedicated to establishing the stiff limit, $k\to \infty$ for solutions in the sense of Definition \ref{def:weak-sol-integrable}, as the density-pressure relation becomes a graph. Before we pass to the limit in Subsection \ref{sec:limit-passage}, we have to derive further uniform-in-$k$ estimates. Henceforth, we shall assume 
\begin{equation}
    \label{eq:InitialSupport}
    \Big|\bigcup_{k=1}^\infty\mathrm{supp} (n^{\mathrm{in}}_k)\Big| < \infty,
\end{equation}
which is indispensable for the incompressible limit and the derivation of uniform bounds. 
Note that this assumption is implied by those imposed in \cite{PV2015}. However, Eq.~\eqref{eq:InitialSupport} is weaker in the sense that we do not require the quantity $Q_k^\mathrm{in}$, appearing in Lemma \ref{lemma:pQinL1}, to vanish as $k\to \infty$.

We assume also the following integrability uniformly in $k$ 
    \begin{align*}
        n_k^{(i), \mathrm{in}} &\in L^1(\Rd)\cap L^{\frak{p}}(\Rd),\quad \frak{p} = \max(2,(d/2)^+),\\
        p_k^{\mathrm{in}} &\in L^1(\Rd)\cap L^\frak{q}(\Rd),\quad \frak{q} > 2s\frak{p}',
    \end{align*}
    \begin{itemize}
    \item $\frak{p} = \max(2,(d/2)^+)$, for the densities,
    \item  $\frak{q} > 2s\frak{p}'$, for the pressures,
\end{itemize}
where $\frak{p}'$ is the H\"older conjugate of $\frak p$ (i.e., $\frak p' = \min(2,(\frac{d}{d-2})^{-})$ for $d\geq 2$ and $\frak p' = 2$ in one dimension). Notice that then we always have $p_k^{\mathrm{in}}\in L^2(\Rd)$ and $p_k^{\mathrm{in}}\in L^{2s}(\Rd)$. Furthermore, recall that this integrability is propagated in time by Lemma \ref{lem:propagation-of-LP-for-n} and Corollary \ref{cor:LebesguePressure}.
Finally, we assume that
\begin{equation*}
    n^{(i),\mathrm{in}}_k \to n^{(i),\mathrm{in}}_\infty, \quad\text{in $L^1(\Rd)$ as $k\to\infty$}.
\end{equation*}

For the sake of exposition we shall drop the subscript $k$, but we stress that all bounds derived below are uniform in $k$.

\subsection{Further a priori estimates}
In order to obtain further regularity, it is important to study the evolution of the relative population, $r = n^{(1)} / (n^{(1)} + n^{(2)})$, with the convention that $r=0$ on $n=0$), first introduced in \cite{BHIM2012, BGH1987a}. Indeed, a short computation shows that
\begin{align*}
    \partialt r = \nabla r \cdot \nabla (W + V) + r (1-r) (G^{(1)}(p) - G^{(2)}(p)).
\end{align*}
Before stating the main estimates of this subsection, let us introduce an auxiliary localisation.

\begin{lemma}[Localising test function]
\label{lem:VarPi}
For any $k>1$ there exists a unique renormalised solution $\varpi \in L^\infty(0,T;L^1\cap L^\infty(\Rd))$ of the equation
    \begin{align}
        \label{eq:VarPi}
        \partialt \varpi - \nabla (W + V) \cdot \nabla \varpi = 0,
    \end{align}
    with initial condition $\varpi(\cdot, 0) =\mathbbm{1}_{\mathrm{supp}(n^{\mathrm{in}})}$. Moreover, there holds
    \begin{align}
        \label{eq:VarPiSupport}
        \mathrm{supp}(\varpi(\cdot,t)) \supset \mathrm{supp}(n(\cdot,t)).
        \end{align}
\end{lemma}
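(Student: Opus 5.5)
Existence and uniqueness will come from the DiPerna--Lions theory for transport equations, exactly as for the weights in Proposition~\ref{prop:w}. The transport field is $-\nabla(W+V)$. By Assumption~\ref{assu:integrable-data}, $\nabla V\in L^\infty(0,T;H^1(\Rd))$. Brinkman's law with $p\in L^\infty(0,T;L^2(\Rd))$ (Corollary~\ref{cor:LebesguePressure}) gives $\nabla W\in L^\infty(0,T;H^1(\Rd))$. The divergence of the field is $-\Delta(W+V)$; here $\Delta V\in L^\infty$, and $\Delta W=(W-p)/\nu\in L^\infty(0,T;L^2)$.

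The divergence is not bounded, so I will work in the renormalised framework, which only needs $\nabla\cdot b\in L^1(0,T;L^1_{\mathrm{loc}})$ together with a one-sided bound or an a priori integrability estimate. Since the initial datum $\mathbbm{1}_{\mathrm{supp}(n^{\mathrm{in}})}$ takes values in $\{0,1\}$, renormalising with $\beta(\varpi)=\varpi(1-\varpi)$ or $\beta(\varpi)=\min(\varpi,1)$ shows that $0\le\varpi\le1$. In fact $\varpi$ remains an indicator function, $\varpi=\mathbbm{1}_{\Omega(t)}$. This gives the $L^\infty$ bound.

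For the $L^1$ bound, I integrate the renormalised equation in space:
\[
\frac{d}{dt}\int\varpi\,dx=-\int\varpi\,\Delta(W+V)\,dx .
\]
The $\Delta V$ part is controlled by $C_V\int\varpi$. For the $W$ part,
\[
-\Delta W=(p-W)/\nu\le p/\nu ,
\]
and the resulting term $\int\varpi p/\nu\le\|p\|_{L^1}/\nu$ is bounded uniformly in time. Gronwall then gives $\varpi\in L^\infty(0,T;L^1)$, using that $|\mathrm{supp}\,n^{\mathrm{in}}|<\infty$ by \eqref{eq:InitialSupport}. This step needs a truncation/cut-off argument to justify the computation on $\Rd$, as in the tightness proof of Proposition~\ref{prop:CompactW_eta}.

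For the support inclusion \eqref{eq:VarPiSupport}, I would show that $n(1-\varpi)\equiv0$. Combining the renormalised equation for $1-\varpi$ (same transport) with the continuity equation for $n$, as in the proof of Proposition~\ref{prop:w}(ii) via \cite[Lemma 6.8]{BreschJabin2}, gives
\[
\partial_t\bigl(n(1-\varpi)\bigr)-\nabla\cdot\bigl(n(1-\varpi)\nabla(W+V)\bigr)=(1-\varpi)\bigl(n\1G\1(p)+n\2G\2(p)\bigr).
\]
Integrating in space, the right-hand side is bounded by $C_G\int n(1-\varpi)$. The initial value is $\int n^{\mathrm{in}}(1-\mathbbm{1}_{\mathrm{supp}\,n^{\mathrm{in}}})=0$, so Gronwall yields $n(1-\varpi)=0$ a.e. Hence $n(\cdot,t)$ vanishes a.e. outside $\{\varpi=1\}$, which is the claimed inclusion, understood up to null sets.

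The main obstacle I expect is the rigorous product rule between the weak solution $n$, which is only in $L^\infty(L^1\cap L^2)$, and the renormalised solution $\varpi$ with this low-regularity field. I would handle it as in Bresch--Jabin: mollify $\varpi$, control the commutator by DiPerna--Lions using $\nabla(W+V)\in L^1(W^{1,1}_{\mathrm{loc}})$, and exploit that $\varpi\in L^\infty$ while $n\in L^2$ to pass to the limit. If the argument is done at the level of the bounded approximations of Theorem~\ref{thm:Main1}, where $n\in L^\infty$, the commutator estimates are standard, and the inclusion then passes to the limit by a.e. convergence.
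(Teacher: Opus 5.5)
Your proposal is correct and follows the paper's proof. Existence, uniqueness and $0\le\varpi\le1$ come from DiPerna--Lions, the $L^1$ bound from Gronwall applied to $\int\varpi$, and the support inclusion from the equation for $(1-\varpi)n$ with zero initial value plus Gronwall. The one difference is minor: for the $L^1$ estimate you use $W\ge0$, so $-\Delta W\le p/\nu$, and bound $\int\varpi p$ by $\|p\|_{L^1}$, whereas the paper uses Cauchy--Schwarz with $\|\Delta(W+V)\|_{L^2}$ and $\varpi^2\le\varpi$; both bounds are uniform in $k$.
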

\begin{proof}
Similarly as in Proposition~\ref{prop:w} the existence of a unique solution of equation~\eqref{eq:VarPi} follows from the DiPerna-Lions theory -- indeed, $\varpi$ is transported with the same velocity field as $\upsilon$. Moreover, we have the bound $0\leq \varpi \leq 1$, and 
\begin{equation*}
    \ddt \norm{\varpi(t)}_{L^1(\Rd)} \leq \frac12\norm{\Delta(W+V)}_{L^2(\Rd)} + \frac12\norm{\varpi(t)}_{L^1(\Rd)}.
\end{equation*}
It follows that $\varpi \in L^\infty(0,T;L^q(\Rd))$ for any $q\in[1,\infty]$, uniformly in $k$.

To establish~\eqref{eq:VarPiSupport}, we consider the equation
    \begin{align*}
        \fpartial t((1-\varpi)n) = \nabla \cdot((1-\varpi)n\nabla(W+V)) + (1-\varpi)\sum_{i=1}^2n\i G\i(p).        
    \end{align*}
Upon integrating in space, we have
    \begin{align*}
        \ddt \int_{\Rd} (1-\varpi)n \dx x &= \sum_{i=1}^2 \int_\Rd (1-\varpi)n\i G\i(p) \dx x\\
        &\leq \max(G\1(0), G\2(0)) \int_\Rd (1-\varpi) n\dx x,
    \end{align*}
whence, for any $t\in(0,T)$,
    \begin{align*}
        0\leq \int_\Rd (1-\varpi(t))n (t) \dx x \leq C_G \int_\Rd (1-\varpi(0))n^{\mathrm{in}}\dx x = 0,
    \end{align*}   
and~\eqref{eq:VarPiSupport} follows.   
\end{proof}

\begin{lemma}
    \label{lemma:pQinL1}
    There exists a constant $C>0$ independent of $k$ such that for all $k > (\alpha \nu)^{-1}$, there holds
    \begin{equation*}
        k\int_0^T\!\!\int_{\R^d}  p |W- p + \nu\Delta V + \nu r G\1(p) + \nu (1-r)G\2(p)| \dx{x}\dx{t} \leq C.
    \end{equation*}
\end{lemma}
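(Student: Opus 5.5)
The plan follows the Perthame--Vauchelet strategy: derive a transport equation for
\[
Q := W - p + \nu\Delta V + \nu r G\1(p) + \nu(1-r)G\2(p),
\]
whose damping term is exactly $k p Q$, take absolute values, and integrate against the localising function $\varpi$ of Lemma~\ref{lem:VarPi}. I will work formally on the bounded solutions of Theorem~\ref{thm:Main1} and justify the steps at the end.

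\textbf{Step 1: the equation for $Q$.} Write $\mathcal V := W+V$ and $D_t := \partial_t - \nabla\mathcal V\cdot\nabla$. Brinkman's law gives $\nu\Delta W = W - p$, so the pressure equation from the proof of Proposition~\ref{prop:TimeDerivatives1} becomes
\[
D_t p = \frac{k}{\nu}\, p\, Q .
\]
The relative population satisfies $D_t r = r(1-r)(G\1-G\2)$. Differentiating $Q$ along $D_t$ then yields
\begin{align*}
D_t Q = D_t W + \nu D_t \Delta V + \nu r(1-r)\big(G\1(p)-G\2(p)\big)^2 - \Big(1 - \nu\big(r\partial_pG\1 + (1-r)\partial_pG\2\big)\Big)\frac{k}{\nu}\, p Q .
\end{align*}
Since $\partial_p G\i<-\alpha$, the coefficient in front of $\frac{k}{\nu}pQ$ is at least $1+\nu\alpha$. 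For $D_t W$, I use $W = K_\nu\star p$ to write
\[
D_t W = K_\nu\star\Big(\frac{k}{\nu}pQ\Big) + \Big[K_\nu\star(\nabla p\cdot\nabla\mathcal V) - \nabla\mathcal V\cdot\nabla W\Big].
\]
The bracket is a commutator. After integrating by parts inside the convolution, it is controlled in $L^1(\R^d\times(0,T))$ by $\norm{p}_{L^2}$, $\norm{\nabla\mathcal V}_{H^1}$, $\norm{\nabla K_\nu}_{L^1}$ and $\norm{\nabla V}_{L^\infty}$, all of which are uniform in $k$ by Corollary~\ref{cor:LebesguePressure}.

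\textbf{Step 2: $L^1$ estimate with the localisation $\varpi$.} I multiply by $\sign(Q)$ and by $\varpi$. Since $D_t\varpi = 0$, $0\le\varpi\le 1$, and $\varpi=1$ on $\mathrm{supp}\,p$ by \eqref{eq:VarPiSupport}, I get
\[
D_t(\varpi|Q|) + \frac{k}{\nu}(1+\nu\alpha)\, p|Q| \le \varpi\Big|K_\nu\star\frac{k}{\nu}pQ\Big| + \varpi|R|,
\]
where $R$ collects the commutator, $\nu D_t\Delta V$ and $\nu(G\1-G\2)^2$. Integrating in space produces $\int\varpi|Q|\Delta\mathcal V$ from the transport term. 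Using $\Delta W = (W-p)/\nu$, this is at most
\[
\frac1\nu\int p|Q| + \frac1\nu\int\varpi|Q||W| + \norm{\Delta V}_{L^\infty}\int\varpi|Q| .
\]
For the convolution term, Young's inequality with $0\le\varpi\le1$ gives $\int\varpi|K_\nu\star f|\le\norm{f}_{L^1}$, i.e.\ at most $\frac{k}{\nu}\int p|Q|$. Altogether the $p|Q|$ terms on the right carry the coefficient $\frac{k}{\nu}+\frac{1}{\nu}$ against $\frac{k}{\nu}(1+\nu\alpha)$ on the left, leaving a net good term $\big(k\alpha-\frac1\nu\big)\int p|Q|$. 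This is positive precisely when $k>(\alpha\nu)^{-1}$, which explains the hypothesis. I expect this balance to be the delicate point of the proof.

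\textbf{Step 3: bounding the remaining terms.}
\begin{itemize}
\item $\int\varpi|Q||W|$: since $|Q|\le |W| + p + \nu|\Delta V| + C(1+p^s)$, this is bounded by $L^2$ norms of $W$ and $p$ and by $\int\varpi p^{s}|W|$. The latter is finite uniformly in $k$ because $p\in L^{\frak q}$ with $\frak q>2s$ and $\varpi\in L^1\cap L^\infty$.
\item $(G\1-G\2)^2\varpi$: integrable via $p^{2s}$.
\item $\partial_t\Delta V$: in $L^1(0,T;L^{\frak p'})$ by Assumption~\ref{assu:stiff}, paired with $\varpi\in L^{\frak p}$.
\item $\nabla\mathcal V\cdot\nabla\Delta V$: controlled by $\nabla\Delta V\in L^1(0,T;L^\infty)$ and $\nabla\mathcal V\in L^2$.
\item Initial term $\int\varpi(0)|Q^{\mathrm{in}}|$: bounded by the integrability of $p^{\mathrm{in}}$ and $W^{\mathrm{in}}$, together with the finite measure of the support assumed in \eqref{eq:InitialSupport}.
\end{itemize}
Gronwall's lemma, applied to the linear term $\norm{\Delta V}_{L^\infty}\int\varpi|Q|$, then gives both $\sup_t\int\varpi|Q|\le C$ and $k\int_0^T\!\!\int p|Q|\le C$ with $C$ independent of $k$.

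\textbf{Main obstacle.} The chief difficulty is rigor. The sign-multiplication and renormalisation need a regularisation of $|\cdot|$ and of the drift, followed by passage to the limit, as in \cite[Lemma 6.8]{BreschJabin2}. The commutator must be controlled uniformly in $k$ without any $L^\infty$ bound on $p$. Finally, the bound has to pass from the bounded-data solutions to the integrable solutions of Theorem~\ref{thm:existence-integrable-data}; this follows from lower semicontinuity, using the a.e.\ convergence of $p_\eta$ and $r_\eta$ from Proposition~\ref{prop:ExistenceEtaTo0}.
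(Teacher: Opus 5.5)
Your proposal is correct and follows the paper's proof essentially step by step. The shared steps are: the same transport equation for $Q$ with damping coefficient $\beta_2\geq 1+\nu\alpha$; multiplication by $\varpi$, which equals $1$ on $\{p>0\}$; absorption of $\varpi\,|K_\nu\star(\frac{k}{\nu}pQ)|$ and of the Brinkman contribution $-\varpi|Q|\Delta W$ into the left-hand side, leaving $(\alpha k-\nu^{-1})\int_0^T\!\int_{\R^d} p|Q|$; and the same treatment of the initial term, the $|G^{(1)}-G^{(2)}|^2$ term, and $K_\nu\star(\nabla p\cdot\nabla\mathcal V)$ by integrating by parts inside the convolution.

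The differences are cosmetic. You keep and bound $\frac1\nu\int\varpi|Q||W|$, whereas the paper drops it since $W=K_\nu\star p\geq 0$. You also close with Gronwall, whereas the paper bounds $\norm{\varpi Q}_{L^1(0,T;L^1(\R^d))}$ directly.

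One caveat applies to your argument and the paper's alike: going from $(\alpha k-\nu^{-1})\int_0^T\!\int_{\R^d} p|Q|\leq C$ to a $k$-independent bound on $k\int_0^T\!\int_{\R^d} p|Q|$ requires $k$ to stay bounded away from $(\alpha\nu)^{-1}$, e.g.\ $k\geq 2(\alpha\nu)^{-1}$.
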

\begin{proof}
    We begin by defining the quantity 
    \begin{equation*}
        Q \coloneqq W - p + \nu \Delta V + \nu r G\1(p) + \nu (1-r)G\2(p),
    \end{equation*}
    and introduce, for convenience, 
    $$
        \beta_1 = G^{(1)}(p) - G^{(2)}(p), 
        \qquad
        \beta_2 := 1- \nu r \partial_p G^{(1)}(p) - \nu (1 - r)\partial_p G^{(2)}(p).
    $$
    Next, we recall the equation for the pressure
    \begin{align}
        \label{eq:pressure-revisited}
        \partialt{p} = \nabla p \cdot \nabla (V + W) + \frac{k}{\nu} p Q,
    \end{align}
    and the concentration
    \begin{align}
        \label{eq:concentration-revisited}
        \partialt r = \nabla r \cdot \nabla (V + W) + r(1-r) \beta_1,
    \end{align}
    Next, observe that
        \begin{align}
        \label{eq:Q-equation}
        \partialt Q &= \partialt W + \nu \partialt {\Delta V} + \nu \beta_1  \partialt r   - \beta_2 \partialt p,
    \end{align}
    with
    \begin{align}\label{eq:coeff-grad-p}
        \nabla Q = \nabla W + \nu \nabla \Delta V + \nu \beta_1  \nabla r - \beta_2 \nabla p.
    \end{align}
    Substituting Eq. \eqref{eq:concentration-revisited} and Eq. \eqref{eq:pressure-revisited} into Eq. \eqref{eq:Q-equation} yields
    \begin{align*}
        \partialt{Q} 
        &=\partialt W + \nu \partialt {\Delta V} + \nu \beta_1 \left[\nabla r \cdot \nabla(V+W) + \beta_1  r(1-r)\right] - \beta_2 \left[\nabla p \cdot \nabla (V+W) + \frac{k}{\nu}pQ \right].
    \end{align*}
    Using Eq. \eqref{eq:coeff-grad-p}, we obtain
    \begin{align*}
        \partialt{Q} 
        &= - \beta_2  \frac{k}\nu p Q  + \partialt W + \nu \partialt {\Delta V}\\[0.5em]
        & \qquad + \nu r (1-r) |\beta_1|^2  + \nu \beta_1  \nabla r \cdot \nabla (V+W)\\[0.5em]
        & \qquad - \nabla (V+W) \cdot \left[ \nabla W + \nu \nabla \Delta V + \nu \beta_1  \nabla r- \nabla Q \right]\\[0.5em]
        &=- \beta_2 \frac{k}\nu p Q  + \partialt W + \nu \partialt {\Delta V}  + \nu r (1-r)  |\beta_1|^2 \\[0.5em]
        & \qquad - \nabla (V+W) \cdot \left[ \nabla W + \nu \nabla \Delta V - \nabla Q \right].
    \end{align*}
    Moving terms depending on $Q$ on the left-hand side, we derive the equation
    \begin{equation*}
        \begin{aligned}
        \partialt{\abs*{Q}} &- \nabla{\abs*{Q}}\cdot \prt*{\nabla{W}+\nabla V} + \beta_2 \frac{k}{\nu}p\abs*{Q}\\[3pt] 
        &\leq \abs*{\nabla{W}}\prt*{\abs*{\nabla{W}} + \abs*{\nabla{V}}} + \nu\abs*{\partial_t\Delta V} + \abs*{K \star \brk*{\nabla{p}\cdot\nabla{W}+ \nabla{p}\cdot\nabla{V} + \frac{k}{\nu}p \abs*{Q}}}  \\[3pt]
        &\qquad + \nu r \prt*{1-r}\abs*{\beta_1 }^2 + \nu\abs*{\prt*{\nabla W + \nabla V}\cdot \nabla\Delta V}.
        \end{aligned}
    \end{equation*}
    Since by Assumption \ref{assu:existence} $G\i_p \leq - \alpha$, we observe that
    $$
        \beta_2 \geq 1 + \nu \alpha.
    $$
Now we multiply by $\varpi(x,t)$, the solution of the transport equation from Lemma~\ref{lem:VarPi}, and integrate in space and time to get
    \begin{equation*}
        \begin{aligned}
         &(1+\nu \alpha ) \frac{k}{\nu} \int_0^T\!\!\int_{\R^d} \varpi p\abs*{Q} \dx{x}\dx{t} \\[3pt]
          &\leq \int_{\R^d}  \varpi(x,0)\abs*{Q(x,0)} - \varpi(x,T)\abs*{Q(x,T)} \dx{x} + \int_0^T\!\!\int_\Rd |Q|\prt*{\partial_t\varpi - \nabla\varpi\cdot\nabla(W+V)}\dx x\dx t\\[3pt]
          &\quad - \int_0^T\!\!\int_{\R^d}\varpi\abs*{Q}\prt*{ \Delta W + \Delta V} \dx x \dx t + \int_0^T\!\!\int_{\R^d}\abs*{\nabla{W}}(\abs*{\nabla V} + \abs*{\nabla W}) \dx{x}\dx{t}\\[3pt]
          &\quad + \nu\int_0^T\!\!\int_\Rd \varpi \abs*{\partial_t\Delta V} \dx x \dx t +\int_0^T\!\!\int_{\R^d} \varpi\abs*{K \star \brk*{\nabla{p}\cdot\nabla{W}+ \nabla{p}\cdot\nabla{V} + \frac{k}{\nu}p \abs*{Q}}}  \dx{x}\dx{t}  \\[3pt]
          &\quad + \nu  \int_0^T\!\!\int_{\R^d} \varpi r(1-r)  \abs*{\beta_1}^2\dx{x}\dx{t} + \nu\int_0^T\!\!\int_\Rd \varpi\abs*{\prt*{\nabla W + \nabla V}\cdot \nabla\Delta V} \dx x\dx t.
        \end{aligned}
    \end{equation*}
    Notice that we used the property $\varpi\leq 1$ to omit the weight from some positive terms.
    
    Recall that $n(x,t)>0$ implies $\varpi(x,t) = 1$, for any $t\in(0,T)$, so that
    \begin{align*}
        (1+\nu \alpha ) \frac{k}{\nu}\int_0^T\!\!\int_\Rd \varpi p |Q| \dx x \dx t
        &= (1+\nu \alpha ) \frac{k}{\nu}\int_0^T\!\! \int_{\Rd}p |Q| \dx x \dx t.
    \end{align*}
    Furthermore, 
    $$
        \frac{k}{\nu}\int_0^T\!\! \int_\Rd \varpi K \star (p |Q|) \dx x \dx t \leq \frac{k}{\nu}\int_0^T \int_\Rd p |Q| \dx x \dx t,
    $$
    and, using the Brinkman equation
    $$
        -\int_0^T\!\!\int_\Rd \varpi |Q|\Delta W \dx x \dx t \leq \frac{1}{\nu}\int_0^T\!\!\int_\Rd p|Q|\dx x \dx t.
    $$
    Therefore, the estimate becomes
    \begin{align*}
        (\alpha k - \nu^{-1})\int_0^T\int_\Rd p |Q|\dx x \dx t \leq C(W, V) (1 + \norm{\varpi Q}_{L^1(0,T;L^1(\Rd))}) + I_1 + I_2 + I_3,
    \end{align*}
    having used Assumptions \ref{assu:existence}, Assumptions \ref{assu:stiff}, as well as the uniform control of $\nabla W$ through the bounds on $p$ and the fact that $\varpi$ in $L^1(0,T;L^1(\Rd))$, cf.\ Lemma \ref{lem:VarPi}, uniformly in $k$. Here,
    \begin{align*}
        I_1 &:= \int_{\R^d}  \varpi(x,0)\abs*{Q(x,0)} \dx x, \\
        I_2 &:= \nu  \int_0^T\!\int_{\R^d} \varpi r(1-r)  \abs*{\beta_1}^2\dx{x}\dx{t}, \\
        I_3 &:= \int_0^T\!\int_{\R^d} \abs*{K \star \brk*{\nabla{p}\cdot\nabla{(W+ V)}}}  \dx{x}\dx{t}.
    \end{align*}
    We note that 
    \begin{align*}
        I_1 &= \int_{\{n^{\mathrm{in}}>0\}} |Q(x,0)| \dx x\\
        &\leq \int_{\mathcal{N}} |K\star p^{\mathrm{in}} - p^{\mathrm{in}} + \nu\Delta V(x,0) + \nu r^{\mathrm{in}}G\1(p^{\mathrm{in}}) + \nu(1-r^{\mathrm{in}})G\2(p^{\mathrm{in}})| \dx x \leq C,
    \end{align*}
    where $\mathcal{N}\subset \Rd$ is a set of finite measure containing $\cup_{k\geq 1}\mathrm{supp}(n_k^{\mathrm{in}})$, cf.\ \eqref{eq:InitialSupport}.
    For the second term we observe
    \begin{align*}
        I_2 \leq 8  \Gamma^2 \int_0^T\int_\Rd \varpi r(1-r)  (1 + p^{2s}) \dx x \dx t \leq 8\Gamma^2(\norm{\varpi}_{L^1(0,T;L^1(\Rd))} + \norm{p}_{L^{2s}(0,T;L^{2s}(\Rd))}),
    \end{align*}
    by the uniform $L^1$-bounds on $\varpi$ and $L^{2s}$-bounds on $p$, and Lemma \ref{lem:VarPi}.
    Finally,
    \begin{equation*}
        K \star \brk*{\nabla{p}\cdot\nabla{(W_k+V)}} = \sum_{i=1}^d\frac{\partial K}{\partial x_i} \star \brk*{p\frac{\partial (W + V)}{\partial x_i}} - K \star \brk*{p\Delta (W +V)}.
    \end{equation*}
    Using $K\in W^{1,1}$, $p\in L^\infty L^2$, and the regularity of the velocity potentials, we conclude
    \begin{equation*}
        (\alpha k- \nu^{-1})\int_0^T\int_{\R^d} p|Q| \dx{x}\dx{t} \leq C,
    \end{equation*}
    where $C>0$ is independent of $k$.
\end{proof}

\begin{lemma}
    \label{prop:TimeDerivative}
    We have the following time regularities:
    \begin{enumerate}[label=(\roman*)]
        \item $\partial_t n \in L^2(0,T;H^{-\frak s}(\Rd))$, $\frak s>d/2 + 1$;
        \item $\partial_t p \in L^1(0,T;H^{-\frak s}(\Rd))$, $\frak s > d/2 + 1$;
        \item $\partial_t{W} \in L^{1}(0,T; L^{q}(\Rd))$,\, for $1\leq q < d/(d-2)$, (or $\infty$ in 1D).
    \end{enumerate} 
\end{lemma}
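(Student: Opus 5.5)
We argue by duality, term by term, using the weak formulations and the uniform-in-$k$ bounds already available: Lemma~\ref{lem:propagation-of-LP-for-n} and Corollary~\ref{cor:LebesguePressure} (densities in $L^\infty(0,T;L^1\cap L^{\frak p})$, pressure in $L^\infty(0,T;L^1\cap L^{\frak q})$), Lemma~\ref{lemma:pQinL1} (the bound $k\,p|Q|\in L^1$ uniformly), and Brinkman's law, which gives $W,\nabla W\in L^\infty(0,T;L^2)$ and $D^2W\in L^\infty(0,T;L^2)$. Throughout we use $H^{\frak s}(\Rd)\hookrightarrow W^{1,\infty}(\Rd)$ for $\frak s>d/2+1$.

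\textbf{Item (i).} We sum the equations for $n\1,n\2$ and test with $\varphi\in H^{\frak s}$.
\begin{itemize}
\item The transport term is bounded by $\norm{\nabla\varphi}_{L^\infty}\bigl(\norm{n}_{L^1}\norm{\nabla V}_{L^\infty}+\norm{n}_{L^2}\norm{\nabla W}_{L^2}\bigr)$.
\item The reaction term is bounded by $\Gamma\norm{\varphi}_{L^\infty}\bigl(\norm{n}_{L^1}+\norm{n}_{L^{\frak p}}\norm{p^s}_{L^{\frak p'}}\bigr)$.
\end{itemize}
This last bound is finite because $s\frak p'<\frak q$. Hence both terms are in $L^\infty(0,T)$, uniformly in $k$, which is stronger than what (i) requires. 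This is essentially a repetition of Proposition~\ref{prop:TimeDerivatives1}, now tracking $k$-uniformity.

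\textbf{Item (ii).} This is the main obstacle, since the pressure equation carries the factor $k$. We use the form \eqref{eq:pressure-revisited}:
\[
\partial_t p = \nabla p\cdot\nabla(V+W)+\tfrac{k}{\nu}pQ .
\]
The transport part is handled by integration by parts,
\[
\int \varphi\,\nabla p\cdot\nabla\mathcal V = -\int p\,\nabla\varphi\cdot\nabla\mathcal V-\int p\,\Delta\mathcal V\,\varphi .
\]
With $\Delta W=(W-p)/\nu$, this is controlled by $\norm{\varphi}_{W^{1,\infty}}$ times $\norm{p}_{L^1\cap L^2}$ and $\norm{W}_{L^2}$, uniformly in $k$. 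Unlike in Proposition~\ref{prop:TimeDerivatives1}, no factor $k$ multiplies $p\Delta\mathcal V$ here; it is absorbed into $Q$. The remaining term satisfies
\[
\Bigl|\int \tfrac{k}{\nu}pQ\varphi\Bigr|\le \tfrac1\nu\norm{\varphi}_{L^\infty}\,k\norm{pQ}_{L^1}.
\]
Lemma~\ref{lemma:pQinL1} gives exactly $k\,pQ\in L^1(0,T;L^1)$, so this term is in $L^1(0,T)$. Therefore $\partial_tp\in L^1(0,T;H^{-\frak s})$ uniformly in $k$. The only subtlety is justifying the identity \eqref{eq:pressure-revisited} in the weak sense for the constructed solutions. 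We would do this at the level of the bounded approximations and pass to the limit using the uniform bounds.

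\textbf{Item (iii).} We differentiate Brinkman's law in time: $\partial_tW=K_\nu\star\partial_tp$. Substituting (ii),
\[
\partial_t W=K_\nu\star\Bigl(\nabla\cdot(p\nabla\mathcal V)-p\Delta\mathcal V\Bigr)+\tfrac{k}{\nu}K_\nu\star(pQ).
\]
\begin{itemize}
\item The divergence term becomes $\nabla K_\nu\star(p\nabla\mathcal V)$. Here $\nabla K_\nu\in L^1$ and $p\nabla\mathcal V\in L^\infty(0,T;L^1\cap L^{1+})$.
\item The term $K_\nu\star(p\Delta\mathcal V)$ involves $p\Delta W=p(W-p)/\nu\in L^\infty(0,T;L^1)$.
\item The last term is $K_\nu\star f$ with $f\in L^1_{t,x}$.
\end{itemize}
Since the Bessel kernel $K_\nu\in L^q(\Rd)$ for $q<d/(d-2)$ (all $q$ in $d=1,2$), Young's inequality gives $L^1(0,T;L^q)$ bounds on every term. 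Likewise $\nabla K_\nu\in L^1$ handles the divergence part in $L^{1}\cap L^{1+}$. Combining these yields (iii), with the range of $q$ dictated by the integrability of $K_\nu$.
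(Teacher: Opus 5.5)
Your proposal follows the paper's route: (i) repeats the estimate of Proposition~\ref{prop:TimeDerivatives1} while tracking $k$-uniformity, (ii) writes $\partial_t p=\nabla p\cdot\nabla\mathcal V+\frac{k}{\nu}pQ$ so that the only $k$-dependent term is controlled by Lemma~\ref{lemma:pQinL1}, and (iii) uses $\partial_t W=K_\nu\star\partial_t p$. Your term-by-term decomposition in (iii) is what the paper's one-line justification implicitly needs. One detail to tighten in (iii): $\nabla K_\nu\in L^1$ with $p\nabla\mathcal V\in L^1\cap L^{1+}$ only puts the divergence term in $L^1\cap L^{1+}$. To cover every $q<d/(d-2)$, combine instead $\nabla K_\nu\in L^r$ for $r<d/(d-1)$ with $p\nabla\mathcal V\in L^\infty(0,T;L^1\cap L^{\frak{q}/2})$; this works because $\frak{q}/2>s\frak{p}'\geq d/(d-1)$ when $d\geq 2$, while in $d=1$, $\nabla K_\nu\in L^\infty$ handles it directly.
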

\begin{proof}
    The proof of the three statements are very similar and follow the same line of reasoning as Proposition  \ref{prop:TimeDerivatives1}. Indeed, the regularity for $n$ follows exactly the same manner. To prove ($ii$), we have to use the equation for $p$. Now we can control all terms uniformly in $k$. The only term that has to be estimated anew is
    \begin{align*}
        \int_\Rd kpQ\phi \dx x \leq C\norm{\phi}_{H^{\frak s}(\Rd)} \norm{kpQ}_{L^1(\Rd)}.
    \end{align*}
    We can leverage the uniform bound from Lemma \ref{lemma:pQinL1} to obtain the desired $L^1(0,T;H^{-\frak s}(\Rd))$ bound.
    Finally, the time regularity of $W$ follows from the regularity of $K$ and $\partial_t p$.
\end{proof}

\begin{lemma}\label{lem:npQinL1}
    There holds
    \begin{align}
         k \int_0^T\int_\Rd np|Q|\dx x\dx t \leq C,
    \end{align}
    where $C>0$ is independent of $k$.
\end{lemma}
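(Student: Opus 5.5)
We want $k\int_0^T\int np|Q|\le C$, given Lemma~\ref{lemma:pQinL1}, which bounds $k\int\int p|Q|$. The obvious route is to split according to the size of $n$. On the set $\{n\le 1\}$ we have $np|Q|\le p|Q|$, so this part is controlled directly by Lemma~\ref{lemma:pQinL1}. It remains to handle $\{n>1\}$, where $p=n^k$ is large relative to $n$. There, $n = p^{1/k}$, and for $k$ large $p^{1/k}$ is close to $1$ unless $p$ is extremely large. So I would bound $n\le 1 + p^{1/k}\mathbbm{1}_{\{n>1\}}$ and try to show that $k\int\int_{\{n>1\}} p^{1+1/k}|Q|$ is bounded.

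A cleaner alternative avoids splitting: repeat the proof of Lemma~\ref{lemma:pQinL1} with weight $n\varpi$, or simply $n$, instead of $\varpi$. The plan is to multiply the differential inequality for $|Q|$ by $n$ rather than $\varpi$ and integrate. The transport part combines with the equation for $n$:
\begin{align*}
\partial_t n - \nabla\cdot(n\nabla(W+V)) = \textstyle\sum_i n^{(i)}G^{(i)}(p).
\end{align*}
This gives
\begin{align*}
\partial_t(n|Q|) - \nabla\cdot(n|Q|\nabla(W+V)) + \beta_2\tfrac{k}{\nu}np|Q| \le n\cdot\mathrm{RHS} + |Q|\textstyle\sum_i n^{(i)}G^{(i)}(p).
\end{align*}
On the left we keep $(1+\nu\alpha)\frac{k}{\nu}\int\int np|Q|$.

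The right-hand side then has to be estimated term by term, uniformly in $k$:
\begin{itemize}
\item The boundary term $\int n^{\mathrm{in}}|Q(0)|$ is bounded by H\"older's inequality using $n^{\mathrm{in}}\in L^{\frak p}$ and the integrability of $p^{\mathrm{in}}$ and $W^{\mathrm{in}}$.
\item The growth contribution $|Q|n^{(i)}G^{(i)}(p)$ is at most $C_G n|Q|$ wherever $G^{(i)}>0$, and is nonpositive elsewhere. The quantity $\int n|Q|$ is controlled through $n\in L^\infty L^{\frak p}$ together with $|Q|\lesssim |W|+p+|\Delta V|+\Gamma(1+p^s)$ and $p\in L^{\frak q}$ with $\frak q>2s\frak p'$. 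This is presumably exactly why those exponents were chosen.
\item The terms $n|\nabla W|(|\nabla W|+|\nabla V|)$, $n|\partial_t\Delta V|$, $n|(\nabla W+\nabla V)\cdot\nabla\Delta V|$ and $n r(1-r)\beta_1^2$ are all bounded by H\"older's inequality with $n\in L^\infty L^{\frak p}$ and Assumption~\ref{assu:stiff}; in particular $\partial_t\Delta V\in L^1L^{\frak p'}$.
\end{itemize}

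The main obstacle is the nonlocal term
\begin{align*}
n\,\bigl|K\star[\nabla p\cdot\nabla(W+V) + \tfrac{k}{\nu}p|Q|]\bigr|.
\end{align*}
For the part $\frac{k}{\nu}\int n\,K\star(p|Q|)$ we can no longer absorb it into the left-hand side, since the weight is $n$ rather than $1$. Instead I would bound it by $\|n\|_{L^\infty L^{\frak p}}\|K\star(kp|Q|)\|_{L^1L^{\frak p'}}$. Since $K$ is the Bessel kernel, it lies in $L^{\frak p'}$ precisely when $\frak p'<d/(d-2)$, which holds by the choice $\frak p=\max(2,(d/2)^+)$. Young's inequality then gives the bound $\|K\|_{L^{\frak p'}}\,\|kp|Q|\|_{L^1L^1}\le C$ by Lemma~\ref{lemma:pQinL1}. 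The gradient part is handled by integrating by parts into $\nabla K\star(p\nabla\mathcal V)$ and $K\star(p\Delta\mathcal V)$ as before, and then applying Young's inequality with the $L^2$ bounds.

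The remaining term $-\int n|Q|\Delta(W+V)$ coming from the divergence is the second delicate point. Using Brinkman's law $-\Delta W=(p-W)/\nu$, its worst part is $\frac1\nu\int np|Q|$. This is absorbed by the left-hand side once $k>(\alpha\nu)^{-1}$, just as in Lemma~\ref{lemma:pQinL1}. Collecting the estimates yields $(\alpha k-\nu^{-1})\int\int np|Q|\le C$, and hence the claim.
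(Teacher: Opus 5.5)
Your proposal follows essentially the paper's proof. You weight the differential inequality for $|Q|$ by $n$, so that the continuity equation for $n$ turns the transport terms into the exact divergence $\nabla\cdot(n|Q|\nabla(W+V))$. You bound the remaining terms by H\"older's inequality using $n\in L^\infty(0,T;L^{\mathfrak p}(\Rd))$ and $p\in L^\infty(0,T;L^{\mathfrak q}(\Rd))$, and you control the nonlocal piece $\frac{k}{\nu}\int_0^T\!\int_{\Rd} n\,K\star(p|Q|)\dx x\dx t$ through Lemma~\ref{lemma:pQinL1} and $K\in L^{\mathfrak p'}(\Rd)$. The paper instead bounds $\norm{K\star n}_{L^\infty}$ by Young's inequality, which is the same estimate by duality.

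The only superfluous step is your final paragraph: with weight $n$ no term $-\int n|Q|\Delta(W+V)$ arises. As your own displayed inequality shows, the transport part is conservative and integrates to zero, so $(1+\alpha\nu)\frac{k}{\nu}\int_0^T\!\int_{\Rd} np|Q|\dx x\dx t$ stays on the left directly without any absorption; treating the nonexistent term anyway does no harm.
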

\begin{proof}
    Similarly as in the proof of Lemma \ref{lemma:pQinL1}, we compute
    \begin{align*}
        \partialt {(n|Q|)} 
        &= |Q| \nabla \cdot(n \nabla (W + V)) + |Q| (n^{(1)}G^{(1)} + n^{(2)}G^{(2)}) \\[3pt]
        &\quad - \frac{k}\nu \beta_2 np|Q| + n\abs*{\partialt W} + \nu n \abs*{\partialt {\Delta V}} + \nu r(1-r) n |\beta_1|^2 \\[3pt]
        &\quad + n\nabla|Q|\cdot \nabla (W + V) + n\abs*{\nabla (W + V)} \cdot \abs*{\nabla W + \nu \Delta \nabla V}.
    \end{align*}
    Integrating in space-time and using an integration by parts in space yields
    \begin{align*}
        &\frac{k}\nu (1 + \alpha \nu) \int_0^T\int_\Rd np|Q|\dx x\dx t\\ 
        &= \int_\Rd n^\mathrm{in}|Q^\mathrm{in}| \dx x  - \int_\Rd (n|Q|)(T)\dx x  + C_G \mathcal B_1 + \nu \mathcal B_2 + \nu \mathcal B_3 +  \mathcal B_4  + \mathcal B_5,
    \end{align*}
    where
    \begin{align*}
        \mathcal B_1 &:= \int_0^T\int_\Rd n|Q|\dx x \dx t,\\
        \mathcal B_2 &:= \int_0^T\int_\Rd  n \abs*{\partialt {\Delta V}} \dx x \dx t, \\
        \mathcal B_3 &:= \int_0^T\int_\Rd r(1-r) n |\beta_1|^2 \dx x \dx t,\\
        \mathcal B_4 &:= \int_0^T\int_\Rd n\abs*{\nabla (W + V)} \cdot \abs*{\nabla W + \nu \Delta \nabla V}\dx x \dx t,\\
        \mathcal B_5 &:= \int_0^T\int_\Rd n\abs*{\partialt W} \dx x \dx t.
    \end{align*}
    Using  Assumptions \ref{assu:existence}, \ref{assu:integrable-data}, and \ref{assu:stiff}, as well as uniform bounds of the $L^\frak p$-norm of $n$ and the $L^{\frak q}$-norm of $p$, these terms can be uniformly bounded in $k$, and we refer to Appendix \ref{app:further-details}.
\end{proof}

\subsection{Limit passage}
\label{sec:limit-passage}
We have gathered all necessary additional a priori estimates to pass to the limit $k\to \infty$. The main result of this section is the following theorem.
\begin{theorem}\label{thm:StiffLimit}
    There exist functions $n\i_\infty \in L^\infty(0,T;L^\frak{p}(\Rd)) \cap C([0,T];L^q(\Rd))$, $q\in[1,\frak p)$, and $p_\infty \in L^\infty(0,T;L^1(\Rd)\cap L^\frak{q}(\Rd))$ such that, up to subsequences,
    \begin{alignat*}{2}
        n\i_k &\to n\i_\infty,\quad &&\text{in } L^1(0,T;L^1(\Rd)),\\
        p_k &\to p_\infty,\quad &&\text{in } L^1(0,T;L^1(\Rd)).
    \end{alignat*}
    Moreover, 
    \begin{equation*}
        W_k\to W_\infty:=K_\nu\star p_\infty,\quad \text{in } L^2(0,T;L^2(\Rd)),
    \end{equation*}
    and $(n\i_\infty, W_\infty, p_\infty)$ satisfy (in the weak sense) the system:
    \begin{align*}
        \partialt {n\i_\infty} &= \nabla \cdot(n\i_\infty \nabla W_\infty) + \nabla \cdot(n\i_\infty\nabla V) + n\i_\infty G\i(p_\infty),\\
        -\nu\Delta W_\infty + W_\infty &= p_\infty,\\
        n\i_\infty(x, 0) &= n_\infty^{(i),\mathrm{in}},
    \end{align*}
    with the pointwise almost everywhere relation $p_\infty(1-n_\infty)=0$.
    Finally, the following complementarity relation holds pointwise almost everywhere
    \begin{equation}\label{eq:ComplementarityRelation}
    	p_\infty\left(W_\infty-p_\infty + \nu \Delta V + \nu n^{(1)}_\infty G\1(p_\infty) + \nu n^{(2)}_\infty G\2(p_\infty)\right) = 0.
	\end{equation}
\end{theorem}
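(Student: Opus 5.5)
The plan is to transfer the $\eta$-compactness machinery to the $k$-limit, exploiting that all constants in Propositions~\ref{prop:CompactnessEtaWeight} and~\ref{prop:CompactnessEta} are independent of $k$.

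\emph{Step 1: compactness of $W_k$.} Corollary~\ref{cor:LebesguePressure} bounds $p_k$ in $L^\infty(0,T;L^1\cap L^{\frak q})$ uniformly in $k$. Hence $W_k=K_\nu\star p_k$ is bounded in $L^\infty(0,T;H^1)$, and Lemma~\ref{prop:TimeDerivative}(iii) gives $\partial_t W_k\in L^1(0,T;L^q)$. Aubin--Lions then yields local compactness. For tightness I repeat the cutoff argument of Proposition~\ref{prop:CompactW_eta}, using the bound $\int p(1-\chi_R)\le C(\int n(1-\chi_R))^{1/2}$, which is uniform in $k$. This gives $W_k\to W_\infty$ in $L^2(0,T;L^2)$.

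\emph{Step 2: compactness of the densities.} I rerun the weighted estimate \eqref{eq:WeightEstimate} and then the weight-removal estimate \eqref{eq:CompactnessEstimateEta} with $k$ in place of $\eta$. The right-hand side tends to zero as $h\to0$ uniformly in $k$, for three reasons. The initial terms vanish because $n_k^{(i),\mathrm{in}}\to n_\infty^{(i),\mathrm{in}}$ in $L^1$. The $W$ term vanishes by Step 1. The $\Delta V$ term vanishes by continuity of translations. Combining this with the time bound of Lemma~\ref{prop:TimeDerivative}(i), the tightness of Step 1, Lemma~\ref{lem:CompactnessCriterion}, and interpolation against $L^\infty(L^{\frak p})$ gives $n_k^{(i)}\to n^{(i)}_\infty$ in $L^1(0,T;L^1)$ and a.e. Time continuity follows from Lemma~\ref{lem:Arzela-ascoli} exactly as in Proposition~\ref{prop:ExistenceEtaTo0}.

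\emph{Step 3: the pressure.} This is the main obstacle. Unlike the $\eta$-limit, $p_k=n_k^k$ does not converge a.e.\ from $n_k$ alone. The density does give something: on $\{n_\infty<1\}$ we have $n_k<1-\delta$ eventually, so $p_k\to0$ a.e. there. Passing to the limit in $p_k=n_k p_k^{(k-1)/k}$ gives $p_\infty(1-n_\infty)=0$ for the weak limit. For strong convergence, my first attempt is to run the Bresch--Jabin functional directly for $p_k$. I would use the pressure equation \eqref{eq:pressure-revisited} together with the sign identity $\sigma=\mathrm{sign}(p^x-p^y)$. The dangerous term is $\frac{k}{\nu}pQ$, where $Q=W-p+\nu\Delta V+\dots$. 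Its $-\frac{k}{\nu}(p^x-p^y)$ part produces the dissipative contribution
\[
-\tfrac{k}{\nu}\iint \calK_h|p^x-p^y|(p^x+p^y).
\]
The remaining pieces are controlled by Lemma~\ref{lemma:pQinL1} and Lemma~\ref{lem:npQinL1} together with the compactness of $W_k$. If this direct route fails, the fallback is to get compactness of $p_k$ from the relation $p_k Q_k\to0$ in $L^1$ (Lemma~\ref{lemma:pQinL1}). Rewriting $Q_k=0$ gives $p_k=\Phi(W_k+\nu\Delta V,r_k)$ on $\{p_k>0\}$, where $\Phi$ is the inverse of the strictly increasing map $p\mapsto p-\nu rG^{(1)}(p)-\nu(1-r)G^{(2)}(p)$, monotone since $\beta_2\ge 1+\nu\alpha$. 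Then $p_k-\Phi(\cdot)\mathbbm{1}_{\{p_k>0\}}\to0$. Combining this with the strong convergence of $W_k$ and $n_k^{(i)}$, and hence of $r_k n_k$, should yield strong convergence of $p_k$ away from the degenerate set $\{n_\infty=0\}$. On the set $\{n_\infty<1\}$ we already have $p_k\to0$. Equi-integrability from the $L^{\frak q}$ bound then upgrades a.e.\ convergence to $L^1$ convergence, and tightness again follows from the cutoff argument.

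\emph{Step 4: limit passage.} With strong convergence of $n^{(i)}_k$, $p_k$ and $W_k$ in hand, I pass to the limit in the weak formulation as in Proposition~\ref{prop:ExistenceEtaTo0}, using the growth bound $|G|\le\Gamma(1+p^s)$ and $\frak q>2s\frak p'$ for uniform integrability of $n G(p)$. For the complementarity relation, Lemma~\ref{lemma:pQinL1} gives $\|p_kQ_k\|_{L^1}\le C/k\to0$. Strong convergence then yields $p_\infty Q_\infty=0$ a.e., where $Q_\infty$ is built from $r_\infty$. Since $n_\infty=1$ wherever $p_\infty>0$, we have $rG^{(1)}+(1-r)G^{(2)}=n^{(1)}_\infty G^{(1)}+n^{(2)}_\infty G^{(2)}$ there, which gives \eqref{eq:ComplementarityRelation}.
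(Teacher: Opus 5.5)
\textbf{Verdict: genuine gap in Step~3, which is the heart of the theorem.} Steps~1, 2 and~4 follow the paper. (In Step~2 the weighted estimates are really proved for the bounded approximations $n_{k,\eta}$ and transferred to $n_k$ by letting $\eta\to0$ in \eqref{eq:CompactnessEstimateEta}, but that is a technicality.) Neither of your two routes gives strong convergence of $p_k$.

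\emph{The direct route does not close.} The $O(k)$ term $\frac{k}{\nu}(p^xQ^x-p^yQ^y)\sigma$ can only be handled by Lemma~\ref{lemma:pQinL1}. That bounds its contribution to the normalised functional by $\frac{2k}{\nu}\|p_kQ_k\|_{L^1}\le C$, which is $O(1)$, not $o(1)$ as $h\to0$ uniformly in $k$. Splitting off $-\frac{k}{\nu}(p^x+p^y)|p^x-p^y|$ does not help. The remainder $\frac{k}{\nu}(p^xR^x-p^yR^y)\sigma$, with $R=W+\nu\Delta V+\nu rG^{(1)}+\nu(1-r)G^{(2)}$, is of the same order $k$ and is not dominated by the dissipation: it is positive of order $k$ wherever $p^x+p^y<R^x$.

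\emph{The fallback is weaker than you state.} From $\|p_kQ_k\|_{L^1}\le C/k$ and $\beta_2\ge1+\nu\alpha$ you only get $p_k|p_k-\Phi_k|\to0$ in $L^1$, with $\Phi_k=\Phi(W_k+\nu\Delta V,r_k)$ truncated at $0$. Equivalently, $\min(p_k,|p_k-\Phi_k|)\to0$ in measure; you do not get $p_k-\Phi_k\mathbbm{1}_{\{p_k>0\}}\to0$. So $p_k$ is asymptotically close either to $0$ or to $\Phi_k$. The critical set is $\{n_\infty=1\}$, not $\{n_\infty=0\}$, where $p_k\to0$ anyway. On $\{n_\infty=1\}$, $n_k\to1$ carries no information on $n_k^k$. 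Nothing you use rules out oscillation between the two branches, i.e.\ a Young measure $\theta\delta_{\Phi_\infty}+(1-\theta)\delta_0$ with $\theta\in(0,1)$. This is exactly the two-valued oscillation found in \cite{PV2015}, which needed a kinetic formulation and no-mushy-region assumptions to exclude. Without strong convergence of $p_k$, Step~4 also fails, since the limits of $p_k^2$ and of $n_k^{(i)}G^{(i)}(p_k)$ cannot be identified.

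\textbf{The missing idea.} Pressure oscillations are already dissipated, at order one, by the Bresch--Jabin functional $Q_{n}$ of the total density. In $\mathcal{A}_1^{(tot)}$, Brinkman's law $\Delta W=(W-p)/\nu$ together with $\mathrm{sign}(n^x-n^y)=\mathrm{sign}(p^x-p^y)$ produces the good term $-2\nu^{-1}\int_0^t\iint\calK_h n^x|p^x-p^y|\upsilon^x$. In Proposition~\ref{prop:CompactnessEtaWeight} this term was only spent cancelling the species terms. The paper's argument then runs as follows.
\begin{enumerate}
\item Work with $Q_{n}$ alone. Bound the reaction term crudely by $\Gamma(1+(p^x)^s)|n^{(i),x}-n^{(i),y}|$, which is harmless once the densities are known to be compact. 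This gives Proposition~\ref{prop:GradP}: $\lim_{h\to0}\limsup_k\int_0^t\iint\overline{\calK}_h n_k^x|p_k^x-p_k^y|=0$.
\item The inequality $|n^xp^x-n^yp^y|\le n^x|p^x-p^y|+p^y|n^x-n^y|$ then gives oscillation control for $n_kp_k=p_k^{(k+1)/k}$.
\item Lemma~\ref{lem:npQinL1} (this is its actual purpose) bounds $\partial_t(n_kp_k)$ in $L^1(0,T;H^{-\mathfrak{s}}(\Rd))$.
\item Lemma~\ref{lem:CompactnessCriterion} yields compactness of $n_kp_k$.
\item The inequality $p|p^{1/k}-1|\le\frac1k(1+p^2)$ transfers this compactness to $p_k$, and simultaneously gives $p_\infty(1-n_\infty)=0$.
\end{enumerate}
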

The proof of the theorem is done in several steps. The most important one is to guarantee strong convergence of the sequence of pressures. To achieve it, we shall first propagate compactness of the densities, for which we leverage the Gronwall-type inequality proved in Section~\ref{sec:UnboundedData}. Indeed, we have the following proposition.

\begin{proposition}\label{prop:DensityCompactK}
    The sequence $(n\i_k)_k$ is compact in $L^1(0,T;L^1(\Rd))$. Its limit, $n\i_\infty$, belongs to $C([0,T];L^q(\Rd))$,  for $q\in[1,\frak p)$.
\end{proposition}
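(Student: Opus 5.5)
The plan is to re-run the weighted compactness argument of Section~\ref{sec:UnboundedData}, now with $k$ as the parameter in place of $\eta$, and then combine it with time regularity and tightness.

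\textbf{Step 1: spatial compactness, uniform in $k$.} For each $k$, the solution $n_k\i$ of Theorem~\ref{thm:existence-integrable-data} is the limit as $\eta\to0$ of the bounded solutions $n_{k,\eta}\i$. Estimate~\eqref{eq:CompactnessEstimateEta} holds for the $n_{k,\eta}\i$ with constants depending only on:
\begin{itemize}
\item the $L^\infty(0,T;L^1\cap L^2)$ norms of densities and pressures;
\item the $L^1(0,T;H^2)$ norms of $W$ and $V$;
\item the data.
\end{itemize}
By Lemma~\ref{lem:propagation-of-LP-for-n} and Corollary~\ref{cor:LebesguePressure}, all of these are uniform in $k$ as well, since $\frak p\ge2$ and $\frak q>2$. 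The left-hand side is lower semicontinuous under $L^1$ convergence, so I would pass $\eta\to0$ and obtain the same inequality for $n_k\i$, with $W_k$ on the right and $k$-independent constants.

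The right-hand side then contains:
\begin{itemize}
\item the initial terms, which are uniformly small in $h$ because $n_k^{(i),\mathrm{in}}\to n_\infty^{(i),\mathrm{in}}$ in $L^1$, so the family is compact;
\item the $\Delta V$ term, which is fixed and vanishes as $h\to0$;
\item the terms $C\zeta^{-1}|\log h|^{-1/2}+C/|\log\zeta|$;
\item the $W_k$ term.
\end{itemize}

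\textbf{Step 2: compactness of $W_k$.} The $W_k$ term requires $\lim_{h\to0}\sup_k\int_0^T\iintO{\overline{\calK}_h^{x,y}|W_k^x-W_k^y|^2}\dx t=0$. This follows from the uniform bound on $\nabla W_k$ in $L^2(0,T;L^2)$, which comes from $p_k\in L^\infty L^2$ and $\nabla K_\nu\in L^1$. Indeed, that quantity is controlled by $\int\overline{\calK}_h(z)\min(|z|^2\|\nabla W_k\|^2_{L^2},4\|W_k\|_{L^2}^2)\dx z$, which tends to $0$ as $h\to0$ because $\overline{\calK}_h$ concentrates at scale $h$. It is also convenient, and needed later, to note that $W_k$ is compact in $L^2(0,T;L^2)$. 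This follows from Lemma~\ref{prop:TimeDerivative}(iii), the gradient bound, and the tightness argument of Proposition~\ref{prop:CompactW_eta}, whose estimates are uniform in $k$. Choosing $\zeta=\zeta(h)$ as in Proposition~\ref{prop:CompactnessEta} gives
\[
\lim_{h\to0}\sup_k\sup_t\int_{\R^{2d}}\overline{\calK}_h|n_k^{(i),x}-n_k^{(i),y}|\dx x\dx y=0.
\]

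\textbf{Step 3: conclusion.} I would combine this with the uniform time bound $\partial_t n_k\i\in L^\infty(0,T;W^{-1,\frak r'})$ from Proposition~\ref{prop:TimeDerivatives1} (uniform in $k$) and apply Lemma~\ref{lem:CompactnessCriterion}. This gives compactness in $L^1(0,T;L^1_{\mathrm{loc}})$. Tightness follows as in the proof of Proposition~\ref{prop:CompactW_eta}: the cut-off estimate uses only $L^2$ bounds and the $L^1$ convergence of the initial data, hence is uniform in $k$. This upgrades the result to global compactness in $L^1(0,T;L^1(\Rd))$.

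For continuity in time, the uniform-in-time spatial oscillation control, together with the equicontinuity in $W^{-1,\frak r'}$ from the time-derivative bound, allows Lemma~\ref{lem:Arzela-ascoli} to yield $n_\infty\i\in C([0,T];L^1)$. Interpolating with the uniform $L^\infty(0,T;L^{\frak p})$ bound then gives $C([0,T];L^q)$ for $q<\frak p$.

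\textbf{Main obstacle.} The delicate point is Step 1, making sure that no constant secretly depends on $k$. Two places need checking:
\begin{itemize}
\item The reaction-term cancellation used $\sign(n^x-n^y)=\sign(p^x-p^y)$, which holds for every $k$, and $C_G$ is $k$-independent.
\item The weight bound in Proposition~\ref{prop:w}(ii) uses $\|B\|_{L^2}$, which is controlled through $\|p_k\|_{L^2}$ and hence uniform.
\end{itemize}
If passing the weighted estimate from $\eta$ to the $\eta\to0$ limit is problematic, my fallback is to run the weighted argument directly on $n_k$, using renormalised weights built from $W_k$. This is justified by the DiPerna--Lions setting of Assumption~\ref{assu:integrable-data}.
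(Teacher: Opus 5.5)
Your proposal is correct and follows the paper's proof essentially step by step. You pass $\eta\to0$ in \eqref{eq:CompactnessEstimateEta} with $k$-independent constants and choose $\zeta=\zeta(h)$; the resulting uniform-in-time oscillation control, together with a uniform time-derivative bound, then goes into Lemma~\ref{lem:CompactnessCriterion}, tightness, and Lemma~\ref{lem:Arzela-ascoli} plus interpolation.

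The only deviations are harmless. You handle the $W_k$ term through the uniform $L^2(0,T;L^2(\Rd))$ bound on $\nabla W_k$ rather than through compactness of $W_k$. This works because $\int\overline{\calK}_h(z)|z|^2\dx{z}\lesssim\log(1+|\log h|)/|\log h|\to0$, even though $\overline{\calK}_h$ does not actually concentrate at scale $h$. You also use the $W^{-1,\frak{r}'}$ bound of Proposition~\ref{prop:TimeDerivatives1} instead of the $H^{-\frak{s}}$ bound, which in fact matches the hypotheses of the two lemmas more directly.
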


\begin{proof}
    First, we notice that the global strong compactness of $W_k$ in $L^2(0,T;L^2(\Rd))$ follows from the same argument as in the proof of Proposition~\ref{prop:CompactW_eta}, now exploiting the uniform-in-$k$ estimates of Lemma \ref{prop:TimeDerivative}.
   We now consider the approximating sequence $n\i_{k,\eta}$ constructed as in Section~\ref{sec:UnboundedData} and exploit the fact that we already know that it converges strongly in $L^1(0,T;L^1(\Rd))$ to $n\i_k$.
    Passing to the limit $\eta\to 0$ in~\eqref{eq:CompactnessEstimateEta}, we deduce, for any $\zeta\in(0,1)$,
    \begin{equation}\label{eq:CompactnessEstimateK}
    \begin{aligned}
        &\int_{\R^{2d}} \overline{\calK}_h^{x,y}|n_k^{(i),x} - n_k^{(i),y}| \dx x \dx y\\[3pt]
        &\leq \frac{C}{\zeta}\sum_{j=1}^2\int_{\R^{2d}} \overline{\calK}_h^{x,y} |n_k^{(j),\mathrm{in},x} - n_k^{(j),\mathrm{in},y}| \dx x\dx y\\[3pt]
        &\quad + \frac{C}{\zeta}\prt*{\int_0^T\iintO{\overline{\calK}_h^{x,y}\abs*{W_k^x - W_k^y}^2}\dx t}^{1/2}\\[3pt]
        &\quad + \frac{C}{\zeta}\prt*{\int_0^T\iintO{\overline{\calK}_h^{x,y}\abs*{\Delta V^x-\Delta V^y}^2}\dx t}^{1/2}\\[3pt]
        &\quad +\frac{C}{\zeta}|\log h|^{-1/2} + \frac{C}{|\log\zeta|},
    \end{aligned}
    \end{equation}
    where all the constants are independent of $k$, as they depend only on the data and the $L^\infty(0,T;L^2(\Rd))$-norms of the densities and the pressure. The same argument as in the proof of Proposition~\ref{prop:CompactnessEta} shows that
    \begin{equation*}
        \lim_{h\to 0}\;\limsup_{k\to\infty}\;\sup_{t\in[0,T]}\;\int_{\R^{2d}} \overline{\calK}_h^{x,y}|n^{(i),x}_{k} - n^{(i),y}_{k}| \dx x \dx y = 0.
    \end{equation*}
    Combining the above limit with the uniform control of $\partial_tn\i_k$ in $L^2(0,T;H^{-\frak{s}}(\Rd))$, we can use the compactness criterion, Lemma~\ref{lem:CompactnessCriterion}, to conclude compactness. The continuity of the limit follows from Lemma~\ref{lem:Arzela-ascoli}.
\end{proof}
We now state the following extra estimate that can be deduced from the proofs of Propositions~\ref{prop:CompactnessEtaWeight} and~\ref{prop:CompactnessEta}.
\begin{proposition}\label{prop:GradP}
    There holds
    \begin{align*}
        \lim_{h\to0}\; \limsup_{k\to\infty}\;\int_0^t\int_{\R^{2d}} \overline{\calK}_h^{x,y} n_{k}^x|p_{k}^{x}-p_{k}^{y}| \dx x\dx y\dx\tau = 0.
    \end{align*}
\end{proposition}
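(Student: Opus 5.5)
The estimate comes from a term that we discarded in the proof of Proposition~\ref{prop:CompactnessEtaWeight}. There, when we added the contributions of the two species and the total population, the highest order term gave
\begin{equation*}
a_1\1+a_1\2+a_1^{(tot)} \leq 4\nu^{-1}|W_\eta^x-W_\eta^y|n_\eta^x + 4|\Delta V^x-\Delta V^y|n_\eta^x - 2\nu^{-1}|p_\eta^x-p_\eta^y|\,n_\eta^x\upsilon_\eta^x\bigl(1 - \tfrac{n_\eta^{(1),x}+n_\eta^{(2),x}}{n_\eta^x}\bigr)\ldots
\end{equation*}
To keep a strictly negative dissipation, I will not use the full cancellation. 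Instead, I will consider the combination $Q_{n\1_\eta}+Q_{n\2_\eta}+2Q_{n_\eta}$. With this weighting, the $p$-difference terms sum to at most $2\nu^{-1}|p^x-p^y|n^x\upsilon^x-4\nu^{-1}|p^x-p^y|n^x\upsilon^x=-2\nu^{-1}|p^x-p^y|n^x\upsilon^x$. The remaining terms $\mathcal A_2,\mathcal A_3,\mathcal A_4$ are handled exactly as before, so the constants change only by a fixed factor; $\lambda$ is enlarged to absorb the doubled total-population commutator. For $\mathcal A_3$ the total term is only more favourable, because its sign-sensitive part $-|G\i(p^x)-G\i(p^y)|n^{(i),y}$ is nonpositive. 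The Gronwall argument then yields
\begin{equation*}
\frac{2}{\nu}\int_0^t\!\!\iintO{\calK_h^{x,y}|p_\eta^x-p_\eta^y|\,n_\eta^x\upsilon_\eta^x}\dx\tau \leq \text{(right-hand side of \eqref{eq:WeightEstimate})}.
\end{equation*}

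Next I remove the weight as in Proposition~\ref{prop:CompactnessEta}, splitting space according to $\Omega_\zeta=\{\upsilon_\eta\le\zeta\}$. On $\Omega_\zeta^c$ we divide by $\zeta$. On $\Omega_\zeta$ we bound $|p^x-p^y|n^x\le n^xp^x+n^xp^y$. The term $\int_{\Omega_\zeta}n^xp^x\int\calK_h^{x,y}\,dy$ is bounded by $C|\log h|\int_{\Omega_\zeta}np$. Using H\"older, $\int_{\Omega_\zeta}np\le \|p\|_{L^{\frak q}}(\int_{\Omega_\zeta} n^{\frak q'})^{1/\frak q'}$. Interpolating $n$ between its $L^1$-mass on $\Omega_\zeta$, which is at most $C/|\log\zeta|$ by Proposition~\ref{prop:w}(ii), and its uniform $L^{\frak p}$ bound gives a factor $|\log\zeta|^{-\theta}$ for some $\theta>0$. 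The cross term $\iint\calK_h n^xp^y$ with $x\in\Omega_\zeta$ is controlled the same way, using Young's inequality for the convolution $\calK_h\star p$ in $L^{\frak q}$ with norm $\le C|\log h|$. After normalising by $\|\calK_h\|_{L^1}$ we obtain
\begin{equation*}
\int_0^t\!\!\iintO{\overline{\calK}_h^{x,y}n_\eta^x|p_\eta^x-p_\eta^y|}\dx\tau \le \frac{\beta(h,\eta)}{\zeta}+\frac{C}{|\log\zeta|^{\theta}}.
\end{equation*}
Here $\beta$ consists of the initial-data moduli, the $W$-modulus, the $\Delta V$-modulus and $|\log h|^{-1/2}$, with constants uniform in $\eta$ and $k$ since they depend only on the uniform $L^1\cap L^{\frak p}$ and $L^{\frak q}$ bounds.

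Finally, I pass to the limit $\eta\to0$ at fixed $k$. By Proposition~\ref{prop:ExistenceEtaTo0}, $n_\eta\to n$ and $p_\eta\to p$ almost everywhere with uniform integrability, so the left side converges. Then I take $\limsup_{k\to\infty}$. The initial moduli vanish as $h\to0$ uniformly in $k$ because $n_k^{\mathrm{in}}$ converges in $L^1$. The $W_k$-modulus vanishes because $W_k$ is strongly compact in $L^2$, as in Proposition~\ref{prop:DensityCompactK}. The $\Delta V$ term vanishes by continuity of translations. Choosing $\zeta(h)=\beta(h)^{1/2}$ concludes. The main obstacle is the first step: checking that the reweighted combination still leaves a negative multiple of $|p^x-p^y|n^x\upsilon^x$ without spoiling the closure of the Gronwall loop for the species terms in $\mathcal A_3$. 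If that bookkeeping fails, I would instead add $Q_{n_\eta}$ a second time separately; its own $a_1$ term already produces $-2\nu^{-1}|p^x-p^y|n^x\upsilon^x$, and its own reaction term is bounded by $C Q_{n_\eta}$ because $G\i$ is monotone.
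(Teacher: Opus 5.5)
There is a genuine gap in the reaction terms, at exactly the step you flagged. Write $\sigma=\sign(n_\eta^x-n_\eta^y)$. With the weighting $Q_{n\1_\eta}+Q_{n\2_\eta}+2Q_{n_\eta}$, the reaction contributions add up to
\begin{equation*}
\sum_{i=1}^2\Big[G\i(p_\eta^x)\abs{n_\eta^{(i),x}-n_\eta^{(i),y}}+2G\i(p_\eta^x)\prt{n_\eta^{(i),x}-n_\eta^{(i),y}}\sigma-\abs{G\i(p_\eta^x)-G\i(p_\eta^y)}\,n_\eta^{(i),y}\Big].
\end{equation*}
The last term is nonpositive, but the middle one is not sign-definite. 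Doubling it destroys the exact cancellation used in Proposition~\ref{prop:CompactnessEtaWeight}. Take a point in $\{p_\eta^x>p_L\}$, where $G\i(p_\eta^x)<0$, at which species $i$ moves against the total, i.e.\ $\prt{n_\eta^{(i),x}-n_\eta^{(i),y}}\sigma<0$. There the first two terms equal $\abs{G\i(p_\eta^x)}\abs{n_\eta^{(i),x}-n_\eta^{(i),y}}$ instead of $0$. Since $\abs{G\i(p)}\geq\alpha p-C$ for large $p$, and $p_\eta$ is not bounded uniformly in $\eta$ or $k$, this is not $\leq C\abs{n_\eta^{(i),x}-n_\eta^{(i),y}}$, so the Gronwall loop does not close.

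No reweighting helps. Weights $(a,a,b)$ leave net dissipation in $a_1$ only if $b>a$, and then they leave $(b-a)\abs{G\i(p_\eta^x)}\abs{n_\eta^{(i),x}-n_\eta^{(i),y}}$ here.

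Your fallback fails for the same reason: the total population's own reaction term is not bounded by $CQ_{n_\eta}$. Monotonicity of $G\i$ only disposes of $\prt{G\i(p_\eta^x)-G\i(p_\eta^y)}n_\eta^{(i),y}\sigma$. Take $n^{(1),x}-n^{(1),y}=a$ and $n^{(2),x}-n^{(2),y}=\delta-a$ with $\delta>0$. Then the remainder $\sum_i G\i(p^x)\prt{n^{(i),x}-n^{(i),y}}\sigma$ equals $\prt{G\1(p^x)-G\2(p^x)}a+G\2(p^x)\delta$. This is not $O(\delta)=O(\abs{n^x-n^y})$ unless the growth rates coincide. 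Once you bound it by the species differences instead, it carries the factor $\abs{G\1(p^x)-G\2(p^x)}$, which is in general unbounded.

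The missing idea is that this estimate does not come from a closed Gronwall inequality. Instead, you insert the species compactness already established in Proposition~\ref{prop:DensityCompactK}. The paper does this with the total-population identity alone, i.e.\ your fallback:
\begin{itemize}
\item It bounds the reaction term by $\Gamma\prt{1+(p^x)^s}\abs{n^{(i),x}-n^{(i),y}}$.
\item It applies H\"older with exponents $\frak q/s$ and $\frak q/(\frak q-s)$. The bad part is then controlled by $\norm{\calK_h}_{L^1}^{s/\frak q}\norm{p}_{L^{\frak q}}^s$ times the $L^{\frak q/(\frak q-s)}$-oscillation of $n\i$.
\item Since $\frak q>2s$, one has $\frak q/(\frak q-s)<2\leq\frak p$. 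So after $\eta\to0$, this oscillation vanishes as $h\to0$ in $\limsup_{k\to\infty}$. This uses the strong $L^1$-compactness of $n\i_k$, the uniform $L^{\frak p}$-bounds, and the converse part of Lemma~\ref{lem:CompactnessCriterion}.
\end{itemize}

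With this replacement, either in the total identity or as the leftover of your weighted sum, the rest of your argument goes through. In particular, your removal of the weight on $\Omega_\zeta$ is sound: you split via $n^xp^x+n^xp^y$, use H\"older with the $L^{\frak q}$-bound, and interpolate $n$ between $L^1(\Omega_\zeta)$ and $L^{\frak p}$. This is more careful than the paper's one-line appeal to Proposition~\ref{prop:CompactnessEta}.
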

\begin{proof}
Recall the quantity 
\[
    Q_{n_{k,\eta}}(t) := \int_{\R^{2d}} {\calK_h^{x,y} \abs*{n_{k,\eta}^{x} - n_{k,\eta}^{y}}(\upsilon_{k,\eta}^{x}+\upsilon_{k,\eta}^{y})} \dx x \dx y,
\]
which satisfies
\begin{equation}\label{eq:GradP1}
    Q_{n_{k,\eta}}(t) = Q_{n_{k,\eta}}(0) + 
    \mathcal{A}^{(tot)}_1 + \mathcal{A}^{(tot)}_2 +
    \mathcal{A}^{(tot)}_3 + \mathcal{A}^{(tot)}_4, 
\end{equation}
where $\mathcal{A}^{(tot)}_\ell$ are defined in the proof of Proposition~\ref{prop:CompactnessEtaWeight}. From the estimates in that proof it follows that
\begin{equation*}
    \mathcal{A}^{(tot)}_2 + \mathcal{A}^{(tot)}_4 \leq C|\log h|^{1/2},
\end{equation*}
and 
\begin{align*}
    &\mathcal{A}^{(tot)}_1 + \int_0^t \int_{\R^{2d}} \calK_h^{x,y} n_{k,\eta}^{x}|p_{k,\eta}^{x}-p_{k,\eta}^{y}| \upsilon_{k,\eta}^{x} \dx x\dx y \dx \tau\\[3pt]
    &\leq \nu^{-1}\norm{n_{k,\eta}}_{L^2(0,T;L^2(\R^d))}\norm{\calK_h}_{L^1(\R^d)}^{1/2}\prt*{\int_0^t\iintO{\calK_h^{x,y}\abs*{W_{k,\eta}^{x}-W_{k,\eta}^{y}}^2}\dx \tau}^{1/2}\\[3pt]
    &\quad+\norm{n_{k,\eta}}_{L^2(0,T;L^2(\R^d))} \norm{\calK_h}_{L^1(\R^d)}^{1/2} \prt*{\int_0^t\iintO{\calK_h^{x,y}\abs*{\Delta V^{x}-\Delta V^{y}}^2}\dx \tau}^{1/2}.
\end{align*}
The remaining term contains the growth dynamics and is equal to
\begin{align*}
    \mathcal{A}^{(tot)}_3 = \sum_{i=1}^2  \int_0^t\int_{\R^{2d}} \calK_h^{x,y} a^{(i),(tot)}_3(x,y)(\upsilon_{k,\eta}^{x}+\upsilon_{k,\eta}^{y}) \dx x \dx y \dx \tau,
\end{align*}
where
\begin{align*}
    a^{(i),(tot)}_3(x,y) 
    &:= G\i (p_{k,\eta}^{x}) \prt*{n_{k,\eta}^{(i),x} - n_{k,\eta}^{(i),y}} \sigma -\abs*{G\i (p_{k,\eta}^{x})-G\i (p_{k,\eta}^{y})} n_{k,\eta}^{(i),y} \\[3pt]
    &\leq |G\i (p_{k,\eta}^{x})| \abs*{n_{k,\eta}^{(i),x}-n_{k,\eta}^{(i),y}}\\[3pt]
    &\leq \Gamma \prt*{1+(p_{k,\eta}^{x})^s}\abs*{n_{k,\eta}^{(i),x}-n_{k,\eta}^{(i),y}}.
\end{align*}
It follows that
\begin{align*}
    \mathcal{A}^{(tot)}_3 &\leq 2\sum_{i=1}^2\int_0^t\iintO{\calK_h^{x,y}\abs*{n_{k,\eta}^{(i),x}-n_{k,\eta}^{(i),y}}}\dx \tau\\[3pt]
    &\quad+\sum_{i=1}^2\prt*{\int_0^t\iintO{\calK_h^{x,y}
    (p_{k,\eta}^{x})^\frak{q}}\dx\tau}^{\frac{s}{\frak{q}}} \prt*{\int_0^t\iintO{\calK_h^{x,y}\abs*{n_{k,\eta}^{(i),x}-n_{k,\eta}^{(i),y}}^{\frac{\frak{q}}{\frak{q}-s}}}\dx\tau}^{\frac{\frak{q}-s}{\frak{q}}}\\[3pt]
    &\leq 2\sum_{i=1}^2\int_0^t\iintO{\calK_h^{x,y}\abs*{n_{k,\eta}^{(i),x}-n_{k,\eta}^{(i),y}}}\dx\tau\\[3pt]
    &\quad+2\sum_{i=1}^2\norm{\calK_h}_{L^1(\Rd)}^{s/\frak{q}}\norm{p_{k,\eta}}_{L^\frak{q}(0,T;L^\frak{q}(\Rd))}^s \prt*{\int_0^t\iintO{\calK_h^{x,y}\abs*{n_{k,\eta}^{(i),x}-n_{k,\eta}^{(i),y}}^{\frac{\frak{q}}{\frak{q}-s}}}\dx\tau}^{\frac{\frak{q}-s}{\frak{q}}}.
\end{align*}
Consequently, using the above estimates in~\eqref{eq:GradP1}, and dividing by $|\log h|$, we deduce
\begin{equation}
    \label{eq:GradP2}
    \begin{aligned}
    &\int_0^t\int_{\R^{2d}} \overline{\calK}_h^{x,y} n_{k,\eta}^{x}|p_{k,\eta}^{x}-p_{k,\eta}^{y}|\upsilon_{k,\eta}^{x} \dx x\dx y\dx \tau\\[3pt]
    & \leq C|\log h|^{-1/2} + C\prt*{\int_0^t\iintO{\overline{\calK}_h^{x,y}\abs*{W_{k,\eta}^{x}-W_{k,\eta}^{y}}^2}\dx\tau}^{1/2}\\[3pt]
    &\quad+ C\prt*{\int_0^t\iintO{\overline{\calK}_h^{x,y}\abs*{\Delta V^{x}-\Delta V^{y}}^2}\dx\tau}^{1/2}\\[3pt]
    &\quad+2\sum_{i=1}^2\int_0^t\iintO{\overline{\calK}_h^{x,y}\abs*{n_{k,\eta}^{(i),x}-n_{k,\eta}^{(i),y}}}\dx\tau\\[3pt]
    &\quad+C\prt*{\int_0^t\iintO{\overline{\calK}_h^{x,y}\abs*{n_{k,\eta}^{(i),x}-n_{k,\eta}^{(i),y}}^{\frac{\frak{q}}{\frak{q}-s}}}\dx\tau}^{\frac{\frak{q}-s}{\frak{q}}}\\[3pt]
    &\quad+\iintO{\overline{\calK}_h^{x,y}\abs*{n^{in,x}_{k,\eta} -n^{in, y}_{k,\eta}}},
\end{aligned}
\end{equation}
where all the constants are independent of $\eta$ and $k$. Moreover, we observe that the exponent $\frak{q} / (\frak{q}-s) \in (1,2)$. 

Performing the same domain decomposition as in the proof of Proposition~\ref{prop:CompactnessEta}, we infer the estimate
\begin{align*}
    &\int_0^t\int_{\R^{2d}} \overline{\calK}_h^{x,y} n_{k,\eta}^{x}|p_{k,\eta}^{x}-p_{k,\eta}^{y}| \dx x\dx y\dx \tau \\
    &\leq \frac{1}{\zeta}\int_0^t\int_{\R^{2d}} \overline{\calK}_h^{x,y} n_{k,\eta}^{x}|p_{k,\eta}^{x}-p_{k,\eta}^{y}|\upsilon_{k,\eta}^{x} \dx x\dx y\dx \tau +  \frac{C}{|\log\zeta|},
\end{align*}
where for the first term on the right-hand side we can use the bound~\eqref{eq:GradP2}.
Passing to the limit $\eta\to 0$, we can remove the subscript $\eta$ in each term, replacing appropriate quantities with their limits indexed only by $k$. Then, using Proposition~\ref{prop:DensityCompactK}, we can take $\limsup$ as $k\to\infty$ to obtain
\begin{align*}
   \limsup_{k\to\infty}\;\int_0^t\int_{\R^{2d}} \overline{\calK}_h^{x,y} n_k^x |p_{k}^{x} - p_{k}^{y}|\dx x\dx y\dx \tau \leq \frac{\omega(h)}{\zeta} + \frac{C}{|\log\zeta|},
\end{align*}
where $\omega(h)\to 0$ as $h\to 0$. We now choose $\zeta=\omega(h)^{1/2}$ and take the limit $h\to0$ to conclude the proof.
\end{proof}

\begin{proposition}[Pressure compactness]\label{prop:CompactPressureFinal}
        The sequence $(p_k)_k$ is compact in $L^1(0,T;L^1(\Rd))$.
\end{proposition}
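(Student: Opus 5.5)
We plan to apply the compactness criterion of Lemma~\ref{lem:CompactnessCriterion} to $(p_k)_k$, in the same way as for the densities. This needs three ingredients: a uniform integrability bound, a uniform spatial-oscillation estimate for the kernel quantity $\int_0^T\int_{\R^{2d}}\overline{\calK}_h^{x,y}|p_k^x-p_k^y|$, and weak time regularity. The integrability comes from Corollary~\ref{cor:LebesguePressure}, which bounds $p_k$ in $L^\infty(0,T;L^1\cap L^{\frak q})$ uniformly in $k$. The time regularity comes from Lemma~\ref{prop:TimeDerivative}(ii), which gives $\partial_t p_k \in L^1(0,T;H^{-\frak s})$. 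For global $L^1$ compactness we also need tightness. We get it exactly as in the proof of Proposition~\ref{prop:CompactW_eta}: $\int p_k(1-\chi_R)\le C(\int n_k(1-\chi_R))^{1/2}$, together with \eqref{eq:tightness1}. So the real task is the spatial oscillation estimate.

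The key step is to pass from Proposition~\ref{prop:GradP}, which controls the \emph{weighted} quantity $n_k^x|p_k^x-p_k^y|$, to the unweighted one. We split according to whether $n_k^x$ is small. Fix $\delta\in(0,1)$. On $\{n_k^x\ge 1-\delta\}$ we simply have $|p_k^x-p_k^y|\le (1-\delta)^{-1}n_k^x|p_k^x-p_k^y|$, which Proposition~\ref{prop:GradP} controls. On $\{n_k^x<1-\delta\}$ we have $p_k^x\le (1-\delta)^k\to0$, so there $|p_k^x-p_k^y|\le p_k^y+(1-\delta)^k$. We then distinguish two subcases:
\begin{itemize}
\item[(a)] if also $n_k^y<1-\delta$, the contribution is $O((1-\delta)^k)$ times the measure of the relevant set, which is finite by the support localisation of Lemma~\ref{lem:VarPi} and the initial-support assumption \eqref{eq:InitialSupport};
\item[(b)] if $n_k^y\ge 1-\delta$, we exchange the roles of $x$ and $y$ and use the symmetry of the kernel. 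In this case $p_k^y> p_k^x$, so $|p_k^x-p_k^y|\le (1-\delta)^{-1}n_k^y|p_k^y-p_k^x|$, which is again controlled by Proposition~\ref{prop:GradP} after symmetrisation.
\end{itemize}
The large-pressure tails are handled with the uniform $L^{\frak q}$ bound, using H\"older with $\|\overline{\calK}_h\|_{L^1}=1$. Altogether we expect
\[
\int_0^T\!\!\int_{\R^{2d}}\overline{\calK}_h^{x,y}|p_k^x-p_k^y|\le \frac{2}{1-\delta}\int_0^T\!\!\int_{\R^{2d}}\overline{\calK}_h^{x,y}n_k^x|p_k^x-p_k^y|+C(1-\delta)^k .
\]
Taking first $\limsup_{k\to\infty}$ and then $h\to0$, with any fixed $\delta$, the right-hand side vanishes.

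The main obstacle is this case analysis. In particular, the finite-measure control of the region where both densities are small but positive must be uniform in $k$. I would try to use $\varpi$ from Lemma~\ref{lem:VarPi} as the localisation, since its $L^1$ norm is bounded uniformly in $k$. A cruder alternative is to bound $\int_{\{n_k<1-\delta\}} p_k\le (1-\delta)^{k-1}\|n_k\|_{L^1}$, which avoids any measure argument altogether. This second route looks cleaner and I would prefer it.

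Once the oscillation estimate holds, Lemma~\ref{lem:CompactnessCriterion} gives local compactness in $L^1$, with the $L^1(H^{-\frak s})$ time bound playing the same role as for the densities. Tightness then upgrades this to compactness in $L^1(0,T;L^1(\Rd))$.
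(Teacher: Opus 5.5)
Your argument is correct, but it follows a different route from the paper's. The paper never estimates oscillations of $p_k$ itself. Instead it proves compactness of the nonlinear quantity $n_kp_k=p_k^{(k+1)/k}$ in three steps:
\begin{enumerate}
\item It splits $|n_k^xp_k^x-n_k^yp_k^y|\le n_k^x|p_k^x-p_k^y|+p_k^y|n_k^x-n_k^y|$. The first term is handled by Proposition~\ref{prop:GradP}; the second by H\"older, the $L^{\frak{q}}$ bound and the compactness of $n_k$.
\item It bounds $\partial_t(n_kp_k)$ in $L^1(0,T;H^{-\frak{s}}(\Rd))$ using the stronger dissipation estimate of Lemma~\ref{lem:npQinL1}.
\item It transfers compactness to $p_k$ through $p|p^{1/k}-1|\le\frac1k(1+p^2)$.
\end{enumerate}

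You instead remove the weight $n_k^x$ from the oscillation functional directly. On $\{n_k^x\ge 1-\delta\}$ and on $\{n_k^x<1-\delta\le n_k^y\}$, the integrand is at most $(1-\delta)^{-1}$ times the weighted one; the second case uses the symmetry of $\overline{\calK}_h$. On $\{n_k^x<1-\delta,\ n_k^y<1-\delta\}$ one has $|p_k^x-p_k^y|\le p_k^x+p_k^y\le(1-\delta)^{k-1}(n_k^x+n_k^y)$, contributing at most $2T(1-\delta)^{k-1}\sup_t\|n_k(t)\|_{L^1(\Rd)}$. So the ``cruder'' route you prefer is the right one. The $\varpi$-localisation and the remark about large-pressure tails are superfluous, because these three regions already exhaust $\R^{2d}$.

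Your approach is more economical. The time regularity comes from Lemma~\ref{prop:TimeDerivative}(ii), hence only from Lemma~\ref{lemma:pQinL1}. For this step you therefore do not need the cross term $p_k^y|n_k^x-n_k^y|$, Lemma~\ref{lem:npQinL1}, or the estimates of Appendix~\ref{app:further-details}, such as the $L^\infty$ bound on $K\star n$.

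The paper's approach has one advantage: comparing $n_kp_k$ with $p_k$ yields the graph relation $p_\infty(1-n_\infty)=0$ at the same time, and the proof of Theorem~\ref{thm:StiffLimit} quotes it from this proposition. On your route you would record it separately. That is immediate from a.e.\ convergence, since $p_k=n_k^k$ tends to $0$ where $n_\infty<1$ and to $+\infty$ where $n_\infty>1$.

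Both arguments apply Lemma~\ref{lem:CompactnessCriterion} with a time-derivative bound in $L^1(0,T;H^{-\frak{s}}(\Rd))$ rather than in $L^r(0,T;W^{-1,r}(\Rd))$, so on that point you stand on the same footing as the paper.
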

\begin{proof}
    The idea of the proof is to use Proposition~\ref{prop:GradP} to obtain compactness of a nonlinear function of the pressure, $p_k^{(k+1)/k} = n_k p_k$, and deduce compactness of the pressure as a consequence. 
        
\medskip
\underline{The compactness quantity.}
We have
\begin{align*}
    &\int_0^t\int_{\R^{2d}} \overline{\calK}_h^{x,y} |n_k^x p_{k}^{x}-n_k^y p_{k}^{y}|\dx x\dx y\dx \tau\\[3pt]
    &\leq
    \int_0^t\int_{\R^{2d}} \overline{\calK}_h^{x,y} n_k^x|p_{k}^{x}-p_{k}^{y}|\dx x\dx y\dx \tau + \int_0^t\int_{\R^{2d}} \overline{\calK}_h^{x,y} p_k^y |n_k^x - n_k^y|\dx x\dx y\dx \tau.
\end{align*}
For the last term we observe the following bound:
\begin{align*}
    &\int_0^t\int_{\R^{2d}} \overline{\calK}_h^{x,y} p_k^{y}|n_{k}^x-n_{k}^y|\dx x\dx y\dx \tau\\
    &\leq \norm{p_k}_{L^\infty(0,T;L^\frak{q}(\Rd))}
    \prt*{\int_0^t\int_{\R^{2d}} \overline{\calK}_h^{x,y}|n_{k}^x-n_{k}^y|^{\frak{q}/(\frak{q}-1)}\dx x\dx y\dx \tau}^{(\frak{q}-1)/\frak{q}}.
\end{align*}
Since $p_k$ is bounded uniformly in $L^\infty(0,T; L^\frak{q}(\Rd))$ and $n_k$ is compact in $L^q(\Rd \times (0,T))$, for any $q\in[1,2)$, we deduce that
\begin{align*}
    \lim_{h\to0}\;\limsup_{k\to\infty}\;\int_0^t\int_{\R^{2d}} \overline{\calK}_h^{x,y} |n_k^xp_{k}^{x}-n_k^yp_{k}^{y}|\dx x\dx y\dx \tau = 0.
\end{align*}

\medskip
\underline{The time derivative.} The product $n_kp_k$ satisfies
\begin{align*}
    \fpartial t(n_kp_k) &= \nabla \cdot(n_kp_k \nabla(W_k+V)) + \frac{k}{\nu}n_kp_kQ_k + p_k \prt*{n_k\1 G\1(p_k)+n\2 G\2(p_k)}\\
    &= \nabla \cdot(n_kp_k \nabla(W_k+V)) + \frac{k+1}{\nu}n_kp_kQ_k - n_kp_k(\Delta V + \Delta W_k).
\end{align*}
Let $\phi \in H^\frak{s}(\Rd), \frak{s} > d/2+1$. Then, using Lemma~\ref{lem:npQinL1},
\begin{align*}
    \abs*{\int_\Rd (k+1)n_kp_kQ_k \phi \dx x} \leq \norm{\phi}_{ L^\infty(\Rd)}\norm{(k+1)p_kn_kQ_k}_{L^1(\Rd)}
    \leq C_0(t)\norm{\phi}_{H^\frak{s}(\Rd)},
\end{align*}
where $C_0 \in L^1(0,T)$.
Next, we consider the term
\begin{align*}
    \abs*{\int_\Rd n_k p_k \Delta W_k \phi \dx x} &\leq \frac{1}{\nu}\norm{\phi}_{L^\infty(\Rd)}(\norm{n_k p_k W_k}_{L^1(\Rd)} + \norm{n_k p_k^2}_{L^1(\Rd)})\\
    &\leq C\norm{\phi}_{H^\frak{s}(\Rd)} (C+\norm{n_k}_{L^\frak{p}(\Rd)}\norm{p_k}_{L^{2\frak{p}'}(\Rd)}),
\end{align*}
and note that $2\frak{p}' = \min{(4, 2d/(d-2))}\leq \frak{q}$.
Similarly, for the transport term we write
\begin{align*}
    \abs*{\int_\Rd  n_kp_k\nabla(W_k+V)\cdot \nabla\phi \dx x}
    &\leq \norm{\nabla\phi}_{L^\infty(\Rd)}\norm{n_k p_k \nabla(W_k+V)}_{L^1(\Rd)}\\[3pt]
    &\leq C\norm{\phi}_{H^\frak{s}(\Rd)}(\norm{n_kp_k\nabla W_k}_{L^1(\Rd)} + \norm{n_kp_k\nabla V}_{L^1(\Rd)})\\[3pt]
    &\leq C\norm{\phi}_{H^\frak{s}(\Rd)}(\norm{n_k}_{L^{\frak{p}}(\Rd)}\norm{p_k}_{L^{2\frak{p}'}(\Rd)}\norm{\nabla W_k}_{L^{2\frak{p}'}(\Rd)} \\[3pt]
    &\qquad\qquad\qquad\qquad\qquad + \norm{n_kp_k}_{L^1(\Rd)}\norm{\nabla V}_{L^\infty(\Rd)})\\[3pt]
    &\leq C\norm{\varphi}_{H^\frak{s}(\Rd)}.
\end{align*}

Combining the above estimates, we obtain a uniform bound for the time derivative $\partial_t(n_kp_k)$ in $L^1(0,T;H^{-\frak{s}}(\Rd))$. 

\medskip
\underline{Strong convergence of pressure.}
It follows from Lemma~\ref{lem:CompactnessCriterion} that the sequence $n_kp_k = p_k^{(k+1)/k}$ is compact in $L^1(0,T;L_{\mathrm{loc}}^1(\Rd))$.

On the other hand, this product converges weakly in $L^1$ to $n_\infty p_\infty$. Indeed, this follows from Proposition~\ref{prop:DensityCompactK} and the uniform bound $n_k\in L^\infty(0,T;L^\frak{p}(\Rd))$ with $\frak{p}\geq 2$ in conjunction with the uniform bound $p_k\in L^\infty(0,T;L^\frak{q}(\R^d))$ with $\frak{q}>2$.
Then, using the elementary inequality
\begin{equation*}
    p\abs*{p^{1/k}-1} \leq \frac{1}{k}(1+p^2),\qquad p\geq 0,\, k\geq 1,
\end{equation*}
we deduce
\begin{equation*}
    \norm{n_kp_k - p_k}_{L^1(0,T;L^1(K))} \leq \frac{1}{k}(|K| + \sup_{k}\,\norm{p_k}_{L^2(0,T;L^2(\Rd))}^2)\to 0,
\end{equation*}
for every compact set $K\subset\R^d$. This implies that $n_\infty p_\infty = p_\infty$ a.e.\ and that $p_k\to p_\infty$ strongly in $L^1(0,T;L^1_{\mathrm{loc}}(\Rd))$.
Finally, arguing as earlier in the proof of Proposition~\ref{prop:CompactW_eta}, we can deduce global convergence by transferring tightness of the sequence $n_k$ to tightness of the sequence $p_k$.
\end{proof}

\subsection{Proof of Theorem~\ref{thm:StiffLimit}}
Propositions~\ref{prop:DensityCompactK} and~\ref{prop:CompactPressureFinal}, together with the a priori bounds, imply that, after extracting subsequences, the following convergences hold:
\begin{alignat*}{2}
    W_k &\to W_\infty, \quad &&\text{in $L^2(0,T; L^2(\Rd))$ and a.e.},\\
    n_k &\to n_\infty, \quad &&\text{in $L^{\infty-}(0,T;L^{\frak{p}-}(\Rd))$ and a.e.},\\
    p_k &\to p_\infty,\quad &&\text{in $L^{\infty-} (0,T;L^{\frak q-}(\Rd))$ and a.e.},
\end{alignat*}
where $W_\infty = K_\nu\star p_\infty$.
Moreover, there is a function $\frak h \in L^1(\Rd)\cap L^{s+1}(\Rd)$ such that $p_k \leq \frak h$ (since $s+1 <\frak q$).
Passing to the limit in the Brinkman equation we readily deduce $-\nu \Delta W_\infty + W_\infty = p_\infty$ a.e.\ in $\Rd \times (0,T)$.
Passing to the limit in the equations for the densities $n_k\i$ is also straightforward; for instance, the growth term can be treated in the same way as in the proof of Proposition~\ref{prop:ExistenceEtaTo0}. 
The relation $p_\infty(1-n_\infty) = 0$ a.e.\ was already proved in Proposition~\ref{prop:CompactPressureFinal}.

Finally, we prove the complementarity condition. From the uniform bound
\begin{equation*}
    \norm{k p_k Q_k}_{L^1(0,T; L^1(\Rd))} \leq C,
\end{equation*}
we deduce that $p_k Q_k \to 0$ in $L^1(0,T;L^1(\Rd))$.
On the other hand, we argue that
\begin{equation*}
    p_k Q_k = p_k(W_k-p_k) + \nu p_k\Delta V + \nu r_k p_k G\1(p_k) + \nu(1-r_k)p_k G\2(p_k),
\end{equation*}
converges in the sense of distributions to
\begin{equation*}
    p_\infty(W_\infty-p_\infty) + \nu p_\infty\Delta V + \nu r_\infty p_\infty G\1(p_\infty) + \nu (1-r_\infty)p_\infty G\2(p_\infty),
\end{equation*} 
where $r_\infty := \frac{n\1_\infty}{n_\infty}\mathbbm{1}_{\{n_\infty>0\}}$. The only nontrivial terms to consider are the reaction terms. 
From the a.e. convergence of the $n_k^{(i)}$ we clearly have $r_k \to r_\infty$ a.e. Then, using the trivial bound $0 \leq r_k \leq 1$, we deduce that this convergence holds in any $L_{\mathrm{loc}}^q(\Rd \times (0,T))$ with $ 1\leq q <\infty$.
We now claim that 
\begin{equation}\label{eq:StrongReactionFinal}
    p_k G\1(p_k)\to p_\infty G\1(p_\infty), \quad \text{strongly in $L^1(0,T;L^1(\Rd))$}.
\end{equation}
Indeed, convergence a.e.\ follows from the convergence of $p_k$, and we have
\begin{equation*}
    |p_k G\1(p_k)| \leq \Gamma(p_k + p_k^{s+1}) \leq \Gamma(\frak h + \frak h^{s+1}).
\end{equation*}
Since $\frak h + \frak h^{s+1}$ is integrable, we deduce~\eqref{eq:StrongReactionFinal} by dominated convergence.
Thus, we have shown that $r_kp_kG\1(p_k)\to r_\infty p_\infty G\1(p_\infty)$ strongly $L^1(0,T;L_{\mathrm{loc}}^1(\Rd))$.

We therefore deduce that 
\begin{equation*}
    p_\infty(W_\infty-p_\infty) + \nu p_\infty\Delta V + \nu r_\infty p_\infty G\1(p_\infty) + \nu (1-r_\infty)p_\infty G\2(p_\infty) = 0
\end{equation*}
or, equivalently, using $n_\infty p_\infty = p_\infty$,
\begin{equation*}
    p_\infty\left((W_\infty-p_\infty) + \nu \Delta V + \nu n^{(1)}_\infty G\1(p_\infty) + \nu n^{(2)}_\infty G\2(p_\infty)\right) = 0,
\end{equation*}
a.e.\ in $\Rd\times(0,T)$, as claimed in~\eqref{eq:ComplementarityRelation}. The proof of Theorem~\ref{thm:StiffLimit} is complete. \qed

\subsection{Smoothing effect of the stiff limit}
We conclude by proving the following regularising effect for the limit pressure and the limit density.
\begin{theorem}
    \label{thm:regularising-effect}
    The limit total density $n_\infty$ and the limit pressure $p_\infty$ satisfy
    $$
        n_\infty, p_\infty \in L^\infty(0,T;L^\infty(\Rd)),
    $$ 
    with $0\leq p_\infty \leq p_L$ and $0 \leq n_\infty \leq 1$.
\end{theorem}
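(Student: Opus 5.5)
The plan is to prove the two bounds separately. The density bound follows from the uniform-in-$k$ pressure integrability. The pressure bound follows from the complementarity relation \eqref{eq:ComplementarityRelation} combined with a maximum-principle argument for Brinkman's equation.

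\emph{Step 1: $0\le n_\infty\le 1$.} Nonnegativity passes to the a.e.\ limit. For the upper bound, Corollary~\ref{cor:LebesguePressure} (with $\ell=\frak q$) gives $\sup_t\int_\Rd n_k^{k\frak q}\dx x = \norm{p_k}_{L^\infty(0,T;L^{\frak q})}^{\frak q}\le C$ uniformly in $k$. By Chebyshev's inequality, for every $\delta>0$,
$$\abs{\{(x,t): n_k>1+\delta\}}\le C\,T\,(1+\delta)^{-k\frak q}\to 0 .$$
Since $n_k\to n_\infty$ a.e.\ (Proposition~\ref{prop:DensityCompactK}), Fatou's lemma applied to $\mathbbm 1_{\{n_k>1+\delta\}}$ gives $\abs{\{n_\infty>1+\delta\}}=0$. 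Letting $\delta\downarrow 0$ yields $n_\infty\le 1$ a.e.

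\emph{Step 2: pointwise consequence of complementarity.} On $\{p_\infty>0\}$ we have $n_\infty=1$ from Theorem~\ref{thm:StiffLimit}, hence $n\1_\infty+n\2_\infty=1$ there. The complementarity relation then gives, a.e.\ on $\{p_\infty>0\}$,
$$p_\infty = W_\infty + \nu\Big(\Delta V + n\1_\infty G\1(p_\infty)+n\2_\infty G\2(p_\infty)\Big)= W_\infty+\nu\sum_{i=1}^2 n\i_\infty\big(\Delta V+G\i(p_\infty)\big).$$
By \eqref{eq:def-pL}, on $\{p_\infty>p_L\}$ each bracket is strictly negative. Since the weights $n\i_\infty$ are nonnegative and sum to one, we get $p_\infty<W_\infty$ a.e.\ on $\{p_\infty>p_L\}$.

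\emph{Step 3: maximum principle for Brinkman.} This is the key step. For a.e.\ $t$ we have $p_\infty(t)\in L^1\cap L^2$, hence $W_\infty(t)=K_\nu\star p_\infty(t)\in H^2(\Rd)$. Therefore $(W_\infty-p_L)_+\in H^1(\Rd)$ and it has finite-measure support, so it is an admissible test function for $-\nu\Delta W_\infty+W_\infty=p_\infty$. Testing with it gives
$$\nu\int_\Rd \abs{\nabla (W_\infty-p_L)_+}^2\dx x=\int_\Rd (p_\infty-W_\infty)(W_\infty-p_L)_+\dx x .$$
On $\{W_\infty>p_L\}$ we split into two cases. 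If $p_\infty>p_L$, Step 2 gives $p_\infty-W_\infty<0$. If $p_\infty\le p_L$, then $p_\infty-W_\infty<p_L-W_\infty<0$. Hence the right-hand side is nonpositive, so $(W_\infty-p_L)_+$ is constant. Being integrable, it vanishes, and thus $W_\infty\le p_L$ a.e.

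Feeding this back into Step 2, on $\{p_\infty>p_L\}$ we obtain $p_L<p_\infty<W_\infty\le p_L$, which is a contradiction. So $\abs{\{p_\infty>p_L\}}=0$, i.e.\ $0\le p_\infty\le p_L$, and in particular both limits lie in $L^\infty(0,T;L^\infty(\Rd))$.

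I expect Step 3 to be the main obstacle. The delicate point is that the complementarity relation holds only a.e.\ in space-time, while the truncation argument must be run on time slices. One has to check that for a.e.\ $t$ the relation holds a.e.\ in $x$ (Fubini) and that $W_\infty(t)$ has the regularity needed for the test. A second point to check is that the strict inequality in \eqref{eq:def-pL} is genuinely pointwise: this uses $\Delta V(x,t)\le \norm{\Delta V}_{L^\infty}$ a.e.
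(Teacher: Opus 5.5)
Your proof is correct, and for the pressure bound it takes a genuinely different route from the paper. The paper first proves $p_\infty\in L^\infty$ by a bootstrap. On $\{p_\infty\ge p_L\}$ the complementarity relation gives $p_\infty\le W_\infty$, and Young's convolution inequality for $K_\nu$ then bounds $\|p_\infty\|_{L^\vartheta}$ by a strictly lower Lebesgue norm of $p_\infty$; iterating makes $W_\infty$, and hence $p_\infty$, bounded. Only afterwards does it obtain $p_\infty\le p_L$, by contradiction on $\{p_\infty>p_L+\epsilon\}$. That step compares suprema using $\|K_\nu\star p_\infty\|_{L^\infty}\le\|p_\infty\|_{L^\infty}$ together with the quantitative gap $W_\infty\ge p_\infty+\alpha\nu\epsilon$, which comes from $\partial_pG^{(i)}\le-\alpha$ via a Taylor expansion. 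Your Stampacchia truncation with $(W_\infty-p_L)_+$ on time slices replaces both steps at once and is shorter and less structural. It needs only $p_\infty(t)\in L^2$ and the pointwise inequality $p_\infty<W_\infty$ on $\{p_\infty>p_L\}$; in fact $p_\infty\le W_\infty$ would suffice, since you conclude from $\nabla(W_\infty-p_L)_+=0$ plus integrability. It also yields $W_\infty\le p_L$ as a by-product. It avoids the dimension-dependent bootstrap, the need for an a priori $L^\infty$ bound before comparing suprema, and any use of the constant $\alpha$ beyond what enters \eqref{eq:def-pL}. The Fubini and regularity points you flag are harmless: $p_\infty\in L^\infty(0,T;L^1\cap L^{\mathfrak q})$ with $\mathfrak q>2$, so for a.e.\ $t$ both $p_\infty(1-n_\infty)=0$ and \eqref{eq:ComplementarityRelation} hold a.e.\ in $x$, and $W_\infty(t)=K_\nu\star p_\infty(t)\in H^2(\R^d)$. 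For the density bound, your Chebyshev--Fatou argument and the paper's inequality $n^k\ge k(n-1)_+$ (giving $\int(n_k-1)_+\le k^{-1}\|p_k\|_{L^1}$) are two phrasings of the same idea. In both, uniform-in-$k$ integrability of $n_k^k$ forces $n_\infty\le 1$.
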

\begin{proof}
    Let $\vartheta>1$. Pointwise almost everywhere in time there holds
    \begin{align*}
        \int_\Rd |p_\infty|^\vartheta \dx x
        &= \int_{\set{p_\infty < p_L}} p_\infty^\vartheta \dx x + \int_{\set{p_\infty \geq p_L}} p_\infty^\vartheta \dx x \\[3pt]
        &\leq |p_L|^{\vartheta-1}\norm{p_\infty}_{L^\infty(0,T;L^1(\Rd))}\\[3pt]
        &\quad+ \int_{\set{p_\infty \geq p_L}} \bigg(p_\infty - \nu \Delta V - \nu r_\infty G^{(1)}(p_\infty) - \nu (1 - r_\infty)G^{(2)}(p_\infty)\bigg)^\vartheta \dx x\\[3pt]
        &\leq C + \int_{\set{p_\infty \geq p_L}} |W_\infty|^\vartheta \dx x\\[3pt]
        &\leq C + \norm{W_\infty}_{L^\vartheta(\Rd)}^\vartheta,
    \end{align*}    
    having used the complementarity relation. Using Young's inequality for convolutions, we find
    \begin{align*}
        \norm{W_\infty}_{L^\vartheta(\Rd)}\leq \norm{K_\nu}_{L^{\frac{d}{d-2} - }(\Rd)} \norm{p_\infty}_{L^{q+}(\Rd)},
    \end{align*}
    with $q = \vartheta d/(2\vartheta + d) < \vartheta$, and a simple modification in $d=1,2$. Thus, we can bootstrap integrability until we reach $p_\infty \in L^\infty(0,T; L^{d/2+}(\Rd))$, whence we conclude $W_\infty \in L^\infty(0,T; L^\infty(\Rd))$, which implies that also $p_\infty\in L^\infty(0,T;L^\infty(\Rd))$. Indeed, from the above estimate, we get
    \begin{align*}
        \norm{p_\infty}_{L^\vartheta(\Rd)} \leq |p_L|^{(\vartheta-1)/\vartheta} \norm{p_\infty}^{1/\vartheta}_{L^\infty(0,T;L^1(\Rd))} + \norm{W_\infty}_{L^\infty(0,T;L^\infty(\Rd))}^{(\vartheta-1)/\vartheta} \norm{W_\infty}_{L^\infty(0,T;L^1(\Rd))}^{1/\vartheta},
    \end{align*}
    which, upon passing to $\vartheta \to \infty$, yields the result.

    \medskip
    Finally, to show that $p_\infty \leq p_L$, we argue by contradiction. Indeed, let us assume that $\norm{p_\infty}_{L^\infty(0,T;L^\infty(\Rd))} > p_L + \epsilon$ for some $\epsilon>0$. For convenience, we introduce the set $\frak O_\epsilon = \set{p_\infty > p_L + \epsilon}$. On $\frak O_\epsilon$ we have $p_\infty < W_\infty$ from the complementarity relation. Therefore,
    \begin{align*}
        \norm{p_\infty}_{L^\infty(0,T;L^\infty(\Rd))} = \sup_{\frak O_\epsilon} p_\infty \leq \sup_{\frak O_\epsilon} W_\infty \leq \norm{W_\infty}_{L^\infty(0,T;L^\infty(\Rd))} \leq \norm{p_\infty}_{L^\infty(0,T;L^\infty(\Rd))},
    \end{align*}
    whence we infer
    \begin{align}
        \label{eq:same-norms}
         \sup_{\frak O_\epsilon} p_\infty = \sup_{\frak O_\epsilon} W_\infty.
    \end{align}
    On the set $\frak O_\epsilon$ we have from the complementarity relation
    \begin{align*}
        W_\infty &= p_\infty - \nu\Delta V - \nu r_\infty  G^{(1)}(p_\infty) - \nu(1 - r_\infty) G^{(2)}(p_\infty)\\[3pt]
        &\geq p_\infty  -\nu \norm{\Delta V}_\infty - \nu r_\infty  G^{(1)}(p_L + \epsilon) - \nu (1 - r_\infty) G^{(2)}(p_L + \epsilon)\\[3pt]
        &=p_\infty   -\nu\norm{\Delta V}_\infty - \nu r_\infty  G^{(1)}(p_L) - \nu (1 - r_\infty) G^{(2)}(p_L)\\[3pt]
        &\qquad\qquad-  \nu r_\infty  \partial_p G^{(1)}(\tilde p_L)\epsilon - \nu (1- r_\infty)  \partial_p G^{(2)}(\tilde p_L')\epsilon\\[3pt]
        &\geq  p_\infty  -   \nu r_\infty  \partial_p G^{(1)}(\tilde p_L)\epsilon -  \nu(1- r_\infty)  \partial_p G^{(2)}(\tilde p_L')\epsilon\\[3pt]
        &\geq p_\infty  + \alpha \epsilon \nu,
    \end{align*}
    having used the definition of $p_L$, cf. Eq. \eqref{eq:def-pL}, and a Taylor expansion in the third line. Passing to the supremum over $\frak O_\epsilon$, we get
      \begin{align*}
        \norm{W_\infty}_{L^\infty(\frak O_\epsilon )} 
        &\geq  \norm{p_\infty}_{L^\infty(\frak O_\epsilon )} + \alpha \epsilon \nu >   \norm{p_\infty}_{L^\infty(\frak O_\epsilon )},
    \end{align*}
    which is absurd since $\norm{W_\infty}_{L^\infty(\frak O_\epsilon )}  = \norm{p_\infty}_{L^\infty(\frak O_\epsilon )} $, by Eq. \eqref{eq:same-norms}. Therefore, there cannot exist any $\epsilon>0$ such that $\norm{p_\infty}_{L^\infty(0,T;L^\infty(\Rd))} > p_L + \epsilon$.
    
    \medskip
    Finally, the bound on the limit total density follows from the elementary inequality $n^k - k(n-1)_+ \geq 0$, for any $n\geq 0$. Substituting $n=n_k$ and integrating over space, we get
    $$
        0 \leq \int_\Rd (n_k-1)_+ \dx x \leq \frac1k \int_\Rd n_k^k \dx x \leq \frac{1}{k}\norm{p_k}_{L^\infty(0,T; L^1(\Rd))} \to 0,
    $$
    as $k\to \infty$. From the pointwise convergence $n_k\to n_\infty$, we infer $0 \leq (n_\infty-1)_+ \leq 0$, i.e.,
    $$
        0 \leq n_\infty \leq 1,
    $$
    which concludes the proof. 
\end{proof}

\section*{Acknowledgements}
This project was partly funded by the Federal Ministry of Education and Research (BMBF) and the Free State of Saxony as part of the Excellence Strategy of the German Federal and State Governments.

T.D.\ acknowledges the support of the National Science Centre, Poland, project no. 2023/51/D/ST1/02316. 

M.S.\ warmly acknowledges funding from the Thematic Research Programme "Mathematical Biology - Analysis and Applications" within the Excellence Initiative - Research University at the University of Warsaw.

\appendix
\section{Compactness criterion}
\label{app:compcrit}

For $h\in(0,1)$ we define the family of functions $\mathcal{L}_h$ by
\begin{equation}
    \label{eq:pre-kernel-L}
    \mathcal{L}_h(z) = \frac{\phi(z)}{(|z|^2+h^2)^{d/2}},
\end{equation}
where $\phi \in C_c^\infty(\Rd)$ is a nonnegative, radially decreasing function with $\mathrm{supp}(\phi) \subset B_2(0)$ and $\phi \equiv 1$ on $B_1(0)$.
Now we define the kernel $\calK_h$ by
\begin{equation}
    \label{eq:kernel-K}
    \calK_h(z) := \int_h^1 \frac{\mathcal{L}_s(z)}{\norm{\mathcal{L}_s}_{L^1(\Rd)}}\frac{\dx s}{s}.
\end{equation}
Notice that 
\begin{equation*}
    \norm{\calK_h}_{L^1(\Rd)} = |\log h|.
\end{equation*}
We introduce the notation
\begin{equation*}
    \overline{\calK}_h := \frac{\calK_h}{\norm{\calK_h}_{L^1(\Rd)}}.
\end{equation*}

We observe the following two properties of the kernel $\mathcal{K}_h$.
\begin{lemma}[Approximation of the identity]
    \label{lem:Kh_dirac}
    The family of kernels, $(\overline {\mathcal K}_h)_h$, defined in Eq. \eqref{eq:kernel-K} is a Dirac sequence.
\end{lemma}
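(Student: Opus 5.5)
We need to show that $(\overline{\calK}_h)_h$ is a Dirac sequence: each $\overline{\calK}_h$ is nonnegative with unit mass, and its mass concentrates at the origin as $h\to0$, i.e.
\begin{equation*}
    \int_{|z|>\delta}\overline{\calK}_h(z)\dx z\to0 \quad\text{for every fixed } \delta>0.
\end{equation*}

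\textbf{Nonnegativity and unit mass.} Each $\mathcal{L}_s$ is nonnegative, since $\phi\geq0$. By Tonelli,
\begin{equation*}
    \norm{\calK_h}_{L^1(\Rd)}=\int_h^1\frac{\dx s}{s}=|\log h|,
\end{equation*}
so $\overline{\calK}_h\geq0$ and $\int_{\Rd}\overline{\calK}_h\dx z=1$ by construction.

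\textbf{Concentration at the origin.} Fix $\delta\in(0,1)$. I would first bound $\norm{\mathcal{L}_s}_{L^1(\Rd)}$ from below. On $B_1(0)$ we have $\phi\equiv1$, so rescaling $z=sw$ gives
\begin{equation*}
    \norm{\mathcal{L}_s}_{L^1(\Rd)}\geq\int_{|z|\leq1}\frac{\dx z}{(|z|^2+s^2)^{d/2}}=\int_{|w|\leq 1/s}\frac{\dx w}{(|w|^2+1)^{d/2}}\geq c_d\,|\log s|+c,
\end{equation*}
where the last step follows by integrating in polar coordinates. In particular $\norm{\mathcal{L}_s}_{L^1(\Rd)}\geq c_0>0$ uniformly for $s\in(0,1)$. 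The same computation gives the matching upper bound $\norm{\mathcal{L}_s}_{L^1(\Rd)}\lesssim 1+|\log s|$, though it is not needed here.

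Next, the tail of $\mathcal{L}_s$ is bounded uniformly in $s$:
\begin{equation*}
    \int_{|z|>\delta}\mathcal{L}_s\dx z\leq\int_{\delta<|z|<2}|z|^{-d}\dx z=C\log(2/\delta).
\end{equation*}
Combining the two bounds,
\begin{equation*}
    \int_{|z|>\delta}\calK_h\dx z\leq\int_h^1\frac{C\log(2/\delta)}{\norm{\mathcal{L}_s}_{L^1(\Rd)}}\frac{\dx s}{s}.
\end{equation*}

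\textbf{Main obstacle.} If we only use the uniform lower bound $c_0$, the right-hand side is $C_\delta|\log h|$, which is exactly the order of the normalisation $\norm{\calK_h}_{L^1(\Rd)}=|\log h|$. This crude bound therefore shows only that the tail fraction stays bounded, not that it vanishes, and the logarithmic lower bound on $\norm{\mathcal{L}_s}_{L^1(\Rd)}$ has to be used. With $\norm{\mathcal{L}_s}_{L^1(\Rd)}\geq c\,(1+|\log s|)$ we get
\begin{equation*}
    \int_h^1\frac{1}{1+|\log s|}\frac{\dx s}{s}=\log\bigl(1+|\log h|\bigr),
\end{equation*}
and hence
\begin{equation*}
    \int_{|z|>\delta}\overline{\calK}_h\dx z\leq C_\delta\,\frac{\log|\log h|}{|\log h|}\to0 \quad\text{as } h\to0.
\end{equation*}
The only remaining care is the constant for small $s$. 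Concretely, for $s<1/2$ one has
\begin{equation*}
    \int_{1\leq|w|\leq 1/s}\frac{\dx w}{(2|w|^2)^{d/2}}=2^{-d/2}\,|S^{d-1}|\,|\log s|,
\end{equation*}
which gives the required lower bound there, while for $s\in[1/2,1)$ the uniform bound $c_0$ suffices.

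Finally, since $\overline{\calK}_h$ is nonnegative, has unit mass and its mass outside every ball $B_\delta(0)$ vanishes as $h\to0$, standard approximation-of-identity arguments give $\overline{\calK}_h\star f\to f$ in $L^1$ for every $f\in L^1(\Rd)$, which is the property used in the compactness criterion.
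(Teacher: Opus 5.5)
Your proof is correct and follows the paper's argument: unit mass, the logarithmic lower bound $\norm{\mathcal{L}_s}_{L^1(\Rd)}\gtrsim 1+|\log s|$, a tail bound for $\mathcal{L}_s$ that is uniform in $s$, and the resulting $\log|\log h|/|\log h|$ decay of the tail of $\overline{\calK}_h$. The only additions are that you actually derive the lower bound on $\norm{\mathcal{L}_s}_{L^1(\Rd)}$, which the paper just asserts, and that you evaluate the $s$-integral exactly instead of splitting it at some $h_0$.
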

\begin{proof}
    In order to prove the claim, we shall employ ~\cite[Definition 1.2.15, Theorem 1.2.21]{GrafakosClassical}. Since the kernel is normalised, we only have to show that
    \begin{align*}
        \int_{B_R(0)^c} \overline \calK_h(z) \dx z \to 0,
    \end{align*}
    for all $R>0$, as $h\to 0$. We only consider $R\in(0,2)$ as otherwise the integrand is identically zero.
    We can write
    \begin{align}\label{eq:KDirac}
        \int_{B_R(0)^c}\overline \calK_h(z) \dx z &= \frac{1}{|\log h|} \int_h^1 \frac{1}{s\norm{\mathcal L_s}_{L^1(\Rd)}} \left[ \int_{B_R(0)^c} \mathcal L_s(z) \dx z\right]\dx s,
    \end{align}
    where $\mathcal L_s$ is as in Eq. \eqref{eq:pre-kernel-L}, and satisfies 
    \begin{equation}\label{eq:LNorm}
        \norm{\mathcal L_s}_{L^1(\Rd)} \geq c_d (|\log s|+1).
    \end{equation}
    Furthermore, 
    \begin{align*}
		\int_{B_R(0)^c} \mathcal L_s(z) \dx z 
		&\leq \int_{B_R(0)^c\cap B_2(0)} \frac1{(|z|^2 + s^2)^{d/2}} \dx z\\[3pt]
		&= \omega_d\int_R^2 \frac{r^{d-1}}{(r^2+s^2)^{d/2}}\dx r\\[3pt]
		&\leq C(R).
	\end{align*}
    Now, let us fix some $h_0\in(h,1)$ and use the above observations in Eq.~\eqref{eq:KDirac} to estimate
    \begin{align*}
        \int_{B_R(0)^c}\overline \calK_h(z) \dx z &\leq \frac{C(R)}{|\log h|}\prt*{\int_h^{h_0}\frac{1}{s|\log s|} \dx s + \int_{h_0}^1 \frac{1}{s} \dx s}\\[3pt]
        &\leq \frac{C(R,h_0)}{|\log h|} \log|\log h|,
    \end{align*}
    which converges to $0$ as $h\to 0$ for every $R$.
\end{proof}

\begin{lemma}\label{lem:prop-gradK}
There exists a constant $C>0$ such that
    \begin{equation}
\label{eq:prop-gradK}
    |z||\nabla \mathcal{K}_h(z)| \leq \left\{
    \begin{array}{cc}
        d\mathcal{K}_h(z), & z\in\{1\leq|z|\leq2\}^c,\\[0.5em]
        d\mathcal{K}_h(z) + C(|\log|\log h|| + 1), & z\in\{1\leq|z|\leq2\}.
    \end{array}
    \right.
\end{equation}
\end{lemma}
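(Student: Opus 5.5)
The plan is to differentiate under the integral sign in the definition \eqref{eq:kernel-K} and estimate $\nabla\mathcal L_s$ pointwise. For each $s\in(h,1)$ we have
\begin{equation*}
    \nabla \mathcal L_s(z) = \frac{\nabla\phi(z)}{(|z|^2+s^2)^{d/2}} - d\,\frac{\phi(z)\, z}{(|z|^2+s^2)^{d/2+1}},
\end{equation*}
so that
\begin{equation*}
    |z||\nabla\mathcal L_s(z)| \leq \frac{|z||\nabla\phi(z)|}{(|z|^2+s^2)^{d/2}} + d\,\frac{|z|^2}{|z|^2+s^2}\,\mathcal L_s(z).
\end{equation*}
Since $|z|^2/(|z|^2+s^2)\leq 1$, the second term is bounded by $d\,\mathcal L_s(z)$. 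The first term vanishes outside the annulus $\{1\leq|z|\leq2\}$, because $\phi\equiv1$ on $B_1(0)$ and $\phi\equiv 0$ outside $B_2(0)$. Dividing by $s\norm{\mathcal L_s}_{L^1(\Rd)}$ and integrating over $s\in(h,1)$ (differentiation under the integral is justified since the integrands are smooth in $z$ and uniformly bounded for $s\geq h>0$), we obtain directly $|z||\nabla\calK_h(z)|\leq d\,\calK_h(z)$ on the complement of the annulus.

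On the annulus we must additionally control
\begin{equation*}
    \int_h^1 \frac{|z||\nabla\phi(z)|}{(|z|^2+s^2)^{d/2}}\,\frac{\dx s}{s\norm{\mathcal L_s}_{L^1(\Rd)}}.
\end{equation*}
Here $|z|\in[1,2]$, so $|z||\nabla\phi(z)|/(|z|^2+s^2)^{d/2}\leq 2\norm{\nabla\phi}_{L^\infty(\Rd)}$, a constant. We are therefore reduced to bounding $\int_h^1 \frac{\dx s}{s\norm{\mathcal L_s}_{L^1(\Rd)}}$. Using the lower bound \eqref{eq:LNorm}, $\norm{\mathcal L_s}_{L^1(\Rd)}\geq c_d(|\log s|+1)$, and the substitution $u=|\log s|$, this integral is at most $c_d^{-1}\int_0^{|\log h|}\frac{\dx u}{1+u} = c_d^{-1}\log(1+|\log h|)$. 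This quantity is bounded by $C(|\log|\log h||+1)$, which gives the second case of \eqref{eq:prop-gradK}.

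The only step that needs checking is \eqref{eq:LNorm} itself, since it was merely asserted in the proof of Lemma~\ref{lem:Kh_dirac}. I would verify it with polar coordinates: $\norm{\mathcal L_s}_{L^1(\Rd)}\geq \omega_d\int_0^1 \frac{r^{d-1}}{(r^2+s^2)^{d/2}}\dx r \geq \omega_d 2^{-d/2}\int_s^1 \frac{\dx r}{r} = c\,|\log s|$. Near $s=1$ the contribution from $r\in[0,1]$ adds a positive constant, which yields the form $c_d(|\log s|+1)$. Everything else is routine.
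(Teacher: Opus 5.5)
Your proof is correct and follows essentially the paper's argument: a pointwise bound $|z||\nabla\mathcal L_s(z)|\le d\,\mathcal L_s(z)$ plus a $\nabla\phi$-term supported on the annulus, integrated in $s$, with the annulus error controlled by $\int_h^1 \frac{\mathrm{d}s}{s\|\mathcal L_s\|_{L^1}}$ via the lower bound \eqref{eq:LNorm}. Your substitution $u=|\log s|$ in place of the paper's split at $h_0$, and your explicit polar-coordinate check of \eqref{eq:LNorm} (which the paper only asserts), are minor refinements that make the argument slightly more self-contained.
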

\begin{proof}
First, by direct calculation, we observe that
\begin{equation*}
\label{eq:L_grad}
    |z||\nabla \mathcal{L}_h(z)| \leq \left\{
    \begin{array}{cc}
        d\mathcal{L}_h(z), & z\in\{1\leq|z|\leq2\}^c \\[0.5em]
        d\mathcal{L}_h(z) + \norm{\phi'}_{L^\infty}\frac{1}{(|z|^2+h^2)^{d/2}}, & z\in\{1\leq|z|\leq2\}.
    \end{array}
    \right.
\end{equation*}
Since
\begin{equation*}
    |z||\nabla \mathcal{K}_h(z)| \leq \int_h^1 \frac{|z||\nabla \mathcal{L}_s(z)|}{\norm{\mathcal{L}_s}_{L^1(\Rd)}} \frac{\dx s}{s},
\end{equation*}
the result follows easily --- for the error term in the annulus $\{1\leq |z|\leq 2\}$ we choose $h_0\in(h,1)$ and write
\begin{equation*}
    \int_h^1 \frac{1}{\norm{\mathcal{L}_s}_{L^1(\Rd)}(|z|^2+s^2)^{d/2}} \frac{\dx s}{s}
    \leq \int_h^1 \frac{1}{s\norm{\mathcal{L}_s}_{L^1(\Rd)}} \dx s
    \leq C\int_h^{h_0} \frac{1}{s|\log s|}\dx s + C\int_{h_0}^1\frac{1}{s}\dx s,
\end{equation*}
having used bound~\eqref{eq:LNorm} again.
\end{proof}

For brevity of notation, throughout we shall use the superscript notation $\overline \calK_h^{x,y} := \overline \calK_h(x-y)$.
We can now recall the following compactness criterion which we use significantly in Section~\ref{sec:UnboundedData} and Section~\ref{sec:StiffLimit}. It was first proved in~\cite{BelgacemJabin}.

\begin{lemma}
    \label{lem:CompactnessCriterion}
    Let $(u_n)_n$ be a sequence of functions uniformly bounded in $L^p(\Rd \times (0,T))$ with $1\leq p < \infty$. If $(\partial_t u_n)_n$ is uniformly bounded in $L^r(0,T;W^{-1,r}(\Rd))$, $r\geq 1$, and if
    \begin{equation*}
    \lim_{h\to0}\;\limsup_{n\to\infty}\int_0^T\int_{\R^{2d}} \overline{\calK}_h(x-y) |u_n(x,t)-u_n(y,t)|^p\dx x\dx y\dx t =0,
    \end{equation*}
    then $(u_n)$ is compact in $L^p_{\mathrm{loc}}(\Rd \times (0,T))$. If $(u_n)$ is compact in $L^p(\Rd \times (0,T))$, then the above limit holds.
\end{lemma}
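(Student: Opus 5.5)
The statement to prove is the final compactness criterion, Lemma~\ref{lem:CompactnessCriterion}. I would prove it by a Kolmogorov--Riesz/Fréchet type argument, combining the averaged spatial modulus of continuity encoded by $\overline{\calK}_h$ with the time-derivative bound. The natural smoothing device is convolution in space with $\overline{\calK}_h$ itself, which Lemma~\ref{lem:Kh_dirac} shows is a Dirac sequence.

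\emph{Step 1 (space regularisation).} Set $u_n^h := \overline{\calK}_h \star_x u_n$. By Jensen's inequality, using that $\overline{\calK}_h$ is a probability density,
\begin{equation*}
\int_0^T\!\!\int_{\Rd} |u_n - u_n^h|^p \dx x \dx t \leq \int_0^T\!\!\int_{\R^{2d}} \overline{\calK}_h(x-y)\,|u_n(x,t)-u_n(y,t)|^p \dx x\dx y\dx t =: \varepsilon_n(h).
\end{equation*}
The hypothesis gives $\lim_{h\to0}\limsup_n \varepsilon_n(h)=0$. It therefore suffices to show that, for each fixed $h$, the sequence $(u_n^h)_n$ is compact in $L^p_{\mathrm{loc}}$. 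A diagonal argument then shows that $(u_n)$ is totally bounded in $L^p(K\times(0,T))$ for every compact $K$.

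\emph{Step 2 (fixed $h$).} For fixed $h$, $\overline{\calK}_h$ is smooth, because $s\geq h$ inside the integral defining $\calK_h$. It is also compactly supported in $B_2$. Hence $\nabla_x u_n^h = \nabla\overline{\calK}_h\star u_n$ is bounded in $L^p$ uniformly in $n$, with a constant depending on $h$. In addition,
\begin{equation*}
\partial_t u_n^h = \overline{\calK}_h \star \partial_t u_n,
\end{equation*}
and this is bounded in $L^r(0,T;L^{r}_{\mathrm{loc}})$, since convolution with a $C_c^\infty$ kernel maps $W^{-1,r}$ into $L^r\cap L^\infty_{\mathrm{loc}}$ (pair with $\overline{\calK}_h(x-\cdot)\in W^{1,r'}$). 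Thus $u_n^h$ is uniformly Lipschitz-in-space in an averaged sense and has a time derivative bounded in a Lebesgue space. Aubin--Lions, or directly the Kolmogorov--Riesz criterion via time translates controlled by $\|\partial_t u_n^h\|_{L^1}$, then gives compactness of $(u_n^h)_n$ in $L^p_{\mathrm{loc}}$. The time translate estimate for the original sequence is transferred from $u_n^h$ through Step 1, since Step 1 only compares functions at the same time.

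\emph{Step 3 (converse).} Suppose $u_n\to u$ in $L^p(\Rd\times(0,T))$. Split
\begin{equation*}
|u_n(x)-u_n(y)|^p\leq 3^{p-1}\bigl(|u_n(x)-u(x)|^p+|u(x)-u(y)|^p+|u(y)-u_n(y)|^p\bigr).
\end{equation*}
The first and third terms integrate against $\overline{\calK}_h$ to at most $C\|u_n-u\|_{L^p}^p$ uniformly in $h$, because the kernel is normalised; they vanish as $n\to\infty$. The middle term equals $\int\overline{\calK}_h(z)\|u(\cdot+z)-u\|_{L^p}^p\dx z$. This tends to $0$ as $h\to0$ by continuity of translations in $L^p$ together with the Dirac property of Lemma~\ref{lem:Kh_dirac}.

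\emph{Main obstacle.} The delicate point is Step 2 when $r=1$ and only $W^{-1,1}$-type bounds on $\partial_t u_n$ are available, with no uniform integrability in time. The plan there is to avoid Aubin--Lions altogether. For fixed $h$ and a spatial test cutoff, the bound $\|u_n^h(t+\tau)-u_n^h(t)\|_{L^1_{\mathrm{loc}}}\leq C_h\int_t^{t+\tau}\|\partial_t u_n\|_{W^{-1,1}}$ yields equicontinuity of time translates in $L^1_{\mathrm{loc}}$. This upgrades to $L^p_{\mathrm{loc}}$ by interpolation with the uniform $L^p$ bound, combined with the fact that for fixed $h$ the functions $u_n^h$ are bounded in $L^\infty_x$ locally.
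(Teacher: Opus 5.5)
The paper does not prove this lemma; it quotes it from Belgacem--Jabin \cite{BelgacemJabin}. Your self-contained argument is the standard one for this criterion, and it is correct up to the two small points below. It uses the same mechanism as the paper's own proof of Lemma~\ref{lem:Arzela-ascoli}. You pass through $\overline{\calK}_h\star u_n$, bound $\|u_n-\overline{\calK}_h\star u_n\|_{L^p}$ by the oscillation functional via Jensen, and control $\overline{\calK}_h\star\partial_t u_n$ at fixed $h$ through the $W^{-1,r}$ representation of $\partial_t u_n$. The difference lies in what is extracted at fixed $h$. There the paper needs $r>1$ and a uniform-in-time oscillation bound, because it wants a H\"older modulus in time and compactness in $L^\infty(0,T;L^p_{\mathrm{loc}})$. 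Here only $L^p_{t,x}$ compactness is required, and Aubin--Lions at fixed $h$ is enough.

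\textbf{First point: the case $r=1$.} It is not a real obstacle. Simon's version of Aubin--Lions applies directly: the family is bounded in $L^p(0,T;X)$, its time derivatives are bounded in $L^1(0,T;Y)$, and $X\subset\subset B\hookrightarrow Y$, with $X=W^{1,p}(B_R)$, $B=L^p(B_R)$, $Y=L^1(B_R)$.

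If you keep your hands-on route, the justification needs fixing. Interpolating the $O(\tau)$ time-translate bound in $L^1$ with the uniform $L^p$ bound gives decay only in $L^q$ for $q<p$. A bound in $L^p_tL^\infty_x$ does not improve this. What you need is that, for fixed $h$, $u_n^h$ is bounded in $L^\infty(\Rd\times(0,T))$. This holds because both $u_n^h$ and $\partial_t u_n^h$ are bounded in $L^1(0,T;L^\infty)$, and $W^{1,1}(0,T;L^\infty)\subset L^\infty(0,T;L^\infty)$.

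\textbf{Second point: the converse.} You assume $u_n\to u$, but the hypothesis is only relative compactness. Close this by contradiction:
\begin{itemize}
\item choose $h_j\to0$ and $n_j\to\infty$ with $\varepsilon_{n_j}(h_j)\geq\delta$;
\item extract an $L^p$-convergent subsequence;
\item use that your bounds on the first and third terms are uniform in $h$.
\end{itemize}
Alternatively, invoke the uniform equicontinuity of translations on relatively compact subsets of $L^p$.
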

Below we prove an extension of the result of \cite{BelgacemJabin}, in the style of the Aubin-Lions lemma with compact injection into Banach-space valued functions that are continuous in time, which will be useful later.

\begin{lemma}
    \label{lem:Arzela-ascoli}
    Let $(u_n)_n$ be a sequence of functions uniformly bounded in $L^\infty(0,T;L^p(\Rd))$ with $1\leq p < \infty$, such that  
    \begin{itemize}
        \item \textbf{time regularity:} $\displaystyle \limsup_{n\to \infty} \ \norm{\partial_t u_n}_{L^r(0,T;W^{-1,r}(\Rd))} <\infty$, and $r>1$,
    \end{itemize}
    and 
    \begin{itemize}
        \item \textbf{space regularity:} $\displaystyle\limsup_{n \to \infty} \sup_{0 \leq t \leq T} \int_{\R^{2d}} \overline{\calK}_h(x-y) |u_n(x,t ) - u_n(y,t)|^p \dx x \dx y \to 0,$
    \end{itemize}
    as $h\to 0$. 
    
    Then, $(u_n)_n$ is precompact in $L^\infty(0,T;L_{\mathrm{loc}}^p(\Rd))$, and any limit, $u$, is continuous in time.
\end{lemma}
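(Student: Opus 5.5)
The plan is to reduce to an Arzelà--Ascoli argument in $C([0,T];L^p_{\mathrm{loc}}(\Rd))$, or rather in $L^\infty(0,T;L^p(B_R))$ for each ball, using mollification by the kernel $\overline{\calK}_h$ as the bridge between the spatial compactness quantity and the weak time regularity. For a fixed ball $B_R$ and each $h$, introduce the regularised functions $u_n^h := \overline{\calK}_h \star u_n$. First, by Jensen's inequality and the symmetry of the kernel,
\begin{equation*}
    \norm{u_n(t) - u_n^h(t)}_{L^p(\Rd)}^p \leq \int_{\R^{2d}} \overline{\calK}_h(x-y)\abs{u_n(x,t)-u_n(y,t)}^p \dx x \dx y,
\end{equation*}
so the space-regularity hypothesis gives $\limsup_n \sup_t \norm{u_n - u_n^h}_{L^p} =: \varepsilon(h) \to 0$, uniformly in time. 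This is where the uniform-in-time formulation of the hypothesis is essential; it is exactly what upgrades the $L^p_{t,x}$ statement of Lemma~\ref{lem:CompactnessCriterion} to an $L^\infty_t$ statement.

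Second, for fixed $h$, the family $(u_n^h)_n$ is equicontinuous in time with values in $L^p(B_R)$ and spatially equicontinuous. Indeed, for $s<t$ and any $x$, testing against $\overline{\calK}_h(x-\cdot)\in W^{1,r'}$ gives
\begin{equation*}
    \abs{u_n^h(x,t)-u_n^h(x,s)} \leq \norm{\overline{\calK}_h}_{W^{1,r'}}\int_s^t \norm{\partial_t u_n}_{W^{-1,r}}\dx\tau \leq C_h \abs{t-s}^{1-1/r},
\end{equation*}
using H\"older in time and $r>1$ (this is why $r>1$ is required). Hence the $L^\infty(B_R)$, and thus $L^p(B_R)$, modulus of continuity in time is uniform in $n$ (for $n$ large). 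Similarly, $\nabla u_n^h = \nabla\overline{\calK}_h\star u_n$ is bounded in $L^\infty(0,T;L^p)$ uniformly in $n$, so by Fréchet--Kolmogorov the set $\{u_n^h(t)\}_{n,t}$ is relatively compact in $L^p(B_R)$. Arzelà--Ascoli then yields, for each fixed $h$, a subsequence of $u_n^h$ converging in $C([0,T];L^p(B_R))$.

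Third, take $h_j\to0$ and a diagonal subsequence (also diagonal in $R\in\N$). Then
\begin{equation*}
    \limsup_{n,m}\sup_t \norm{u_n(t)-u_m(t)}_{L^p(B_R)} \leq 2\varepsilon(h_j) + \limsup_{n,m}\sup_t\norm{u_n^{h_j}-u_m^{h_j}}_{L^p(B_R)} = 2\varepsilon(h_j),
\end{equation*}
so the sequence is Cauchy in $L^\infty(0,T;L^p(B_R))$. The limit $u$ is a uniform-in-time limit of the continuous functions $u^{h_j}$ (limits of $u_n^{h_j}$), up to errors $\varepsilon(h_j)$, hence itself lies in $C([0,T];L^p_{\mathrm{loc}})$.

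The main obstacle is the measure-theoretic subtlety that $u_n$ is only defined a.e.\ in time, so ``$\sup_t$'' and pointwise-in-time evaluation must be understood for a representative; I would first note that $\partial_t u_n \in L^r(W^{-1,r})$ gives a weakly continuous representative $u_n\in C([0,T];W^{-1,r})$, which justifies the pointwise identity in step two and makes the supremum meaningful. A secondary point is the Fréchet--Kolmogorov tightness within $B_R$, which is trivial on a bounded set, so only local compactness is claimed, consistent with the statement.
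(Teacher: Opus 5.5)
Your proof is correct, but it is organised differently from the paper's. The paper works with $u_n$ directly. It uses the same triangle inequality through $\overline{\calK}_h\star u_n$, and the same $|t-s|^{1-1/r}$ bound obtained by testing $\partial_t u_n$ against $\overline{\calK}_h$ and $\nabla\overline{\calK}_h$, to show that $u_n$ itself is \emph{asymptotically} equicontinuous in $L^p$ of a compact set for $n$ large. It then invokes fixed-time compactness of $\{u_n(t_i)\}_n$, asserted from the oscillation control, on a countable dense set of times, extracts a diagonal subsequence, and uses a finite $\delta$-net of times to get the Cauchy property in $L^\infty(0,T;L^p_{\mathrm{loc}})$. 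This is effectively a hand-made Arzel\`a--Ascoli argument. Time continuity of the limit is proved in a separate fourth step: the limit is mollified in time, the oscillation bound is transferred to the mollified family uniformly in the mollification parameter by lower semicontinuity, and Arzel\`a--Ascoli is applied a second time.

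You instead apply the classical Arzel\`a--Ascoli theorem to the regularised families $(\overline{\calK}_h\star u_n)_n$ at fixed $h$. These are genuinely equicontinuous in time and pointwise relatively compact, since $\nabla\overline{\calK}_h\in L^1$. You then recover $u_n$ up to the uniform error $\varepsilon(h)$ through a diagonal argument in $h_j$ and $R$. This gives two simplifications:
\begin{enumerate}
\item The fixed-time compactness that the paper takes for granted from the oscillation control is proved along the way by your mollification argument.
\item Continuity of the limit comes for free. The limit $u$ is, in essential-supremum norm in time, a uniform limit of the continuous limits $u^{(j)}$ of $\overline{\calK}_{h_j}\star u_n$. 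Hence the $u^{(j)}$ are Cauchy in $C([0,T];L^p(B_R))$, and the paper's Step 4 can be dropped entirely.
\end{enumerate}
Your remark on representatives also handles a point the paper passes over silently: you define $\overline{\calK}_h\star u_n$ pointwise in time through the continuous pairings $t\mapsto\langle u_n(t),\overline{\calK}_h(x-\cdot)\rangle$ and read all suprema in time as essential suprema.
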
 

\begin{proof}
    We have to show that for any compact set $\frak S \subset \Rd$ and any $\epsilon>0$, there exists $N_\eps \in \N$ such that 
    \begin{align*}
        \norm{u_n - u_m}_{L^\infty(0,T;L^p(\frak S))} < \epsilon,
    \end{align*}
    for all $m, n \geq N_\epsilon$. 
    To this end, let us fix an arbitrary $\epsilon>0$ and a compact set $\frak S \subset \Rd$. 

    \smallskip
    \textbf{Step 1 -- Almost time continuity.}\\
    We show that the weak time regularity in conjunction with the nonlocal control of oscillations implies almost-continuity in norm in the sense that $\norm{u_n(t) - u_n(s)}_{L^p(\frak S)}$ is small, whenever $n\gg1$ and $|t-s|\ll 1$. Indeed, we observe that 
    \begin{align}
        \label{eq:cty-time-translates}
        \begin{split}
        &\norm{u_n(t)-u_n(s)}_{L^p(\frak S)} \\
        &\leq \norm{\overline{\calK}_h\star u_n(t)-u_n(t)}_{L^p(\frak S)} + \norm{\overline{\calK}_h\star u_n(s)-u_n(s)}_{L^p(\frak S)}\\
        &\quad + \norm{\overline{\calK}_h\star u_n(t)-\overline{\calK}_h\star u_n(s)}_{L^p(\frak S)},
        \end{split}
    \end{align}
    for any $n \in \mathbb N$ and $0 \leq s < t \leq T$, by triangulation. Next, exploiting the spatial regularity, let us agree that there exist some $h_\epsilon>0$ and $N_\epsilon \in \N$ such that 
    \begin{align*}
        \sup_{h \leq h_\epsilon}  \sup_{n \geq N_\epsilon} \sup_{0 \leq t \leq T} \int_{\R^{2d}} \overline{\calK}_h^{x,y} |u_n(x) - u_n(y)|^p \dx x \dx y< (\epsilon / 12)^p,
    \end{align*}
    such that the first two terms in Eq. \eqref{eq:cty-time-translates}, can be estimated by
    \begin{align*}
        \norm{\overline{\calK}_h\star u_n(t)-u_n(t)}_{L^p(\frak S)} &\leq \prt*{\int_{\R^{2d}} \overline{\calK}_h^{x,y}|u_n(x,t)-u_n(y,t)|^p\dx x\dx y}^{1/p}< \epsilon/12,
    \end{align*}
    having used Jensen's inequality and the fact that $\overline {\mathcal K}_h$ integrates to unity. In summary, for $n\geq N_\epsilon$ and $h\leq h_\epsilon$ we have 
    \begin{align*}
        &\norm{u_n(t)-u_n(s)}_{L^p(\Rd)} \leq  \frac\epsilon6 + \norm{\overline{\calK}_h\star u_n(t)-\overline{\calK}_h\star u_n(s)}_{L^p(\Rd)}.
    \end{align*}
    Concerning the remaining term, we have
    \begin{align*}
        &\norm{\overline{\calK}_h\star u_n(t)-\overline{\calK}_h\star u_n(s)}_{L^p(\frak S)} \\
        &\quad \leq \int_s^t\prt*{\int_{\frak S}|\overline{\calK}_h\star \partial_\tau u_n(\tau)|^p \dx x}^{1/p} \dx \tau\\
        &\quad =: \int_s^t I(\tau) \dx \tau,
    \end{align*}
    where
    \begin{align}
        \label{eq:estimating-I}
        I(\tau) = \norm*{\overline{\calK}_h\star \frac{\partial u_n}{\partial \tau} (\tau)}_{L^p(\frak S)}.
    \end{align}
    Recall that $\partial_\tau u_n(\tau) \in W^{-1,r}(\Rd)$ implies the existence of functions $\frak u_\ell \in L^{r}(\Rd)$, $\ell = 0, 1, \ldots, d$, such that
    \begin{align}
        \label{eq:rep-neg-sob}
        \frac{\partial u_n}{\partial \tau} (\tau) = \frak u_0 + \sum_{\ell = 1}^d \frac{\partial \frak u_\ell}{\partial x_\ell},
    \end{align}
    with 
    \begin{align}
        \label{eq:neg-norm-identity}
        \norm*{\frac{\partial u_n}{\partial \tau}}_{W^{-1,r}(\Rd)} = \max_{0 \leq \ell \leq d} \norm{\frak u_\ell}_{L^{r}(\Rd)},
    \end{align}
    see, for instance, \cite[Prop. 9.20]{Brezis}.  Thus, substituting Eq. \eqref{eq:rep-neg-sob} into Eq. \eqref{eq:estimating-I}, we get
    \begin{align}
        \begin{split}
        I(\tau) 
        &\leq \norm*{\overline \calK_h \star \frak u_0}_{L^p(\frak S)}  +\sum_{\ell = 1}^d\norm*{\frac{\partial \overline \calK_h}{\partial x_\ell} \star \frak u_\ell}_{L^p(\frak S)}\\
        &\leq |\frak S|^{1/p} \norm{\overline \calK_h}_{L^{r'}(\Rd)} \norm{\frak u_0}_{L^{r}(\Rd)} +  d |\frak S|^{1/p} \norm{\nabla \overline{\calK}_h}_{L^{r'}(\Rd)} \max_{1\leq \ell\leq d} \norm{\frak u_\ell}_{L^{r}(\Rd)}\\
        &\leq \iota(|\frak S|, h) \norm{\partial_\tau u_n}_{W^{-1,r}(\Rd)},
        \end{split}
    \end{align}
    with $1/r + 1/{r'} = 1$, and having used Eq. \eqref{eq:neg-norm-identity}. We note that  $\iota(|\frak S|, h) \to \infty$, as $h\to 0$ or as $|\frak S| \to \infty$.

    We conclude
    \begin{align}
        \label{eq:almost-equi-continuity}
        \norm{\overline{\calK}_h\star u_n(t)-\overline{\calK}_h\star u_n(s)}_{L^p(\frak S)} &\leq \iota(|\frak S|, h) \norm{\partial_\tau u_n}_{L^r(0, T; W^{-1,r}(\Rd))} |t-s|^{1/{r'}}.
    \end{align}
    By the time regularity assumption there exists $\tilde N_\eps\in \N$ such that for $n> \tilde N_\epsilon$ 
    \begin{align*}
        \norm{\overline{\calK}_{h}\star u_n(t)-\overline{\calK}_h\star u_n(s)}_{L^p(\frak S)} &\leq C(h, |\frak S|, \epsilon) |t-s|^{1/{r'}}.
    \end{align*}
    Thus, for $n > N_\epsilon' := \max(N_\eps, \tilde N_\eps)$ and $h<h_\eps$, it follows that 
    \begin{align*}
        \norm{u_n(t) - u_n(s)}_{L^p(\frak S)} &\leq  \frac\epsilon6 + C(h, |\frak S|, \eps) |t-s|^{1/{r'}}.
    \end{align*}
    Then, we can choose $\delta_\epsilon  > 0 $ such that 
    \begin{align}
        \label{eq:small-time-translations}
        \norm{u_n(t) - u_n(s)}_{L^p(\frak S)} &\leq  \frac\epsilon3,
    \end{align}
    whenever $|t-s|<\delta_\epsilon $, which concludes the first step.

    \medskip
    \textbf{Step 2 -- Cauchy property in $L^\infty(0,T;L^p(\frak S))$.}\\
    Next, let us choose a countable dense subset of $[0,T]$ denoted by $(t_i)_{i\in \N}$. By the oscillation control, we have that $\set{u_n(t_i)}_{n}$ is relatively compact in $L^p(\frak S)$ for any given $i\in \N$. Applying a diagonal argument, we can assume that the same sequence is relatively compact for each $i\in \N$, i.e., let us agree that
    \begin{align*}
        \lim_{n\to \infty} u_n(t_i) = u(t_i),
    \end{align*}
    for each $i\in \N$. We select a finite subset of these time instances, denoted by $\set{\hat t_1, \ldots, \hat t_M} \subset \set{t_i}_{i\in \N}$, such that 
    \begin{align}
        \min_{i=1,\ldots, M} |t - \hat t_i| \leq \delta_\epsilon,
    \end{align}
    for all $t\in[0,T]$, where $\delta_\epsilon$ was chosen in Eq. \eqref{eq:small-time-translations}. However, then for $n,m\geq N_\epsilon'$, we have
    \begin{align*}
        &\norm{u_n(t) - u_m(t)}_{L^p(\frak S)} \\
        &\quad \leq \norm{u_n(t) - u_n(\hat t_i)}_{L^p(\frak S)}  + \norm{u_m(\hat t_i) - u_m(t)}_{L^p(\frak S)}\\
        &\qquad + \norm{u_n(\hat t_i) - u_m(\hat t_i)}_{L^p(\frak S)}\\
        &\quad \leq \frac{2\epsilon}3 + \norm{u_n(\hat t_i) - u_m(\hat t_i)}_{L^p(\frak S)},
    \end{align*}
    where the first two terms are small by Eq. \eqref{eq:small-time-translations}. Finally, upon possibly increasing $N_\epsilon'$ further, and exploiting the compactness of the sequence $\set{u_n(\hat t_i)}_n$ we have for any $n, m\geq N_\epsilon'$, that 
    \begin{align*}
        &\norm{u_n(t) - u_m(t)}_{L^p(\frak S)} < \epsilon,
    \end{align*}
   for almost every $t\in [0,T]$,  which shows that $(u_n|_{\frak S})_n$ is relatively compact in $L^\infty(0,T;L^p(\frak S))$. \\
    
    \textbf{Step 3 -- Compactness in $L^\infty(0,T; L_\mathrm{loc}^p(\Rd))$.}\\
    To conclude, we observe that $\frak S \subset \Rd$ was arbitrary and, if $(\frak S_j)_{j\in \N}$ is a compact exhaustion of $\Rd$, we can extract a diagonal subsequence such that the same subsequence converges on all $\frak S_j$.

    \medskip
    \textbf{Step 4 -- Time-continuity of the limit.}\\
    Finally, without relabelling, let us assume that $u_n$ is a subsequence which converges to a function $u$ in $L^\infty(0,T;L^p_{\mathrm{loc}}(\Rd))$. 
    To show that the limit is continuous in time, we consider a time-continuous auxiliary sequence and apply the Arzela-Ascoli theorem to infer the time continuity of its limit. We subsequently identify the limits.
    To this end, we first extend the limit above, $u$, by a mirror extension
    \begin{align*}
        \bar u(t) := \left\{
        \begin{array}{ll}
            u(-t), & t \in (-T, 0),\\
            u(t), & t \in [0,T],\\
            u(2T-t), & t\in (T, 2T),
        \end{array}
        \right.
    \end{align*}
    and consider the family of functions
    $$
        v^\alpha := \phi_\alpha \star_t u,
    $$
    obtained from $u$ by mollifying in time, where $0<\alpha<T$. By definition, the restriction of $v^\alpha$ to $[0,T]$ satisfies $v^\alpha\in C([0,T]; L_{\mathrm{loc}}^p(\Rd))$, and 
    \begin{align*}
        \max_{0\leq t\leq T}\; \norm{v^\alpha}_{L^p(\frak S)} \leq C,
    \end{align*}
    i.e., the family $(v^\alpha)_{\alpha>0}$ is uniformly bounded, and it remains to show that the sequence is also equi-continuous. Before we address this property, we need to establish a control of spatial oscillations uniformly in the parameter $\alpha$.
    Indeed, we observe that
    \begin{align*}
        &\int_{\R^{2d}} \overline{\calK}_h^{x,y}|v^\alpha(x,t) - v^{\alpha}(y,t)|^p\dx x \dx y\\
        &= \int_{\R^{2d}} \overline{\calK}_h^{x,y}\abs*{\int_\R \phi_\alpha(t-\tau) [u(x,\tau) - u(y,\tau)]\dx \tau}^p\dx x \dx y\\
        &\leq \int_{\R^{2d}} \overline{\calK}_h^{x,y}\int_\R \phi_\alpha(t-\tau) \abs*{u(x,\tau) - u(y,\tau)}^p\dx \tau \dx x \dx y,
    \end{align*}
    by Jensen's inequality applied to the time integral. Thus, we have 
    \begin{align*}
        & \int_{\R^{2d}} \overline{\calK}_h^{x,y}|v^\alpha(x,t), - v^{\alpha}(y,t)|^p\dx x \dx y\\
        &= \int_\R \phi_\alpha(t-\tau) \brk*{\int_{\R^{2d}} \overline{\calK}_h^{x,y} \abs*{u(x,\tau) - u(y,\tau)}^p \dx x \dx y} \dx \tau\\
        &\leq \sup_{0\leq \tau \leq T} \int_{\R^{2d}} \overline{\calK}_h^{x,y} \abs*{u(x,\tau) - u(y,\tau)}^p \dx x \dx y\\
        &\leq \limsup_{n\to \infty} \sup_{0\leq \tau \leq T} \int_{\R^{2d}} \overline{\calK}_h^{x,y} \abs*{u_n(x,\tau) - u_n(y,\tau)}^p \dx x \dx y,
    \end{align*}
    along the subsequence as above, by lower semi-continuity. Thus, the spatial regularity of $v^\alpha$ is controlled uniformly in $\alpha$, i.e.,
    \begin{align}
        \label{eq:sup_alpha_oscill}
        \sup_{\alpha > 0} \sup_{0 \leq t \leq T}\int_{\R^{2d}} \overline{\calK}_h^{x,y}|v^\alpha(x,t) - v^{\alpha}(y,t)|^p\dx x \dx y \to 0,
    \end{align}
    as $h \to 0$. 
    We have now garnered all information necessary to establish 
    equi-continuity of the mollified family. To this end, let $\epsilon>0$, and repeat Eq. \eqref{eq:cty-time-translates} for $v^\alpha$ to get
    \begin{align*}
        &\norm{v^\alpha(t) - v^\alpha(s)}_{L^p(\frak S)} \\
        &\leq \norm{\overline \calK_h \star_x v^\alpha(t) - v^\alpha(t)}_{L^p(\frak S)} + \norm{\overline \calK_h \star_x v^\alpha(s) - v^\alpha(s)}_{L^p(\frak S)} \\
        &\qquad + \norm{\overline \calK_h \star_x v^\alpha(t) - \overline \calK_h \star_x v^\alpha(s)}_{L^p(\frak S)}.
    \end{align*}
    We use Eq. \eqref{eq:sup_alpha_oscill}, to observe that 
    \begin{align*}
        \sup_{0\leq t \leq T} \norm{\overline \calK_h \star_x v^\alpha(t) - v^\alpha(t)}_{L^p(\frak S)} < \epsilon/3,
    \end{align*}
    for some $h>0$ sufficiently small. 
    Next, we repeat Eq. \eqref{eq:almost-equi-continuity}, to get 
    \begin{align*}
        \norm{\overline{\calK}_h\star v^\alpha(t)-\overline{\calK}_h\star v^\alpha(s)}_{L^p(\frak S)} &\leq \iota(|\frak S|, h) \norm{\partial_\tau u}_{L^r(0, T; W^{-1,r}(\Rd))} |t-s|^{1/{r'}} < \epsilon/3,
    \end{align*}
    for $|t-s|\leq \delta_\epsilon$, where we used the fact that $\norm{\partial_\tau v^\alpha}_{L^r(0,T; W^{-1,r}(\Rd))} \leq C \norm{\partial_\tau u}_{L^r(0,T; W^{-1,r}(\Rd))}$. Combining all pieces of the jigsaw, for given $\epsilon>0$ we can always find $\delta_\epsilon$ such that
    \begin{align*}
        \norm{v^\alpha(t)-v^\alpha(s)}_{L^p(\frak S)} < \epsilon,
    \end{align*}
    as soon as $|t-s|<\delta_\eps$. Applying the Arzel\'{a}-Ascoli theorem yields a continuous limit $v\in C([0,T];L^p(\frak S))$, which agrees with $u$ almost everywhere. Thus $u$ has a continuous representative.
\end{proof}
\begin{remark}
    \label{rem:global-time-continuity}
    Let us remark that strong convergence in $L^\infty(0,T;L^p(\Rd))$ can be attained if tightness of the sequence $(u_n)_n$ is assumed additionally.
    Furthermore, if the regularity assumptions in the statement of Lemma \ref{lem:Arzela-ascoli} hold for all $n\in \N$ (replacing $\limsup_n$ by $\sup_n$), each member of the sequence is time-continuous and the convergence holds in $C([0,T]; L^p_{\mathrm{loc}}(\Rd))$.
\end{remark}

\section{Existence of bounded solutions}
\label{sec:existence}
This section is dedicated to establishing the existence of weak solutions to System \eqref{eq:main-system} in the following sense.
\begin{definition}[Bounded weak solutions]
    \label{def:weak-sol-Brinkman}
    Given nonnegative initial data, 
    $$
        (n^{(1), \mathrm{in}}, n^{(2), \mathrm{in}}) \in (L^1(\Rd)\cap L^\infty(\Rd))^2,
    $$ we call a pair of nonnegative functions $(n^{(1)}, n^{(2)})$ with
    \begin{align}
        n^{(1)}, n^{(2)} \in L^\infty(0,T;L^1(\Rd)\cap L^\infty(\Rd)) \cap C([0,T];L^q(\Rd)),
    \end{align}
    for any $1\leq q < \infty$, and 
    \begin{align}
        \partialt {n^{(1)}}, \partialt {n^{(2)}} \in L^2(0,T; H^{-1}(\R^d)),
    \end{align}
    a weak solution to System \eqref{eq:main-system} if there holds
    \begin{align}
        \int_0^T\int_\Rd \varphi \partialt{n}^{(i)}  + n^{(i)} \nabla  \varphi \cdot (\nabla V + \nabla W)  \dx x \dx t = \int_0^T \int_\Rd  \varphi n^{(i)} G^{(i)}(p) \dx x \dx t,
    \end{align}
    for $i=1,2$, and any test function $\varphi \in L^2(0,T ; H^1(\Rd))$, and $n\i(0) = n^{(i),\mathrm{in}}$, where $p = (n^{(1)} + n^{(2)})^k$.
\end{definition}
Then, the rest of this appendix is dedicated to proving the existence of bounded weak solutions.
\begin{theorem}
    \label{thm:bounded-solutions}
    Let $n^{(1),in}, n^{(2),in} \in L^1(\Rd)\cap L^\infty(\Rd)$ be nonnegative. Then, there exist functions
    \begin{align*}
        n\1, n\2 \in L^\infty(0,T;L^1(\Rd)\cap L^\infty(\Rd)) \cap C([0,T];L^q(\Rd)) \cap H^1(0,T;H^{-1}(\Rd)),
    \end{align*} 
    for any $1\leq q < \infty$, such that $(n\1, n\2)$ is a weak solution of the system
    \begin{align*}
        \partialt {n\i}  &= \nabla \cdot(n\i\nabla W) + \nabla \cdot(n\i\nabla V) + n\i G\i(p),\\
        -\nu \Delta W + W &= p\\
        n\i(\cdot, 0) &= n^{(i),in},
    \end{align*}
    in the sense of Definition~\ref{def:weak-sol-Brinkman}.
\end{theorem}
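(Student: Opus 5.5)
I would prove Theorem~\ref{thm:bounded-solutions} by a regularisation--compactness scheme: construct smooth approximate solutions, derive bounds uniform in the regularisation parameter, and pass to the limit using the Belgacem--Bresch--Jabin compactness functional. Concretely, for $\epsilon>0$ I would add artificial viscosity $\epsilon\Delta n\i$ to each balance law. I would also mollify the initial data, $V$, and the pressure law, replacing $p=n^k$ by $p_\epsilon=(n_\epsilon)^k$ with $n_\epsilon$ truncated at a large level if needed. For fixed $\epsilon$ the resulting parabolic system has a nonlocal but smoothing velocity, since $\nabla W=\nabla K_\nu\star p$ and $K_\nu\in W^{1,1}$. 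Existence of a global nonnegative solution then follows by a Banach fixed point on short time intervals in $L^\infty(0,\tau;L^1\cap L^\infty)$, using the heat semigroup, and iterating. Nonnegativity follows from the maximum principle applied to each species separately.

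The second step is the set of uniform-in-$\epsilon$ estimates. The $L^1$ bound follows as in the $L^1$-lemma of Section~\ref{sec:UnboundedData}. The $L^q$ bounds follow from Lemma~\ref{lem:propagation-of-LP-for-n}, whose proof is unchanged by the viscous term, because $-\epsilon q(q-1)\int n^{q-2}|\nabla n|^2\le0$. The key new bound is the uniform $L^\infty$ bound. At a maximum point of $n$ we use $\Delta W=(W-p)/\nu$ and $W\le\|p\|_\infty$, which gives $n\Delta W\le0$ there. The growth term is negative once $p>p_L$ by \eqref{eq:def-pL}, hence $\|n(t)\|_\infty\le\max(\|n^{in}\|_\infty,n_L)\,e^{CT}$. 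Rigorously I would run this as a Stampacchia argument, testing with $(n-M)_+$, or alternatively let $q\to\infty$ in Lemma~\ref{lem:propagation-of-LP-for-n} (the constant there is independent of $q$). Multiplying by $n\i$ gives an $L^2(0,T;H^1)$ bound on $\sqrt\epsilon\,n\i$. Since $\nabla W,\nabla V\in L^\infty$ and $n\i\in L^\infty\cap L^1$, the flux is in $L^2$, so $\partial_t n\i$ is bounded in $L^2(0,T;H^{-1})$ uniformly in $\epsilon$. Boundedness of $p$ in $L^1\cap L^\infty$ gives $W$ bounded in $L^\infty(0,T;W^{2,q})$, and by Aubin--Lions $W_\epsilon$ is compact.

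The main obstacle is strong compactness of $n\i_\epsilon$, since there is no uniform spatial regularity and the nonlinearity $G\i(p)$ requires a.e.\ convergence. I would reproduce the argument of Proposition~\ref{prop:CompactnessEtaWeight}. In the bounded setting it is simpler, because $D^2W\in L^2$ and $n\in L^\infty$, so the weight can even be avoided if $\nabla V\in H^1$ is not available. If $\nabla V$ is not in $H^1$, I would keep the weight $\upsilon$ from \eqref{eq:weight}, with $\mathsf M|D^2V|$ estimated locally, and remove it as in Proposition~\ref{prop:CompactnessEta}. The viscous term contributes only a favourable sign, since $\int\calK_h\,\epsilon(\Delta_x+\Delta_y)|n^x-n^y|\le0$ after using translation invariance. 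The species cancellation (summing species and total, with $\sign(n^x-n^y)=\sign(p^x-p^y)$) again removes the pressure difference term. This yields the oscillation control, uniform in $\epsilon$, and Lemma~\ref{lem:CompactnessCriterion}, together with tightness as in Proposition~\ref{prop:CompactW_eta}, gives compactness in $L^1(0,T;L^1)$, and by interpolation in every $L^q$, $q<\infty$.

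Finally I pass to the limit in the weak formulation. The transport term converges because $n\i_\epsilon\to n\i$ strongly and $\nabla W_\epsilon\to\nabla W$ strongly. The growth term converges by dominated convergence, since $p$ is uniformly bounded and the convergence holds a.e. The viscous term vanishes. Time continuity in $C([0,T];L^q)$ follows from Lemma~\ref{lem:Arzela-ascoli} together with Remark~\ref{rem:global-time-continuity}, using the $\sup_t$ oscillation bound and the $L^2(H^{-1})$ time derivative; this bound also gives $H^1(0,T;H^{-1})$.
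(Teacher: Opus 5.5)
Your plan is correct, and its backbone matches the paper's. Both use artificial viscosity plus truncation, an $L^\infty$ bound showing the truncation is inactive, and the vanishing-viscosity limit through the Belgacem--Bresch--Jabin functional (species/total cancellation via $\sign(n^x-n^y)=\sign(p^x-p^y)$, propagated tightness, and Lemma~\ref{lem:Arzela-ascoli} for time continuity). The real difference is in how you build the regularised solutions. The paper proceeds in layers:
a Banach fixed point in $C([0,T];H^1)$ with a frozen potential $U$;
a second fixed point in $C([0,T];L^2)$ for the Brinkman coupling, where it uses the mollified kernel $K_\nu\star\omega_\delta$ (in an $L^2$ framework, convolution with $\nabla K_\nu$ does not give an $L^\infty$ velocity difference when $d\geq 2$);
then $\delta\to0$ by Aubin--Lions.
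Your mild formulation in $L^\infty(0,\tau;L^1\cap L^\infty)$ makes the mollification unnecessary, since $\|\nabla K_\nu\star f\|_{L^\infty}\leq\|\nabla K_\nu\|_{L^1}\|f\|_{L^\infty}$. A single fixed point, iterated in time, therefore suffices; the price is some parabolic regularity to justify the pointwise maximum principle.

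For the $L^\infty$ bound, the paper differentiates $\int(n-\kappa)_+$ with the truncation level chosen \emph{exactly} as $\kappa=\max(n_L,\|n^{\mathrm{in}}\|_{L^\infty})$. That choice is what gives $W\leq\kappa^k=\Phi_\kappa(n)^k$ on $\{n>\kappa\}$. Your Stampacchia variant with $(n-M)_+$ closes only for this reason; with a larger truncation level you lose $W\leq p$ on $\{n>M\}$. Your maximum-point argument, by contrast, works for any level $\kappa\geq M$ and even gives the bound without the factor $e^{CT}$.

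Three further points.
\begin{enumerate}
\item The viscous term is harmless because $\Delta n^x-\Delta n^y=(\nabla_x+\nabla_y)\cdot(\nabla_x+\nabla_y)(n^x-n^y)$ and $(\nabla_x+\nabla_y)\calK_h(x-y)=0$. Applying Kato's inequality with $\Delta_x+\Delta_y$ instead leaves $2\epsilon\int|n^x-n^y|\,\Delta\calK_h(x-y)$, which has no sign.
\item Aubin--Lions for $W_\epsilon$ needs a bound on $\partial_t p_\epsilon$, which you do not supply. The paper gets it from the pressure equation, using that $\epsilon k(k-1)n_\epsilon^{k-2}|\nabla n_\epsilon|^2$ is bounded in $L^1$. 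You can bypass this: the uniform $L^2$ bound on $\nabla W_\epsilon$ already makes the $W$-term of the compactness estimate vanish as $h\to0$, and $W_\epsilon\to W$ then follows from the convergence of $n_\epsilon$.
\item Your concern that Assumption~\ref{assu:existence} does not give $\nabla V\in H^1$ is legitimate, and the paper's appendix does not address it either. However, the weight only absorbs the $\mathsf{M}|D^2\mathcal{V}|$ term. The $H^1$ norm also enters through the Bresch--Jabin bound on $\int\calK_h(z)\|\mathsf{D}_{|z|}\nabla\mathcal{V}(\cdot)-\mathsf{D}_{|z|}\nabla\mathcal{V}(\cdot+z)\|_{L^2}\,\mathrm{d}z$. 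What rescues this is that $\nabla V,\Delta V\in L^\infty$ give $D^2V\in L^p$ uniformly on unit balls for every finite $p$, combined with a localisation of the functional and tightness.
\end{enumerate}
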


The proof of Theorem \ref{thm:bounded-solutions} is based on several regularisation procedures. In Section \ref{sec:fixedpoint}, we begin by proving existence of solutions for the linearised system
\begin{align*}
	\partialt {n^{(i)}} - \epsilon \Delta n^{(i)} = \nabla \cdot (m^{(i)} \nabla U) +\Phi_\kappa(m^{(i)})  G^{(i)}\left((\Phi_\kappa(m^{(1)} + m^{(2)}))^k\right),\quad i=1,2,
\end{align*}
where $\epsilon>0$ is a small artificial viscosity, $U$ is an arbitrary, yet fixed, external potential, $\Phi_\kappa$ is a cut-off at $\kappa>0$, and $m^{(1)}, m^{(2)}$ are given functions. In particular, this requires us to bound $G \circ \Phi_k$, and, in allusion to Eq. \eqref{eq:bound-G}, we define
\begin{align}
    \label{eq:bound-G-kappa}
    C_{G,\kappa} = \max_{i=1,2} \max_{0 \leq s \leq \kappa^k } |G^{(i)}(s)|.
\end{align}
Next, we show that there is a fixed point allowing us to replace $m^{(i)}$ by $n^{(i)}$, for $i=1,2$. Subsequently, in Section \ref{sec:brinkman}, we replace $U$ by a given external potential, $V$, and the potential $W^\delta$, coming from a mollified Brinkman's law, cf. Eq. \eqref{eq:brinkman}, and subsequently, in Section \ref{sec:remove-mollification-brinkman}, we remove the mollification, i.e., we let $\delta \to 0$. Next, in Section \ref{sec:Linfty-propag} we prove a propagation of $L^\infty$-control which allows us to remove the cut-offs. In the final step, Section \ref{sec:removal-of-eps}, we let the artificial viscosity, $\epsilon>0$, vanish --- thus proving Theorem~\ref{thm:bounded-solutions} for bounded solutions.

\subsection{Existence of solutions to a linear, regularised system}
\label{sec:fixedpoint}
Let $n^{(i), \mathrm{in}} \in H^1(\Rd)$ be some initial data. We begin by introducing the truncation 
\begin{align}
	\Phi_\kappa(s) :=
	\left\{
	\begin{array}{ll}
		0, & s \leq 0,\\
		s, & 0 \leq s \leq \kappa,\\
		\kappa, & \kappa \leq s,
	\end{array}
	\right.
\end{align}
where $\kappa >0$ is fixed and will be chosen later. As the strategy is based on a fixed-point argument, let us introduce the space $\mathcal X:= C([0, T]; H^1(\Rd))$.
Then, for
$$
    (m^{(1)}, m^{(2)}) \in \mathcal X \times \mathcal X,
$$
we consider the system of linear parabolic equations
\begin{align}
    \label{eq:linear-regular-approx-sys}
	\partialt {n^{(i)}} - \epsilon \Delta n^{(i)} = \nabla \cdot (m^{(i)} \nabla U) + R^{(i)}(m^{(1)}, m^{(2)}),
\end{align}
where
\begin{align*}
    R^{(i)}(m^{(1)}, m^{(2)}) := \Phi_\kappa(m^{(i)})  G^{(i)} \left( (\Phi_\kappa(m^{(1)} + m^{(2)}))^k\right),
\end{align*}
and $U$ is a given function satisfying
\begin{align}
    \label{eq:bounds-on-U}
    \norm{\nabla U}_{L^\infty(0,T;L^\infty(\Rd))}, \norm{\Delta U}_{L^\infty(0,T;L^\infty(\Rd))} \leq C_U,
\end{align}
for some $C_U>0$. Upon setting 
$$
    \Phi^{(i)} := \Phi_\kappa(m^{(i)}), \  i=1,2,\quad \text{and} \quad  \underline \Phi := \Phi_\kappa(m^{(1)} + m^{(2)}),
$$ 
we observe that
    \begin{align*}
        &\norm{\nabla \cdot (m^{(i)} \nabla U) + R^{(i)}(m^{(1)}, m^{(2)})}_{L^\infty(0,T;L^2(\Rd))} \\[0.5em]
	   &\quad = \norm{\nabla m^{(i)} \cdot \nabla U + m^{(i)} \Delta U + \Phi^{(i)} G^{(i)}(\underline \Phi^k)}_{L^\infty(0,T;L^2(\Rd))}\\[0.5em]
	   &\quad \leq \norm{m^{(i)}}_{\mathcal X} \norm{\nabla U}_{L^\infty(0,T;L^\infty(\Rd))} + \norm{m^{(i)}}_{\mathcal X} \norm{\Delta U}_{L^\infty(0,T;L^\infty(\Rd))} + C_{G, \kappa} \norm{m^{(i)}}_{\mathcal X}\\[0.5em]
        &\quad \leq C(C_U,C_{G,\kappa}) \norm{m^{(i)}}_{\mathcal X},
    \end{align*}
    for $i=1,2$, where $C_{G, \kappa}$ is given in Eq. \eqref{eq:bound-G}. Therefore, as the right-hand side is bounded in $L^\infty(0,T;L^2(\Rd))$, linear semigroup theory gives the existence of a unique solution $(n^{(1)}, n^{(2)})$ to System \eqref{eq:linear-regular-approx-sys} with
    $$
        n^{(1)}, n^{(2)} \in L^\infty(0,T;H^1(\Rd)) \cap L^2(0,T;H^2(\Rd)) \cap H^1(0,T;L^2(\Rd)) \subset \mathcal X,
    $$
    and $n^{(i)}(0) = n^{(i),\mathrm{in}}$, see~\cite[Theorem 1.3.11]{zheng1995nonlinear}.

\subsection{Existence of solutions to a regularised nonlinear system}
Based on this existence result, we are now able to prove existence of solutions to the nonlinear, yet still regularised, system. 
\begin{proposition}[Existence of solutions]
\label{prop:existence-reg-nonlin-system}
    For given initial data $n^{(1), \mathrm{in}}, n^{(2),\mathrm{in}} \in  H^1(\Rd)$, there exist functions 
    $$
        n^{(1)}, n^{(2)} \in L^\infty(0,T;H^1(\Rd)) \cap L^2(0,T; H^2(\Rd)) \cap H^1(0,T;L^2(\Rd)),
    $$
    solving 
    \begin{align*}
	   \partialt {n^{(i)}} - \epsilon \Delta n^{(i)} = \nabla \cdot (n^{(i)} \nabla U) + \Phi_\kappa(n^{(i)})  G^{(i)}\left((\Phi_\kappa(n^{(1)} + n^{(2)}))^k\right),
    \end{align*}
    pointwise a.e., and $n^{(i)}(0) = n^{(i), \mathrm{in}}$. Moreover, if $n^{(i), \mathrm{in}} \geq 0$ for $i=1,2$, then $n^{(i)} \geq 0$, almost everywhere.
\end{proposition}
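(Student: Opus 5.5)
The plan is a Banach fixed-point argument for the map $\mathcal T:(m^{(1)},m^{(2)})\mapsto(n^{(1)},n^{(2)})$ defined by the linear problem \eqref{eq:linear-regular-approx-sys}. By the previous subsection, $\mathcal T$ is well defined from $\mathcal X\times\mathcal X$ into itself and maps into $L^\infty(0,T;H^1)\cap L^2(0,T;H^2)\cap H^1(0,T;L^2)$. I will show that $\mathcal T$ is a contraction on $C([0,T_0];H^1(\Rd))^2$ for a short time $T_0$ that depends only on $\epsilon,C_U,\kappa,k$ and not on the data. Iterating over intervals of length $T_0$ then gives a solution on $[0,T]$.

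\textbf{Contraction.} Take two inputs $m,\tilde m$ with outputs $n,\tilde n$. The difference $w^{(i)}=n^{(i)}-\tilde n^{(i)}$ solves the heat equation with zero initial data and right-hand side
\[
f^{(i)}=\nabla\cdot\big((m^{(i)}-\tilde m^{(i)})\nabla U\big)+R^{(i)}(m)-R^{(i)}(\tilde m).
\]
Two facts control $f^{(i)}$.
\begin{itemize}
\item The map $s\mapsto\Phi_\kappa(s)$ is $1$-Lipschitz. The map $s\mapsto G^{(i)}(\Phi_\kappa(s)^k)$ is Lipschitz with a constant depending on $\kappa,k$ and $\sup_{[0,\kappa^k]}|\partial_pG^{(i)}|$, which is finite because $G^{(i)}\in C^1$. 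Hence $\|R^{(i)}(m)-R^{(i)}(\tilde m)\|_{L^2}\le C_\kappa\|m-\tilde m\|_{L^2}$.
\item The transport term lies in $H^{-1}$, with norm bounded by $C_U\|m-\tilde m\|_{L^2}$.
\end{itemize}
Rather than go through $L^\infty_tL^2_x$ for $f$, I use the parabolic estimate with an $H^{-1}$ source. Testing with $w$ and with $-\Delta w$ (the latter is admissible by the $L^2_tH^2_x$ regularity) gives
\[
\|w\|_{L^\infty(0,T_0;H^1)}^2\le C\epsilon^{-1}\|f\|_{L^2(0,T_0;L^2)}^2\le C\epsilon^{-1}T_0\,\|m-\tilde m\|_{L^\infty(0,T_0;H^1)}^2 .
\]
Here $\nabla\cdot((m-\tilde m)\nabla U)$ is bounded in $L^2$ by $C_U\|m-\tilde m\|_{H^1}$, using both $\nabla U$ and $\Delta U$ in $L^\infty$. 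Choosing $T_0$ small makes $\mathcal T$ a contraction. Its fixed point solves the nonlinear equation in $L^2(0,T;L^2)$, hence pointwise a.e., and inherits the regularity stated in the proposition.

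\textbf{Nonnegativity.} I test the equation for $n^{(i)}$ with $-(n^{(i)})_-$. Since $\Phi_\kappa(n^{(i)})=0$ on $\{n^{(i)}<0\}$, the reaction term contributes nothing there. After integrating by parts, the transport term contributes
\[
\int (n^{(i)})_-\nabla U\cdot\nabla (n^{(i)})_-\,dx=-\tfrac12\int (n^{(i)})_-^2\,\Delta U\,dx\le \tfrac{C_U}{2}\|(n^{(i)})_-\|_{L^2}^2 .
\]
Gronwall's lemma then gives $(n^{(i)})_-\equiv0$ when the initial datum is nonnegative.

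\textbf{Main obstacle.} The delicate step is closing the contraction in the $H^1$ topology of $\mathcal X$, because the transport term $\nabla\cdot(m\nabla U)$ loses a derivative. The artificial viscosity $\epsilon\Delta$ compensates for this through maximal parabolic regularity. If the $H^1$ estimate becomes awkward, the fallback is to run the fixed point in $C([0,T];L^2)$ on a closed ball that is bounded in $L^\infty H^1\cap L^2H^2$. That ball is complete in the $L^2$ metric by weak lower semicontinuity, and there the contraction only needs $L^2$ energy estimates.
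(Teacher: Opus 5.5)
Your proof is correct and is essentially the paper's argument. The paper also runs a Banach fixed point in $C([0,T^\ast];H^1(\mathbb{R}^d))^2$ using $L^2$ and $\dot H^1$ energy estimates for the difference: it tests with $\hat n^{(i)}$ and $-\Delta\hat n^{(i)}$, absorbs $\frac{\epsilon}{2}\|\Delta \hat n^{(i)}\|_{L^2}^2$, and uses a contraction time that depends only on $\epsilon, C_U, \kappa, k$ and the $C^1$ bounds of $G^{(i)}$ on $[0,\kappa^k]$, so it can iterate up to $T$; nonnegativity is then obtained by testing with $-(n^{(i)})_-$ and applying Gronwall. The only blemish is that your mention of an $H^{-1}$ source is never used, and such a source would not give the $L^\infty H^1$ bound you need anyway; the estimate you actually close is the $L^2$-source one with $\|\nabla\cdot((m-\tilde m)\nabla U)\|_{L^2}\le C_U\|m-\tilde m\|_{H^1}$, exactly as in the paper.
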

\begin{proof}
    We define the operator
    \begin{align*}
        \Psi: \mathcal X \times \mathcal X &\to \mathcal X \times \mathcal X,\\
        m = (m^{(1)}, m^{(2)}) & \mapsto n := (n^{(1)}, n^{(2)}),
    \end{align*}
    where $n$ is the solution we constructed above. In order to show that $\Psi$ admits a fixed point, let $m, \tilde m \in \mathcal X \times \mathcal X$, and $n = \Psi(m), \tilde n = \Psi(\tilde m)$ be the associated solutions. We set 
    $$
        \hat n := n - \tilde n, \quad \hat m:= m - \tilde m, \quad \text{and} \quad \hat \Phi^{(i)} := \Phi(m^{(i)}) - \Phi(\tilde m^{(i)}),
    $$
    and observe that the differences satisfy
    \begin{align}
       \label{eq:diffs-lin-approx}
       \begin{split}
            \partialt {\hat n^{(i)}} - \epsilon \Delta \hat n^{(i)} 
            &= \nabla \hat m^{(i)} \cdot \nabla U + \hat m^{(i)} \Delta U + \hat \Phi^{(i)} G^{(i)}(\underline \Phi^k) + \tilde \Phi^{(i)} \left(G^{(i)}(\underline \Phi^k) - G^{(i)}(\tilde {\underline \Phi}^k)\right),
        \end{split}
    \end{align}
    equipped with zero initial data $\hat n^{\mathrm{in}} = 0$. 
    
    \underline{$L^2$-Contraction.} Multiplying Eq. \eqref{eq:diffs-lin-approx} by $\hat n^{(i)}$, for $i=1,2$, and integrating in space gives
    \begin{align}
        \label{eq:L2-contraction-drift}
    	\frac12 \ddt \norm{\hat n^{(i)}}_{L^2(\Rd)}^2 + \epsilon \norm{\nabla \hat n^{(i)}}_{L^2(\Rd)}^2 \leq \frac12 \norm{\hat n^{(i)}}_{L^2(\Rd)}^2 + N^{(i)},
    \end{align}
    where
    \begin{align*}
        N^{(i)} := \frac12 \norm{\nabla \hat m^{(i)} \cdot \nabla U + \hat m^{(i)} \Delta U + \hat \Phi^{(i)} G^{(i)}(\underline \Phi^k)  + \tilde \Phi^{(i)} (G^{(i)}(\underline \Phi^k) - G^{(i)}(\tilde {\underline\Phi}^k))}_{L^2(\Rd)}^2.
    \end{align*}
    This term can be estimated
    \begin{align}
        \label{eq:rhs-estimate}
        \begin{split}
    	N^{(i)} 
    	&\leq C(C_U, C_{G,\kappa}, \kappa, k) \left(\norm{\hat m^{(1)}}_{H^1(\Rd)}^2 + \norm{\hat m^{(2)}}_{H^1(\Rd)}^2 \right),
        \end{split}
    \end{align}
    Thus, using Eq. \eqref{eq:rhs-estimate} in Eq. \eqref{eq:L2-contraction-drift}, and adding up the resulting inequality for $i=1,2$, we obtain
    \begin{align}
    	\label{eq:ddt-L2-diff}
        \begin{split}
    	\ddt &\left(\norm{\hat n^{(1)}}_{L^2(\Rd)}^2 + \norm{\hat n^{(2)}}_{L^2(\Rd)}^2 \right) \\
        &\leq \norm{\hat n^{(1)}}_{L^2(\Rd)}^2 + \norm{\hat n^{(2)}}_{L^2(\Rd)}^2 + C \left(\norm{\hat m^{(1)}}_{H^1(\Rd)}^2  + \norm{\hat m^{(2)}}_{H^1(\Rd)}^2 \right).
        \end{split}
    \end{align}

    \underline{$\dot H^1$-Contraction.}
    Next, we multiply Eq. \eqref{eq:diffs-lin-approx} by $-\Delta \hat n^{(i)}$ to obtain
    \begin{align*}
    	\frac12 \ddt &\norm{\nabla \hat n^{(i)}}_{L^2(\Rd)}^2 + \epsilon \norm{\Delta \hat n^{(i)}}_{L^2(\Rd)}^2\\[0.5em]
    	&\leq \frac{\epsilon}{2}\norm{\Delta \hat n^{(i)}}_{L^2(\Rd)}^2 + C(C_U, C_{G, \kappa}, \kappa, k, \epsilon) \left(\norm{\hat m^{(1)}}_{H^1(\Rd)}^2 + \norm{\hat m^{(2)}}_{H^1(\Rd)}^2 \right).
    \end{align*}
    Hence, 
    \begin{align}
    	\label{eq:ddt-H1hom-diff}
    	\ddt \left(\norm{\nabla \hat n^{(1)}}_{L^2(\Rd)}^2  + \norm{\nabla \hat n^{(2)}}_{L^2(\Rd)}^2 \right) \leq C \left( \norm{\hat m^{(1)}}_{H^1(\Rd)}^2 + \norm{\hat m^{(2)}}_{H^1(\Rd)}^2 \right).
    \end{align}
    
    \underline{$\mathcal X^2$-Contraction.}
    Adding up Eq. \eqref{eq:ddt-H1hom-diff} and Eq. \eqref{eq:ddt-L2-diff}, we obtain
    \begin{align*}
    	\ddt \norm{\hat n}_{H^1(\Rd)^2}^2 \leq \norm{\hat n}_{H^1(\Rd)^2}^2 + C \norm{\hat m}_{H^1(\Rd)^2}^2.
    \end{align*}
    Applying Gronwall's inequality and using the fact that the initial condition is identically equal to zero, we get
    \begin{align}
        \label{eq:gronwall-banach-fp}
    	\sup_{0 \leq t \leq T}\norm{\hat n}_{H^1(\Rd)^2}^2 \leq C \norm{\hat m}_{L^\infty(0,T; H^1(\Rd))^2}^2 T e^T.
    \end{align}
    Thus, upon choosing $T^\ast>0$ sufficiently small, we get
    \begin{align*}
        \norm{\hat n}_{L^\infty(0,T^\ast; H^1(\Rd))^2} \leq \frac12 \norm{\hat m}_{L^\infty(0,T^\ast; H^1(\Rd))^2}, 
    \end{align*}
    which shows the desired contraction. Applying the Banach fixed-point theorem yields the local existence of a solution. As the constant in Eq. \eqref{eq:gronwall-banach-fp} only depends on $C_U, C_{G, \kappa}, \kappa, k,$ and $\epsilon$, but not on the solution itself, we may repeat this argument iteratively until we have solutions on $(0,T)$.

    Finally, if $n^{(i), \mathrm{in}} \geq 0$, for $i=1,2$, we multiply the equations 
    \begin{align*}
	   \partialt {n^{(i)}} - \epsilon \Delta n^{(i)} = \nabla \cdot (n^{(i)} \nabla U) + \Phi_\kappa(n^{(i)})  G^{(i)}\left((\Phi_\kappa(n^{(1)} + n^{(2)}))^k\right), \qquad i=1,2,
    \end{align*}
    by $-(n^{(i)})_-$, integrate, and use Gronwall's inequality to find that $n^{(i)} \geq 0$.
\end{proof}

\subsection{Existence for the system coupled through a regularised Brinkman law}
\label{sec:brinkman}
Next, we replace $U$ by a velocity potential consisting of an external part, $V$ satisfying Assumptions \ref{assu:existence}, and a coupling to a regularised Brinkman's law. Before we proceed, let us
note that the class of initial data can be relaxed to $L^2(\Rd)$ with associated solutions in 
$$
    n^{(i)} \in C([0,T];L^2(\Rd)) \cap L^2(0,T; H^1(\Rd)),\; \text{and } \ \partialt{n^{(i)}} \in L^2(0,T;H^{-1}(\Rd)), i=1,2,
$$
This follows from the fact that $H^1$ is dense in $L^2$ such that we can find an approximating sequence $n^{(i), \mathrm {in}}_\ell \to n^{(i), \mathrm {in}}$, strongly in $L^2(\Rd)$. The corresponding solutions $n^{(i)}_\ell$ converge strongly in $L^2(0,T;L^2(\Rd))$ by the above energy estimates in conjunction with the Aubin-Lions lemma.

\begin{proposition}
    \label{prop:existence-regularised-truncated-Brinkman}
    Given some initial data $n^{(1),\mathrm{in}}, n^{(2),\mathrm{in}} \in L^2(\Rd)$, there exist functions 
    $$
        n^{(i)} \in C([0,T];L^2(\Rd)) \cap L^2(0,T; H^1(\Rd)),\; \text{and } \ \partialt{n^{(i)}} \in L^2(0,T;H^{-1}(\Rd)), i=1,2,
    $$
    such that
     \begin{align}
        \label{eq:soln-reg-brinkman-system}
        \begin{split}
	   \int_0^T\int_\Rd &\phi \partialt {n^{(i)}} \dx x \dx t + \epsilon \int_0^T\int_\Rd \nabla \phi \cdot \nabla n^{(i)} \dx x \dx t  \\
       &= \int_0^T\int_\Rd \phi \left( \nabla \cdot (n^{(i)} \nabla V) + \nabla \cdot (n^{(i)} \nabla K_\nu \star \omega_\delta \star p_\kappa) + \Phi^{(i)} G^{(i)}(p_\kappa)\right)\dx x \dx t,
       \end{split}
    \end{align}
    for any $\phi \in L^2(0,T; H^1(\Rd))$, and $n^{(i)}(0) = n^{(i), \mathrm{in}}$, where $\Phi^{(i)} := \Phi_\kappa(n^{(i)})$, $i=1,2$, and $p_\kappa := (\Phi_\kappa(n^{(1)} + n^{(2)}))^k$. Moreover, if $n^{(i), \mathrm{in}}\geq 0$, for $i=1,2$,  then $n^{(i)} \geq 0$, almost everywhere.
\end{proposition}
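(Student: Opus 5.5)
The plan is a Schauder fixed-point argument (or, equivalently, a contraction on a short interval) built on Proposition~\ref{prop:existence-reg-nonlin-system}, applied with the frozen potential $U := V + K_\nu\star\omega_\delta\star \bar p$. Here $\bar p$ is a given function, and the regularisations make all the needed bounds trivial. Throughout, $\kappa$, $\delta$ and $\epsilon$ are fixed.

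\textbf{Step 1 (the map).} Fix $\bar p \in L^\infty(0,T;L^1(\Rd)\cap L^\infty(\Rd))$ with $0\le \bar p\le \kappa^k$. This is the natural class, because $p_\kappa = (\Phi_\kappa(n^{(1)}+n^{(2)}))^k$ always takes values in $[0,\kappa^k]$.

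\begin{itemize}
\item Since $\omega_\delta$ is smooth, $\nabla U$ and $\Delta U$ are bounded in $L^\infty(0,T;L^\infty(\Rd))$. The bound is $C_V + C(\delta,\nu)\kappa^k$, because the derivatives fall on $\omega_\delta$ and $\|\bar p\|_{L^\infty}\le\kappa^k$. This verifies \eqref{eq:bounds-on-U} with a constant $C_U$ independent of $\bar p$.
\item For $H^1$ data, Proposition~\ref{prop:existence-reg-nonlin-system} gives a solution $(n^{(1)},n^{(2)})$.
\item We then set $\mathcal T(\bar p) := (\Phi_\kappa(n^{(1)}+n^{(2)}))^k$.
\end{itemize}

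For a fixed point, $U = V + K_\nu\star\omega_\delta\star p_\kappa$. The equation of Proposition~\ref{prop:existence-reg-nonlin-system} then becomes \eqref{eq:soln-reg-brinkman-system} after testing with $\phi$ and integrating by parts in the viscous term.

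\textbf{Step 2 (the estimates).} Testing with $n^{(i)}$ gives the energy estimate
\[
\sup_t\|n^{(i)}\|_{L^2}^2+\epsilon\|\nabla n^{(i)}\|_{L^2L^2}^2\le C(\epsilon,C_U,C_{G,\kappa},T)\|n^{(i),\mathrm{in}}\|_{L^2}^2 .
\]
Two details make this work. The drift term $\nabla\cdot(n\nabla U)$ is absorbed by Young's inequality into $\epsilon\|\nabla n\|^2$. The reaction term is bounded by $C_{G,\kappa}|n|$.

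From the equation we then get $\partial_t n^{(i)}\in L^2(0,T;H^{-1})$, uniformly in $\bar p$. Since $\Phi_\kappa$ is Lipschitz and bounded, $\mathcal T(\bar p)$ is bounded in $L^2(0,T;H^1_{\mathrm{loc}})$, and its time derivative is controlled through the chain rule. This gives local compactness via Aubin--Lions.

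\textbf{Step 3 (the obstacle).} I expect the main obstacle to be the global compactness and continuity needed on the unbounded domain. To avoid it, I would instead prove a contraction directly in $L^\infty(0,T^\ast;L^2(\Rd))$ for the map $\bar n\mapsto n$, taking $\bar p = (\Phi_\kappa(\bar n^{(1)}+\bar n^{(2)}))^k$. The key difference estimate follows the $L^2$-contraction computation \eqref{eq:L2-contraction-drift}, with two inputs:
\begin{itemize}
\item The difference of drifts is $n^{(i)}\nabla K_\nu\star\omega_\delta\star(\bar p-\tilde{\bar p})$.
\item Its sup norm is bounded by $C(\delta)\|\bar p-\tilde{\bar p}\|_{L^2}\le C(\delta,\kappa,k)\|\bar n-\tilde{\bar n}\|_{L^2}$. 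This holds because $\nabla\omega_\delta\in L^2$ and $s\mapsto\Phi_\kappa(s)^k$ is Lipschitz.
\end{itemize}
The difficulty is that this drift difference multiplies $n$, which is only in $L^\infty_tL^2$. This is still fine, since $\|n\,\nabla K\star\omega_\delta\star\hat{\bar p}\|_{L^2}\le\|n\|_{L^2}\,C\|\hat{\bar p}\|_{L^2}$, and $\|n\|_{L^2}$ is bounded by Step 2. Pairing this with $\nabla\hat n$ and absorbing into $\epsilon\|\nabla\hat n\|^2$, Gronwall's inequality gives a contraction on a short interval $T^\ast$. The interval depends only on the data bounds, so we iterate to reach $(0,T)$.

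\textbf{Step 4 (nonnegativity, time continuity, $L^2$ data).} Nonnegativity follows by testing with $-(n^{(i)})_-$. Time continuity, $C([0,T];L^2)$, follows from $n\in L^2H^1$ together with $\partial_tn\in L^2H^{-1}$. Finally, the extension from $H^1$ data to $L^2$ data is by density, as already remarked before the statement.
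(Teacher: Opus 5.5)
Your final argument in Step 3 is essentially the paper's proof. You freeze $U=V+K_\nu\star\omega_\delta\star\bar p$, solve with Proposition~\ref{prop:existence-reg-nonlin-system}, bound the drift difference by $\|n\|_{L^2}\|\nabla K_\nu\star\omega_\delta\star(\bar p-\tilde{\bar p})\|_{L^\infty}$, absorb it into $\epsilon\|\nabla\hat n\|_{L^2}^2$, and close with Gronwall, a short-time contraction in $L^\infty_tL^2$, and iteration; using the uniform $L^2$ bound of Step 2 to fix a single step length $T^\ast$ slightly simplifies the paper's shrinking-step iteration, while the Schauder/Aubin--Lions detour of Steps 1--2 is unnecessary (as is the final density step, since the paper runs the contraction directly with the $L^2$-data version of the frozen-potential problem).
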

\begin{proof}
    To prove this proposition we employ another fixed-point argument and begin by defining the space
    $$
        \Upsilon := \left\{ f \in C([0, T];L^2(\Rd)) \ |\ \norm{f}_{C([0, T];L^2(\Rd))} \leq \norm{f(0)}_{L^2(\Rd)} e^{C_\Upsilon T} \right\},
    $$
    where
    \begin{align*}
        C_{\Upsilon} := \norm{\Delta V}_{L^\infty(0,T;L^\infty(\Rd))} + 2\kappa^k/\nu + 2 C_{G, \kappa}.
    \end{align*}
    Next, for fixed $m\in \Upsilon \times \Upsilon$, let us set $U = V + W^\delta[m]$, where
    \begin{align*}
        W^\delta[m] := K_\nu \star \omega_\delta \star p_m, \qquad \text{with} \quad   p_m := \Phi_\kappa(m^{(1)} + m^{(2)})^k.
    \end{align*}
    We observe that 
    $$
        \norm{\Delta U}_{L^\infty(0,T;L^\infty(\Rd))} \leq \norm{\Delta V}_{L^\infty(0,T;L^\infty(\Rd))} + C(\kappa, k, \nu),
    $$
    as well as 
    $$
        \norm{\nabla U}_{L^\infty(0,T;L^\infty(\Rd))} \leq \norm{\nabla V}_{L^\infty(0,T;L^\infty(\Rd))} + C(\kappa, k, \nu),
    $$
    for some constant $C(\kappa, k, \nu)>0$ independent of $\delta$. Therefore, the previous existence result for fixed velocity potential, Proposition \ref{prop:existence-reg-nonlin-system} 
    , provides a solution $(n^{(1)}, n^{(2)})$ with 
    $$
        n^{(1)}, n^{(2)} \in L^\infty(0,T;L^2(\Rd)) \cap L^2(0,T; H^1(\Rd)) \cap H^1(0,T;H^{-1}(\Rd)) \subset C([0,T];L^2(\Rd)),
    $$
    satisfying 
    \begin{align*}
    	\partialt {n^{(i)}} - \epsilon \Delta n^{(i)} = \nabla \cdot (n^{(i)} \nabla V) + \nabla \cdot (n^{(i)} \nabla W^\delta[m])  + R^{(i)}(n^{(1)}, n^{(2)}),
    \end{align*}
    equipped with initial condition $(n^{(1), \mathrm{in}}, n^{(2), \mathrm{in}})$.
    A quick computation shows that
    \begin{align}
        \label{eq:L2-propagation-fp}
        \begin{split}
            \frac12 \ddt \int_{\Rd} |n^{(i)}|^2 \dx x + \epsilon \int_{\Rd} |\nabla n^{(i)}|^2 \dx x 
            &\leq \frac12 \int_{\Rd} |n^{(i)}|^2 (\Delta V + \Delta W^\delta[m] + 2G^{(i)}(0)) \dx x\\
            &\leq \frac{C_\Upsilon}2 \int |n^{(i)}|^2 \dx x,
        \end{split}
    \end{align}
    with 
    $$
        C_\Upsilon = \norm{\Delta V}_{L^\infty(0,T;L^\infty(\Rd))} + 2\kappa^k/\nu + 2 C_{G, \kappa},
    $$
    as above. An application of Gronwall's inequality shows that the fixed-point operator
    \begin{align*}
        F : \Upsilon \times \Upsilon &\to \Upsilon \times \Upsilon,\\
        m &\mapsto n,
    \end{align*}
    is well-defined in that it maps back into the original space with the correct bound.
    
    To show that it is a contraction, consider $m, \tilde m \in \Upsilon$, and let $n, \tilde n$ be the associated solutions. As before, we write an equation for the difference
    \begin{align*}
        \partialt {\hat n^{(i)}} - \epsilon \Delta \hat n^{(i)} 
        &= \nabla \cdot(\hat n^{(i)} \nabla V) + \nabla\cdot (\hat n^{(i)} \nabla W^\delta[m] + n^{(i)} \nabla K_\nu\star \omega_\delta\star (p_m  - p_{\tilde m})) \\
        &\qquad + \Phi^{(i)} G^{(i)}(p) - \tilde \Phi^{(i)} G^{(i)}(\tilde p).
    \end{align*}
    We multiply this equation by $\hat n^{(i)}$ and integrate to find
    \begin{align*}
        &\frac12 \ddt \norm{\hat n^{(i)}}_{L^2(\Rd)}^2 + \epsilon \norm{\nabla \hat n^{(i)}}_{L^2(\Rd)}^2\\[6pt]
        & \leq \frac12 C(C_V, \kappa, k)\left(\norm{\hat n^{(1)}}_{L^2(\Rd)}^2 + \norm{\hat n^{(2)}}_{L^2(\Rd)}^2 + \norm{\hat m^{(1)}}_{L^2(\Rd)}^2 + \norm{\hat m^{(2)}}_{L^2(\Rd)}^2\right) \\[6pt]
        &\quad + \frac1{2\epsilon}\norm{n^{(i)} \nabla K_\nu \star \omega_\delta \star (p_m -  p_{\tilde m})}_{L^2(\Rd)}^2  + \frac\epsilon2 \norm{\nabla \hat n^{(i)}}_{L^2(\Rd)}^2.
    \end{align*}
    Since
    \begin{align*}
        \frac1{2\epsilon}\norm{n^{(i)} &\nabla K_\nu \star \omega_\delta \star (p_m -  p_{\tilde m})}_{L^2(\Rd)}^2\\[6pt] 
        &\leq C(\epsilon, \nu, \delta, k, \kappa) \norm{n^{(i), \mathrm{in}}}_{L^2(\Rd)}^2\exp(2C_\Upsilon T)\left(\norm{\hat m^{(1)}}_{L^2(\Rd)}^2 + \norm{\hat m^{(2)}}_{L^2(\Rd)}^2\right),
    \end{align*}
    we have
    \begin{align*}
           \frac12 \ddt \norm{\hat n^{(i)}}_{L^2(\Rd)}^2 
           &\leq \frac12 C(C_U, p_L, \epsilon, k, \kappa, \nu, \delta) \norm{n^{(i),\mathrm{in}}}_{L^2(\Rd)^2}^2 \exp(2 C_\Upsilon T) \times\\
           &\quad \times\left(\norm{\hat n^{(1)}}_{L^2(\Rd)}^2 + \norm{\hat n^{(2)}}_{L^2(\Rd)}^2 + \norm{\hat m^{(1)}}_{L^2(\Rd)}^2 + \norm{\hat m^{(2)}}_{L^2(\Rd)}^2\right).
    \end{align*}
    Upon summing over $i=1,2$, we obtain, for some constant $C^\ast>0$,
    \begin{align*}
        \ddt \left(
        \norm{\hat n}_{L^2(\Rd)^2}^2 
        \right)
        \leq C^\ast \norm{n^{\mathrm{in}}}_{L^2(\Rd)^2}^2 \exp(2C_\Upsilon T) \left(\norm{\hat n}_{L^2(\Rd)^2}^2 + \norm{\hat m}_{L^2(\Rd)^2}^2 \right).
    \end{align*}
    An application of Gronwall's lemma yields
    \begin{align}
        \label{eq:gronwall-contraction-reg-Brinkman}
        \norm{\hat n}_{L^2(\Rd)^2}^2 \leq C^\ast \norm{n^{\mathrm{in}}}_{L^2(\Rd)^2}^2 e^{2C_\Upsilon T} T \, e^{C^\ast  \norm{n^{\mathrm{in}}}_{L^2(\Rd)^2}^2 e^{2C_\Upsilon T} T}  \norm{\hat m}_{\Upsilon^2}^2 .
    \end{align}
    In particular, by choosing $T_1 > 0$ such that
    $$
        C^\ast \norm{n^{\mathrm{in}}}_{L^2(\Rd)^2}^2 e^{2C_\Upsilon T_1} T_1 = \frac12,
    $$
    we observe that Eq. \eqref{eq:gronwall-contraction-reg-Brinkman} becomes
    \begin{align*}
        \norm{\hat n}_{C([0,T_1];L^2(\Rd)^2)}^2 \leq \frac{\sqrt{e}}{2} \norm{\hat m}_{C([0,T_1];L^2(\Rd)^2)}^2.
    \end{align*}
    This yields a contraction and an application of Banach's fixed-point theorem yields a fixed point on $[0,T_1]$. Repeating the procedure above with initial data $n(T_1)$ on the space $C([T_1,T_1 + T_2];L^2(\Rd))$ with analogue bound, yields 
     \begin{align*}
        \norm{\hat n(t)}_{L^2(\Rd)^2}^2 &\leq C^\ast \norm{n(T_1)}_{L^2(\Rd)^2}^2 e^{2C_\Upsilon T_2} T_2 e^{C^\ast  \norm{n(T_1)}_{L^2(\Rd)^2}^2 e^{2C_\Upsilon T_2} T_2} \norm{\hat m}_{\Upsilon^2}^2\\
        &\leq C^\ast \norm{n(0)}_{L^2(\Rd)^2}^2 e^{2C_\Upsilon T_1} e^{2C_\Upsilon T_2} e^{C^\ast  \norm{n(0)}_{L^2(\Rd)^2}^2 e^{2C_\Upsilon T_1}  e^{2C_\Upsilon T_2} T_2} T_2 \norm{\hat m}_{\Upsilon^2}^2,
    \end{align*}
    for $T_1 \leq t \leq T_1 +  T_2$. Upon choosing $T_2$  such that
    \begin{align*}
        T_2 := T_1 e^{-2 C_\Upsilon T_2},
    \end{align*}
    we get
    \begin{align*}
        \norm{\hat n}_{C([T_1, T_1 + T_2];L^2(\Rd)^2)}^2 &\leq  \frac{\sqrt e}{2} \norm{\hat m}_{C([T_1, T_1 + T_2]; L^2(\Rd))}^2,
    \end{align*}
    which by the Banach fixed point theorem yields a solution on $[T_1, T_1 + T_2]$. We iterate this procedure, and it remains to show that we can continue this procedure indefinitely to obtain a global solution on $[0,T]$.
    By construction
    $$
        T_{k} = T_{k-1} e^{-2C_\Upsilon T_{k}} = \cdots = T_1 \exp\left(-2C_{\Upsilon} \left[S_k - T_1\right] \right),
    $$
    where $S_k = \sum_{j=1}^k T_j$. Summing from $k=1$ to $K$, we get
    \begin{align*}
        S_K 
        &= T_1 e^{2C_\Upsilon T_1} \sum_{k=1}^K e^{-2 C_\Upsilon S_k} \\
        &\geq T_1 e^{2C_\Upsilon T_1} K e^{-2 C_\Upsilon S_K}.
    \end{align*}
    Arguing by contradiction, let us assume that the solution is not global, i.e., that there is a constant $C>0$ such that 
    $0 \leq S_K \leq C$ for all $K\in \N$ but then
    \begin{align*}
        C \geq S_K \geq T_1 e^{2C_\Upsilon T_1} K e^{-2 C_\Upsilon C} \to \infty,
    \end{align*}
    as $K\to \infty$ which is absurd. Thus, we can repeat the fixed-point argument until we have $S_K \geq T$, the time horizon in the statement of the problem. The nonnegativity follows from the construction via Proposition \ref{prop:existence-reg-nonlin-system}, if nonnegative initial data is assumed.
\end{proof}

\subsection{Removal of mollification in Brinkman's law}
\label{sec:remove-mollification-brinkman}
This section is dedicated to removing the mollification kernel in the drift term associated to the Brinkman equation. To this end, we use the following property of mollifications.

\begin{proposition}\label{prop:MollificationProperty}
    Let $f\in L^1(\Rd)$ and $(g_\delta)_\delta \subset L^\infty(\Rd)$ be uniformly bounded and such that $g_\delta\to0$ pointwise almost everywhere. Then, $f\star g_\delta\to 0$, in $L^2_{\mathrm{loc}}(\Rd)$.
\end{proposition}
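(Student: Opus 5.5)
We will show that $f\star g_\delta\to 0$ in $L^2(B_R)$ for every ball $B_R\subset\Rd$. The tool is dominated convergence, applied to approximations of $f$ by nicer functions: approximate $f$ in $L^1$ by a bounded, compactly supported function, handle that part with dominated convergence, and control the remainder by Young's inequality.

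Let $M:=\sup_\delta \norm{g_\delta}_{L^\infty(\Rd)}$ and fix $\epsilon>0$. Choose $f_\epsilon\in L^1\cap L^\infty(\Rd)$ with compact support and $\norm{f-f_\epsilon}_{L^1(\Rd)}<\epsilon$; truncation together with a cut-off in space does this. Young's inequality gives
\[
\norm{(f-f_\epsilon)\star g_\delta}_{L^\infty(\Rd)}\leq M\epsilon,
\]
so on $B_R$ the remainder contributes at most $M\epsilon|B_R|^{1/2}$ to the $L^2(B_R)$-norm, uniformly in $\delta$.

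For the main part, fix $x\in B_R$ and write
\[
(f_\epsilon\star g_\delta)(x)=\int_\Rd f_\epsilon(x-y)\,g_\delta(y)\dx y .
\]
The integrand tends to zero for a.e.\ $y$, because $g_\delta\to0$ a.e. It is dominated by $M|f_\epsilon(x-\cdot)|$, which is integrable. Dominated convergence therefore gives $(f_\epsilon\star g_\delta)(x)\to0$ for every $x\in B_R$. We also have $|f_\epsilon\star g_\delta|\leq M\norm{f_\epsilon}_{L^1(\Rd)}$, and constants are square-integrable on $B_R$. A second application of dominated convergence, now in $x$, yields $f_\epsilon\star g_\delta\to0$ in $L^2(B_R)$.

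Combining the two parts,
\[
\limsup_{\delta\to0}\norm{f\star g_\delta}_{L^2(B_R)}\leq M\epsilon|B_R|^{1/2},
\]
and $\epsilon$ is arbitrary, so the limit is zero. This holds for every $R$, which proves the claim.

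The only subtle point is that convergence is parametrised by $\delta$ rather than by a sequence. For dominated convergence we pass to arbitrary sequences $\delta_j\to0$, which suffices, interpreting "$g_\delta\to0$ a.e." along such sequences. The approximation by $f_\epsilon$ is not strictly needed, since $f$ itself already gives an integrable dominant $M|f(x-\cdot)|$ for each fixed $x$. Using $f$ directly, the pointwise-in-$x$ limit and the uniform bound $\norm{f\star g_\delta}_{L^\infty}\leq M\norm{f}_{L^1}$ suffice without approximation, so I would present that shorter version.
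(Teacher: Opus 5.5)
Your proof is correct and is essentially the paper's argument. The paper also concludes by dominated convergence, first pointwise in $x$ and then in $L^2$ over the bounded set. Before that, it splits the $y$-integral into $B_R$ and $B_R^c$ and uses the integrability of $f$ to make the tail uniformly small for $x$ in the bounded set, which plays the same role as your $f_\epsilon$-approximation. As you rightly observe, that preliminary step is superfluous because $M|f(x-\cdot)|$ is already an integrable dominant on all of $\Rd$, so your shorter version is a clean streamlining of the same proof.
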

\begin{proof}
\begin{align*}
    \int_S |f \star g_\delta|^2 \dx x 
    &= \int_S \abs*{\int_\Rd f(x-y) g_\delta (y) \dx y}^2 \dx x \\
    &\leq 2 \int_S \abs*{\int_{B_R} f(x-y) g_\delta(y) \dx y}^2 + \abs*{\int_{B_R^c} f(x-y) g_\delta (y) \dx y}^2 \dx x\\
    &\leq 2 \int_S \abs*{\int_{B_R} f(x-y) g_\delta(y) \dx y}^2 \dx x + 2 \norm{g_\delta}_{L^\infty}\abs*{\int_{B_R^c} f(x-y) \dx y}^2 \dx x.
\end{align*}
Since $f \in L^1(\Rd)$ and $S$ is bounded the integrals over the complements can be made smaller than $\epsilon/2$ for $R$ large enough such that
\begin{align}
    \int_S |f \star g_\delta|^2 \dx x 
    &\leq \int_S \abs*{\int_{B_R} f(x-y) g_\delta(y) \dx y}^2 \dx x + \frac\epsilon2.
\end{align}
Finally, using the pointwise convergence of $g_\delta \to 0$, the dominated convergence theorem yields the result.
\end{proof}

We are now ready to prove the removal of the mollification in the Brinkman's law.
\begin{proposition}[Removal of mollification]
    For each $\delta >0$, and any given initial data $n^{(1), \mathrm{in}}, n^{(2), \mathrm{in}} \in L^2(\Rd)$, let $(n_\delta^{(1)}, n_\delta^{(2)})$
    be a solution to Eq. \eqref{eq:soln-reg-brinkman-system}. Then, there exist functions
    $$
        n^{(1)}, n^{(2)} \in C([0,T];L^2(\Rd)) \cap L^2(0,T;H^1(\Rd)),
    $$ 
    with
    $$
        \partialt {n^{(1)}}, \partialt {n^{(2)}} \in L^2(0,T;H^{-1}(\Rd)),
    $$
    such that, up to a subsequence, $n_\delta^{(i)} \to n^{(i)}$
    strongly in $L^2(0,T; L_{\mathrm{loc}}^2(\Rd))$
    as $\delta \to 0$. The limit $(n^{(1)}, n^{(2)})$ solves
    \begin{align}
    \label{eq:unmollified-brinkman-with-cutoff}
    \begin{split}
        \int_0^T\int_\Rd &\phi \partialt {n^{(i)}} \dx x \dx t + \int_0^T\int_\Rd \epsilon \nabla \phi \cdot \nabla n^{(i)} \dx x \dx t \\
        &= \int_0^T\int_\Rd \phi \left(\nabla \cdot (n^{(i)} \nabla V) + \nabla \cdot (n^{(i)} \nabla W) + R^{(i)}(n^{(1)}, n^{(2)}) \right) \dx x \dx t,
    \end{split}
    \end{align}    
    for all $\phi \in L^2(0,T;H^1(\Rd))$, and $n^{(i)}(0) = n^{(i), \mathrm{in}}$, as well as
    $$
        - \nu \Delta W + W = (\Phi_\kappa(n^{(1)} + n^{(2)}))^k\quad\text{a.e.}
    $$
    Moreover, if $n^{(i),\mathrm{in}}\geq 0$, then $n\i\geq0$ a.e. in $(0,T)\times\Rd$.
\end{proposition}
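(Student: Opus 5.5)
The plan is to derive bounds on $n_\delta^{(i)}$ that are uniform in $\delta$, extract a strongly convergent subsequence with the Aubin--Lions lemma, and then pass to the limit in each term of \eqref{eq:soln-reg-brinkman-system}. The cut-off $\Phi_\kappa$ is what makes the uniform bounds available. We have $0\le p_\kappa\le \kappa^k$, so $p_\kappa$ is bounded in $L^\infty$ independently of $\delta$. Young's inequality then gives $\|\nabla K_\nu\star\omega_\delta\star p_\kappa\|_{L^\infty}\le \|\nabla K_\nu\|_{L^1}\kappa^k$. The Laplacian is handled through Brinkman's law: $\Delta K_\nu\star f=\nu^{-1}(K_\nu\star f-f)$, so $\|\Delta W^\delta\|_{L^\infty}\le 2\kappa^k/\nu$. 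These are exactly the bounds behind the constant $C_\Upsilon$.

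\textbf{Step 1: uniform estimates.} Testing with $n_\delta^{(i)}$, as in \eqref{eq:L2-propagation-fp}, gives
\[
\sup_t\|n_\delta^{(i)}\|_{L^2}^2+\epsilon\int_0^T\|\nabla n_\delta^{(i)}\|_{L^2}^2\,dt\le C\|n^{(i),\mathrm{in}}\|_{L^2}^2e^{C_\Upsilon T},
\]
with $C_\Upsilon$ independent of $\delta$. For the time derivative, each flux $n_\delta^{(i)}\nabla(V+W^\delta)$ is bounded in $L^2(0,T;L^2)$, and each reaction term $\Phi^{(i)}G^{(i)}(p_\kappa)$ is bounded by $C_{G,\kappa}|n_\delta^{(i)}|$. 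Together with the $\epsilon\Delta n_\delta^{(i)}$ term, this bounds $\partial_t n_\delta^{(i)}$ in $L^2(0,T;H^{-1})$ uniformly in $\delta$. The Aubin--Lions lemma on balls, followed by a diagonal argument, then yields a subsequence with $n_\delta^{(i)}\to n^{(i)}$ strongly in $L^2(0,T;L^2_{\mathrm{loc}})$ and a.e. It also gives weak convergence in $L^2(0,T;H^1)$ and weak-$*$ convergence of $\partial_t n_\delta^{(i)}$ in $L^2(0,T;H^{-1})$. The limit inherits the stated regularity, and $n^{(i)}\in C([0,T];L^2)$ follows from $L^2H^1\cap H^1H^{-1}\subset C L^2$. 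Nonnegativity and the initial condition pass to the limit directly, the latter through the weak continuity in $H^{-1}$.

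\textbf{Step 2: passing to the limit.} The linear terms converge weakly. Since $\Phi_\kappa$ is Lipschitz and $s\mapsto s^k$ and $G^{(i)}$ are continuous on bounded sets, the a.e. convergence implies $p_{\kappa,\delta}\to p_\kappa:=(\Phi_\kappa(n^{(1)}+n^{(2)}))^k$ a.e. with a uniform bound. Dominated convergence on compact sets then handles $R^{(i)}$. The main obstacle is the nonlinear drift $n_\delta^{(i)}\nabla K_\nu\star\omega_\delta\star p_{\kappa,\delta}$. I split it as
\[
\nabla K_\nu\star\omega_\delta\star p_{\kappa,\delta}-\nabla K_\nu\star p_\kappa=\nabla K_\nu\star\big(\omega_\delta\star p_{\kappa,\delta}-p_\kappa\big).
\]
The functions $g_\delta:=\omega_\delta\star p_{\kappa,\delta}-p_\kappa$ are uniformly bounded by $2\kappa^k$. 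To see that $g_\delta\to 0$ a.e., along a further subsequence, I write $\omega_\delta\star p_{\kappa,\delta}-p_\kappa=\omega_\delta\star(p_{\kappa,\delta}-p_\kappa)+(\omega_\delta\star p_\kappa-p_\kappa)$. The second piece tends to $0$ a.e. by Lebesgue differentiation. The first tends to $0$ in $L^1_{\mathrm{loc}}$, because $p_{\kappa,\delta}\to p_\kappa$ in $L^1_{\mathrm{loc}}$ by bounded a.e. convergence. Proposition~\ref{prop:MollificationProperty}, applied with $f=\nabla K_\nu\in L^1$ at a.e. fixed time and then integrated in time by dominated convergence, gives $\nabla K_\nu\star g_\delta\to0$ in $L^2(0,T;L^2_{\mathrm{loc}})$. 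Combined with the strong $L^2_{\mathrm{loc}}$ convergence of $n_\delta^{(i)}$ and the $L^\infty$ bound on the drift, the product converges in $L^1_{\mathrm{loc}}$. This is enough for test functions $\phi\in C_c^\infty$.

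\textbf{Step 3: conclusion.} Density of $C_c^\infty$ functions in $L^2(0,T;H^1)$, together with the uniform bounds, extends \eqref{eq:unmollified-brinkman-with-cutoff} to all admissible $\phi$. Setting $W:=K_\nu\star p_\kappa$ gives $-\nu\Delta W+W=p_\kappa$ a.e., since $p_\kappa\in L^2$ puts $W$ in $H^2$.
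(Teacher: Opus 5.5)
Your proposal is correct and follows the paper's route: $\delta$-independent $L^2$ energy and $L^2(0,T;H^{-1})$ time-derivative bounds, Aubin--Lions for strong $L^2(0,T;L^2_{\mathrm{loc}})$ convergence, Proposition~\ref{prop:MollificationProperty} with $f=\nabla K_\nu$ for the drift, and bounded a.e.\ convergence for the reaction term. Your treatment of the drift is slightly more careful than the paper's displayed computation. You keep the factor $n_\delta^{(i)}$, which the paper's display drops, and you show $\omega_\delta\star p_{\kappa,\delta}-p_\kappa\to 0$ a.e.\ instead of moving the mollifier onto the test function.
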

\begin{proof}
    Since the estimate in Eq. \eqref{eq:L2-propagation-fp} is independent of $\delta$, we readily observe that
    \begin{align*}
    \norm{n_\delta^{(i)}}_{L^\infty(0,T;L^2(\Rd))} + \norm{ \nabla n_\delta^{(i)}}_{L^2(0,T; L^2(\Rd))} \leq C,
    \end{align*}
    as well as
    \begin{align*}
        \norm{\partial_t n_\delta^{(i)}}_{L^2(0,T;H^{-1}(\Rd))} \leq C,
    \end{align*}
    where $C>0$ is independent of $\delta$.
    With this regularity we can invoke the Aubin-Lions lemma to extract subsequences in $\delta$, $n_\delta^{(i)} \to n^{(i)}$,
    converging strongly in $L^2(0,T;L_{\mathrm{loc}}^2(\Rd))$, and pointwise a.e., for $i=1,2$. It remains to identify the limit. We set 
    $\underline {\Phi} := \Phi(n^{(1)} + n^{(2)})$ and $\underline {\Phi}_\delta := \Phi(n_\delta^{(1)} + n_\delta^{(2)})$ and, for arbitrary $\varphi \in C_c^\infty(\Rd \times (0,T))$, we compute
    \begin{align*}
        &\abs*{\int_0^T\int_\Rd\nabla \varphi \cdot \nabla K_\nu \star \omega_\delta \star \underline{\Phi}_\delta^k \dx x \dx t 
        -
        \int_0^T\int_\Rd\nabla \varphi \cdot \nabla K_\nu  \star \underline{\Phi}^k \dx x \dx t
        }\\[3pt]
        & = \abs*{\int_0^T\int_\Rd\nabla\varphi \star \omega_\delta \cdot \nabla K_\nu\star (\underline{\Phi}_\delta^k-\underline{\Phi}^k) \dx x \dx t} + \abs*{\int_0^T\int_\Rd\nabla\varphi \cdot \nabla K_\nu\star (\omega_\delta \star\underline{\Phi}^k-\underline{\Phi}^k) \dx x \dx t}.
    \end{align*}
    Using Proposition~\ref{prop:MollificationProperty}, we see that both terms vanish as $\delta\to 0$.
    Next, we observe
    \begin{align*}
        &\abs*{\int_0^T\int_\Rd \varphi \Phi(n_\delta^{(i)}) G^{(i)}(\underline{\Phi}_\delta^k)\dx x \dx t 
        -
        \int_0^T\int_\Rd \varphi \Phi(n^{(i)}) G^{(i)}(\underline{\Phi}^k) \dx x \dx t
        }\\[3pt]
        &\leq \norm{\varphi}_{L^\infty(0,T;L^\infty(\Rd))} 
        \Big(C_{G, \kappa} 
        \norm{\Phi(n_\delta^{(i)}) - \Phi(n^{(i)})}_{L^2(0,T;L^2(\mathrm{supp}\,\varphi))}\\
        &\hspace{4cm}
        + 
        \kappa \norm{G^{(i)}(\underline{\Phi}_\delta^k) - G^{(i)}(\underline{\Phi}^k)}_{L^2(0,T;L^2(\mathrm{supp}\,\varphi))}
        \Big)\\[3pt]
        &\leq C
        \left(
        \norm{n_\delta^{(1)} - n^{(1)}}_{L^2(0,T;L^2(\mathrm{supp}\,\varphi))}
        + 
        \norm{n_\delta^{(2)} - n^{(2)}}_{L^2(0,T;L^2(\mathrm{supp}\,\varphi))}
        \right)\\
        & \to 0,
    \end{align*}
    as $\delta \to 0$, by the strong $L_{\mathrm{loc}}^2$-convergence. The linear terms are straightforward. Finally, to recover the weak form it is standard to approximate $L^2(0,T;H^1(\Rd))$ test functions by $C_c^\infty$ functions.
\end{proof}

\subsection{Removal of the cut-offs}
\label{sec:Linfty-propag}
\begin{lemma}\label{lem:Linfty-propag}
    Let $n^{(1), \mathrm{in}}, n^{(2), \mathrm{in}} \in  L^1(\Rd) \cap L^\infty(\Rd)$ be nonnegative. Then, the associated solution to System \eqref{eq:unmollified-brinkman-with-cutoff} satisfies 
    $$
        \norm{n^{(1)} + n^{(2)}}_{L^\infty(0,T;L^\infty(\Rd))} \leq \max\left(n_L, \norm{n^{(1),\mathrm{in}} + n^{(2),\mathrm{in}}}_{L^\infty(\Rd)}\right).
    $$
    In particular, the individual species are also uniformly bounded in $L^\infty(0,T;L^\infty(\Rd))$.
\end{lemma}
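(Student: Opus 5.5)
We argue by a Stampacchia-type truncation on the total density of the regularised system \eqref{eq:unmollified-brinkman-with-cutoff}. Set $n := n^{(1)}+n^{(2)}$ and $M:=\max\left(n_L, \norm{n^{(1),\mathrm{in}}+n^{(2),\mathrm{in}}}_{L^\infty(\Rd)}\right)$. Adding the two equations gives
\begin{equation*}
    \partialt n - \epsilon\Delta n = \nabla\cdot(n\nabla(V+W)) + \sum_{i=1}^2 \Phi_\kappa(n^{(i)})\,G^{(i)}\big(\Phi_\kappa(n)^k\big),
\end{equation*}
with $-\nu\Delta W + W = \Phi_\kappa(n)^k$. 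We fix the cut-off level $\kappa > M$. This choice is what later lets us drop the cut-offs: once we know $n\le M<\kappa$, we have $\Phi_\kappa(n^{(i)})=n^{(i)}$ and $\Phi_\kappa(n)=n$. Since $n^{(i)}\ge0$, the bound on the total density also bounds each species.

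The key step is to test the equation with $(n-M)_+\in L^2(0,T;H^1(\Rd))$; this is admissible because $n\in L^2(H^1)$ and $\partial_t n\in L^2(H^{-1})$. The time derivative gives $\frac12\ddt\int (n-M)_+^2$, and the diffusion term has a good sign. For the transport term we write $n = (n-M)_+ + M$ on $\{n>M\}$ and integrate by parts:
\begin{equation*}
    -\int n\nabla(V+W)\cdot\nabla(n-M)_+ = \frac12\int (n-M)_+^2\Delta(V+W) + M\int (n-M)_+\,\Delta(V+W).
\end{equation*}
The first piece is bounded by $C\int(n-M)_+^2$, because $\Delta W = (W-\Phi_\kappa(n)^k)/\nu$ is bounded by $2\kappa^k/\nu$ and $\Delta V\in L^\infty$. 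For the second piece, on $\{n>M\}$ we use $\Delta W=(W-\Phi_\kappa(n)^k)/\nu$ and group it with the reaction term. On $\{n>M\}$ we have $\Phi_\kappa(n)\ge \Phi_\kappa(M)=M\ge n_L$, so $p_\kappa:=\Phi_\kappa(n)^k\ge p_L$. By \eqref{eq:def-pL} this gives $G^{(i)}(p_\kappa)\le -\norm{\Delta V}_\infty$.

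The reaction contribution $\sum_i\Phi_\kappa(n^{(i)})G^{(i)}(p_\kappa)(n-M)_+$ is therefore nonpositive. Moreover $\sum_i\Phi_\kappa(n^{(i)})\ge \min(n,\kappa)\ge M$ on that set. Hence the reaction term is at most $-M\norm{\Delta V}_\infty\int(n-M)_+$, which absorbs $M\int(n-M)_+\Delta V$.

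The main obstacle is the leftover nonlocal term $\frac M\nu\int (n-M)_+(W-p_\kappa)$. We want it to have a sign, or at least to be controlled by $\int(n-M)_+^2$. Pointwise $W-p_\kappa$ has no sign, so we exploit the strict decrease $\partial_pG^{(i)}<-\alpha$. The reaction term gives an extra $-\alpha M\int (n-M)_+(p_\kappa-p_L)$. For the drift term, write $W-p_\kappa = (W-M^k)-(p_\kappa-M^k)$ and use $W=K_\nu\star p_\kappa$ with $\int K_\nu=1$, $K_\nu\ge0$. This gives
\begin{equation*}
    W-M^k \le K_\nu\star(p_\kappa-M^k)_+ \le C(\kappa,k)\,K_\nu\star(n-M)_+.
\end{equation*}
By Young's inequality, $\int (n-M)_+\,K_\nu\star(n-M)_+\le\norm{(n-M)_+}_{L^2}^2$. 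The remaining part $-\int (n-M)_+(p_\kappa-M^k)$ is nonpositive.

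Collecting all terms, we get $\ddt\norm{(n-M)_+}_{L^2}^2\le C\norm{(n-M)_+}_{L^2}^2$ with zero initial value, so Gronwall's lemma gives $(n-M)_+\equiv0$. The constant $C$ depends on $\kappa$, but this is harmless because the conclusion does not. Integrability of $(n-M)_+$ follows from $n\in L^\infty(0,T;L^1)$, which comes from the equation as in the $L^1$ lemma. If the $\Delta V$ absorption fails for species with $\Phi_\kappa(n^{(i)})<n^{(i)}$, we instead choose $\kappa$ larger than the a priori $L^\infty$ bound obtained first with a crude constant, and then rerun the argument.
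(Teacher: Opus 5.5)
Your argument is correct, but it is organised differently from the paper's proof.

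\textbf{The paper's route.} The paper works at the $L^1$ level: it differentiates $\int(n-\kappa)_+\,dx$ and uses Kato's inequality for the viscous term. Crucially, it chooses the cut-off level to coincide with the threshold, $\kappa=M$. That choice makes the Brinkman term sign-definite for free. Since $\Phi_\kappa(n)\le\kappa$ everywhere and $K_\nu\ge0$ has unit mass, $W\le\kappa^k$ everywhere. On the other hand $p_\kappa=\kappa^k$ on $\{n>\kappa\}$, so $W-p_\kappa\le0$ on the only set that matters. The drift/reaction pair is then treated exactly as you do, via $G^{(i)}(\kappa^k)\le-\|\Delta V\|_\infty$ and $\Phi_\kappa(n^{(1)})+\Phi_\kappa(n^{(2)})\ge\kappa$ on $\{n>\kappa\}$.

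\textbf{What your choice $\kappa>M$ costs and buys.} By taking $\kappa>M$ you give up this saturation on $\{M<n<\kappa\}$. That is exactly why the nonlocal term becomes your ``main obstacle''. You pay for it with three steps, all correct:
\begin{itemize}
\item the representation $W-M^k=K_\nu\star(p_\kappa-M^k)$;
\item the Lipschitz bound $(p_\kappa-M^k)_+\le k\kappa^{k-1}(n-M)_+$;
\item Young's inequality.
\end{itemize}
Together these close the Gronwall estimate with a $\kappa$-dependent constant. In exchange, you obtain the sharp bound $n\le M$ for every cut-off level $\kappa\ge M$, so the threshold and the cut-off are decoupled. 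Also, your $H^1$ test function $(n-M)_+$ makes the viscous term simply $-\epsilon\int|\nabla(n-M)_+|^2$, without appeal to Kato's inequality.

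\textbf{Simplifications you could make.} The strict inequality $\kappa>M$ is not needed to remove the cut-offs: $n\le M=\kappa$ already gives $\Phi_\kappa(n)=n$ and $\Phi_\kappa(n^{(i)})=n^{(i)}$. With $\kappa=M$, your own computation collapses to the paper's one-line sign argument, since then $W-M^k\le0$ pointwise. Two further points are superfluous:
\begin{itemize}
\item The extra dissipation from $\partial_pG^{(i)}<-\alpha$ is never used.
\item Your closing fallback is not needed. The bound $\Phi_\kappa(n^{(1)})+\Phi_\kappa(n^{(2)})\ge\min(n,\kappa)$, which you already established, makes the $\Delta V$ absorption valid whether or not an individual species exceeds $\kappa$.
\end{itemize}
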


\begin{proof}
    For this proof, let us set the cut-off parameter,  introduced in Eq. \eqref{eq:def-pL}, to be
    $$
        \kappa := \max(n_L, \norm{n^{(1),\mathrm{in}} + n^{(2),\mathrm{in}}}_{L^\infty(\Rd)}),
    $$
    where we recall that $n_L = p_L^{1/k}$ as in Assumption~\ref{assu:existence}.
    Let us write $\Phi\i = \Phi_{\kappa}(n\i)$, $\underline{\Phi} = \Phi_\kappa(p)$, and $n = n\1+n\2$. Then, we consider
    \begin{align*}
        \ddt \int_\Rd |\kappa - n|_- \dx x 
        &= - \int_\Rd \sign_-(\kappa - n) \partialt n \dx x\\
        &= - \int_\Rd \sign_-(\kappa - n) \big(\epsilon \Delta n + \nabla \cdot (n \nabla V) + \nabla \cdot (n \nabla W)\\
        &\hspace{6cm}+ \Phi^{(1)} G^{(1)} + \Phi^{(2)} G^{(2)}\big) \dx x \\
        &= I_\epsilon + I_W + I_V + I_G.
    \end{align*}
    We treat each of these terms individually. First, we observe that
    \begin{align*}
        I_\epsilon 
        &= \epsilon \int_\Rd \sign_-(\kappa - n) \Delta (\kappa - n) \dx x \leq \epsilon \int_\Rd \Delta (|\kappa - n|_-) \dx x \leq 0,
    \end{align*}
    by Kato's inequality. Next, we observe that
    \begin{align*}
        I_W 
        &= - \int_\Rd \sign_-(\kappa - n) (\nabla n \cdot \nabla W + n \Delta W) \dx x\\[3pt]
        &= \int_\Rd \nabla \cdot (|\kappa - n|_- \nabla W)\dx x - \int_\Rd \sign_-(\kappa - n) \kappa \Delta W\dx x\\[3pt]
        &= \int_\Rd |\kappa - n|_- \Delta W\dx x - \int_\Rd \sign_-(\kappa - n) n \Delta W \dx x \\[3pt]
        &\leq \norm{\Delta W}_{L^\infty(0,T;L^\infty(\Rd))} \int_\Rd |\kappa - n|_- \dx x - \frac1\nu \int_\Rd \sign_-(\kappa - n) n (W - \underline \Phi^k) \dx x\\[3pt]
        &\leq \norm{\Delta W}_{L^\infty(0,T;L^\infty(\Rd))} \int_\Rd |\kappa - n|_- \dx x - \frac1\nu \int_\Rd \sign_-(\kappa - n) n (\kappa^k - \underline \Phi^k) \dx x\\[3pt]
        &= C(\kappa, k, \nu) \int_\Rd |\kappa - n|_- \dx x,
    \end{align*}
    as the last term in the penultimate line is identically equal to zero. Next, we consider the term associated to the external potential, i.e.,
    \begin{align*}
        I_V 
        &= \int_\Rd \sign_-(\kappa - n) \nabla \cdot((\kappa - n)\nabla V)\dx x - \int_\Rd \sign_-(\kappa - n) \kappa \Delta V\dx x\\
        &\leq \int_\Rd - \sign_-(\kappa - n) \norm{\Delta V}_{L^\infty(0,T;L^\infty(\Rd))} \kappa \dx x.
    \end{align*}
    Finally, let us address the growth term. We see that
    \begin{align*}
        I_G 
        &= -\int_\Rd \sign_-(\kappa - n) (\Phi^{(1)} G^{(1)}(\underline \Phi^k) + \Phi^{(2)} G^{(2)}(\underline \Phi^k))\dx x\\
        &= \int_\Rd -\sign_-(\kappa - n) (\Phi^{(1)} G^{(1)}(\kappa^k) + \Phi^{(2)} G^{(2)}(\kappa^k))\dx x,
    \end{align*}
    since the integrand vanishes for $n < \kappa$ and $\underline \Phi = \kappa$, whenever $n > \kappa$. By definition of $p_L$, Eq. \eqref{eq:def-pL}, we have
    \begin{align*}
        I_G \leq \int_\Rd - \sign_-(\kappa - n) (-\norm{\Delta V}_{L^\infty(0,T;L^\infty(\Rd))}) (\Phi^{(1)} + \Phi^{(2)})\dx x.
    \end{align*}
    Thus, 
    \begin{align*}
        I_V + I_G 
        &\leq \int_\Rd - \sign_-(\kappa - n) \norm{\Delta V}_{L^\infty(0,T;L^\infty(\Rd))}(\kappa - \Phi(n^{(1)}) - \Phi(n^{(2)}))\dx x \leq 0,
    \end{align*}
    since $\kappa - \underline \Phi \leq 0$ on the set where $n\geq \kappa$. In conclusion, we have
    \begin{align*}
        \ddt \int_\Rd |\kappa - n|_- \dx x \leq C(\kappa,k,\nu) \int_\Rd |\kappa - n|_- \dx x,
    \end{align*}
    which, upon using Gronwall, yields
    \begin{align*}
        0 \leq \int_\Rd |\kappa - n|_-(t) \dx x \leq  \int_\Rd |\kappa - n|_-(0) \dx x\  e^{C(\kappa, k, \nu) t} = 0,
    \end{align*}
    since the initial data are bounded by $\kappa$. Since $0 \leq  n^{(1)} + n^{(2)} \leq \kappa$, we can remove the truncation, since then $\Phi_\kappa(n) = n$ and $\Phi_\kappa(n^{(i)}) = n^{(i)}$, for $i=1,2$.
\end{proof}
To summarise, we have proven the following existence result.
\begin{proposition}
    For each $\epsilon>0$ and any given nonnegative initial data $n^{(1), \mathrm{in}}, n^{(2), \mathrm{in}} \in L^1(\Rd) \cap L^\infty(\Rd)$, there exist nonnegative functions
    $$
        n_\epsilon^{(1)}, n_\epsilon^{(2)} \in C([0,T];L^2(\Rd)) \cap L^2(0,T;H^1(\Rd)),
    $$ 
    with
    $$
        \partialt {n_\epsilon^{(1)}}, \partialt {n_\epsilon^{(2)}} \in L^2(0,T;H^{-1}(\Rd)),
    $$
    and $0 \leq n_\epsilon^{(1)}, n_\epsilon^{(2)} \leq C$, for some constant $C$ independent of $\epsilon$, such that
        \begin{align}
        \label{eq:viscous-brinkman}
        \begin{split}
    	\int_0^T\int_\Rd &\varphi\partialt {n_\epsilon^{(i)}}\dx x \dx t + \epsilon \int_0^T\int_\Rd \nabla n_\epsilon^{(i)}\cdot \nabla \varphi \dx x\dx t\\
        &= \int_0^T\int_\Rd \varphi\brk*{\nabla \cdot (n_\epsilon^{(i)} \nabla V) + \nabla \cdot (n_\epsilon^{(i)} \nabla W_\epsilon) + n^{(i)}_\epsilon G^{(i)}(p_\epsilon)}\dx x \dx t,
        \end{split}
    \end{align}
    for any $\phi \in L^2(0,T;H^1(\Rd))$ and $n^{(i)}(0) = n^{(i), \mathrm{in}}$, as well as
    $$
        - \nu \Delta W_\epsilon + W_\epsilon = (n_\epsilon^{(1)} + n_\epsilon^{(2)})^k,
    $$
    almost everywhere in $\Rd\times(0,T)$.
\end{proposition}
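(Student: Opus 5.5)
This proposition summarises the preceding chain of constructions. The plan is to combine them for fixed $\epsilon>0$ and initial data $n^{(i),\mathrm{in}}\in L^1(\Rd)\cap L^\infty(\Rd)\subset L^2(\Rd)$, nonnegative.

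\textbf{Step 1 (regularised problem with cut-off).} Choose the cut-off level
$$
\kappa:=\max\bigl(n_L,\norm{n^{(1),\mathrm{in}}+n^{(2),\mathrm{in}}}_{L^\infty(\Rd)}\bigr),
$$
which does not depend on $\epsilon$. Proposition~\ref{prop:existence-regularised-truncated-Brinkman} gives, for each $\delta>0$, nonnegative solutions with the mollified Brinkman potential. The removal-of-mollification proposition then yields, as $\delta\to0$ along a subsequence, nonnegative solutions of \eqref{eq:unmollified-brinkman-with-cutoff}. These lie in $C([0,T];L^2(\Rd))\cap L^2(0,T;H^1(\Rd))$ with time derivative in $L^2(0,T;H^{-1}(\Rd))$, and $W$ solves $-\nu\Delta W+W=\Phi_\kappa(n)^k$.

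\textbf{Step 2 (removal of cut-off).} Apply Lemma~\ref{lem:Linfty-propag} to get $0\le n=n^{(1)}+n^{(2)}\le\kappa$ almost everywhere. Together with nonnegativity, each species also satisfies $0\le n^{(i)}\le\kappa$. Hence $\Phi_\kappa(n^{(i)})=n^{(i)}$ and $\Phi_\kappa(n)^k=n^k=p_\epsilon$. Equation~\eqref{eq:unmollified-brinkman-with-cutoff} therefore becomes \eqref{eq:viscous-brinkman}, and the Brinkman law holds with right-hand side $(n^{(1)}_\epsilon+n^{(2)}_\epsilon)^k$. The constant $C=\kappa$ depends only on $n_L$ and the initial data, so it is independent of $\epsilon$.

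\textbf{Main obstacle.} The delicate point is justifying Lemma~\ref{lem:Linfty-propag} at the regularity actually available, namely $L^2(H^1)$ with $\partial_t n\in L^2(H^{-1})$. This is what allows testing with $\sign_-(\kappa-n)$. I would handle it through a smooth convex approximation of $|\cdot|_-$ together with the chain rule in $L^2(H^1)\cap H^1(H^{-1})$. A second point is the bound $\Delta W\in L^\infty$: it follows from $\Delta W=(W-\Phi_\kappa(n)^k)/\nu$ and $0\le\Phi_\kappa(n)^k\le\kappa^k$. The remaining steps are bookkeeping. In particular, the weak formulation with $\phi\in L^2(0,T;H^1(\Rd))$ is inherited directly from the previous proposition, since the cut-offs are inactive.
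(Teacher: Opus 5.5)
Your proposal is correct and takes the same route as the paper, which obtains this proposition by chaining Proposition~\ref{prop:existence-regularised-truncated-Brinkman}, the removal of the mollification, and Lemma~\ref{lem:Linfty-propag}, with the $\epsilon$-independent cut-off level $\kappa=\max(n_L,\norm{n^{(1),\mathrm{in}}+n^{(2),\mathrm{in}}}_{L^\infty(\Rd)})$ fixed from the outset so that the truncations become inactive. Your justification of testing with $\sign_-(\kappa-n)$, via a convex regularisation and the chain rule in $L^2(0,T;H^1)\cap H^1(0,T;H^{-1})$, supplies a detail the paper leaves implicit.
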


\subsection{Removal of the parabolic term}
\label{sec:removal-of-eps}
To complete the general $L^1\cap L^\infty$-existence theory, we have to remove the artificial viscosity terms. To this end, we employ a nonlocal compactness criterion, first proved in~\cite{BelgacemJabin} and recalled in Appendix \ref{app:compcrit}. Moreover, we prove an extension in the style of the Aubin-Lions lemma where gradient control is replaced by mere oscillation control. This yields compactness in $L^\infty(0,T;L^p(\Rd))$ functions whose limits are continuous in time, allowing us to show time-continuity of our limit functions.

\begin{proposition}
\label{lem:RemoveEpsilon}
    Let $n^{(1),in}, n^{(2),in} \in L^1(\Rd)\cap L^\infty(\Rd)$ be nonnegative. For $\epsilon>0$ let $(n\1_\eps, n\2_\eps)$ be a global weak solution to System~\eqref{eq:viscous-brinkman} and set $p_\epsilon = n_\eps^k:=(n\1_\eps+n\2_\eps)^k$.
    Then, there exist functions
    \begin{align*}
        n\1, n\2 \in L^\infty(0,T;L^1(\Rd)\cap L^\infty(\Rd)) \cap C([0,T];L^q(\Rd)) \cap H^1(0,T;H^{-1}(\Rd)),
    \end{align*}
    for any $1 \leq q < \infty$, such that up to a subsequence
    \begin{equation*}
        n\i_\eps \to n\i,\quad \text{in\; $L^1(0,T;L^1(\Rd))$},\; i=1,2,
    \end{equation*}
    as well as 
    \begin{equation*}
        p_\eps \to p = (n\1 + n\2)^k,
    \end{equation*}
    almost everywhere,
    and such that $(n\1, n\2)$ is a weak solution of the system
    \begin{align*}
    \partialt {n\i}  &= \nabla \cdot(n\i\nabla W) + \nabla \cdot(n\i\nabla V) + n\i G\i(p),\\
    -\nu \Delta W + W &= p,\\
    n\i(\cdot, 0) &= n^{(i),in},
    \end{align*}
    in the sense of Definition~\ref{def:weak-sol-Brinkman}.
\end{proposition}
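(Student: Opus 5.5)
We first collect uniform-in-$\epsilon$ bounds and then combine the nonlocal compactness criterion with the time-continuity result, Lemma~\ref{lem:Arzela-ascoli}.

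\textbf{Uniform bounds.} By Lemma~\ref{lem:Linfty-propag}, $0\le n\i_\eps\le n_\eps\le \kappa:=\max(n_L,\norm{n^{\mathrm{in}}}_{L^\infty})$, uniformly in $\epsilon$. Integrating the equations and using Gronwall as in the $L^1$-lemma of Section~\ref{sec:UnboundedData} gives a uniform $L^\infty(0,T;L^1)$ bound; the viscous term integrates to zero. Hence $p_\eps\le\kappa^k$ is bounded in $L^\infty(0,T;L^1\cap L^\infty)$. Brinkman's law then bounds $W_\eps$ uniformly in $L^\infty(0,T;W^{2,q})$ for every $q<\infty$, and in particular bounds $\nabla W_\eps$ in $L^\infty$. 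Testing the equation with $\varphi\in H^1$ gives a uniform bound on $\partial_t n\i_\eps$ in $L^2(0,T;H^{-1})$. The only $\epsilon$-dependent contribution is $\epsilon\nabla n\i_\eps$. Testing with $n\i_\eps$ gives $\epsilon\int_0^T\norm{\nabla n\i_\eps}_{L^2}^2\le C$, so $\sqrt\epsilon\,\nabla n\i_\eps$ is bounded and this contribution is $O(\sqrt\epsilon)$. Summing the equations and using $p=n^k$ with $n\le\kappa$ gives a bound on $\partial_t p_\eps$ in $L^1(0,T;H^{-\frak s})$, uniform in $\epsilon$. Arguing as in Proposition~\ref{prop:CompactW_eta}, $W_\eps$ is then compact in $L^2(0,T;L^2)$. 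Tightness of the densities transfers as in that proof.

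\textbf{Propagation of compactness (main obstacle).} Here we rerun the estimate of Proposition~\ref{prop:CompactnessEtaWeight} for the three quantities $\int\calK_h|n^{(i),x}_\eps-n^{(i),y}_\eps|$, $i=1,2$, and the total population. Since all quantities are bounded, $\nabla\mathcal V_\eps$ is uniformly Lipschitz; in the bounded setting we nevertheless keep the weights $\upsilon_\eps$, or alternatively use Lipschitz control of $\nabla W_\eps$ in the commutator, to be safe. The new feature is the viscous term $\epsilon(\Delta_x+\Delta_y)|n^x-n^y|$. Doubling variables and using Kato's inequality, it contributes $\epsilon\int\Delta\calK_h|n^x-n^y|$ after integration by parts. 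Since $\norm{\Delta\calK_h}_{L^1}\lesssim h^{-2}$, this contribution is at most $C\epsilon h^{-2}$, which vanishes as $\epsilon\to0$ for each fixed $h$. The highest-order term again cancels through $\sign(n^x-n^y)=\sign(p^x-p^y)$. The reaction terms are handled by the monotonicity of $G\i$. The commutator is bounded by $C|\log h|^{1/2}$. Gronwall then yields $\lim_{h\to0}\limsup_{\epsilon\to0}\sup_t\int\overline\calK_h|n^{(i),x}_\eps-n^{(i),y}_\eps|=0$; removing the weights is done as in Proposition~\ref{prop:CompactnessEta}. I expect the delicate step to be checking that the viscous commutator and the weight equation remain compatible, which requires the renormalisation of the weight equation in the presence of $\epsilon\Delta$.

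\textbf{Conclusion.} Lemma~\ref{lem:CompactnessCriterion}, combined with tightness, gives $n\i_\eps\to n\i$ in $L^1(0,T;L^1)$ and a.e. Consequently $p_\eps\to p=n^k$ a.e., and boundedly, hence in every $L^q_{\mathrm{loc}}$. The linear terms pass to the limit by weak convergence. The term $n\i_\eps\nabla W_\eps$ passes because $W_\eps\to W$ strongly and $n\i_\eps$ converges strongly. The term $n\i_\eps G\i(p_\eps)$ passes by dominated convergence. Lemma~\ref{lem:Arzela-ascoli} with $r=2$, followed by interpolation with the $L^\infty$ bound, gives $n\i\in C([0,T];L^q)$ for every $q<\infty$ and identifies the initial datum.
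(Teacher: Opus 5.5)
Your overall architecture matches the paper's. You use uniform bounds from Lemma~\ref{lem:Linfty-propag} and the energy estimate, compactness of $W_\eps$ via the pressure equation and Aubin--Lions, Lemma~\ref{lem:CompactnessCriterion} plus tightness for $n^{(i)}_\eps$, and then Lemma~\ref{lem:Arzela-ascoli} with interpolation. Your limit passage and time-continuity steps are fine.

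The gap is in the one step that is genuinely new here: propagating the oscillation quantity in the presence of $\eps\Delta$. You never close this step consistently, for two reasons.

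First, the claim that $\nabla\mathcal V_\eps$ is uniformly Lipschitz is false. From $p_\eps\in L^\infty$ you get $\Delta W_\eps\in L^\infty$ and $W_\eps\in W^{2,q}$ only for $q<\infty$, so $D^2W_\eps$ is BMO, not bounded. Assumption~\ref{assu:existence} also controls only $\nabla V$ and $\Delta V$. So the option of using Lipschitz control in the commutator is not available.

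Second, you keep the weights $\upsilon_\eps$ but compute the viscous contribution as if the quantity were unweighted. For $\int\calK_h(x-y)|n^x-n^y|(\upsilon^x+\upsilon^y)$, integrating $\eps(\Delta_x+\Delta_y)|n^x-n^y|$ by parts produces $2\eps\,\Delta\calK_h(x-y)\,\upsilon^x|n^x-n^y|$ and also the terms $2\eps\,\nabla\calK_h(x-y)\cdot\nabla\upsilon^x\,|n^x-n^y|$ and $\eps\,\calK_h(x-y)\,\Delta\upsilon^x\,|n^x-n^y|$. The weight $\upsilon_\eps$ is a DiPerna--Lions solution of the pure transport equation~\eqref{eq:weight}, so it has no regularity with which to control these. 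Likewise, the bound $\int n|\log\upsilon|\le C\lambda$ of Proposition~\ref{prop:w} picks up $-\eps\int\nabla n_\eps\cdot\nabla|\log\upsilon_\eps|$, which is neither signed nor bounded.

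Adding $\eps\Delta\upsilon_\eps$ to~\eqref{eq:weight} does not repair this. After integration by parts the weight enters through $\partial_t\upsilon^x+\eps\Delta\upsilon^x$, so a forward heat term doubles the $\Delta\upsilon$ contribution instead of cancelling it. You flag this as the delicate point but leave it open. As a result, the Gronwall inequality for $Q_{n^{(1)}_\eps}+Q_{n^{(2)}_\eps}+Q_{n_\eps}$, and with it the compactness of $n^{(i)}_\eps$, is not established.

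The paper avoids the issue by dropping the weights in this bounded setting. Since $\Delta\mathcal V_\eps=\nu^{-1}(W_\eps-p_\eps)+\Delta V$ and $n_\eps$ are bounded uniformly in $\eps$, it propagates the unweighted quantities $\int\calK_h|n^{(i),x}_\eps-n^{(i),y}_\eps|$ following \cite{BelgacemJabin}. This is the scheme of Proposition~\ref{prop:CompactnessEtaWeight} without $\upsilon$. The term $\int\calK_h\,\mathsf{M}|D^2\mathcal V_\eps|^x\,|n^x-n^y|$, which the weight absorbed in Section~\ref{sec:UnboundedData}, is then controlled through this bounded-divergence structure instead of a weight.

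In that unweighted framework your computation is exactly the extra ingredient needed. By Kato's inequality and $\norm{\Delta\calK_h}_{L^1}\lesssim h^{-2}$, the viscous contribution is $O(\eps h^{-2})$ times $\norm{n_\eps}_{L^\infty(0,T;L^1)}$, which vanishes under $\limsup_{\eps\to0}$ for each fixed $h$. So you should commit to the unweighted estimate, justified by the boundedness of $\Delta\mathcal V_\eps$ rather than by a Lipschitz bound, and discard the weights.
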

\begin{proof}
    Similarly as in the previous steps, we derive the energy estimate
    \begin{align*}
        \norm{n\i_\eps}_{L^\infty(0,T;L^2(\Rd))} + \norm{\sqrt{\eps}\nabla n\i_\eps}_{L^2(0,T;L^2(\Rd))} \leq C,
    \end{align*}
    for a constant $C$ independent of $\eps$, whence we deduce that $\partial_tn\i_\eps$ is uniformly bounded in $L^2(0,T;H^{-1}(\Rd))$.
    With these bounds, we can already extract weakly convergent subsequences and see that the viscosity term vanishes in the limit. We will now discuss the strong convergence of the velocity potential $W_\eps$. 
    
    First, since $p_\eps \in L^\infty(0,T; L^\infty(\Rd))$, we have $W_\eps \in L^\infty(0,T;W^{2,q}(\Rd))$ for any $q\in(1,\infty)$, and $\Delta W_\eps \in L^\infty(0,T;L^\infty(\Rd))$, uniformly in $\eps$.
    Adding up the density equations for $i=1$ and $i=2$, and multiplying the resulting identity by $k n_\eps^{k-1}$ we obtain
    \begin{align*}
        \partialt {p_\eps} = \eps\Delta p_\eps - \eps k(k-1)n_\eps^{k-2}|\nabla n_\eps|^2 &+ \nabla p_\eps\cdot(\nabla W_\eps + \nabla V) + k p_\eps(\Delta W_\eps + \Delta V)\\
        &+ k n_\eps\1 G\1(p_\eps)n_\eps^{k-1} + k n_\eps\2 G\2(p_\eps)n_\eps^{k-1}.
    \end{align*}

    We note that, upon integration in time and space and using the a priori bounds, the term $\eps n_\eps^{k-2}|\nabla n_\eps|^2$ is uniformly bounded in $L^1(0,T;L^1(\Rd))$. It follows that $\partial_t p_\eps$ belongs to $L^2(0,T;H^{-\frak{s}}(\Rd))$ for some $\frak{s}>d/2$. Consequently, since $W_\eps = K_\nu \star p_\eps$, we have the same uniform bounds for $\partial_t W_\eps$ and $\partial_t\nabla W_\eps$. 
    
    By the Aubin-Lions lemma, we can extract a subsequence such that $W_\eps$ and $\nabla W_\eps$ converge in $L^2(0,T;L^2_{\mathrm{loc}}(\Rd))$ to limit quantities $W = K_\nu \star p$ and $\nabla W = \nabla K_\nu \star p$, where $p$ is the weak-$*$ limit of $p_\eps$.
    With these convergences, we are ready to pass to the limit in all terms in Eq.~\eqref{eq:viscous-brinkman}, except for the reaction term. 
    
    For now, let us assume that the densities $n\i_\eps$ converge strongly in $L^1(0,T;L^1(\Rd))$, which we show last. Extracting a subsequence, we also have convergence almost everywhere. Then, by continuity of $G\i$ and of the map $n\mapsto n^k$, the term $n\i_\eps G\i(p_\eps)$ converges a.e.\ to $n\i G\i(p)$. Since this term is also uniformly bounded in ($L^1\cap L^\infty)(\Rd \times (0,T))$, we can finally pass to the limit in the weak formulation.

    \medskip 
    It remains to prove the global $L^1$-strong convergence $n\i_\eps \to n\i$. To this end, we use the compactness criterion in Lemma~\ref{lem:CompactnessCriterion}, following~\cite{BelgacemJabin}.  The strategy is similar to the one we employ in Section~\ref{sec:UnboundedData} (but slightly easier as here no auxiliary weight is required since the velocity has bounded divergence). The only point worthwhile highlighting is that the local compactness from \cite{BelgacemJabin} can be upgraded to global compactness using uniform tightness which is obtained by propagating tightness of the initial data similar to the proof of Proposition~\ref{prop:CompactW_eta}. In the scope of the removal of the parabolic term this procedure is standard, and we refer the reader to Eq.~\eqref{eq:tightness1} where we give the full details when removing the $L^\infty$-assumption.
    
    \smallskip
    Let us point out that, in particular, we can establish the following limit
    \begin{equation}
        \label{eq:comp-to-zero}
        \lim_{h\to0}\; \limsup_{\eps\to0} \int_{\R^{2d}} \overline{\calK}_h^{x,y}| n^{(i),x}_\eps - n^{(i),y}_\eps| \dx x\dx y =0,
    \end{equation}
    uniformly for every time $t\in [0,T]$. Consequently, we can apply Lemma~\ref{lem:Arzela-ascoli} and Remark \ref{rem:global-time-continuity} using the tightness of the sequence to deduce that the limit densities $n\i$ are continuous in time with values in $L^1(\Rd)$. An interpolation with the $L^\infty(0,T;L^\infty(\Rd))$ control yields $n\i \in C([0,T];L^q(\Rd))$ for any $q\in[1,\infty)$.
    \end{proof}

\section{Further details of proof of Lemma \ref{lem:npQinL1}}
\label{app:further-details}
We begin by estimating $\mathcal B_1$. Since 
    \begin{align*}
        n|Q| \leq n|W| + n|p| + 2\Gamma \nu n + 2 n\Gamma |p|^s,
    \end{align*}
    we have 
    \begin{align*}
        \mathcal B_1 &= \int_0^T\int_\Rd n|Q|\dx x \dx t \\
        &\leq \norm{n}_{L^2(0,T;L^2(\Rd))} (\norm{W}_{L^2(0,T;L^2(\Rd))} + \norm{p}_{L^2(0,T;L^2(\Rd))} +  2 \nu\Gamma \norm{p}^s_{L^{2s}(0,T;L^{2s}(\Rd))}) \\
        &\quad + 2\Gamma \nu \norm{n}_{L^1(0,T;L^1(\Rd))}\\
        &\leq C.
    \end{align*}
    Next, it is easy to see that 
    \begin{align*}
        \mathcal B_2 
        &\leq \norm{n}_{L^\infty(0,T;L^\frak p(\Rd))} \norm{\partial_t \Delta V}_{L^1(0,T;L^{\frak p'}(\Rd))}
        \leq C.
    \end{align*}
    Concerning $\mathcal B_3$, we observe, by Assumption \ref{assu:integrable-data}, that  
    \begin{align*}
        \mathcal B_3 
        &\leq C \int_0^T\int_\Rd n (1 + p^s + p^{2s}) \dx x \dx t\\
        &\leq C \norm{n}_{L^\infty(0,T;L^\frak p (\Rd))}(\norm{p}_{L^1(0,T;L^{\frak p'}(\Rd))} + \norm{p}_{L^{1}(0,T;L^{2s\frak{p}'}(\Rd))})\\
        &\qquad +C\norm{n}_{L^1(0,T;L^1(\Rd))}\\
        &\leq C.
    \end{align*}
    Next, we observe that
    \begin{align*}
        \mathcal B_4 
        &\leq \int_0^T \int_\Rd n\prt*{|\nabla W|^2 + \nu |\nabla W| |\Delta \nabla V| + |\nabla V| |\nabla W| +  \nu |\nabla V| |\Delta \nabla V|} \dx x \dx t
        \leq C,
    \end{align*}
    where $C$ depends on the regularity imposed in  Assumptions \ref{assu:existence} and \ref{assu:stiff}, as well as the $L^\frak p$-norm of $n$ and the $L^{\frak q}$-norm of $p$. 
    
    It remains to show that $\mathcal B_5$ can be bounded uniformly in $k$. To this end, we write
    \begin{align*}
        \mathcal B_5 
        &= \int_0^T\int_\Rd n \abs*{K \star \prt*{\nabla p \cdot \nabla (W + V) + \frac k\nu pQ}} \dx x \dx t\\
        &\leq  \mathcal B_{5,1} + \mathcal B_{5,2} + \mathcal B_{5,3},
    \end{align*}
    where
    \begin{align*}
        \mathcal B_{5,1} &:= \frac k\nu \int_0^T\int_\Rd n \abs*{K \star \prt*{ pQ}} \dx x \dx t,\\
        \mathcal B_{5,2} &:= \int_0^T\int_\Rd n \abs*{K \star \prt*{\nabla p \cdot \nabla W}} \dx x \dx t, \\
        \mathcal B_{5,3} &:= \int_0^T\int_\Rd n \abs*{K \star \prt*{\nabla p \cdot \nabla V}} \dx x \dx t.
    \end{align*}
    First, we observe that
    \begin{align*}
        \mathcal B_{5,1} 
        &\leq \frac k\nu \int_0^T\int_\Rd (K\star n) p|Q| \dx x \dx t\\
        &\leq \left(\frac k\nu \int_0^T\int_\Rd p|Q| \dx x \dx t\right) \norm{K\star n}_{L^\infty(0,T;L^\infty(\Rd))}.
    \end{align*}
    By Lemma \ref{lemma:pQinL1}, the term in the parenthesis is bounded and it is enough to verify that $K\star n$ is essentially bounded. 
   
    Indeed,
    $$
        \norm{K\star n}_{L^\infty(0,T;L^\infty(\Rd))} \leq \norm{K}_{L^{\frac{d}{d-2} - }(\Rd)} \norm{n}_{L^{d/2 +}(\Rd)} \leq C,
    $$
    as $\frak p \geq d/2$. 
    Let us proceed by estimating the second term.    
    \begin{align*}
        \mathcal B_{5,2} \leq \int_0^T\int_\Rd n \abs*{\frac{\partial K}{\partial x_i} \star \prt*{p\frac{\partial W}{ \partial x_i}}} \dx x \dx t + \int_0^T\int_\Rd n \abs*{K \star \prt*{p\Delta W}} \dx x \dx t.
    \end{align*}
    The first term on the right-hand side is controlled by
    \begin{align*}
        \int_0^T\int_\Rd & n \abs*{\frac{\partial K}{\partial x_i} \star \prt*{p\frac{\partial W}{ \partial x_i}}} \dx x \dx t\\
        &\leq \norm{|\nabla K| \star n}_{L^1(0,T;L^{\frak p}(\Rd))} \norm{ p \nabla W}_{L^\infty(0,T;L^{\frak p'}(\Rd))}\\
        &\leq \norm{\nabla K}_{L^1(\Rd)} \norm{n}_{L^1(0,T;L^{\frak p}(\Rd))} \norm{p}_{L^\infty(0,T;L^{2\frak p'}(\Rd))} \norm{\nabla W}_{L^\infty(0,T;L^{2 \frak p'}(\Rd))}\\
        &\leq C,
    \end{align*}
    having used the $L^{\frak q}$-regularity of $p$ and $L^\frak p$-regularity of $n$. The second term follows in the same vein, and $\mathcal B_{5,3}$ is even simpler due to the regularity of $V$, by Assumption \ref{assu:existence}.

\bibliography{doku}
\bibliographystyle{plain}

\end{document}